\documentclass{article}

\usepackage{amsmath,amsthm,amsfonts,amssymb}
\usepackage{graphicx}
\usepackage[normalem]{ulem}
\usepackage{listings}
\usepackage{booktabs}
\usepackage{longtable}
\usepackage[T1]{fontenc}
\usepackage[scaled=.85]{beramono}
\usepackage{lmodern}
\allowdisplaybreaks

\lstdefinelanguage{Magma}{
  morekeywords={Group,sub,Simplify,AbelianQuotientInvariants,SearchForIsomorphism,
  Homomorphism,GrfFP,GrpFP,Id},
  sensitive=true,
  morecomment=[l]{//},
  morestring=[b]",
}
\newtheorem{theorem}{Theorem}[section]
\newtheorem{lm}[theorem]{Lemma}
\newtheorem{cy}[theorem]{Corollary}
\newtheorem{conjecture}[theorem]{Conjecture}
\newtheorem{prop}[theorem]{Proposition}

\newtheorem{df}[theorem]{Definition} 
\newtheorem{remark}[theorem]{Remark}
\newtheorem{example}[theorem]{Example}

\usepackage{color}
\usepackage[dvipsnames]{xcolor}

\newcommand{\beq}{\begin{equation}}
\newcommand{\eeq}{\end{equation}}
\newcommand{\be}{\begin{enumerate}}
\newcommand{\ee}{\end{enumerate}}
\newcommand{\bp}{\begin{proof}}
\newcommand{\ep}{\end{proof}}
\newcommand{\bi}{\begin{itemize}}
\newcommand{\ei}{\end{itemize}}
\newcommand{\bea}{\begin{eqnarray*}}
\newcommand{\eea}{\end{eqnarray*}}
\newcommand{\bml}{\begin{multline*}}
\newcommand{\eml}{\end{multline*}}
\newcommand{\Z}{\mathbb{Z}}
\newcommand{\gp}[2]{\langle\, #1 \mid #2 \,\rangle}

\usepackage[margin=1in]{geometry}
\newcommand{\conj}[2]{#1^{-1}#2\,#1}  
\newcommand{\inv}{^{-1}}

\newcommand{\ncl}{\operatorname{ncl}}

\DeclareMathOperator{\Aut}{Aut}

\edef\storedcatcodeat{\the\catcode`\@} \catcode`\@=11

\catcode`\@=\storedcatcodeat\relax

\begin{document}

\title{Equations in Products of Free Groups and 3-Manifold Groups, I}

\author{Olga Kharlampovich, Alina Vdovina}

\maketitle

 Abstract:

Perelman's proof of the Poincare conjecture shows that every simply
connected closed 3-manifold is homeomorphic to the 3-sphere. The fundamental groups of 3-manifolds attract lots of interest from mathematicians of different fields. As it was stated in a famous survey of Allen Hatcher  "The classification of 3-manifolds", one would want to know exactly which groups occur as fundamental groups of these manifolds. 

The Stallings--Jaco--Hempel reformulation of the Poincare conjecture
inspired several connections between low-dimensional topology, equations
over free groups, and combinatorial group theory. The reformulation reduces the problem to study epimorphisms from the fundamental group of a closed orientable surface onto the direct product of two free groups (they correspond to Heegaard splittings of 3-manifolds and were named splitting homomorphisms).  Olshankii (1989) constructed (in non-explicit form) first non-trivial examples of such splitting epimorphisms  and verified 
 the standardness of some of them.

 We construct up to equivalence all the splitting coordinate-surjective homomorphisms (among them, the genuine splitting epimorphisms are exactly those for which  our constructed associated group balanced presentation is trivial).  We give generators and relations of the corresponding  balanced presentation (so all closed orientable 3-manifold groups) that can be studied by algebraic
methods.  We analyse a big class of such homomorphisms/presentations (including all Olshanskii's epimorphisms)
and show that splitting epimorphisms are  rare, in this case the corresponding balanced presentation of the trivial group can be reduced to the standard one  by Andrews-Curtis transformations and  the epimorphisms  are standard.

\tableofcontents

\section{Introduction} 
Perelman's theorem that resolved the Poincare conjecture, states: Every simply connected, closed 3-manifold is homeomorphic to the 3-sphere. The fundamental groups of 3-manifolds attract lots of interest from mathematicians of different fields. As it was stated in  Allen Hatcher's survey  \cite{Hat}  one would want to know exactly which groups occur as fundamental groups of these manifolds.

Stallings-Jaco-Hempel's reformulation (1966-1976) of Poincare conjecture (Perelman theorem) inspired 
many interesting mathematical connections. The reformulation reduces the problem to study epimorphisms from the fundamental group of a closed orientable surface onto the direct product of two free groups (they correspond to Heegaard splittings of 3-manifolds and were named splitting homomorphisms). Grigorchuk-Kurchanov wrote a survey on this (1990). Olshankii (1989) constructed (in non-explicit form) first non-trivial examples of such splitting epimorphisms  and verified 
 the standardness of some of them.

This paper is the first in the series that, in particular,  aims to show that this approach to the problem was quite viable. We construct up to equivalence all the splitting coordinate-surjective homomorphisms (among them, the genuine splitting epimorphisms are exactly those for which
the associated balanced presentation is trivial).  We give a  concrete simple relators of the corresponding  balanced presentation, see, as one of the  simplest examples,  relations (\ref{eq:6}) (so all closed orientable 3-manifold groups) that can be studied by algebraic
methods. Our construction makes the associated presentations explicit enough to compute with and analyse. The construction is inspired by Kharlampovich-Miasnikov's proof of the implicit function theorem. We also analyse a big class of such homomorphisms/presentations (including all Olshanskii's epimorphisms)
 and show that in this class splitting epimorphisms are  rare and in this case they are standard and the corresponding balanced presentation of the trivial group can be reduced to the standard one  by Andrews-Curtis transformations. 

There are two  main motivations of this paper. One  is the  possibility of obtaining a purely group theoretic proof of the Poincare conjecture. Another is to study fundamental groups of closed orientable 3-manifolds in Hatcher's spirit  that become clear from their presentations that we construct. Main results of the paper  are Theorem \ref{th:14} proved without using the Perelman theorem (Poincar\'{e} conjecture) and, therefore, providing a different proof of this conjecture for the class of  3-manifolds under consideration, and 
Theorem \ref{thm:genus3-two-layers}.

We used some technical help with computations, typing and  figures  from Chat GPT 5.5-5.6. We also extensively used MAGMA software.

\subsection{Surface groups and Stallings-Jaco-Hempel approach} Let $S_g$ be the closed orientable surface of genus $g$ and $F_g$ be a free group of rank $g$. We have $$\pi _1(S_g)=\langle x_1,y_1,\ldots x_g,y_g| [x_1,y_1]\ldots [x_g,y_g]=1\rangle .$$  Let $F(\bar a)$ be the free group with basis $a_1,\ldots ,a_g$ and $F(\bar b)$ be a free group with basis $b_1,\ldots ,b_g$. Two homomorphisms $\phi ,\psi$ from $\pi _1(S_g)$ to a group $H$ are equivalent if
there exists an automorphism $\sigma$ of $\pi _1(S_g)$  and an automorphism $\tau$ of $H$
such that $\phi =\tau\psi\sigma.$ In his 1965 paper, John Stallings provided an algebraic criterion for the 3D Poincar\'{e} Conjecture.
\begin{theorem} \cite{J},\cite{S} The Poincare conjecture holds if and only if for each $g\geq 2$ every  epimorphism $\phi: \pi _1(S_g)\rightarrow F_g\times F_g$ non-trivially factors through a free product.
\end{theorem}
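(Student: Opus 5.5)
The plan is to pass from the algebraic statement to Heegaard splittings via the Stallings--Jaco correspondence. Throughout I take $F_g\times F_g$ to mean $F(\bar a)\times F(\bar b)$.

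\textbf{Epimorphisms as Heegaard splittings.} Let $M=H_1\cup_{S_g}H_2$ be a genus $g$ Heegaard splitting of a closed orientable 3-manifold. Each handlebody has $\pi_1(H_i)\cong F_g$, and the maps induced by the inclusions $S_g\hookrightarrow H_i$ are surjective. The product map
\[
\phi_M:\pi_1(S_g)\to \pi_1(H_1)\times\pi_1(H_2)=F(\bar a)\times F(\bar b)
\]
is surjective exactly when $\pi_1(M)=\pi_1(H_1)*_{\pi_1(S_g)}\pi_1(H_2)$ is trivial, by van Kampen.

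Conversely, every epimorphism $\phi=(\phi_1,\phi_2)$ onto $F(\bar a)\times F(\bar b)$ arises this way. Each coordinate $\phi_i:\pi_1(S_g)\to F_g$ is surjective. By Zieschang's theorem (equivalently, Jaco's theorem on surface-to-free-group epimorphisms), $\phi_i$ is induced, up to an automorphism of $\pi_1(S_g)$ (the mapping class group), by the inclusion of $S_g$ as the boundary of a handlebody. Gluing the two handlebodies determined by $\ker\phi_1$ and $\ker\phi_2$ gives a closed $M$ with $\phi=\phi_M$ up to equivalence, and $M$ is simply connected by the van Kampen computation above. This realisation step is the one I expect to be the main obstacle. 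It needs the classical result that a surjection $\pi_1(S_g)\to F_g$ kills $g$ disjoint simple closed curves bounding a handlebody, and the point to check carefully is that the boundary identifications can be chosen compatibly with $\sigma\in\Aut(\pi_1(S_g))$, which is geometric by Dehn--Nielsen--Baer.

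\textbf{Factoring corresponds to reducibility.} Next, $\phi$ nontrivially factors through a free product $A*B$ (with $A,B\neq 1$ and the factorization compatible with the splitting of the surface) iff the splitting is reducible. One direction: a reducing sphere meets $S_g$ in a separating curve $c$, splitting $S_g$ into $S_{g_1}\#S_{g_2}$, and $\phi$ factors through $\pi_1(S_{g_1})*\pi_1(S_{g_2})$. The other direction: a factorization gives a separating essential class in $\ker\phi_1\cap\ker\phi_2$. By Stallings' and Jaco's analysis this class is represented by a simple closed curve bounding discs on both sides, i.e. a reducing sphere. This is the content of Stallings' original argument, and I would follow it: use the kernel-of-free-product characterization together with the loop/sphere theorem in the handlebodies.

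\textbf{Both implications.} Suppose the Poincar\'e conjecture holds. Then $M\cong S^3$, and by Waldhausen every Heegaard splitting of $S^3$ of genus $g\ge 2$ is standard, hence reducible, so $\phi$ factors. Conversely, suppose every such $\phi$ factors, and let $M$ be a homotopy sphere with a minimal genus splitting of genus $g$. If $g\ge 2$, the factorization gives a reducing sphere, and by Haken's lemma this lowers the genus, contradicting minimality. So $g\le 1$; genus $0$ or $1$ with trivial $\pi_1$ forces $M=S^3$.
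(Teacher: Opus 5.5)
The paper gives no proof of this statement; it only cites Stallings and Jaco, and your outline follows their classical route. The realization step you single out as the main obstacle is routine: Zieschang's theorem handles each coordinate, Dehn--Nielsen--Baer realizes the source automorphisms by homeomorphisms, and inner automorphisms are absorbed into the target. The direction through Waldhausen is also fine.

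\textbf{First gap: factorization $\Rightarrow$ reducing sphere.} This step, in the converse direction, is not proved.

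In Jaco's formulation a nontrivial factorization is an arbitrary $\phi=\beta\alpha$ with $\alpha\colon\pi_1(S_g)\to A*B$ onto (or at least with image not conjugate into a factor) and $A\neq1\neq B$. Nothing makes it ``compatible with the splitting of the surface''. Adding that clause either changes the theorem or assumes exactly what has to be shown.

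The tool you name cannot supply the missing curve. The loop or sphere theorem, applied in $H_1$ and $H_2$ separately, only produces meridian disks of each handlebody, and those exist anyway. What you need is a single essential simple closed curve lying in $\ker\phi_1\cap\ker\phi_2$.

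The missing idea is transversality on the surface:
\begin{enumerate}
\item Realize $\alpha$ by a map $f\colon S_g\to X$, where $X$ is $K(A,1)$ and $K(B,1)$ joined by an arc with midpoint $p$.
\item Make $f$ transverse to $p$. Then $f^{-1}(p)$ is a union of simple closed curves, each lying in $\ker\alpha\subseteq\ker\phi$.
\item Remove the inessential curves innermost first; this does not change $\alpha$.
\item If no curve remained, the image of $\alpha$ would lie in a conjugate of $A$ or of $B$, contradicting surjectivity. So an essential curve $c$ survives.
\end{enumerate}
Only at this point does Dehn's lemma give disks $D_1\subset H_1$ and $D_2\subset H_2$ with common boundary $c$.

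Moreover, $c$ need not be separating, contrary to what you assert. It is forced to separate by simple connectivity: a curve on $S_g$ meeting $c$ once would meet the sphere $D_1\cup D_2$ once, which contradicts $H_1(M)=0$. Hence $D_1\cup D_2$ is a reducing sphere.

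\textbf{Second gap: the final step is wrong as written.} A reducing sphere does not lower the genus of a minimal splitting. For example, the genus-two splitting of $L(2,1)\#L(3,1)$ is minimal and reducible. Haken's lemma (every splitting of a reducible manifold is reducible) is not the relevant fact here.

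Instead, induct on $g$ over all homotopy spheres. The reducing sphere writes $M=M_1\#M_2$ with induced splittings of genera $g_1,g_2\geq1$ and $g_1+g_2=g$. Since $\pi_1(M_1)*\pi_1(M_2)=\pi_1(M)=1$, each $M_i$ is a homotopy sphere with a splitting of smaller genus. By induction each $M_i\cong S^3$, and therefore $M\cong S^3$. The base cases $g\leq1$ are as you state.
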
 

We say that an epimorphism $\phi: \pi _1(S_g)\rightarrow F_g\times F_g$ is {\em standard} is it is equivalent to the one sending each $x_i$ to $b_i$ and $y_i$ to $a_i$. 
\begin{theorem} (Hempel \cite{H}) The Poincare conjecture holds if and only if for each $g\geq 2$ there exists only one equivalence class of epimorphisms $\phi: \pi _1(S_g)\rightarrow F_g\times F_g$ (equivalently, every such epimorphism is standard).
\end{theorem}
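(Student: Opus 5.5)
Both directions go through Heegaard splittings. We use Stallings' criterion (the preceding theorem) and the standard dictionary between genus-$g$ Heegaard splittings and epimorphisms $\pi_1(S_g)\to F_g\times F_g$.

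\emph{Setup.} Let $M=H_1\cup_{S_g}H_2$ be a Heegaard splitting of a closed orientable 3-manifold into two handlebodies of genus $g$. The inclusions $S_g\hookrightarrow H_i$ induce surjections $\pi_1(S_g)\to\pi_1(H_i)\cong F_g$. Their product $\phi_M\colon\pi_1(S_g)\to F_g\times F_g$ is surjective exactly when $\pi_1(M)=\pi_1(S_g)/\langle\!\langle\ker\phi_1,\ker\phi_2\rangle\!\rangle$ is trivial; this quotient description is van Kampen. Conversely, I will show that every epimorphism $\phi=(\phi_1,\phi_2)$ comes from such a splitting. Each $\phi_i\colon\pi_1(S_g)\to F_g$ is surjective. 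By Jaco's result (via Stallings' theorem and Dehn's lemma/loop theorem arguments), a surjection from $\pi_1(S_g)$ onto $F_g$ is induced by an embedding of $S_g$ as the boundary of a handlebody. I will take this as the cited input \cite{J}. Gluing the two handlebodies along $S_g$ gives a 3-manifold $M_\phi$ with $\pi_1(M_\phi)=1$. Under this correspondence, equivalence of epimorphisms matches homeomorphism of splittings: $\sigma$ comes from a mapping class of $S_g$ (Dehn--Nielsen--Baer), and $\tau$ from automorphisms of $F_g\times F_g$. Those automorphisms are generated by automorphisms of the factors, which are realized by handlebody homeomorphisms, together with the swap of the factors.

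\emph{($\Rightarrow$).} Assume the Poincar\'e conjecture. Then for any epimorphism $\phi$ the simply connected manifold $M_\phi$ is $S^3$. By Waldhausen's theorem, Heegaard splittings of $S^3$ are unique up to isotopy in each genus: each is a stabilization of the genus-0 splitting. Hence $\phi$ is equivalent to $\phi$ of the standard genus-$g$ splitting of $S^3$. That splitting sends the meridian/longitude pairs $x_i,y_i$ to $(b_i,1)$ and $(1,a_i)$, i.e.\ it is the standard epimorphism after identifying $F(\bar a)\times F(\bar b)$. Thus there is only one equivalence class.

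\emph{($\Leftarrow$).} Assume uniqueness. Standard epimorphisms visibly factor nontrivially through a free product: $\pi_1(S_g)\to\pi_1(S_1)*\pi_1(S_{g-1})$ by pinching a separating curve. By Stallings' theorem above, the Poincar\'e conjecture follows.

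The main obstacle is the realization step: showing each surjection $\phi_i$ is geometric, i.e.\ induced by a handlebody bounded by $S_g$. This is the genuinely topological input, and I would cite Jaco/Stallings rather than reprove it. The remaining concern is matching the two notions of equivalence, which I expect to be routine bookkeeping.
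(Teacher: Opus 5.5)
The paper does not prove this statement. It quotes it from Hempel, with a pointer to the Grigorchuk--Kurchanov survey, so there is no internal proof to match. Your argument is correct modulo the classical inputs you name: realization of epimorphisms $\pi_1(S_g)\to F_g$ by handlebodies, Waldhausen's uniqueness of Heegaard splittings of $S^3$, and Stallings' criterion. It is essentially the Jaco--Hempel reasoning.

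The realization input is exactly the fact the paper itself uses in Section~1.2. There it cites Grigorchuk--Kurchanov's theorem that epimorphisms $\pi_1(S_g)\to F_g$ form a single equivalence class; combined with Dehn--Nielsen--Baer, this turns $\sigma$ into a homeomorphism of $S_g$. Your direction $(\Rightarrow)$ then needs only the easy fact that homeomorphic splittings have equivalent splitting maps. The swap of the two factors absorbs the case where Waldhausen's homeomorphism exchanges the two sides.

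Where you depart from the usual proof is $(\Leftarrow)$. Classically one argues directly. Take a simply connected closed $M$ and stabilize a Heegaard splitting to genus $g\geq 2$. Then $\phi_M$ is onto, hence standard, and $M\cong S^3$ because equivalent splitting homomorphisms come from homeomorphic splittings. That last step is the injectivity half of Jaco's correspondence: a handlebody $H$ with $\partial H=S$ is determined rel $S$ by $\ker(\pi_1S\to\pi_1H)$, which is a Dehn's lemma argument.

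Your route replaces it by Stallings' criterion plus two trivial observations:
\begin{itemize}
\item the standard map kills the separating curve $[x_1,y_1]$;
\item $\phi=\tau\psi\sigma$ and $\psi=\beta\alpha$ give $\phi=(\tau\beta)(\alpha\sigma)$, so equivalence preserves factorization through a free product.
\end{itemize}
You therefore need only realization and the easy direction, at the cost of resting on Stallings' theorem, whose proof contains a reduction argument of comparable depth. The direct route buys the stronger statement: equivalence classes of epimorphisms correspond bijectively to homeomorphism classes of genus-$g$ splittings of simply connected closed $3$-manifolds.

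One sentence of your setup overclaims. Realizing automorphisms of the factors by handlebody homeomorphisms does not show that equivalent epimorphisms give homeomorphic splittings; for that you need the kernel-determines-handlebody fact above. Since your proof never uses this direction, drop the sentence or cite Jaco for it.
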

See also the discussion in  \cite{GK2}.

\subsection{Associated balanced presentations}

Our convention will be to multiply mappings right to left.
 
Let $\phi =(\phi_1, \phi _2): \pi _1(S_g)\rightarrow F(\bar a)\times F(\bar b).$

By  \cite[Theorem 1]{GK1}  there is only one equivalence class  of epimorphisms $\pi _1(S_g)\rightarrow F(\bar b)$.  Every such epimorphism is equivalent to 
$\psi: \pi _1(S_g)\rightarrow F(\bar b)$ such that
$\psi (x_i)=b_i, \psi (y_i)=1.$

Then for some $\delta\in Mod(S_g)$, $$\phi\delta (x_i)=b_iu_i(\bar a),\ \phi\delta (y_i)=v_i(\bar a)$$

We have 
 $[b_1u_1(\bar a),v_1(\bar a)]\ldots [b_gu_g(\bar a),v_g(\bar a)]=1.$ 
 Then
\begin{equation}\label{1} [u_1(\bar a),v_1(\bar a)]\ldots [u_g(\bar a),v_g(\bar a)]=1 \end{equation}
$u_i=\gamma (x_i), v_i=\gamma (y_i)$
where $\gamma: \pi _1(S_g)\rightarrow F(\bar a)$ onto. Then $\gamma=\gamma_0\sigma $, $\sigma \in Mod(S_g)$,
 $\gamma _0(x_i)=a_i,   \gamma _0(y_i)=1.$
 
 We know the generators of $Mod(S_g)=Out ^{+}(\pi _1(S_g))$, they are Dehn twists along a certain family of simple closed curves. So we can describe all possible values of 
$u_1(\bar a),v_1(\bar a),\ldots ,u_g(\bar a),v_g(\bar a)$ . This will be done in Section \ref{Autom}.
\begin{theorem}
The homomorphism
\[
\phi:\pi_1(S_g)\to F(a_1,\dots,a_g)\times F(b_1,\dots,b_g)
\]
is surjective if and only if
\[
G_{\phi}=\langle a_1,\dots,a_g\mid v_1,\dots,v_g\rangle =1.
\]
\end{theorem}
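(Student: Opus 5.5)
The plan is to reduce surjectivity of $\phi$ to a statement about the subgroup $H=\phi(\pi_1(S_g))\cap F(\bar a)$ and then identify that subgroup with the normal closure of $v_1,\dots,v_g$ in $F(\bar a)$. Throughout I use the normal form from the preceding discussion. Surjectivity is unchanged under precomposition with $\delta\in Mod(S_g)$, so I may assume $\phi(x_i)=b_iu_i(\bar a)$ and $\phi(y_i)=v_i(\bar a)$. I also use that $\gamma$ is onto, so $u_1,v_1,\dots,u_g,v_g$ generate $F(\bar a)$.

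\textbf{Step 1 (reduction).} Let $P=\phi(\pi_1(S_g))\le F(\bar a)\times F(\bar b)$ and let $p_b$ be the projection to $F(\bar b)$. Then $p_b(P)=F(\bar b)$, since $p_b(b_iu_i)=b_i$, and $\ker(p_b|_P)=P\cap F(\bar a)=H$. If $H=F(\bar a)$, then $P$ contains $u_i$ and hence $b_i=(b_iu_i)u_i^{-1}$, so $P$ is the whole product. Conversely, if $P$ is the whole product then clearly $H=F(\bar a)$. So $\phi$ is onto if and only if $H=F(\bar a)$.

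\textbf{Step 2 (key step: compute $H$).} Let $W$ be the free group on letters $X_i,Y_i$, and consider the map $W\to P$ given by $X_i\mapsto b_iu_i$, $Y_i\mapsto v_i$. Composing it with $p_b$ gives the map $W\to F(\bar b)$ with $X_i\mapsto b_i$ and $Y_i\mapsto 1$. This composite factors as the retraction $W\to F(X)$ killing all $Y_i$, followed by the isomorphism $F(X)\cong F(\bar b)$. Hence its kernel is exactly the normal closure of $\{Y_i\}$ in $W$.

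Therefore $H$ is the image of $\ncl_W(Y_1,\dots,Y_g)$. This image is the subgroup generated by all $h^{-1}v_ih$ with $h\in P$. For $h=(\alpha,\beta)\in P$, where $\alpha\in F(\bar a)$ and $\beta\in F(\bar b)$, the two factors commute, so $h^{-1}v_ih=\alpha^{-1}v_i\alpha$. As $h$ ranges over $P$, the component $\alpha$ ranges over the $F(\bar a)$-projection of $P$. That projection is generated by the $u_i,v_i$, so it is all of $F(\bar a)$. Consequently
\[
H=\ncl_{F(\bar a)}(v_1,\dots,v_g).
\]

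\textbf{Step 3 (conclusion).} By Steps 1 and 2, $\phi$ is surjective if and only if $\ncl_{F(\bar a)}(v_1,\dots,v_g)=F(\bar a)$, that is, if and only if $G_\phi=1$.

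I expect Step 2 to be the main point needing care. One must justify that the kernel of $P\to F(\bar b)$ is generated only by conjugates of the $v_i$, using the freeness of the $b_i$ through the retraction argument. One must also verify that conjugation within $P$ realizes conjugation by every element of $F(\bar a)$, which uses that $\gamma$ is onto.
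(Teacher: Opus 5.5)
Your proof is correct and follows the same route as the paper: for one direction you use $b_i=\phi(x_i)u_i^{-1}$, and for the other you use that the $F(\bar a)$-part of the image is generated by conjugates of the $v_i$. Your Step 2 supplies what the paper leaves implicit and in fact misstates: the normal closure of the $v_i$ in $F(\bar a)$ is the intersection $\phi(\pi_1(S_g))\cap F(\bar a)$, i.e.\ the kernel of the projection to $F(\bar b)$, not the projection onto $F(\bar a)$ as the paper writes, and surjectivity of $\gamma$ is what lets conjugation inside the image realize conjugation by every element of $F(\bar a)$.
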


\begin{proof}
Suppose first that
\[
\langle a_1,\dots,a_g\mid v_1,\dots,v_g\rangle =1.
\]
Then the normal closure of the elements \(v_i(a)\) is the whole free group
\[
F(a_1,\dots,a_g).
\]
Hence all generators \(a_i\) belong to the image of \(\phi\), since
\[
\phi(y_i)=v_i(a).
\]
Therefore all words \(u_i(a)\) also belong to the image. Since
\[
\phi(x_i)=b_i u_i(a),
\]
it follows that
\[
b_i=\phi(x_i)u_i(a)^{-1}
\]
belongs to the image of \(\phi\). Thus the image contains all generators
\[
a_1,\dots,a_g,b_1,\dots,b_g,
\]
and consequently \(\phi\) is surjective.

Conversely, if \(\phi\) is surjective, then the projection of its image onto the first factor equals
\[
F(a_1,\dots,a_g).
\]
But the projection is generated by the conjugates of elements \(v_i(a)\), so their normal closure is the whole free group. Hence
\[
\langle a_1,\dots,a_g\mid v_1,\dots,v_g\rangle =1.
\]
\end{proof}

Throughout the paper,
the symbols \(u_i,v_i\) always denote words in the generators
\(a_1,\dots,a_g\).We use the same symbols \(a_1,\dots,a_g\) both for the generators of the free
group
\[
F(a_1,\dots,a_g)
\]
and for their images in quotient groups such as
\[
G=G_{\phi}=\langle a_1,\dots,a_g\mid v_1,\dots,v_g\rangle.
\]
No confusion should arise from this convention.

\subsection{Heegaard Splitting}
\begin{theorem}  $G_{\phi}$  is the fundamental group of the 3-manifold with the Heegaard splitting of genus $g$ with gluing map $\sigma$.
\end{theorem}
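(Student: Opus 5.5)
Write $M=H_1\cup_h H_2$, where $H_1,H_2$ are genus $g$ handlebodies with $\partial H_1=\partial H_2=S_g$, and the gluing homeomorphism $h$ represents $\sigma\in Mod(S_g)$ (after composing with the fixed $\delta$, which only reparametrizes the surface). The plan is to compute $\pi_1(M)$ by van Kampen and identify the result with $G_\phi$.

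First I fix conventions. Let $H_1$ be the handlebody in which the curves $y_1,\dots,y_g$ bound meridian discs. Then
\[
\pi_1(H_1)=\pi_1(S_g)/\ncl(y_1,\dots,y_g)=F(x_1,\dots,x_g),
\]
and the inclusion $S_g\to H_1$ induces exactly $\gamma_0$, after renaming $x_i\mapsto a_i$. So $F(\bar a)=\pi_1(H_1)$ and $\gamma_0:\pi_1(S_g)\to F(\bar a)$ is the inclusion-induced map.

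The second handlebody $H_2$ is attached so that its meridians are $h(y_1),\dots,h(y_g)$. Equivalently, its inclusion-induced map, viewed through the identification of $S_g$ with $\partial H_1$, is $\gamma_0\sigma^{-1}$ or $\gamma_0\sigma$, depending on the direction convention. I will choose the convention matching $\gamma=\gamma_0\sigma$.

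Next comes van Kampen. We have
\[
\pi_1(M)=\pi_1(H_1)*_{\pi_1(S_g)}\pi_1(H_2).
\]
Since both inclusion maps are surjective, this equals $\pi_1(S_g)$ modulo the normal closure of the meridian curves of $H_1$ together with those of $H_2$. Quotienting first by the meridians $y_i$ of $H_1$ gives $F(\bar a)$. The meridians of $H_2$ are the curves $\sigma^{-1}(y_i)$ (up to the convention). Their images in $\pi_1(H_1)=F(\bar a)$ are $\gamma_0(\sigma^{\pm1}(y_i))$.

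The key identification is to match these images with $v_i$. With $\gamma=\gamma_0\sigma$ we have $v_i=\gamma(y_i)=\gamma_0\sigma(y_i)$, so $\pi_1(M)=\langle a_1,\dots,a_g\mid v_1,\dots,v_g\rangle=G_\phi$. To get the direction right, I would define the gluing map of the splitting precisely so that the disc system of $H_2$ is $\sigma(\{y_i\})$, i.e. the attaching circles of the 2-handles are the curves representing $\sigma(y_i)$ on $\partial H_1$. One can also argue via a handle decomposition: $M$ is obtained from $H_1$ (a $0$-handle plus $g$ $1$-handles) by attaching $g$ 2-handles along the simple closed curves $\sigma(y_i)$ and then a 3-ball. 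Each 2-handle kills the word its attaching curve reads in $\pi_1(H_1)$, namely $\gamma_0(\sigma(y_i))=v_i$, and the 3-ball does not change $\pi_1$. This gives a balanced presentation with $g$ generators and $g$ relators.

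The main obstacle is bookkeeping rather than substance. There are three things to pin down:
\begin{enumerate}
\item that $v_i$ is defined only up to the choice of $\sigma$ modulo the handlebody subgroup, which changes neither the manifold nor the presentation up to Nielsen/AC-type moves;
\item the orientation and inverse convention for $\sigma$, given that mappings are composed right to left;
\item that $\sigma(y_i)$ is represented by a disjoint family of simple closed curves, since Dehn twists preserve the standard disjoint system $\{y_i\}$.
\end{enumerate}
For point 2, I would first fix $h$ as a representative of $\sigma^{-1}$ and check which convention makes the formula $v_i=\gamma_0\sigma(y_i)$ hold. Point 3 is needed for the 2-handle description, and holds because $\sigma$ is realized by a homeomorphism.
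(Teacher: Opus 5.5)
Your proposal is correct and is essentially the paper's van Kampen argument. The paper writes $\pi_1(M)$ as $F(\bar a)*F(\bar b)$ modulo $\phi_1(t)=\phi_2(t)$ for $t=x_i,y_i$, that is, $u_i=b_i$ and $v_i=1$, and then eliminates the $b_i$; this is the same computation as your description of $\pi_1(M)$ as $\pi_1(S_g)$ modulo both meridian systems, with one inclusion kernel killed first.

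One small point concerns the direction convention. With the convention you first announce (inclusion map of $H_2$ equal to $\gamma_0\sigma$, meridians $\sigma^{-1}(y_i)$), killing the $y_i$ first gives the relators $\gamma_0\sigma^{-1}(y_i)$. The words $v_i=\gamma_0\sigma(y_i)$ appear only if you kill $\ker(\gamma_0\sigma)$ first and read off the images of the $y_i$, which is what the paper does. Your final convention, with attaching curves $\sigma(y_i)$ on $\partial H_1$, gives $v_i$ directly. In any case the two conventions yield isomorphic groups, since $\sigma$ carries $\ncl(y_i,\sigma^{-1}(y_i))$ onto $\ncl(\sigma(y_i),y_i)$.
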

\begin{proof} Let
\[
   M=U\cup V,\qquad T=U\cap V\cong S_g
\]
be a genus \(g\) Heegaard splitting.
Then \(U,V\) are handlebodies, so
\[
   \pi_1(U)\cong F_g,\qquad \pi_1(V)\cong F_g.
\]
The inclusions induce onto maps
\[
\pi_1(T)\to \pi_1(U),\qquad \pi_1(T)\to \pi_1(V).
\]
Together they give the \emph{splitting homomorphism}
\[
   \phi:\pi_1(T)\longrightarrow \pi_1(U)\times \pi_1(V)\cong F_g\times F_g.
\]
For the corresponding Heegaard splitting \(M=U\cup V\), van Kampen theorem gives
\[
\pi_1(M)\cong
\frac{\pi_1(U)*\pi_1(V)}{\ncl(\phi_1(t)=\phi_2(t),\ t\in\pi_1(S_g))},
\]

\[
\phi(x_i)=(u_i(\bar a),b_i),
\qquad
\phi(y_i)=(v_i(\bar a),1).
\]
The two individual homomorphisms are defined as:

\begin{equation}
    \phi_1: \pi_1(S_g) \to \pi_1(V_1), \quad \phi_1(x_i) = u_i(a), \quad \phi_1(y_i) = v_i(a)
\end{equation}

\begin{equation}
    \phi_2: \pi_1(S_g) \to \pi_1(V_2), \quad \phi_2(x_i) = b_i, \quad \phi_2(y_i) = 1
\end{equation}

So we get relations
\[
  u_i(\bar a)=b_i,
  \qquad
  v_i(\bar a)=1.
\]
Eliminating the \(b_i\)'s,
\[
   \pi_1(M)\cong
   \left\langle a_1,\ldots,a_g\ \middle|\
        v_1,\ldots,v_g
   \right\rangle=G_\phi.
\]
 
\begin{cy} Groups $G_{\phi}$ for different coordinate-surjective homomorphisms $\phi$ form the class of fundamental groups of all closed orientable 3-manifolds.
\end{cy}

\end{proof}

\subsection{Andrews--Curtis transformations}

Let
\[
F_n=F(x_1,\dots,x_n)
\]
be the free group on the generators $x_1,\dots,x_n$, and let
\[
\mathbf r=(r_1,\dots,r_m)\in F_n^m
\]
be an ordered $m$-tuple of words.  An \emph{Andrews--Curtis transformation}
replaces $\mathbf r$ by another tuple obtained by one of the following elementary
moves:

\begin{enumerate}
    \item {Inversion of a relator:}
    \[
    r_i \longmapsto r_i^{-1}.
    \]

    \item {Multiplication of one relator by another:}
    for $i\neq j$,
    \[
    r_i \longmapsto r_i r_j
    \quad\text{or, equivalently,}\quad
    r_i \longmapsto r_j r_i.
    \]

    \item {Conjugation of a relator:}
    for any word $w\in F_n$,
    \[
    r_i \longmapsto w r_i w^{-1}.
    \]

    \item {Change of free basis:}
    for an automorphism $\alpha\in \operatorname{Aut}(F_n)$,
    \[
    (r_1,
    \dots,r_m)
    \longmapsto
    (\alpha(r_1),\dots,\alpha(r_m)).
    \]
\end{enumerate}

Two tuples are called \emph{Andrews--Curtis equivalent} if one can be obtained
from the other by a finite sequence of these transformations and their inverses.
Equivalently, the same terminology may be applied to presentations
\[
\langle x_1,\dots,x_n \mid r_1,
\dots,r_m\rangle .
\]

In the balanced case $m=n$, the Andrews--Curtis conjecture says that every
balanced presentation of the trivial group,
\[
\langle x_1,\dots,x_n \mid r_1,
\dots,r_n\rangle \cong 1,
\]
is Andrews--Curtis equivalent to the standard trivial presentation
\[
\langle x_1,\dots,x_n \mid x_1,
\dots,x_n\rangle .
\]

Some authors separate the last operation from the first three and call it a
Nielsen change of generators.  Others include it among the extended
Andrews--Curtis or extended Nielsen transformations.  The essential operations
on relators are inversion, multiplication by another relator, and conjugation.

\section {Automorphisms of surface groups}\label{Autom}
In this section we describe automorphisms
\[
\sigma\in \mathrm{Aut}(S_g) 
\] that we use to  describe all possible values of 
$u_1(\bar a),v_1(\bar a),\ldots ,u_g(\bar a),v_g(\bar a)$  and therefore all groups $G_{\phi}.$

 Since
orientation preserving automorphisms of the surface group correspond to
elements of the mapping class group, this reduces the problem to the study of
certain mapping classes obtained from compositions of Dehn twists and their action on the standard generators of $\pi_1(S_g)$. 

{\bf In this paper we will only consider the case  when $\sigma (x_i)=x_i, i=1,\ldots ,g,$ and, therefore $\phi (x_i)=b_ia_i$, $ \phi (y_i)=v_i(\bar a).$}

\subsection{Case  $\sigma (x_i)=x_i, i=1,\ldots ,g.$}

In this case we set $x_i=a_i$ and we can rewrite equation  (\ref {1}) in the form

 \begin{equation}\label{2}
 z_1^{-1}a_1z_1\ldots z_g^{-1}a_gz_g=a_g\ldots a_1,
 \end{equation}
 where $z_i=y_ia_i\ldots a_1.$ We always denote the conjugation $z^{-1}az=a^z.$

 The coordinate group $H$ of equation (\ref {2}) is the fundamental group of the graph of groups with two vertices, one vertex group is $F(\bar a)$ and the other  corresponds to  a sphere with $g+1$ holes joined by $g+1$  edges to  $F(\bar a)$. 
  \begin{prop} \label {l3} 1. Each $F(\bar a)$-automorphism $\alpha$ of $H$   is generated by  Dehn twists along non-boundary parallel simple closed curves on $S_{0,g+1}$ (the sphere with \(g+1\) boundary components) fixing $a_1,\ldots ,a_n$ and edge automorphisms  $T_{a_1},\ldots T_{a_g}$ \cite[Theorem 5.1]{BKM}.
 
 2. Every solution of  equation (\ref {2}) is obtained by pre-composing the solution obtained from $y_i=1, i=1,\ldots ,g$ by an  $F(\bar a)$-automorphism $\alpha$ of $H$.

\end{prop}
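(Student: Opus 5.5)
Part 1 is an instance of \cite[Theorem 5.1]{BKM}, so the work is in setting up the coordinate group $H$ in the right form and checking the hypotheses. I would first show that $H=F(\bar a)*F(z_1,\dots,z_g)/\ncl\bigl(z_1^{-1}a_1z_1\cdots z_g^{-1}a_gz_g(a_g\cdots a_1)^{-1}\bigr)$ is the fundamental group of the stated graph of groups. Put $c_i=z_i^{-1}a_iz_i$ and $c_0=(a_g\cdots a_1)^{-1}$. The subgroup generated by $c_0,c_1,\dots,c_g$ with the single relation $c_1\cdots c_g c_0=1$ is a free group $\pi_1(S_{0,g+1})$, and its $g+1$ boundary elements are $c_0,c_1,\dots,c_g$. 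The edge $e_i$ amalgamates $\langle c_i\rangle$ with $\langle a_i\rangle$ (for $i\ge 1$, with stable letter $z_i$) or with $\langle a_g\cdots a_1\rangle$ (for $i=0$). Because the graph has two vertices and $g+1$ edges, its first Betti number is $g$, which accounts for the $g$ stable letters $z_i$. Tietze moves then recover the one-relator presentation above.

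With this in place I would check the hypotheses of \cite[Theorem 5.1]{BKM}. The equation must be regular, or non-degenerate: the solution $z_i=a_i\cdots a_1$ (that is, $y_i=1$) is "generic", so $H$ is a fully residually $F$ group and the graph of groups above is its JSJ-type decomposition relative to $F(\bar a)$. The elements $a_1,\dots,a_g,a_g\cdots a_1$ must be pairwise non-conjugate and not proper powers, since otherwise the edge groups would not be maximal cyclic. The theorem then says that the $F(\bar a)$-automorphisms are generated by three kinds of moves. The first is the canonical automorphisms of the QH vertex, namely Dehn twists along non-boundary-parallel simple closed curves of $S_{0,g+1}$, extended by the identity on $F(\bar a)$. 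The second is the edge twists by centralizers of edge groups; here the centralizer of $a_i$ is $\langle a_i\rangle$, which gives $T_{a_i}\colon z_i\mapsto z_ia_i^{\pm1}$, and the edge twist at $e_0$ can be absorbed by conjugation. The third is automorphisms of the rigid vertex fixing $F(\bar a)$, and these are trivial. This proves part 1.

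For part 2 I would use the standard correspondence: solutions of (\ref{2}) are exactly $F(\bar a)$-homomorphisms $H\to F(\bar a)$. Take any solution, so $u_i=a_i$ and the $v_i$ come from an epimorphism $\gamma\colon\pi_1(S_g)\to F(\bar a)$. The discussion preceding the proposition, via \cite[Theorem 1]{GK1}, gives $\gamma=\gamma_0\sigma$ with $\sigma\in Mod(S_g)$. In our case $\sigma$ fixes every $x_i$, and I would need to show that such $\sigma$ lie in the subgroup of the mapping class group acting on $H$. The curves fixed are the $x_i$, and cutting $S_g$ along them gives exactly $S_{0,2g}$. This is glued back, but via the $z$-substitution it corresponds to $S_{0,g+1}$ together with the edges. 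The stabilizer of the multicurve $\{x_i\}$ (as elements) is generated by twists in its complement and twists along the $x_i$, and these are precisely the generators in part 1. Hence $\gamma=\gamma_0\circ\alpha$ with $\alpha$ an $F(\bar a)$-automorphism of $H$.

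The main obstacle is this last step: identifying the stabilizer in $\mathrm{Aut}(\pi_1(S_g))$ of the tuple $(x_1,\dots,x_g)$, as elements rather than up to isotopy, with the group generated in part 1. The isotopy version yields stabilizers of the curves only up to conjugation, so I would handle the resulting inner-automorphism ambiguity explicitly using the edge twists $T_{a_i}$.
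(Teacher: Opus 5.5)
\textbf{There is a genuine gap, in Part 2.} Your Part 1 follows the paper, which also just invokes \cite[Theorem 5.1]{BKM} and writes out the twists $T_{i,i+1}$; checking the two-vertex splitting and the hypotheses is a reasonable expansion. The gap is the sentence ``In our case $\sigma$ fixes every $x_i$.'' The automorphism supplied by \cite{GK1} is only known to satisfy $\gamma_0\sigma=\gamma$. From $\gamma_0\sigma(x_i)=\gamma(x_i)=a_i=\gamma_0(x_i)$ you get only $\sigma(x_i)\in x_i\ker\gamma_0$. The paper's standing restriction $\sigma(x_i)=x_i$ is a choice of which homomorphisms to study. Part 2 asserts exactly that this choice loses no solution of \eqref{2}, so it cannot be used to prove Part 2.

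Conversely, the step you call the main obstacle is vacuous. The substitution $x_i=a_i$, $z_i=y_ia_i\cdots a_1$ turns \eqref{2} into $[a_1,y_1]\cdots[a_g,y_g]=1$. Hence $H\cong\pi_1(S_g)$ with $F(\bar a)=\langle x_1,\dots,x_g\rangle$, and the pointwise stabilizer of $(x_1,\dots,x_g)$ in $\Aut(\pi_1(S_g))$ is literally the group of $F(\bar a)$-automorphisms of $H$. Part 2 therefore needs no generating set, and your multicurve discussion can be dropped. (It also uses the wrong complement: the relevant one is $S_g$ minus a neighbourhood of the wedge $x_1\vee\cdots\vee x_g$, i.e.\ $S_{0,g+1}$, not $S_{0,2g}$.)

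\textbf{What has to be shown.} Put $K=\{\kappa\in\Aut(\pi_1(S_g)) : \gamma_0\kappa=\gamma_0\}$. For every $\sigma$ with $\gamma_0\sigma(x_i)=a_i$ for all $i$, you need some $\kappa\in K$ with $\kappa\sigma(x_i)=x_i$ for all $i$. Then $\alpha=\kappa\sigma$ fixes the $x_i$ and satisfies $\gamma_0\alpha=\gamma$.

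Geometrically, let $V$ be the handlebody in which the $y_i$ bound meridian disks, so $\pi_1(V)=F(\bar a)$ via $\gamma_0$. The curves $\sigma(x_i)$ represent the basis $a_1,\dots,a_g$ of $\pi_1(V)$. One must produce a complete meridian disk system dual to them and then straighten the basing arcs. Gordon's theorem on primitive sets of curves in the boundary of a handlebody is the natural tool for the first step. The one-equivalence-class statement quoted from \cite{GK1} does not control $\sigma(x_i)$. So you must either prove this relative statement or cite a form of \cite{GK1} that contains it; the paper's one-line appeal to \cite{GK1} has to be read in that relative sense.

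\textbf{Two smaller slips in Part 1.}
\begin{enumerate}
\item An edge twist must preserve $z_i^{-1}a_iz_i$, so it is $z_i\mapsto a_i^{\pm1}z_i$. Your $z_i\mapsto z_ia_i^{\pm1}$ does not preserve the relator.
\item The twist at $e_0$, $z_i\mapsto z_ic_0^{\pm1}$ for all $i$, is not inner, since an inner automorphism fixing the non-commuting $a_1,a_2$ is trivial. So it cannot be discarded. For $g=2$ it is the paper's $T_{12}$. Without it only the solutions with $v_i\in\langle a_i\rangle$ are reached, which misses, e.g., the solution $y_1=a_2$, $y_2=a_1$ of $[a_1,y_1][a_2,y_2]=1$.
\end{enumerate}
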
 
\begin{proof}
1. Every nontrivial homotopy class of a simple closed curve on the sphere with boundaries
has a curve that divides the sphere into two parts, each containing some boundaries. 
If the curve divides the sphere into two parts such that one of them has exactly boundaries corresponding to edges going to $a_i, a_{i+1}$ then corresponding automorphisms $T_{i,i+1}$ look like
$$z_j\rightarrow z_j, j=1,\ldots ,i-1,i+2,\ldots ,g,\  z_i\rightarrow z_i(a_i^{z_i}a_{i+1}^{z_{i+1}}), \  z_{i+1}\rightarrow z_{i+1}(a_i^{z_i}a_{i+1}^{z_{i+1}}).$$

2.  The second statement follows from \cite{GK1}.  
\end{proof} 
\vspace{.5cm}

Let $j>i+1$. Denote $A_i=a_i^{z_i},  A_{ij}=a_i^{z_i}\ldots a_j^{z_j}$. Then $T_{ij}$ (the Dehn twist about a simple closed curve enclosing exactly the
$i$-th and $j$-th marked points) takes $z_i$ to $z_iA_{ij}{A_{i+1,j-1}}^{-1}$, takes $z_j$ to $z_j{A_{i+1,j-1}}^{-1}A_{ij}$ and does not move other variables. Here if $i+1=j-1$ we set $A_{i+1,j-1}=a_{i+1}^{z_{i+1}}.$

Notice that these automorphisms were used in the proof of the implicit function theorem in \cite{Imp}.

We have that 
\[
H=\mathrm{Stab}(a_1,\dots,a_g)\subset \mathrm{Mod}(\Sigma_g)
\]
that fixes each \(a_i\) (setwise).

Then:
\[
\mathrm{Stab}(a_1,\dots,a_g)\;\cong\; \mathbb{Z}^g \rtimes \mathrm{PMod}(S_{0,g+1})
\]

where
\(\mathbb{Z}^g\) is generated by Dehn twists \(T_{a_i}\),
PMod = pure  mapping class group (punctures return to their original positions)

The group $\operatorname{PMod}(S_{0,m})$ is generated by the elements
\[
T_{i,j}, \qquad 1 \leq i < j \leq m-1.
\]

The relations are as follows \cite[Lemma 4.1]{GW}:

1. Commutativity for separated pairs:
For $p<q<r<s$,
\[
[T_{p,q},T_{r,s}]=1.
\]

{2. Commutativity for nested pairs}
For $p<q<r<s$,
\[
[T_{p,s},T_{q,r}]=1.
\]

{3. Triangle relations}
For $p<q<r$,
\[
T_{p,r}T_{q,r}T_{p,q}
=
T_{q,r}T_{p,q}T_{p,r}
=
T_{p,q}T_{p,r}T_{q,r}.
\]

{4. Conjugation relations}
For $p<q<r<s$,
\[
\left[ T_{r,s}T_{p,r}T_{r,s}^{-1},\, T_{q,s} \right]=1.
\]

{5. Sphere relation}
\[
\bigl(T_{1,2}T_{1,3}\cdots T_{1,m-1}\bigr)
\bigl(T_{2,3}\cdots T_{2,m-1}\bigr)
\cdots
\bigl(T_{m-2,m-1}\bigr)=1.
\]

If we consider a subgroup generated only by $T_{i,i+1}$'s, then we have a RAAG normal form for elements.

 For example, repeated application of 
$$T_{g-1,g}^{*}\ldots  T_{2,3}^{*}T_{12}^{*}$$ is a normal form.

\begin{df} \label{onel} Balanced presentations obtained  from  $\sigma =T_{a_g}\ldots T_{a_1}T_{1,2}^{k_1}T_{2,3}^{k_2}\ldots  T_{g-1,g}^{k_{g-1}}$   will be called groups with one layer. The tuple $(k_1,\ldots ,k_{g-1})$ will be called a {\em one layer exponent tuple}.

 If  $\sigma =T_{a_g}\ldots T_{a_1}T_{1,2}^{k_1}T_{2,3}^{k_2}\ldots  T_{g-1,g}^{k_{g-1}}T_{1,2}^{m_1}T_{2,3}^{m_2}\ldots  T_{g-1,g}^{m_{g-1}}$, with all the powers $k_i,m_i$ of Dehn Twists non-trivial, then we have groups with two layers and so on.
\end{df} 
The purpose of this restriction is to isolate the simplest nontrivial families
of automorphisms while still producing highly nontrivial balanced
presentations. We will also slightly change equation (\ref{2}) and use equation
 \begin{equation}\label{2'}
 y_1^{-1}a_1y_1\ldots y_g^{-1}a_gy_g=a_1\ldots a_g.
 \end{equation}

The next lemma records precisely why this change of surface equation does not
change the associated balanced group.

\begin{lm}[Equivalence of the two surface equations and their balanced presentations]
\label{lem:equivalent-surface-equations}
Put
\[
 s_i=a_{i+1}a_{i+2}\cdots a_g\quad(1\leq i<g),
 \qquad s_g=1,
\]
and define
\[
 X_i=s_i^{-1}a_i s_i,
 \qquad
 Y_i=s_i^{-1}y_i s_i.
\]
Then
\[
 (a_1,y_1,\ldots,a_g,y_g)
 \longmapsto
 (X_1,Y_1,\ldots,X_g,Y_g)
\]
is an automorphism of the free group on these $2g$ letters, and equation
\eqref{2'} is transformed into the standard surface equation
\[
 [X_1,Y_1]\cdots[X_g,Y_g]=1.
\]

Let
\[
 \rho:F(a_1,y_1,\ldots,a_g,y_g)
 \longrightarrow F(a_1,\ldots,a_g)
\]
be the retraction defined by $\rho(a_i)=a_i$ and $\rho(y_i)=1$.  If
$\Omega$ is any of the automorphisms considered in this section, so that
$\Omega(a_i)=a_i$ for every $i$, put
\[
 R_i=\rho\Omega(y_i),
 \qquad
 \widetilde R_i=\rho\Omega(Y_i).
\]
Then
\[
 \widetilde R_i=s_i^{-1}R_i s_i.
\]
Consequently the two balanced relator tuples differ by conjugations of the
individual relators and by the triangular Nielsen change from the free basis
$(a_1,\ldots,a_g)$ to $(X_1,\ldots,X_g)$.  Hence the corresponding balanced
presentations are isomorphic and AC--Nielsen
equivalent.
\end{lm}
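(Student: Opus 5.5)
The plan is to verify the three assertions of Lemma~\ref{lem:equivalent-surface-equations} in order: that the substitution is a free basis change, that it carries \eqref{2'} to the standard surface relator, and that the relators transform by conjugation. Throughout we use the recursion $s_{i-1}=a_i s_i$, with $s_0:=a_1\cdots a_g$ and $s_g=1$, and the convention $[x,y]=x^{-1}y^{-1}xy$.

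\emph{Step 1 (automorphism).} Each $s_i$ is a word in $a_{i+1},\dots,a_g$ only, so the substitution is triangular. We show by downward induction on $i$ that $a_i,y_i$ lie in the subgroup generated by $X_i,Y_i,\dots,X_g,Y_g$.
\begin{itemize}
\item For $i=g$ this is immediate, since $X_g=a_g$ and $Y_g=y_g$.
\item Assume $a_{i+1},\dots,a_g$ are already expressed in the $X$'s. Then $s_i$ is expressed in $X_{i+1},\dots,X_g$, and hence $a_i=s_iX_is_i^{-1}$ and $y_i=s_iY_is_i^{-1}$.
\end{itemize}
Thus the endomorphism $(a_i,y_i)\mapsto(X_i,Y_i)$ is surjective. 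A surjective endomorphism of a finitely generated free group is an automorphism by Hopficity; alternatively, the inverse can be written down directly from the recursion.

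\emph{Step 2 (surface equation).} Using $s_{i-1}^{-1}=s_i^{-1}a_i^{-1}$ we compute
\[
[X_i,Y_i]=s_i^{-1}a_i^{-1}y_i^{-1}a_iy_is_i=s_{i-1}^{-1}\,(y_i^{-1}a_iy_i)\,s_i .
\]
In the product over $i=1,\dots,g$ the inner factors $s_is_i^{-1}$ cancel, telescoping to
\[
[X_1,Y_1]\cdots[X_g,Y_g]=(a_1\cdots a_g)^{-1}\,y_1^{-1}a_1y_1\cdots y_g^{-1}a_gy_g .
\]
Therefore \eqref{2'} holds exactly when the standard relator equals $1$ in the new basis.

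\emph{Step 3 (relators).} Since $\Omega(a_j)=a_j$ for all $j$, we have $\Omega(s_i)=s_i$, and so
\[
\Omega(Y_i)=s_i^{-1}\Omega(y_i)s_i .
\]
Applying the retraction $\rho$, which also fixes $s_i$, gives $\widetilde R_i=s_i^{-1}R_is_i$. The consequence then follows in two moves:
\begin{itemize}
\item Each $\widetilde R_i$ is obtained from $R_i$ by an AC conjugation move.
\item Reading the standard-form presentation in the basis $(X_1,\dots,X_g)$, rather than $(a_1,\dots,a_g)$, is the Nielsen change from Step 1 restricted to the $a$-letters. This restriction is itself triangular and invertible by the same induction.
\end{itemize}
Hence the normal closures agree and the two presentations are isomorphic and AC--Nielsen equivalent.

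The main obstacle is bookkeeping rather than computation. One must make precise that the automorphisms $\Omega$ of this section, which are built from $T_{a_i}$ and $T_{i,j}$ for \eqref{2'}, correspond under the Step 1 basis change to the Dehn-twist automorphisms for the standard relator. Concretely, conjugating $\Omega$ by the basis change must again fix the $X_i$. I would handle this by noting that $\Omega$ fixes each $a_i$ and hence each $s_i$, so it commutes appropriately with the substitution. This is exactly what the identity $\widetilde R_i=\rho\Omega(Y_i)$ encodes.
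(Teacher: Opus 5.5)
Your proposal is correct and follows essentially the same route as the paper: the triangular recovery $a_i=s_iX_is_i^{-1}$, $y_i=s_iY_is_i^{-1}$ from the right, the telescoping identity $[X_1,Y_1]\cdots[X_g,Y_g]=(a_1\cdots a_g)^{-1}\,y_1^{-1}a_1y_1\cdots y_g^{-1}a_gy_g$, and $\Omega(s_i)=s_i$ giving $\widetilde R_i=s_i^{-1}R_is_i$. The paper obtains the same identity by writing $y_i^{-1}a_iy_i=a_i[a_i,y_i]$ and moving each commutator past the later $a$-letters. Your closing concern is settled by the observation you already make, $\Omega(X_i)=s_i^{-1}\Omega(a_i)s_i=X_i$, and in any case the lemma defines $\widetilde R_i$ through the same $\Omega$.
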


\begin{proof}
The change of variables is triangular from the right.  Indeed,
$X_g=a_g$ and $Y_g=y_g$, and, once $a_{i+1},\ldots,a_g$ have been recovered,
one recovers
\[
 a_i=s_iX_i s_i^{-1},
 \qquad
 y_i=s_iY_i s_i^{-1}.
\]
Thus it is a free-group automorphism.

With the convention $[u,v]=u^{-1}v^{-1}uv$, one has
\[
 y_i^{-1}a_i y_i=a_i[a_i,y_i].
\]
Writing $A=a_1\cdots a_g$ and moving each commutator factor past the later
$a$-letters gives
\[
 \begin{aligned}
 y_1^{-1}a_1y_1\cdots y_g^{-1}a_gy_g
 &=a_1[a_1,y_1]\cdots a_g[a_g,y_g]\\
 &=A\,[s_1^{-1}[a_1,y_1]s_1]\cdots
       [s_g^{-1}[a_g,y_g]s_g]\\
 &=A\,[X_1,Y_1]\cdots[X_g,Y_g].
 \end{aligned}
\]
Hence equation~\eqref{2'} is equivalent to the standard commutator equation.
Since $\Omega$ fixes every $a_j$, it fixes every $s_i$, and therefore
\[
 \widetilde R_i
 =\rho\Omega(s_i^{-1}y_i s_i)
 =s_i^{-1}\rho\Omega(y_i)s_i
 =s_i^{-1}R_i s_i.
\]
Individual conjugation of relators is an Andrews--Curtis move, while replacing
$(a_1,\ldots,a_g)$ by the free basis $(X_1,\ldots,X_g)$ is a simultaneous
Nielsen change of the ambient basis.  This proves the last assertion.
\end{proof}

Thus the relator tuple obtained from equation~\eqref{2'} is not literally the
same tuple as the one obtained from the standard commutator presentation, but
Lemma~\ref{lem:equivalent-surface-equations} shows that the two balanced
presentations are isomorphic and fixed-rank AC--Nielsen equivalent.  In
particular, one defines the trivial group if and only if the other does.  In
Section~\ref{Se:stand} we use the standard surface presentation, whereas in
Section~\ref{se:4} we use equation~\eqref{2'}.

{\bf Example.} Groups of genus \(n+1\) with one layer for presentation (\ref{2'}) are given by the relations

\begin{equation}\label{eq:6}
\begin{aligned}
&(a_1a_2)^{k_1}a_1=1,\\
&b_2=a_2(a_1a_2)^{k_1},\qquad
  a_2(a_1a_2)^{k_1}(a_2^{b_2}a_3)^{k_2}=1,\\
&b_3=a_3(a_2^{b_2}a_3)^{k_2},\qquad
  a_3(a_2^{b_2}a_3)^{k_2}(a_3^{b_3}a_4)^{k_3}=1,\\
&b_4=a_4(a_3^{b_3}a_4)^{k_3},\qquad
  a_4(a_3^{b_3}a_4)^{k_3}(a_4^{b_4}a_5)^{k_4}=1,\\
& \cdots \\
&(a_{n}^{b_{n}}a_{n+1})^{k_{n}}a_{n+1}=1,
\end{aligned}
\end{equation} where all $k_i$'s are non-zeros. The case when some $k_i$ are  zeros in the case of one layer reduces to free products of groups of smaller genus.  

{\bf Example.} Groups of genus 3 with two-layers presentation have the following form. Let $K$ be a free group.
\[
\begin{aligned}
&K = \langle a_1,a_2,a_3,x_1,x_2,x_3,y_1,y_2,y_3,z_1,z_2,z_3 \rangle, \\[3pt]
&L = K \Big/ \Big\langle 
y_1 = a_1 (a_1 a_2)^{k_1},\;
 y_2 = a_2 (a_1 a_2)^{k_1},\;
 y_3 = a_3, \\[3pt]
&\qquad z_1 = y_1,\;
 z_2 = y_2 (a_2^{y_2} a_3)^{k_2},\;
 z_3 = y_3 (a_2^{y_2} a_3)^{k_2},\\[3pt]
&\qquad x_1 = z_1 (a_1^{z_1} a_2^{z_2})^{m_1},\;
 x_2 = z_2 (a_1^{z_1} a_2^{z_2})^{m_1},\;
 x_3 = z_3,\\[3pt]
&\qquad x_1 = 1,\;
 x_2 (a_2^{x_2} a_3^{x_3})^{m_2} = 1,\;
 x_3 (a_2^{x_2} a_3^{x_3})^{m_2} = 1
\Big\rangle.
\end{aligned}
\]
Notice that for simplicity all $k_i$'s are non-zero.

\subsection{Olshanskii's epimorphisms}

Olshanskii in \cite[Section 3]{Ol} considered homomorphisms $\phi :\pi _1(S_2)\rightarrow F_2\times F_2 $ given (after simplification) by
$x_i\rightarrow b_ia_i$, $y_1\rightarrow a_1^{k}(a_1,a_2)^m, \  y_2\rightarrow a_2^{l}(a_1,a_2)^m.$  In genus 2, these are the only possible homomorphisms with $x_i\rightarrow b_ia_i$. The corresponding group is $$G_{\phi}=\langle a_1,a_2| a_1^k(a_1a_2)^m,  a_2^l(a_1a_2)^m\rangle.$$  For $|k|,|l|,|m|>1$ these groups are non-trivial because they have triangular quotients $$\bar G_{\phi}=\langle a_1,a_2| a_1^k, (a_1a_2)^m,  a_2^l\rangle.$$  For those $k,m,n$ for which they are finite of order $n$, Olshanskii constructs a finite cover of $S_2$ of genus $n+1$ such that the restriction 
$$\phi:\pi_1(S_{n+1})\rightarrow F_{n+1}\times F_{n+1}$$ is an epimorphism.
\begin{theorem}[to be proved in details in a subsequent paper] Olshanskii's epimorphisms  are standard.
\end{theorem}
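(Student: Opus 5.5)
The plan is to show that Olshanskii's epimorphism $\phi:\pi_1(S_{n+1})\to F_{n+1}\times F_{n+1}$ is equivalent to the standard one by putting it into the normal form of Section~\ref{Autom}. Concretely, I will find an automorphism $\delta$ of $\pi_1(S_{n+1})$ with $\phi\delta(x_i)=b_ia_i$ and with each $\phi\delta(y_i)=v_i(\bar a)$ of the one-layer shape (\ref{eq:6}). Then I will show that, up to Andrews--Curtis moves, the associated balanced presentation $G_\phi$ becomes $\langle a_1,\dots,a_{n+1}\mid a_1,\dots,a_{n+1}\rangle$. The tuple $(v_1,\dots,v_{n+1})$ itself should then be Nielsen-equivalent to a free basis, which gives standardness.

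The first step describes the cover explicitly. Let $Q=\bar G_\phi=\langle a_1,a_2\mid a_1^k,(a_1a_2)^m,a_2^l\rangle$, the finite triangle group of order $n$. Olshanskii's cover $\widetilde S$ corresponds to the kernel of $\pi_1(S_2)\to F_2\times F_2\to Q\times Q$, or rather the relevant diagonal subgroup. I will compute a Reidemeister--Schreier generating set of $\pi_1(\widetilde S)$ adapted to a coset transversal, and the induced generators of the finite-index subgroups of $F(\bar a)$ and $F(\bar b)$.

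Because $x_i\mapsto b_ia_i$, the lifts of the $x$-curves map coordinate-wise onto lifts in the $b$-factor in a controlled way. Using Theorem~1 of \cite{GK1} on the second coordinate, I will choose the symplectic basis of $\widetilde S$ so that the images of the new $x$-generators are $b_i'u_i$ and the new $y$-generators lie in the $a$-factor. This reduces the problem to analysing the balanced presentation $\langle a'_1,\dots,a'_{n+1}\mid v'_1,\dots,v'_{n+1}\rangle$, where the $a'_j$ are Schreier generators of the index-$n$ subgroup of $F(a_1,a_2)$.

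The second step identifies this presentation. The relators $v'_j$ are the Reidemeister--Schreier lifts of $a_1^k(a_1a_2)^m$ and $a_2^l(a_1a_2)^m$ over the cosets of $Q$. Since $Q$ is a finite spherical triangle group, its Cayley complex is a tessellated $2$-sphere, and the cosets are arranged cyclically around each vertex type. I will order the cosets along orbits of $a_1$, $a_1a_2$ and $a_2$, and aim to show that the lifted relators, after conjugations, take the chain form (\ref{eq:6}) with exponents $k_i=\pm1$. A chain of this form collapses step by step. The last relator $(a_n^{b_n}a_{n+1})^{k_n}a_{n+1}$ with $k_n=\pm1$ lets one eliminate a generator, and the elimination propagates upward through the chain by successive Andrews--Curtis moves. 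Lemma~\ref{lem:equivalent-surface-equations} handles the passage between the two surface equations.

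The main obstacle is this combinatorial identification of the lifted relators with a collapsible one-layer chain. The transversal must be chosen compatibly with the sphere tessellation, and the argument must be carried out uniformly for each spherical triple $(k,l,m)$: the dihedral family and the $A_4$, $S_4$, $A_5$ cases. I would try first a shelling of the tessellated sphere, removing one face at a time, so that each removed face corresponds to one elementary elimination. I would handle the dihedral family uniformly and verify the three exceptional triples separately, with MAGMA as a check.
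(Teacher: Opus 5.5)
The paper does not prove this statement (it is deferred to a sequel), so your plan has to stand on its own. As written it has two genuine gaps.

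\textbf{The first gap: you lift along the wrong finite group.} In $G_\phi=\langle a_1,a_2\mid a_1^k(a_1a_2)^m,\ a_2^l(a_1a_2)^m\rangle$ the element $c=(a_1a_2)^m$ equals $a_1^{-k}=a_2^{-l}$, so it is central. Hence $G_\phi$ is a central extension of the triangle group $Q=\bar G_\phi$ by $\langle c\rangle$, and $c\neq 1$: for $(k,l,m)=(2,3,5)$, say, $G_\phi^{\mathrm{ab}}\cong\mathbb Z/31$ while $Q\cong A_5$ is perfect.

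The image of $\phi$ is the fibre product $\{(u(\bar b),v(\bar a)) : u(\bar a)\equiv v(\bar a)\text{ in }G_\phi\}$. An index-$n$ subgroup whose image is a product of two free groups of rank $n+1$ must therefore be $\phi^{-1}(N\times N)$, with $N=\ker(F(\bar a)\to G_\phi)$. This requires $G_\phi$ itself to be finite of order $n=|Q|\,|c|$. If you restrict instead to the kernel of $\pi_1(S_2)\to Q\times Q$, two things go wrong:
\begin{itemize}
\item the image is a fibre product over $\langle c\rangle$, not a direct product;
\item the associated balanced group is $\ker(G_\phi\to Q)=\langle c\rangle\neq1$, so no collapse of those lifted relators can exist.
\end{itemize}
With the correct subgroup the sphere picture is also unavailable. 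The universal cover of the presentation complex of $G_\phi$ is a simply connected $2$-complex of Euler characteristic $n$, hence homotopy equivalent to a wedge of $n-1$ two-spheres. The tessellated sphere you want to shell belongs to $\langle a_1,a_2\mid a_1^k,a_2^l,(a_1a_2)^m\rangle$, whose relators are not the ones being lifted.

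\textbf{The second gap: passing from an algebraic collapse to standardness.} Andrews--Curtis triviality of the balanced presentation does not imply that the relator tuple is Nielsen equivalent to a free basis. For example, $(a,\,b[a,b])$ is AC-trivial but is not a basis. Nor does it imply that $\phi$ is standard. Conjugating one relator, or multiplying it by an arbitrary conjugate of another, is not in general induced by automorphisms of $\pi_1(S_g)$ and of the target; disk slides supply only specific conjugators.

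This is exactly why Theorem~\ref{th:14} is proved by genuine disk slides (Lemmas~\ref{lem:geometric-singleton-cancellation} and~\ref{lem:pivot-contraction}). Without such an argument, the step ``AC-trivial, hence standard'' is just the Poincar\'e conjecture via Hempel's theorem, and it would make your construction superfluous.

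To invoke Theorem~\ref{th:14} you would have to show that the lifted homomorphism is, up to equivalence, literally $x_i\mapsto b_ia_i$, $y_i\mapsto Y_i$ with $(Y_i)$ a one-layer tuple. Agreement up to conjugating relators is not enough. The uniqueness of epimorphisms $\pi_1(S_g)\to F_g$ that you quote normalizes only the $b$-coordinate. Nothing in your proposal produces the normalization $u_i=a_i$ or the chain form~\eqref{eq:6}, and that is the entire content, stated only as an aim.

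\textbf{The missing observation.} The subgroup $\phi^{-1}(N\times N)$ is the fundamental group of the preimage of the Heegaard surface in the universal cover $\widetilde M$ of the genus-two manifold $M$ with $\pi_1(M)=G_\phi$. So Olshanskii's epimorphism is the splitting homomorphism of the lifted genus-$(n+1)$ Heegaard splitting of $\widetilde M$. Once $M$ is identified with the corresponding elliptic Seifert manifold, so that $\widetilde M\cong S^3$, Waldhausen's uniqueness theorem for Heegaard splittings of $S^3$ gives standardness. If you want to avoid that theorem, as the paper does for one layer, the combinatorics must be carried out on this lifted Heegaard diagram by actual disk slides.
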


\subsection{More on the Mapping Class Group $Mod(S_g)$}
We denote by $y_1,\ldots ,y_n$ meridian curves on the surface,  $x_1,\ldots ,x_n$ longitude curves and the bridge curves $c_i=y_ix_{i+1}\inv$ .

Here  $T_{x _i}(y_i)=x_iy_i$ and is identical on the other generators,   $T_{y _i}(x_i)=y_ix_i$ and is identical on the other generators.

Define the automorphism $T_{c_i}$ (the left Dehn twist about $c_i$) on the four
generators $\{x_i,y_i,x_{i+1},y_{i+1}\}$ by
\begin{equation}\label{eq:phi-def}
T_{c_i}:\quad
x_i\mapsto c_i\inv x_i,\qquad
y_i\mapsto y_i^{\,c_i}=\conj{c_i}{y_i},\qquad
x_{i+1}\mapsto x_{i+1}^{\,c_i}=\conj{c_i}{x_{i+1}},\qquad
y_{i+1}\mapsto c_i\inv y_{i+1},
\end{equation}
and $T_{c_i}$ fixes all other $x_j,y_j$ ($j\notin\{i,i{+}1\}$).
Geometrically, $x_i$ and $y_{i+1}$ cross the annulus of $c_i$ once and pick up a
left factor $c_i\inv$, while $y_i$ and $x_{i+1}$ run parallel to $c_i$ on
opposite sides and are conjugated by $c_i$.

  Denote the mapping class
group and extended mapping class group of $S_{g,1}$ as
$\operatorname{Mod}(S_{g,1})$ and $\operatorname{Mod}^{\pm}(S_{g,1})$
respectively.  
The
isomorphism given by setting the basepoint as the marked point
[\cite{FM} p.~235]
\[
  \varphi : \operatorname{Mod}^{\pm}(S_{g,1})
  \xrightarrow{\ \cong\ }
  \operatorname{Aut}(\pi_{1}(S_g))
\]
\begin{theorem}
The mapping class group
\[
\operatorname{Mod}^{+}(S_{g,1})
\]
is generated by the Dehn twists
\[
T_{c_i}, \quad i=1,\dots,g-1,
\]
\[
T_{x_i}, \quad i=1,\dots,g,
\]
together with
\[
T_{y_1},T_{y_2}.
\]
Moreover,
\[
\varphi(\operatorname{Mod}^{+}(S_{g,1}))
\]
together with a reflection \(R\) generate
\[
\operatorname{Aut}(\pi_1(S_g)).
\]
\end{theorem}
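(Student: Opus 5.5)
The plan is to deduce the statement from the classical Humphries/Lickorish generating set, in its version for $\operatorname{Mod}(S_{g,1})$, and then to pass to $\operatorname{Aut}(\pi_1(S_g))$ through the isomorphism $\varphi$ of \cite[p.~235]{FM}.

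The first step is to recall the generators. By Humphries' theorem (see \cite{FM}, Theorem 4.14 and the discussion of the marked-point case), $\operatorname{Mod}(S_{g,1})$ is generated by the Dehn twists about the $2g+1$ Humphries curves. These are the $g$ meridians, written here as the curves $x_i$ (the naming convention of this paper), the $g-1$ bridge curves $c_i$ joining consecutive handles, and the curves of the first two handles on the other side, written here as $y_1,y_2$. With a marked point, the Lickorish curves still give generators, since the Birman exact sequence $1\to\pi_1(S_g)\to \operatorname{Mod}(S_{g,1})\to\operatorname{Mod}(S_g)\to1$ shows that we only need to add point-pushes. A point-push along a simple loop $\alpha$ equals $T_{\alpha_1}T_{\alpha_2}^{-1}$, where $\alpha_1,\alpha_2$ are the boundary curves of a neighbourhood of $\alpha$, and these products are generated by twists about nonseparating curves.

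The second step is to show that every nonseparating twist, and every point-push, lies in the subgroup $\Gamma=\langle T_{c_i},T_{x_i},T_{y_1},T_{y_2}\rangle$. We use that $\Gamma$ acts transitively on pairs of nonseparating curves with prescribed intersection, and we use the conjugation formula $fT_\gamma f^{-1}=T_{f(\gamma)}$. The twists $T_{y_j}$ for $j\ge 3$ are obtained by the chain (braid) relations along the chain $y_1,x_1,c_1,x_2,c_2,\dots$: the element $T_{c_{j-1}}T_{x_j}T_{c_{j-1}}\cdots$ carries $y_2$ to $y_j$ up to isotopy, exactly as in Humphries' argument. We also check, in the formulas \eqref{eq:phi-def} and the analogues for $T_{x_i}$ and $T_{y_i}$, that the algebraic automorphisms written in the paper really are $\varphi$ of the corresponding twists, with the correct sign conventions. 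This is a routine verification on the generators $x_i,y_i$, together with checking that the surface relator is preserved.

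For the last assertion, $\varphi$ identifies $\operatorname{Mod}^{\pm}(S_{g,1})$ with $\operatorname{Aut}(\pi_1(S_g))$, and $\operatorname{Mod}^{+}$ has index $2$ in $\operatorname{Mod}^{\pm}$. Hence any orientation-reversing element $R$ (for instance a reflection fixing the basepoint, $x_i\mapsto x_{g+1-i}^{\pm}$ suitably) together with $\varphi(\operatorname{Mod}^+)$ generates everything.

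I expect the main obstacle to be the marked-point subtlety. Humphries' count of $2g+1$ generators is usually stated for closed surfaces or for surfaces with a boundary component. Making sure that these twists, and not additional point-pushes, suffice for $S_{g,1}$ needs an explicit check that the point-pushing subgroup is contained in $\Gamma$. I would try to settle this first by realizing pushes around the generators $x_1$ and $y_1$ as products of twists about $x_1$, $y_1$ and their pushed-off copies, which are obtained from the chain elements.
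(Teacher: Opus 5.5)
The gap is in your first step, where you identify $x_1,\dots,x_g$, $c_1,\dots,c_{g-1}$, $y_1,y_2$ with the Humphries curves. In this paper the bridge is $c_i=y_ix_{i+1}^{-1}$. By \eqref{eq:phi-def} and the sentence after it, $c_i$ crosses $x_i$ and $y_{i+1}$ once and is disjoint from $y_i$ and $x_{i+1}$. A Humphries bridge, by contrast, must cross $x_i$ and $x_{i+1}$. This is why the paper's proof does not quote Humphries verbatim. Instead it relates $T_{c_i}$ to the twist $C_i$ from the Farb--Margalit book by conjugating with $T_{x_{i+1}}T_{y_{i+1}}T_{x_{i+1}}$, a rotation of the $(i+1)$-st handle that interchanges $x_{i+1}$ and $y_{i+1}$. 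Two consequences for your argument:
\begin{itemize}
\item $y_1,x_1,c_1,x_2,c_2,\dots$ is not a chain, since already $c_1\cap x_2=\emptyset$.
\item The braid relations between $T_{c_{j-1}}$ and $T_{x_j}$, which you use to carry $y_2$ to $y_j$, are not available, because these two twists commute.
\end{itemize}
The ``routine verification'' of \eqref{eq:phi-def} that you plan is exactly what would expose this.

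A better argument cannot fix the mismatch. Take $g\ge 3$. Among the listed generators, only $T_{c_{g-1}}$ moves $x_g$ at all, and it sends $x_g$ to $c_{g-1}^{-1}x_gc_{g-1}$. So the subgroup they generate preserves the conjugacy class of $x_g$. It therefore cannot contain $T_{y_g}$, since $T_{y_g}(x_g)=y_gx_g$ is not even homologous to $x_g$. Geometrically, $x_g$ is disjoint from every listed curve, so $T_{x_g}$ would have to be central, although $i(x_g,y_g)=1$. Thus, with the curves as defined here, the first assertion fails for $g\ge 3$. It becomes Humphries' theorem if $c_i$ is the standard bridge meeting $x_i$ and $x_{i+1}$. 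It also holds if all $T_{y_i}$ are added, since one then has the rotations that convert one kind of bridge into the other (Lickorish's generators).

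The paper's inductive step fails as well. Its conjugating elements are generated by $T_{x_i},T_{x_{i+1}},T_{c_i},T_{y_{i+1}}$, and all of these fix $x_{i+2}$. Hence they cannot send $y_i$, which is disjoint from $x_{i+2}$, to $y_{i+2}$, which meets it once.

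For standard bridges, your approach reduces to citing Humphries' theorem for the once-marked surface. This follows, for example, by capping the boundary in the version for one boundary component. Your Birman-sequence paragraph is then unnecessary. As written it is also circular: transitivity of your subgroup on nonseparating curves is what the Lickorish--Humphries argument proves, not an available input. Your index-two argument for the second assertion is correct, and the paper's proof does not address that part at all.
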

\begin{proof}  By the proof of Theorem 4.14 in \cite{FM},  one can obtain $T_{y_{i+2}}$ by conjugating $T_{y_i}$ by the element  in $T_{x_i}, C_i, T_{y_{i+1}}$, where $c_i$ is the belt curve in Figure 4.9 \cite{FM} and $C_i$ is the Dehn twists along that curve.  Also our $T_{c_i}$ can be obtaibed by conjugating $C_i$ by $T_{x_{i+1}} T_{y_{i+1}} T_{x_{i+1}} $. Therefore one can obtain $T_{y_{i+2}}$ by conjugating $T_{y_i}$ by the element  in $T_{x_i}, T_{x_{i+1}},T_{c_i}, T_{y_{i+1}}$. 
\end{proof}

Notice that these automorphisms were used in the proof of the implicit function theorem in \cite{Imp}. They are similar to Humphreys generators that represent a minimal, explicit set of $2g+1$ Dehn twists used to generate the mapping class group \cite{FM}.

\section{One-layer splitting epimorphisms are standard}
\label{Se:stand}

In this section we prove that every splitting epimorphism associated with 
 one-layer word is standard.  The proof uses the side-block quotient non-triviality
test and geometric moves on the two disk systems of a Heegaard diagram.

The proof does not use the Perelman theorem, geometrization,
or uniqueness of Heegaard splittings of the three-sphere.

\subsection{One-layer words and the side-block non-triviality test}

Let
\(
 K=(k_1,\ldots,k_{g-1})
\)
be a one-layer exponent tuple.  We use the recursive notation
\[
 \Theta_1=a_1,
 \qquad
 P_i=\Theta_i^{-1}a_i\Theta_i,
\]
\[
 Y_i=\Theta_i(P_i a_{i+1})^{k_i},
 \qquad
 \Theta_{i+1}=a_{i+1}(a_{i+1}P_i)^{k_i}
 \quad (1\leq i<g),
\]
and
\(
 Y_g=\Theta_g.
\)
Thus the corresponding splitting homomorphism is
\[
 \phi_K:\pi_1(S_g)\longrightarrow
 F(b_1,\ldots,b_g)\times F(a_1,\ldots,a_g),
\]
\[
 \phi_K(x_i)=(b_i,a_i),
 \qquad
 \phi_K(y_i)=(1,Y_i),
\]
and its associated balanced presentation is
\(
 G(K)=\left\langle a_1,\ldots,a_g\ \middle|\
 Y_1,\ldots,Y_g\right\rangle.
\)
By the criterion proved in Section~1,
\begin{equation}
 \phi_K\text{ is onto}
 \quad\Longleftrightarrow\quad
 G(K)=1.
 \label{eq:one-layer-onto-trivial}
\end{equation}

\medskip
\noindent
An exponent $k_i$ is called \emph{signed} if $k_i=1$ or $k_i=-1$.
An exponent which is not signed is called \emph{nonsigned}; since the
one-layer exponents considered here are nonzero, this means $|k_i|>1$.

\medskip
\noindent
A tuple
\[
 I=(k_t,\ldots,k_s),
 \qquad 1\leq t\leq s\leq g-1,
\]
of consecutive exponents is called an \emph{exponent interval}.
Let
\[
 G(I)=G_{s-t+2}(k_t,\ldots,k_s)
\]
be the ordinary one-layer group associated with this shorter tuple, written
in the local generators
\[
 a_1,\ldots,a_{s-t+2}.
\]
The recursion defining $G(I)$ starts from its own first generator; in
particular, it starts with
\[
 \Theta_1=P_1=a_1
\]
and does not use the global word $\Theta_t$ of
$G_g(k_1,\ldots,k_{g-1})$.

Replace the local generator $a_j$ of $G(I)$ by $a_{t+j-1}$.  The group
obtained in the generators
\[
 a_t,a_{t+1},\ldots,a_{s+1}
\]
is denoted by $G(t,I)$ and is called the \emph{block corresponding to the
exponent interval $I$}.  Thus a block is a group.

If $t\ne1$, the first relation defining $G(t,I)$ is the shifted first
relation of the shorter group $G(I)$; it is different from the $t$-th
relation of the entire group. 
If $s\ne g-1$, the last relation defining $G(t,I)$ is the shifted last
relation of $G(I)$; it is different from the $(s+1)$-st relation of the entire
group. 

For the prefix exponent interval
\[
 I=(k_1,\ldots,k_{i-1}),
\]
we keep the notation
\[
 G(i)=G_i(k_1,\ldots,k_{i-1}).
\]
Let
\[
 G_g=G_g(k_1,\ldots,k_{g-1}).
\]
For an interior bridge exponent $k_i$, between $a_i$ and $a_{i+1}$, the
left block is
\[
 G(i)=G_i(k_1,\ldots,k_{i-1}),
\]
and the shifted right block on $a_{i+1},\ldots,a_g$ is denoted by
\[
 T(g-i)=G_{g-i}(k_{i+1},\ldots,k_{g-1}).
\]

\begin{lm} (The right factor at a bridge)
After the relation
\[
 (a_i^{b_i}a_{i+1})^{k_i}=1
\]
is added to the presentation~\eqref{eq:6}, the relators on
$a_1,\ldots,a_i$ define $G(i)$ and the relators on
$a_{i+1},\ldots,a_g$ define the shifted block $T(g-i)$.  Hence the
resulting presentation is Tietze equivalent to the quotient
\[
 \frac{G(i)*T(g-i)}
 {\left\langle\!\left\langle
 (a_i^{b_i}a_{i+1})^{k_i}
 \right\rangle\!\right\rangle}.
\]
\end{lm}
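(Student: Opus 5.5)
The plan is to work directly with the presentation~\eqref{eq:6}, with the auxiliary letters $b_j$ kept as Tietze generators, and to cut it at the $i$-th bridge. In~\eqref{eq:6} the relators come in a chain. The $j$-th relator has the form
\[
 \Theta_j\,(a_j^{b_j}a_{j+1})^{k_j}=1,
 \qquad
 \Theta_j=a_j(a_{j-1}^{b_{j-1}}a_j)^{k_{j-1}}=b_j ,
\]
and each definition $b_{j+1}=a_{j+1}(a_j^{b_j}a_{j+1})^{k_j}$ involves only $a_j,a_{j+1},b_j$. The only place where the letters $a_1,\dots,a_i$ (together with $b_1,\dots,b_i$) interact with $a_{i+1},\dots,a_g$ is the factor $(a_i^{b_i}a_{i+1})^{k_i}$. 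This factor occurs in the $i$-th relator and in the definition of $b_{i+1}$.

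First I add the relation $(a_i^{b_i}a_{i+1})^{k_i}=1$. The $i$-th relator $b_i(a_i^{b_i}a_{i+1})^{k_i}=1$ then becomes $b_i=1$, i.e.
\[
 a_i(a_{i-1}^{b_{i-1}}a_i)^{k_{i-1}}=1 .
\]
This is exactly the last relator of the prefix group $G(i)=G_i(k_1,\dots,k_{i-1})$. The earlier relators $1,\dots,i-1$ and the definitions of $b_2,\dots,b_i$ are literally those of $G(i)$, because the recursion for the prefix starts from $a_1$ in both cases. So the relators on $a_1,\dots,a_i$ define $G(i)$. I would also check the edge case $i=1$ (there $b_1=a_1$ in the convention of~\eqref{eq:6}), where the relator reads $a_1=1$ and $G(1)$ is trivial.

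Next, on the right side the added relation gives $b_{i+1}=a_{i+1}$. Then
\[
 a_{i+1}^{b_{i+1}}=a_{i+1},
 \qquad
 a_{i+1}^{b_{i+1}}a_{i+2}=a_{i+1}a_{i+2}.
\]
The $(i+1)$-st relator becomes $a_{i+1}(a_{i+1}a_{i+2})^{k_{i+1}}=1$, which is the shifted first relator of $T(g-i)$. I then verify inductively that every later $b_j$ and relator, for $j>i+1$, coincides with the shifted ones of $G_{g-i}(k_{i+1},\dots,k_{g-1})$. This holds because the recursion only depends on the preceding $b_j$ and the $a$'s, and the starting value matches the shifted start $\Theta_1=P_1=a_1$. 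The last relator $(a_{g-1}^{b_{g-1}}a_g)^{k_{g-1}}a_g$ is unchanged. If $i=g-1$, the right block has the single generator $a_g$ and the relator $a_g=1$, which must be handled separately.

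Finally I eliminate all $b_j$ by Tietze moves. What remains is the presentation of $G(i)$ on $a_1,\dots,a_i$, the presentation of $T(g-i)$ on $a_{i+1},\dots,a_g$, and the single extra relator $(a_i^{b_i}a_{i+1})^{k_i}$, with $b_i$ read as the word $\Theta_i$ in $G(i)$. This is the stated quotient of the free product.

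The main obstacle is the bookkeeping at the start of the right block. One must show precisely that $b_{i+1}=a_{i+1}$ makes the right recursion coincide with the local one starting from $\Theta_1=a_1$, and not with the global $\Theta_{i+1}$. One also has to reconcile the slight differences between the conventions of~\eqref{eq:6} and the $\Theta,P$ notation, using Lemma~\ref{lem:equivalent-surface-equations} if a conjugation mismatch appears.
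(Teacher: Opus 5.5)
Your proposal is correct and follows the paper's proof step by step: impose the bridge relation and read off $b_i=1$ as the last relator of $G(i)$; then note $b_{i+1}=a_{i+1}$, so the recursion on the right restarts as the shifted block $T(g-i)$; finally, the bridge relator is the only one involving both sides. The only imprecision is that the relators you call \emph{exactly} the block relators, namely $b_i=a_i(a_{i-1}^{b_{i-1}}a_i)^{k_{i-1}}$ and $a_{i+1}(a_{i+1}a_{i+2})^{k_{i+1}}$, agree with the corresponding relators of $G(i)$ and $T(g-i)$ only up to cyclic conjugation, as the paper states, and this is harmless.
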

\begin{proof}
The $i$-th relation of~\eqref{eq:6} has the form
\[
 b_i(a_i^{b_i}a_{i+1})^{k_i}=1.
\]
After the added relation is imposed, it gives $b_i=1$.  Since
\[
 b_i=a_i(a_{i-1}^{b_{i-1}}a_i)^{k_{i-1}},
\]
the word $b_i$ is conjugate to
\[
 (a_{i-1}^{b_{i-1}}a_i)^{k_{i-1}}a_i,
\]
which is the last relation of $G(i)$.
  Thus the relations on the left are
precisely those of $G(i)$, up to conjugating this last relation.

Also
\[
 b_{i+1}=a_{i+1}(a_i^{b_i}a_{i+1})^{k_i}=a_{i+1}.
\]
Consequently the next relation becomes
\[
 a_{i+1}(a_{i+1}a_{i+2})^{k_{i+1}}=1,
\]
which is conjugate to
\[
 (a_{i+1}a_{i+2})^{k_{i+1}}a_{i+1}=1,
\]
the first relation of the shifted block $T(g-i)$.  The subsequent formulas
for the words $b_j$ and the subsequent relations are exactly the shifted
formulas~\eqref{eq:6}.  Hence they define $T(g-i)$.  The only remaining
relation involving generators from both sides is the displayed bridge
relation.  This proves the lemma.
\end{proof}

\begin{theorem}
\label{thm:ordinary-one-layer-reduction}
  If $|k_i|>1$, and $G(i)=G_i(k_1,\ldots ,k_{i-1})\neq 1, T(g-i)=G_{g-i}(k_{i+1},\ldots ,k_{g-1})\neq 1,$ then 
$G_g\neq 1$.

\end{theorem}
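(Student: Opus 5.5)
The plan is to pass to the quotient given by the preceding lemma (the right factor at a bridge) and prove that this quotient is nontrivial. Since a quotient of $G_g$ is nontrivial only if $G_g$ is nontrivial, it suffices to show that
\[
 Q=\frac{G(i)*T(g-i)}{\left\langle\!\left\langle (a_i^{b_i}a_{i+1})^{k_i}\right\rangle\!\right\rangle}\neq 1 .
\]
Here $b_i$ is a word in $a_1,\dots,a_i$, so $x:=a_i^{b_i}$ is an element of $G(i)$ and $y:=a_{i+1}$ is an element of $T(g-i)$. Thus $Q$ is a one-relator product $(A*B)/\langle\!\langle (xy)^{k}\rangle\!\rangle$ with $A=G(i)\neq1$, $B=T(g-i)\neq 1$ and $|k|=|k_i|>1$.

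The key steps, in order, are as follows.

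First, I will identify $x$ and $y$ in the factors. By the lemma's proof, $b_i$ is conjugate to the last relator of $G(i)$, so $b_i=1$ in $G(i)$ and $x=a_i$ there. Likewise $b_{i+1}=a_{i+1}$ on the right.

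Second, I will show that $a_i\neq 1$ in $G(i)$ and $a_{i+1}\neq 1$ in $T(g-i)$. For this I will use the recursive structure: the last relation of $G(i)$ expresses $a_i$ in terms of the earlier generators, and the first relation of $T(g-i)$ does the same for $a_{i+2}$ in terms of $a_{i+1}$. If $a_i=1$ in $G(i)$, the presentation should collapse step by step from the end and force $G(i)=1$, contradicting the hypothesis. The argument is symmetric from the front for $T(g-i)$. I would check this by an induction on block length, using Tietze moves like those in the lemma.

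Third, I will apply the standard theorem on one-relator products (Howie, or Brodskii/Baumslag--Morgan--Shalen type): if $x\in A$ and $y\in B$ are nontrivial and the relator is a proper power $(xy)^k$ with $|k|\geq2$, then $A$ and $B$ embed in $Q$. Hence $Q\neq1$. An elementary alternative is to find finite quotients $A\to A_0$ and $B\to B_0$ in which $x$ and $y$ survive, choose representations into $PSL_2(\mathbb C)$ (or $SO(3)$) with $x\mapsto X$ and $y\mapsto Y$, and then adjust by a conjugation parameter so that $XY$ has order dividing $k$. This is a triangle-group-type argument, like the one used for Olshanskii's groups.

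The main obstacle is the second step. Nontriviality of the whole block does not immediately imply nontriviality of the particular generator $a_i$, and that is exactly what the one-relator-product theorem needs. If the collapse induction fails, my fallback is to prove a stronger inductive statement: every nontrivial one-layer block admits a quotient in which its two end generators both survive. I would obtain this from the same representation-variety argument, since the product-of-powers relators $(P_ja_{j+1})^{k_j}$ can be realized by rotations of finite order.
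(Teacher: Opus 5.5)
Your overall architecture matches the paper's: pass to the bridge quotient, use $b_i=1$ in $G(i)$ so the relator becomes $(a_ia_{i+1})^{k_i}$, then apply a proper-power one-relator-product argument. The gap is in your second step. The collapse induction stops at the first nonsigned exponent. If $a_i=1$ in $G(i)$, the last relator $b_i=a_i(a_{i-1}^{b_{i-1}}a_i)^{k_{i-1}}$ gives only $a_{i-1}^{k_{i-1}}=1$. The $(i-1)$-st relator then gives $b_{i-1}=1$, so all you obtain is
\[
 G(i)\cong G(i-1)/\langle\!\langle a_{i-1}^{k_{i-1}}\rangle\!\rangle .
\]
For $|k_{i-1}|>1$ this does not force $a_{i-1}=1$, so the collapse halts. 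The resulting group is exactly the one you assumed nontrivial, so no contradiction appears. Your fallback is not an argument as it stands. It presupposes that every nontrivial block has a finite or linear quotient in which both end generators survive, and nothing here guarantees that.

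The missing idea is that you do not need $a_i\neq1$ unconditionally; a case split suffices, and this is what the paper does. Kill $a_i,\ldots,a_g$ in $G_g$. The first $i-1$ relators are the same words as in $G(i)$, the $i$-th relator becomes $b_i$, and $b_{i+1}$ and every later relator become trivial. Hence
\[
 G_g/\langle\!\langle a_i,\ldots,a_g\rangle\!\rangle\cong G(i)/\langle\!\langle a_i\rangle\!\rangle .
\]
In the same way, $G_g/\langle\!\langle a_1,\ldots,a_{i+1}\rangle\!\rangle\cong T(g-i)/\langle\!\langle a_{i+1}\rangle\!\rangle$. So if $a_i=1$ in $G(i)$, then $G_g$ maps onto $G(i)\neq1$ and you are done; the right side is handled the same way. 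Otherwise both boundary generators are nontrivial and your third step applies.

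That step needs no deep one-relator-product theorem. Let $r,t\in\{2,3,\ldots,\infty\}$ be the orders of $a_i$ and $a_{i+1}$, and let $\Delta=\langle x,y\mid x^r,y^t,(xy)^{k_i}\rangle$ be the von Dyck group, in which $x$ and $y$ have exact orders $r$ and $t$. Then
\[
 Q\cong G(i)*_{\langle a_i\rangle=\langle x\rangle}\Delta*_{\langle y\rangle=\langle a_{i+1}\rangle}T(g-i),
\]
so both factors embed and $Q\neq1$.
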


\begin{proof}
We may  assume that killing the
boundary generator kills the corresponding side.  Namely,
\[
G(i)/\langle\!\langle a_i\rangle\!\rangle=1,
\qquad
T(g-i)/\langle\!\langle a_{i+1}\rangle\!\rangle=1.
\]
Indeed, if one of these quotients were non-trivial, it would already give a
non-trivial quotient of the whole group.  Therefore,  \[
a_i\ne 1\text{ in }G(i),
\qquad
a_{i+1}\ne 1\text{ in }T(g-i).
\]

Add the quotient relation
\[
(a_i^{b_i} a_{i+1})^{k_i}=1,
\]
where $(a_i^{b_i} a_{i+1})^{k_i}$ corresponds to the Dehn Twist $T_{i,i+1}^{k_i}$. The quotient is 
\[
Q_i=
\frac{
G(i)*T(g-i)
}{
\left\langle\!\left\langle (a_i ^{b_i}a_{i+1})^{k_i}
\right\rangle\!\right\rangle
}.
\]
Apply the generalized triangle group theorem of Baumslag--Morgan--Shalen:
 if
\[
\langle x,y\mid x^r=y^t=w^N=1\rangle
\]
is a generalized triangle group with \(|r|,|t|,|N|>1\), then it has a
special representation in which the indicated elements have the prescribed
orders, and in particular the group is non-trivial. 

Therefore
\(
Q_i\neq1.
\)
Since \(Q_i\) is a quotient of \(G_g\), this  implies  \(G_g\neq1\).\end{proof}

\medskip
\noindent

\begin{cy}
\label{cor:ordinary-one-layer-block-reduction}
Let
\[
 I=(k_t,\ldots,k_s)
\]
be an exponent interval, and let $G(t,I)$ be the corresponding block.  Let
$k_j=m$, $|m|>1$, be a nonsigned exponent in $I$.  If the two blocks
\[
 G(t,(k_t,\ldots,k_{j-1}))
 \qquad\text{and}\qquad
 G(j+1,(k_{j+1},\ldots,k_s))
\]
are both nontrivial, then $G(t,I)$ is nontrivial.
\end{cy}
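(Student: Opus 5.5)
The plan is to reduce the corollary to Theorem~\ref{thm:ordinary-one-layer-reduction} by a shift of indices. By definition, $G(t,I)$ is the group $G(I)=G_{s-t+2}(k_t,\ldots,k_s)$ with its local generators $a_1,\ldots,a_{s-t+2}$ renamed $a_t,\ldots,a_{s+1}$. Renaming generators is an isomorphism, so it suffices to show that $G(I)\neq1$. We therefore work entirely in the shorter one-layer group of genus $g'=s-t+2$, with exponent tuple $K'=(k'_1,\ldots,k'_{g'-1})$ where $k'_r=k_{t+r-1}$.

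In this local indexing, the nonsigned exponent $k_j=m$ occupies position $i'=j-t+1$, which is an interior bridge between $a_{i'}$ and $a_{i'+1}$. The left block at this bridge is $G_{i'}(k'_1,\ldots,k'_{i'-1})=G_{j-t+1}(k_t,\ldots,k_{j-1})$. This is exactly the block $G(t,(k_t,\ldots,k_{j-1}))$ up to renaming generators, since both recursions start afresh from their own first generator with $\Theta_1=P_1=a_1$. The right block is $T(g'-i')=G_{s-j+1}(k_{j+1},\ldots,k_s)$, which is the shifted block $G(j+1,(k_{j+1},\ldots,k_s))$ up to renaming. So the hypotheses of the corollary say precisely that the left and right blocks at the bridge $i'$ of $G_{g'}(K')$ are nontrivial, and $|k'_{i'}|=|m|>1$.

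Theorem~\ref{thm:ordinary-one-layer-reduction}, applied to $G_{g'}(K')$ at the bridge $i'$, then gives $G_{g'}(K')\neq1$, hence $G(t,I)\neq1$.

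The step needing care is the identification of the left and right blocks of the theorem with the blocks named in the corollary. This amounts to checking that the lemma on the right factor at a bridge, which underlies the theorem, holds verbatim for the local group $G(I)$. In particular, imposing $(a_{i'}^{b_{i'}}a_{i'+1})^{m}=1$ forces $b_{i'+1}=a_{i'+1}$, so the right-hand relations restart with $\Theta=a_{i'+1}$. This is the same restart used in defining the blocks, and it is exactly why the paper insists that block recursions do not use the global $\Theta_t$.

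Degenerate cases also need attention. If $j=t$, the left block has an empty exponent interval; if $j=s$, the right block does. In these cases the block should be read as the genus-one group $G_1$ on a single generator, with one relator $\Theta_1=a_1$, which is trivial. Then the hypothesis that this block is nontrivial fails, so such cases are vacuous or must be excluded, consistent with the theorem's requirement that both sides be nontrivial.
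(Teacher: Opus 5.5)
Your proposal is correct and is the paper's own argument. You renumber $a_t,\ldots,a_{s+1}$ as $a_1,\ldots,a_{s-t+2}$, use the right-factor lemma to identify the two side blocks with $G(i)$ and $T(g-i)$ at the local bridge $j-t+1$, and invoke Theorem~\ref{thm:ordinary-one-layer-reduction}; your endpoint remark for $j=t$ or $j=s$ (genus-one trivial block, so the statement is vacuous) matches the convention in Definition~\ref{def:side-triviality-tree}.
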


\begin{proof}
Renumber $a_t,\ldots,a_{s+1}$ as
$a_1,\ldots,a_{s-t+2}$.  The block $G(t,I)$ becomes the ordinary one-layer
group $G(I)$.  The left side becomes the group $G(i)$ in
Theorem~\ref{thm:ordinary-one-layer-reduction}, and the right side is the
shifted block $T(g-i)$ by the lemma above.  The corollary is therefore an
immediate consequence of Theorem~\ref{thm:ordinary-one-layer-reduction}.
\end{proof}

\begin{df}[Recursive side-triviality tree]
\label{def:side-triviality-tree}
Let $G(t,I)$ be a trivial block.  A \emph{side-triviality tree} is obtained
by choosing, at every nonsigned exponent $k_j$ in $I$, one of the two side
blocks
\[
 G(t,(k_t,\ldots,k_{j-1}))
 \qquad\text{or}\qquad
 G(j+1,(k_{j+1},\ldots,k_s))
\]
which is trivial, and repeating the same procedure inside every chosen
trivial block.  A block whose exponent interval contains only signed
exponents is a terminal vertex of the tree.  At an endpoint, the side with
no exponent is the genus-one trivial block.
\end{df}

\begin{lm}
\label{lem:recursive-side-triviality}
Every trivial block $G(t,I)$ has a recursive side-triviality tree.
\end{lm}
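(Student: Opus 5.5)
Since the side-triviality tree is built by choosing, at every nonsigned exponent, a \emph{trivial} side block and then recursing, the plan is to prove existence by strong induction on the length $\ell=s-t+1$ of the exponent interval $I$. The inductive statement is: every trivial block with interval length $\ell$ admits a side-triviality tree.

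The base cases are the intervals consisting only of signed exponents. These include all intervals with $\ell=1$ and $|k_t|=1$. Such a block is by Definition~\ref{def:side-triviality-tree} a terminal vertex, so it is its own tree and there is nothing to prove.

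For the inductive step, let $G(t,I)$ be trivial with $I=(k_t,\ldots,k_s)$, and let $k_j$, $t\le j\le s$, be any nonsigned exponent. The two side blocks are
\[
 L_j=G(t,(k_t,\ldots,k_{j-1})),\qquad R_j=G(j+1,(k_{j+1},\ldots,k_s)).
\]
At an endpoint $j=t$ or $j=s$, one side has empty interval, and by the convention in Definition~\ref{def:side-triviality-tree} it is the genus-one trivial block. Indeed, with one generator and one relator the recursion gives $Y_1=\Theta_1=a_1$, so this block is $\langle a_1\mid a_1\rangle=1$. We choose that side and it is a terminal vertex.

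For an interior $j$, we apply Corollary~\ref{cor:ordinary-one-layer-block-reduction} in contrapositive form. Since $G(t,I)=1$ and $|k_j|>1$, the blocks $L_j$ and $R_j$ cannot both be nontrivial, so at least one of them is trivial; we choose it. The chosen block has exponent interval of length at most $\ell-1$. By the induction hypothesis it has its own side-triviality tree, which we attach below the current vertex at $k_j$. Doing this for every nonsigned $k_j$ in $I$ produces a finite rooted tree, of depth bounded by $\ell$, in which every vertex is a trivial block and every branching is at a nonsigned exponent. This is exactly the required recursive side-triviality tree.

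The only substantive input is Corollary~\ref{cor:ordinary-one-layer-block-reduction}, hence Theorem~\ref{thm:ordinary-one-layer-reduction}. The main point to check carefully is that the corollary really applies to an arbitrary sub-block $G(t,I)$ and not only to prefix blocks. Concretely, after renumbering $a_t,\ldots,a_{s+1}$ as $a_1,\ldots$, the block is literally an ordinary one-layer group $G(I)$ with its own local recursion starting at $\Theta_1=a_1$, and its two sides at $k_j$ are again ordinary one-layer groups. This is guaranteed by the definition of blocks via the local recursion, together with the bridge lemma identifying the right factor with a shifted block $T(\cdot)$. With this checked, the recursion stays within the class of blocks and the induction closes.
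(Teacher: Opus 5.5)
Your proposal is correct and follows the paper's own argument. The paper likewise combines strong induction with the contrapositive of Corollary~\ref{cor:ordinary-one-layer-block-reduction} to get a trivial side at each nonsigned exponent. The only cosmetic difference is that you induct on the interval length, whereas the paper inducts on the number of nonsigned exponents; either measure strictly decreases on passing to a side block, since that block omits $k_j$.
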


\begin{proof}
Use strong induction on the number of nonsigned exponents in $I$.  There is
nothing to prove when every exponent in $I$ is signed.  Let $k_j$ be a
nonsigned exponent.  If both side blocks at $k_j$ were nontrivial,
Corollary~\ref{cor:ordinary-one-layer-block-reduction} would imply that
$G(t,I)$ is nontrivial.  Hence at least one side block is trivial.  Every
chosen trivial side block has fewer nonsigned exponents, so the induction
hypothesis applies to it.  Repeating the argument gives the required tree.
\end{proof}

\medskip
\noindent
The right-hand form of each reduction below is obtained by reversing the
order of the active generators and of the corresponding relators, and by
inverting the relevant relators when necessary.  These are Andrews--Curtis
transformations.  At the level of the splitting homomorphism this is the
handle-reversing symmetry: apply the inverse one-layer automorphism, interchange
the two free factors, and reverse the order of the handles.  Thus triviality
and standardness are unchanged when left and right are interchanged.  We use
this symmetry below without introducing a separate right-to-left recursion.

\subsection{AC-reduction from the recursive side-triviality tree}
\label{subsec:recursive-tree-AC-reduction}

We now give the presentation-level reduction which parallels the geometric
argument below.  It is given for the convenience of the reader only, Theorem \ref{thm:recursive-tree-AC-reduction}  can be also obtained as a corollary of Theorem \ref{th:14}. At every nonsigned
entry it uses the trivial side selected by the recursive side-triviality
tree of Definition~\ref{def:side-triviality-tree}.  
The allowed operations are inversions, conjugations, multiplications, and
permutations of the relators, together with simultaneous Nielsen changes of
the ambient free basis.    Singleton
relators which have already been obtained remain in the full tuple; we only
suppress them in the active notation after clearing their letters from the
remaining words. An explicit genus-ten example is given in Section~\ref{sec:examples}.

The basic clearing operation is the following.

\begin{lm}[Singleton clearing]
\label{lem:singleton-clearing}
Suppose an ordered relator tuple contains singleton relators
\[
 r_1=c_1,\ldots,r_s=c_s,
\]
where
\[
 (c_1,\ldots,c_s,t_1,\ldots,t_r)
\]
is a free basis of the ambient free group.  Let
\[
 \pi:F(c_1,\ldots,c_s,t_1,\ldots,t_r)
 \longrightarrow F(t_1,\ldots,t_r)
\]
be the retraction killing $c_1,\ldots,c_s$.  Then every other relator $R$
can be replaced, by Andrews--Curtis moves, by $\pi(R)$ while all
singleton relators $c_1,\ldots,c_s$ are retained.
\end{lm}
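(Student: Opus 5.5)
We will prove the lemma one letter occurrence at a time, using only conjugation and multiplication of relators. Fix a relator $R$ other than the singletons and write it as a reduced word in the basis $(c_1,\ldots,c_s,t_1,\ldots,t_r)$. We argue by induction on the number $N(R)$ of occurrences of letters $c_j^{\pm1}$ in $R$. If $N(R)=0$, then $R=\pi(R)$ and there is nothing to do.

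If $N(R)>0$, write
\[
 R=U\,c_j^{\epsilon}\,V
\]
with $\epsilon=\pm1$, where $c_j^{\epsilon}$ is the last occurrence of a singleton letter. The plan is to delete this occurrence by an Andrews--Curtis move that changes only $R$. Conjugate the singleton relator $r_j=c_j$ by $U$ (after an inversion if $\epsilon=-1$) to obtain $U c_j^{\epsilon}U^{-1}$. Then replace $R$ by
\[
 \bigl(Uc_j^{\epsilon}U^{-1}\bigr)^{-1}R=Uc_j^{-\epsilon}U^{-1}\,U c_j^{\epsilon}V=UV .
\]
Strictly, AC moves only allow multiplying $R$ by another relator $r_j$ itself, not by a conjugate of it. We therefore realize the step as follows: conjugate $r_j$ to $Uc_j^{\epsilon}U^{-1}$, multiply $R$ by its inverse, and conjugate $r_j$ back to $c_j$. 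Alternatively, first conjugate $R$ by $U$ to get $c_j^{\epsilon}VU$, left-multiply by $r_j^{-\epsilon}$ (powers of $r_j$ come from repeated multiplication and inversion), and conjugate back by $U^{-1}$. Either way the singleton relator $c_j$ is restored exactly and every other relator is untouched.

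After free reduction, $UV$ has at most $N(R)-1$ occurrences of singleton letters, since free cancellation never creates letters. Moreover $\pi(UV)=\pi(R)$, because $\pi$ kills $c_j$. Induction therefore ends at a word $R'$ with $N(R')=0$ and $\pi(R')=\pi(R)$, so $R'=\pi(R)$. Here we use that $\pi$ restricted to $F(t_1,\ldots,t_r)$ is the identity, which is exactly where the hypothesis that $(c_1,\ldots,c_s,t_1,\ldots,t_r)$ is a free basis enters. We then repeat this independently for each non-singleton relator; the clearings commute in effect, because each one alters only its own relator.

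The only delicate point is the bookkeeping just described: making sure each step is a legitimate elementary move on the ordered tuple, namely multiplication by $r_j$ itself rather than by a conjugate, and that the singletons are returned to their literal form $c_j$. This is a routine check, and there is no substantive obstacle.
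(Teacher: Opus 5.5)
Your proposal is correct and is essentially the paper's proof: temporarily conjugate (and, if needed, invert) the pivot singleton $c_j$ so that multiplying $R$ by it cancels one chosen occurrence of $c_j^{\pm1}$, restore the pivot to $c_j$, and repeat for every occurrence and every non-singleton relator. The only differences are that the paper multiplies on the right by $v^{-1}c_j^{-\varepsilon}v$ where you multiply on the left by $Uc_j^{-\epsilon}U^{-1}$, and that you spell out the induction on the number of $c$-letters and check that the terminal word equals $\pi(R)$, which the paper leaves implicit.
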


\begin{proof}
It is enough to delete one occurrence of one singleton letter.  Suppose
\[
 R=uc_j^{\varepsilon}v,
 \qquad \varepsilon\in\{1,-1\}.
\]
Temporarily replace the pivot relator $c_j$ by
\[
 v^{-1}c_j^{-\varepsilon}v
\]
using inversion, when necessary, and individual conjugation.  Multiply $R$
on the right by this temporary pivot.  The chosen occurrence cancels:
\[
 uc_j^{\varepsilon}v\,v^{-1}c_j^{-\varepsilon}v=uv.
\]
Restore the pivot relator to $c_j$.  Repeating this operation deletes every
occurrence of the singleton letters from $R$.  Applying the same procedure
to every non-pivot relator proves the lemma.
\end{proof}

\begin{df}
\label{def:relative-algebraic-active-interval}
Suppose that some relators have already been changed into singleton basis
elements and their letters have been cleared from all other relators.  Let
\[
 I=(k_r,\ldots,k_s)
\]
be the exponent interval which remains to be processed.  We call $I$ the
active interval if the following conditions hold.

\begin{enumerate}
\item The retained singleton relators, together with active basis elements
\[
 u_r,\ldots,u_{s+1},
\]
form a free basis of the original ambient free group.

\item The current relators in positions $r,\ldots,s+1$ are the images of the
actual relators of the full tuple under the Andrews--Curtis transformations
already performed.  They are not assumed to be the defining relators of the
block $G(r,I)$.

\item At an endpoint exposed by earlier singleton clearing, the two current
words which enter the next one-layer relation are powers of one primitive
live element.  When the interval is processed from the left, they have the
form
\[
 \Theta=t^{\alpha},\qquad P=t^{\beta}.
\]
\item
The defining presentation of every trivial block chosen in the
side-triviality tree is carried through the same basis changes and
relator operations.  If the chosen block lies to the left of a nonsigned
exponent, its last defining relation is retained in the presentation of
that block.  If the chosen block lies to the right, its first defining
relation is retained.  These relations belong only to the corresponding
block presentations; they are not inserted into the full relator tuple.

\end{enumerate}
\end{df}

In reducing the active interval $I=(k_r,\ldots,k_s)$, we first process the
$s-r+1$ actual relators in positions
\[
 r,r+1,\ldots,s,
\]
one for each exponent in $I$.  The actual relator in position $s+1$ is kept
until the end. When a proper block is used in the proof, its first or last defining
relation, according to which side of the nonsigned exponent the block
occupies, is retained until the other defining relations of that block
have been processed. 
\begin{lm}[Preservation of trivial blocks after singleton clearing]
\label{lem:trivial-blocks-singleton-clearing}
Suppose that a current relator has been changed into the singleton
$t^{\pm1}$.  Retain it as the singleton $t$ and clear the letter $t$ from the
remaining actual relators and from every block presentation being followed.
Then all these presentations still define the same groups as before.  In
particular, the choices of trivial blocks in the side-triviality tree remain
valid for the shorter active interval.
\end{lm}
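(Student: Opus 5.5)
The statement is essentially a Tietze-invariance claim, so I will show that each operation performed is a Tietze transformation of the presentation it acts on. Fix one presentation in the collection: either the full current relator tuple, or one of the block presentations carried along according to Definition~\ref{def:relative-algebraic-active-interval}(4). Suppose the current relator has become $t^{\pm1}$, with $t$ a member of the current free basis. This is guaranteed by condition (1) of the active-interval definition, together with the fact that Nielsen changes keep us inside a free basis.

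\textbf{Full tuple.} Inverting the relator turns $t^{-1}$ into $t$, an Andrews--Curtis move. By Lemma~\ref{lem:singleton-clearing}, every other relator $R$ may then be replaced by $\pi(R)$, where $\pi$ is the retraction killing $t$. This uses only multiplication by conjugates of $t^{\pm1}$, so the normal closure of the relator set is unchanged. Hence the quotient group is literally the same group, presented on the same generators, and the relator $t$ is retained.

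\textbf{Block presentations.} These do not contain the relator $t$ in their own tuple, so the argument must be different. Here I would use the fact that the singleton $t^{\pm1}$ was produced from relators of that block, namely from its defining relations transported through the same basis changes. More precisely, I will argue that $t$ lies in the normal closure of the block's relators. Clearing $t$ then amounts to replacing each relator $R$ by $\pi(R)\equiv R$ modulo $\langle\!\langle t\rangle\!\rangle$, and this does not change the normal closure. Concretely, I would check that each relation operation is applied simultaneously to the full tuple and to the block tuple restricted to the block's generators. If the singleton arose inside the block, it is a consequence of the block's relators.

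\textbf{Main obstacle.} The hard case is a singleton $t$ that arose outside a given block but whose letter occurs in that block. This can only happen at an endpoint exposed by earlier clearing, where condition (3) holds: the live words are $\Theta=t^\alpha$ and $P=t^\beta$. There I would argue that the block's retained boundary relation (its last relation if it lies on the left, its first if on the right) already forces $t$ to be trivial. The reason is that the chosen block is trivial, so $t=1$ in that block anyway, and adding or deleting the relation $t=1$ preserves its isomorphism type. Since only the isomorphism type, indeed only triviality, of the blocks is used in the side-triviality tree, this suffices.

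\textbf{Conclusion.} Trivial blocks stay trivial after clearing. The nonsigned exponents of the shorter active interval are a subset of those of the old one, and the chosen trivial sides are the images of the old chosen blocks. Therefore the side-triviality tree of Lemma~\ref{lem:recursive-side-triviality} restricts to a valid tree for the shorter interval.
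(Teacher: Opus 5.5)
Your argument is correct, and your treatment of the full tuple (inversion followed by Lemma~\ref{lem:singleton-clearing}) matches the paper's. You diverge on the followed block presentations.

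The paper makes three claims. Blocks disjoint from the processed part are untouched. In a block containing that part, the processed relators become retained singletons as well. After clearing, the lemma on the right factor at a bridge identifies what remains with the corresponding smaller block of the shorter interval.

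You never make that identification. You show only that each carried block presentation still defines its old group, because $t$ is already a consequence of its relators: either $t$ was produced from one of them, or the block is trivial and killing $t$ keeps it trivial. This is exactly what Proposition~\ref{prop:open-algebraic-active-interval-reduction} uses, namely triviality of the carried presentation and hence $u=\pm1$.

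Your route also avoids an identification that is not literal: the residual boundary pair is $(z^{\alpha},z^{\beta})$, not $(z,z)$, as the genus-ten example shows. The identification can in fact fail even up to isomorphism. Take $K=(-3,-1,1,2,1)$, for which $G(K)=1$, and suppose $k_1=-3$ is processed first. The block chosen at $k_4=2$ is $G_4(-3,-1,1)=1$, and its carried image stays trivial. Yet both formal side blocks of the shorter interval at $k_4$ are nontrivial: $G(2,(-1,1))\cong\Z/2$ and $G(5,(1))\cong\Z/3$.

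So you should state explicitly that the chosen blocks of the shorter interval are the carried presentations of Definition~\ref{def:relative-algebraic-active-interval}(4). Your closing sentence, that the tree of Lemma~\ref{lem:recursive-side-triviality} restricts, is false if read for the formal blocks $G(t',I')$.

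Two smaller corrections. First, a block containing the processed position carries that relator itself, so $t$ is literally one of its relators, not merely in its normal closure. Second, every followed block is a chosen trivial block, so the trivial-quotient argument alone covers all cases. Your unproved claim that the remaining case arises only at an exposed endpoint can therefore be dropped.
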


\begin{proof}
Singleton clearing and the simultaneous changes of the ambient free basis
are Andrews--Curtis transformations, so every block presentation being
followed continues to define the same group.  A block disjoint from the part
already processed is unchanged, apart from these basis changes.  If a block
contains the processed part, apply the same transformations to its defining
presentation.  The processed relators become retained singleton relators.
After their letters are cleared, the lemma on the right factor at a bridge
identifies the remaining relators with the corresponding smaller block.
Thus a block marked trivial before the clearing leaves a trivial block after
the clearing.  The same argument applies at every later nonsigned exponent.
\end{proof}

\begin{prop}[Active-interval reduction]
\label{prop:open-algebraic-active-interval-reduction}
Let $I=(k_r,\ldots,k_s)$ be an active interval with its recursive
side-triviality tree.  By Andrews--Curtis transformations relative to the
retained singleton relators, the actual relators in positions
$r,\ldots,s$ can be changed into $s-r+1$ singleton basis elements.  One
primitive live element $t$ remains.  After the singleton letters are
cleared, every truncated relation of every block being followed is a power
of $t$.
\end{prop}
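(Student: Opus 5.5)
I will prove the proposition by strong induction on the length $s-r+1$ of the active interval, processing the relators left to right and using the recursive side-triviality tree of Lemma~\ref{lem:recursive-side-triviality} to decide what to do at each nonsigned exponent. The invariant carried along is condition~3 of Definition~\ref{def:relative-algebraic-active-interval}: at the left endpoint the two words entering the next one-layer relation are $\Theta=t^{\alpha}$ and $P=t^{\beta}$ for a single primitive live element $t$. Initially $r=1$ and $\Theta_1=P_1=a_1$, so $t=a_1$ and $\alpha=\beta=1$. The relator in position $r$ then reads $t^{\alpha}(t^{\beta}u_{r+1})^{k_r}$, where $u_{r+1}$ is the next active basis element.

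\emph{Signed case.} Suppose $k_r=\pm1$. The relator becomes $t^{\alpha}t^{\beta}u_{r+1}$, or $t^{\alpha}u_{r+1}^{-1}t^{-\beta}$ after conjugation. I will apply the Nielsen change $u_{r+1}\mapsto t^{-\alpha-\beta}u_{r+1}$ (respectively its analogue for $k_r=-1$), which turns this relator into a singleton basis element. Then I clear it from every other relator and from every followed block presentation by Lemma~\ref{lem:singleton-clearing}; Lemma~\ref{lem:trivial-blocks-singleton-clearing} guarantees that the groups, and hence the choices in the tree, are unchanged. After clearing, $u_{r+1}$ is a power of $t$. I must then check that the recursive formulas $\Theta_{r+1}=u_{r+1}(u_{r+1}P_r)^{k_r}$ and $P_{r+1}=\Theta_{r+1}^{-1}u_{r+1}\Theta_{r+1}$ become powers of $t$, which holds because everything now lies in the cyclic group $\langle t\rangle$. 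So the invariant persists with new exponents $\alpha',\beta'$, and the interval shrinks by one.

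\emph{Nonsigned case.} Suppose $|k_j|>1$. The tree gives a trivial side block. If the right block $G(j+1,(k_{j+1},\ldots,k_s))$ is the trivial one, I first reduce it by the induction hypothesis; by the handle-reversing symmetry noted before this subsection, I may run the reduction from its right end. This turns its relators into singletons and leaves its retained first relation as a power of the live element. Triviality of the block forces that power to have exponent $\pm1$, so the first relation becomes a singleton $u_{j+1}^{\pm1}$ after an AC/Nielsen change. Clearing it kills the bridge word, which collapses relator $j$ to a power of $t$ times a basis letter and reduces the situation to the signed pattern. If instead the left block is the trivial one, I reduce it first by the induction hypothesis. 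Its retained last relation is then $t^{e}$, and triviality of the block forces $e=\pm1$. Hence $t$ itself becomes a singleton, the exposed endpoint satisfies condition~3 with a fresh live element $u_{j+1}$, and I continue.

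The main obstacle is the repeated claim that the truncated block relations are pure powers of $t$, and that triviality forces exponent $\pm1$. This requires tracking carefully that clearing singleton letters commutes with the recursive definition of $\Theta_i,P_i$ in the block presentations. The bridge lemma (the lemma on the right factor at a bridge) is the tool I would try first to justify this identification.
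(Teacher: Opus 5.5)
Your signed step matches the paper's. The nonsigned step has a genuine gap: you never process the actual relator in the cut position $j$. After the left block $G(r,(k_r,\ldots,k_{j-1}))$ has been reduced, its truncated last relation $t^{e}$ is only an auxiliary relation of the followed block presentation. Condition~4 of Definition~\ref{def:relative-algebraic-active-interval} says explicitly that it is not inserted into the full tuple. So ``$t$ itself becomes a singleton'' is not an Andrews--Curtis move on the actual relator tuple.

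Suppose you clear $t$ anyway. The actual relator in position $j$ is $t^{e}(t^{v}z)^{m}$ with $z=u_{j+1}$ and $|m|>1$. After clearing $t$ it becomes $z^{m}$, which can never be turned into a singleton, so the count of $s-r+1$ singletons fails.

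The missing idea is that $e=\pm1$, which is all that triviality of the chosen block gives, makes the actual cut word primitive:
\begin{itemize}
\item Apply $z\mapsto t^{-v}z$ to get $t^{e}z^{m}$.
\item Then apply $t\mapsto tz^{-m}$ if $e=1$, or $t\mapsto z^{m}t^{-1}$ if $e=-1$. This sends the relator in position $j$ to the singleton $t$.
\item After clearing $t$, the new live element is $z$, and every boundary word is a power of $z$.
\end{itemize}
This is the heart of the argument, not the bookkeeping of truncated block relations that you flag as the main obstacle.

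The right-block case has the same conflation. The first relation of $G(j+1,(k_{j+1},\ldots,k_s))$ is also auxiliary, so it cannot be used to clear $u_{j+1}$ from the full tuple. Even if you did clear it, relator $j$ contains $u_{j+1}$ only inside $(Pu_{j+1})^{m}$. With $\Theta=t^{\alpha}$ and $P=t^{\beta}$ it would collapse to the pure power $t^{\alpha+m\beta}$. That is not ``a power of $t$ times a basis letter'' and is not of the signed pattern.

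The paper does not reduce the right block first and then resume left to right. It reverses the whole active interval by the handle-reversing symmetry, so the right block becomes a left block. It then applies the left-side argument, including the primitive-element trick at the cut relator, and reverses back. Your choice of inducting on the length of the interval rather than on the number of nonsigned exponents is harmless.
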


\begin{proof}
We use strong induction on the number of nonsigned exponents in $I$.

Suppose first that every exponent in $I$ is signed.  If the current pair is
\[
 \Theta=t^{\alpha},\qquad P=t^{\beta},
\]
and $z$ is the next unused active basis element, then at a signed exponent
$\varepsilon\in\{1,-1\}$ the actual relator is
\[
 R=t^{\alpha}(t^{\beta}z)^{\varepsilon}.
\]
For $\varepsilon=1$, the basis change
\[
 z\longmapsto t^{-\alpha-\beta}z
\]
sends $R=t^{\alpha+\beta}z$ to $z$.  For $\varepsilon=-1$, the basis change
\[
 z\longmapsto t^{-\beta}z^{-1}t^{\alpha}
\]
sends $R=t^{\alpha}z^{-1}t^{-\beta}$ to $z$.  Retain $z$ as a singleton
relator and clear it from all later actual relators and all block
presentations being followed.  Every new word which enters the next
relation is a word in $t,z$ before the clearing, and hence is a power of $t$
after $z$ is cleared.  Lemma~\ref{lem:trivial-blocks-singleton-clearing} shows that
the chosen trivial blocks remain valid.  Continuing through the signed
interval proves the proposition in this case.

Now suppose that $I$ contains a nonsigned exponent
\[
 k_j=m,\qquad |m|>1.
\]
Choose the trivial side specified by the side-triviality tree.  Suppose first
that the chosen side is the left block
\[
 G(r,(k_r,\ldots,k_{j-1})).
\]
Apply the induction hypothesis to the current actual relators in positions
$r,\ldots,j-1$, and apply the same transformations to the defining
presentation of this block.  These relators become singleton basis elements
and one primitive live element $t$ remains.  The truncated last relation of
the block becomes $t^u$, and the other current boundary word becomes $t^v$
for some integers $u,v$.  Since the chosen block is trivial, its transformed
presentation ends with
\[
 \langle t\mid t^u\rangle,
\]
and therefore
\begin{equation}
 u=\pm1.
 \label{eq:algebraic-terminal-exponent-sign}
\end{equation}
The truncated word $t^u$ has been used only in the presentation of the
chosen block; it has not been inserted into the full tuple.

Let $z$ be the first live basis element on the right.  The actual relator in
position $j$ is
\[
 R=t^u(t^vz)^m.
\]
First make the basis change
\[
 z\longmapsto t^{-v}z,
\]
which changes $R$ to $t^uz^m$.  If $u=1$, make
\[
 t\longmapsto tz^{-m};
\]
if $u=-1$, make
\[
 t\longmapsto z^m t^{-1}.
\]
Using~\eqref{eq:algebraic-terminal-exponent-sign}, either change sends the
actual relator in position $j$ to the singleton $t$.

Retain this singleton and clear its letter from all remaining actual
relators and all block presentations being followed.  The new live element
is $z$, and the remaining exponent interval is
\[
 (k_{j+1},\ldots,k_s).
\]
By Lemma~\ref{lem:trivial-blocks-singleton-clearing}, its chosen trivial blocks remain
valid, so the induction hypothesis applies to it.

If the side-triviality tree chooses the right block, use the symmetry
described above, apply the preceding left-side argument, and then reverse back.  This completes the induction.
\end{proof}

\begin{theorem}
\label{thm:recursive-tree-AC-reduction}
Let $K=(k_1,\ldots,k_{g-1})$ be a one-layer exponent tuple with
$G(K)=1$.  Then the ordered meridian tuple
\[
 (Y_1,\ldots,Y_g)
\]
is Andrews--Curtis equivalent to a free basis of
$F(a_1,\ldots,a_g)$, and hence to
\[
 (a_1,\ldots,a_g).
\]
\end{theorem}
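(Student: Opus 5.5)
The natural route is to apply Proposition~\ref{prop:open-algebraic-active-interval-reduction} to the whole exponent tuple, and then to deal with the single leftover relator in position $g$ by hand.

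\emph{Step 1 (the tree).} Since $G(K)=1$, the full group $G_g=G(1,K)$ is a trivial block. Lemma~\ref{lem:recursive-side-triviality} therefore provides a recursive side-triviality tree for the interval $I=(k_1,\ldots,k_{g-1})$.

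\emph{Step 2 (the interval is active).} At the start no singleton relators have been retained, and the active basis is $u_j=a_j$. The boundary pair at the left endpoint is $\Theta_1=P_1=a_1$, which is of the required form $t^{\alpha},t^{\beta}$ with $t=a_1$ and $\alpha=\beta=1$. Hence $I$ satisfies the conditions of Definition~\ref{def:relative-algebraic-active-interval}.

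\emph{Step 3 (the reduction).} Proposition~\ref{prop:open-algebraic-active-interval-reduction} now turns the actual relators $Y_1,\ldots,Y_{g-1}$, by Andrews--Curtis moves, into $g-1$ singleton relators $c_1,\ldots,c_{g-1}$. One primitive live element $t$ remains, and $(c_1,\ldots,c_{g-1},t)$ is a free basis of $F(a_1,\ldots,a_g)$.

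\emph{Step 4 (the last relator).} I would use Lemma~\ref{lem:singleton-clearing} to clear the letters $c_j$ from the relator still in position $g$. This replaces it by its image under the retraction onto $F(t)$, which is some power $t^{e}$. The resulting tuple $(c_1,\ldots,c_{g-1},t^{e})$ presents $\Z/e$. Because the moves so far are Andrews--Curtis moves, this group is isomorphic to $G(K)=1$, which forces $e=\pm1$. An inversion, if needed, turns the last relator into $t$, so the whole tuple is now the free basis $(c_1,\ldots,c_{g-1},t)$. Finally, the Nielsen change of basis sending this basis to $(a_1,\ldots,a_g)$ carries the tuple to $(a_1,\ldots,a_g)$.

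\emph{Main obstacle.} The delicate point is the applicability of the Proposition, in particular that the relator in position $g$ really collapses to a power of $t$. In position $g$ the actual relator is $Y_g=\Theta_g$, the current boundary word at the right endpoint. Once $c_1,\ldots,c_{g-1}$ are cleared, every word in the reduction that has not yet become a singleton is a word in the live element only, and $\Theta_g$ is such a word.

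One must also check two things inside the Proposition's induction. First, when the tree selects right blocks, the symmetry of handle reversal must be compatible with the ordering of the relators. Second, the truncated block relations $t^{u}$ with $u=\pm1$ must come from the block's triviality and not from the full group. I would verify both on the base case, a single nonsigned exponent with genus-one trivial sides, before trusting the induction.
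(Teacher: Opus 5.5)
Your proposal is correct and follows the paper's proof essentially verbatim. You take the tree from Lemma~\ref{lem:recursive-side-triviality}, apply Proposition~\ref{prop:open-algebraic-active-interval-reduction} with no retained singletons to turn $Y_1,\ldots,Y_{g-1}$ into singletons with one live element $t$, note that $Y_g=\Theta_g$ becomes $t^q$ with $q=\pm1$ forced by $G(K)=1$, and finish with an inversion and a Nielsen change of basis. Your singleton-clearing retraction onto $F(t)$ only makes explicit why the last relator is a power of $t$, and your concerns about right blocks and truncated relations bear on the Proposition's internal proof, which the paper's argument for this theorem also takes as given.
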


\begin{proof}
By Lemma~\ref{lem:recursive-side-triviality}, the full exponent interval
\[
 (k_1,\ldots,k_{g-1})
\]
has a recursive side-triviality tree.  There are initially no retained
singleton relators.  Apply
Proposition~\ref{prop:open-algebraic-active-interval-reduction}.  The first
$g-1$ actual relators become singleton basis elements and one primitive live
element $t$ remains.  The genuine last relator is $Y_g=\Theta_g$, and after
the same transformations it is $t^q$ for some integer $q$.  The resulting
presentation is therefore the retained singleton presentation together with
\[
 \langle t\mid t^q\rangle.
\]
Since $G(K)=1$, one has $q=\pm1$.  Invert the last relator if necessary.
The resulting ordered tuple is a free basis.  A simultaneous change of this
free basis to $(a_1,\ldots,a_g)$ is one of the Andrews--Curtis
transformations allowed in this paper.
\end{proof}

\subsection{Heegaard diagrams, disk slides, and the two coordinates}

Let
\[
 M=H_A\cup_S H_B,
 \qquad S=\partial H_A=\partial H_B\cong S_g,
\]
be the Heegaard splitting associated with \(\phi_K\).  Write
\[
 q_A:\pi_1(S,*)\twoheadrightarrow\pi_1(H_A,*)=F(a_1,\ldots,a_g),
\]
\[
 q_B:\pi_1(S,*)\twoheadrightarrow\pi_1(H_B,*)=F(b_1,\ldots,b_g),
\]
so that \(\phi_K=(q_B,q_A)\).  Choose complete disk systems
\[
 \mathcal A=(A_1,\ldots,A_g)\subset H_A,
 \qquad
 \mathcal B=(B_1,\ldots,B_g)\subset H_B,
\]
and put
\[
 \alpha_i=\partial A_i,
 \qquad
 \beta_i=\partial B_i.
\]
After choosing a basepoint and whiskers, let \(\widehat\beta_i\) be the based
loop determined by \(\beta_i\), and set
\[
 r_i=q_A([\widehat\beta_i])\in F(a_1,\ldots,a_g).
\]
Initially \(r_i=Y_i\), up to inversion and conjugacy, while
\[
 q_B([\widehat\beta_i])=1.
\]

Once an isolated dual pair has been obtained, we choose a small genus-one
neighborhood of that pair.  To say that the pair is \emph{kept fixed} means
that every later disk slide and homeomorphism is supported outside this
chosen neighborhood.  Thus the two disks, their boundary curves, and their
intersection point are not changed by later moves.

Changes of the \(A\)-disk system and changes of the \(B\)-disk system have
different roles in the proof, namely:

\begin{enumerate}
\item Slides, permutations, and orientation changes of the \(A\)-disks are
used only as changes of the target basis of \(F(a_1,\ldots,a_g)\).  We do not
regard them as source automorphisms of the surface group.

\item Slides of the \(B\)-disks change the attaching curves.  They are induced
by homeomorphisms of \(H_B\); their restrictions to \(S\) therefore give
source automorphisms preserving the meridian kernel \(\ker q_B\).
\end{enumerate}

We recall the disk-slide operation precisely.

Let \(N\) be a sufficiently small regular neighborhood in \(H\) of
\[
   B_q\cup\alpha\cup B_p.
\]
The closure of the part of \(\partial N\) lying in the interior of
\(H\) consists of three properly embedded disks: one parallel to
\(B_q\), one parallel to \(B_p\), and a third disk, denoted
\[
   B_q*_{\alpha}B_p.
\]
The slide of \(B_q\) over \(B_p\) along \(\alpha\) is the operation
which replaces \(B_q\) by \(B_q*_{\alpha}B_p\) and leaves \(B_p\)
unchanged.

\begin{df}[Slide of one disk over another]
\label{def:disk-slide}
Let \(B_p,B_q\) be distinct disks of a complete disk system in \(H_B\), and
let \(\delta\subset S\) be an embedded arc with one endpoint on
\(\beta_q\), the other on \(\beta_p\), and interior disjoint from all curves
\(\beta_i\).  A regular neighborhood of
\[
 B_q\cup\delta\cup B_p
\]
has, in its frontier, a disk which is not parallel to \(B_p\) or \(B_q\).
Denote this disk by \(B_q*_{\delta}B_p\).  Replacing \(B_q\) by
\(B_q*_{\delta}B_p\), while leaving \(B_p\) unchanged, is called the
\emph{slide of \(B_q\) over \(B_p\) along \(\delta\)}.
\end{df}
\begin{figure}[ht!]
\centering
\includegraphics[width=0.6\textwidth]{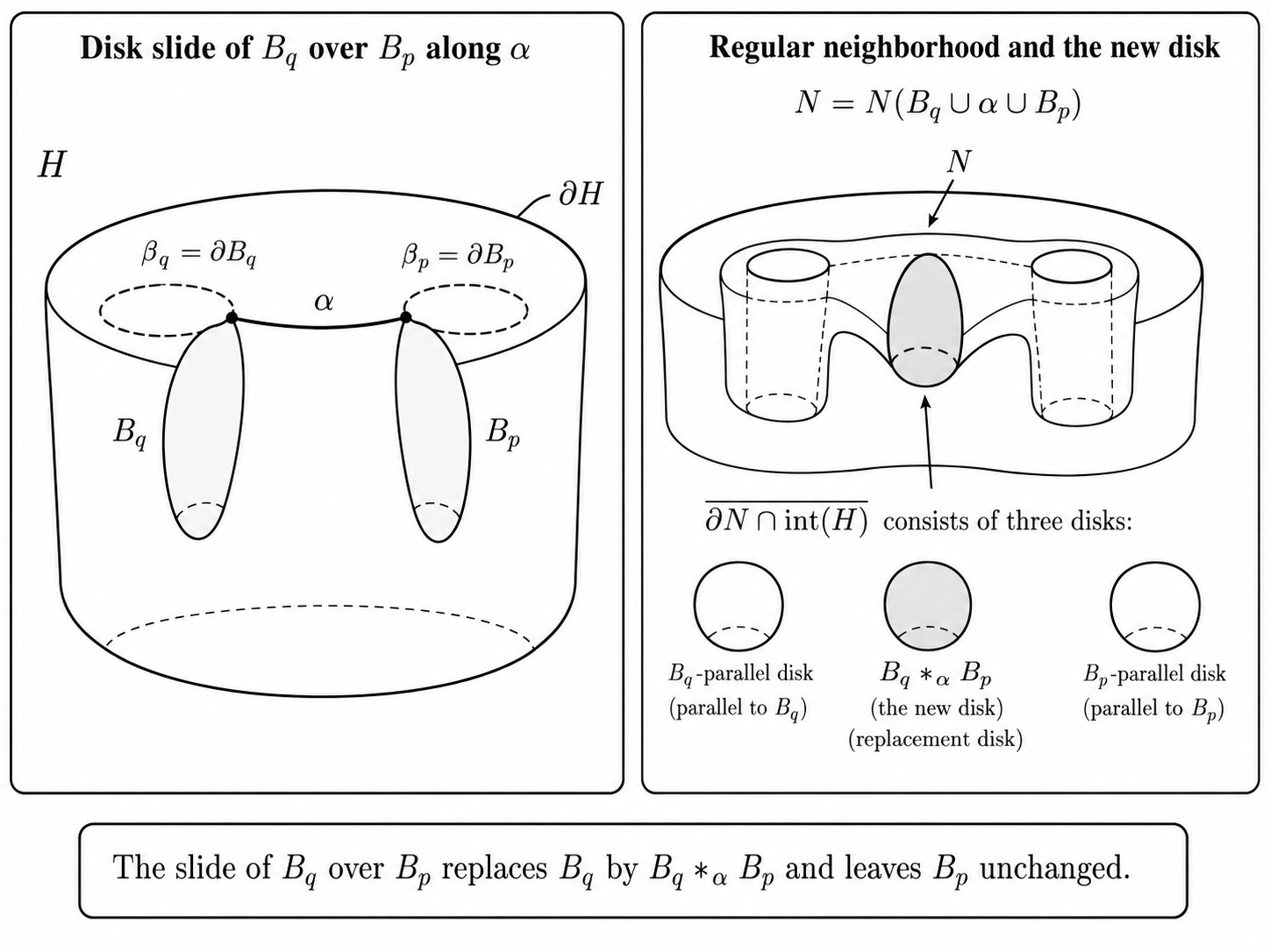}
\end{figure}
On the boundary, \(\partial(B_q*_{\delta}B_p)\) is the oriented band sum of
\(\beta_q\) and \(\beta_p\).  With based curves and whiskers, a slide changes
\(r_q\), according to the side and orientation of the band, to
\begin{equation}
 r_q\,w r_p^{\varepsilon}w^{-1}
 \quad\text{or}\quad
 w r_p^{\varepsilon}w^{-1}r_q,
 \qquad \varepsilon\in\{\pm1\}.
 \label{eq:disk-slide-word}
\end{equation}
Here \(w\in F(a_1,\ldots,a_g)\) is the \(q_A\)-image of the based route to
the copy of \(\beta_p\) used in the band sum.  A change of whisker may
conjugate the whole new word.  The word \(w\) is supplied by the actual band;
it is not an arbitrary prescribed element of the free group.  This is the
standard correspondence between disk slides and Nielsen moves in a
Heegaard diagram; see \cite{C,H,J}.

\begin{lm}
\label{lem:B-slide-preserves-kernel}
Let \(h_B:H_B\to H_B\) realize a slide of the \(B\)-disk system, let
\(U=(h_B|_S)_*\in\Aut\pi_1(S,*)\), and let
\(\nu_B=(h_B)_*\in\Aut F(b_1,\ldots,b_g)\).  Then
\begin{equation}
 q_B U=\nu_B q_B.
 \label{eq:B-slide-naturality}
\end{equation}
Consequently \(U(\ker q_B)=\ker q_B\).  In particular, every transformed
\(B\)-meridian still has trivial \(b\)-coordinate.
\end{lm}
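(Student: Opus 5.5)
The plan is to prove the naturality identity \eqref{eq:B-slide-naturality} directly from functoriality of $\pi_1$, and then read off the two consequences.

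First, fix the setup. The slide of Definition~\ref{def:disk-slide} is realized by a homeomorphism $h_B:H_B\to H_B$. Concretely, one pushes the foot of $B_q$ along $\delta$ and across $B_p$ inside a collar of $S$ in $H_B$. This gives a self-homeomorphism of the pair $(H_B,S)$, and since it is supported away from a neighborhood of the basepoint, it may be taken to fix $*$; if a whisker has to be moved, this changes things only by an inner automorphism, which is harmless below. The inclusion $\iota:S\hookrightarrow H_B$ then satisfies $h_B\circ\iota=\iota\circ(h_B|_S)$, because $h_B$ preserves $S=\partial H_B$ setwise. Applying $\pi_1(-,*)$ and using $q_B=\iota_*$ gives $q_B\circ U=\nu_B\circ q_B$, which is \eqref{eq:B-slide-naturality}. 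If a basepoint change is unavoidable, I would replace $\nu_B$ by $\nu_B$ composed with the corresponding inner automorphism of $F(b_1,\ldots,b_g)$. This is still an automorphism, so the identity survives in the same form.

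Next, the kernel statement. Since $\nu_B$ is an automorphism, it is injective. For $x\in\ker q_B$ we get $q_B(Ux)=\nu_B(q_Bx)=1$, so $U(\ker q_B)\subseteq\ker q_B$. The inverse slide is realized by $h_B^{-1}$, which satisfies the same identity with $U^{-1}$ and $\nu_B^{-1}$. Applying the same argument to it gives the reverse inclusion, hence $U(\ker q_B)=\ker q_B$.

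Finally, the transformed meridians. Each new meridian $\partial(B_q*_\delta B_p)$, and likewise the unchanged $\beta_i$, is the image under $h_B|_S$ of an original attaching curve $\beta_j$, up to whisker conjugation and orientation. Alternatively, it bounds a disk in $H_B$, so its based class lies in $\ker q_B$ directly. Either way its $b$-coordinate is trivial.

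The only delicate point I expect is the bookkeeping of basepoints and whiskers, namely making sure $h_B$ can be chosen to fix $*$, or else absorbing the change into an inner automorphism. I would handle this by choosing $*$ and the whiskers disjoint from the regular neighborhood of $B_q\cup\delta\cup B_p$ that supports the slide homeomorphism.
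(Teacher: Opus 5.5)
Your proposal is correct and takes the same route as the paper. Both derive \eqref{eq:B-slide-naturality} from naturality of $\pi_1$ under the inclusion $S=\partial H_B\hookrightarrow H_B$, absorb any basepoint change into an inner automorphism, and get the meridian statement from the fact that the new curve bounds the slid disk. Your step through $h_B^{-1}$ for the reverse inclusion works, but it is not needed: injectivity of $\nu_B$ already gives $x\in\ker q_B \iff Ux\in\ker q_B$, and $U$ is bijective.
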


\begin{proof}
Equation~\eqref{eq:B-slide-naturality} is the naturality relation for the
inclusion \(S=\partial H_B\hookrightarrow H_B\).  If the support of the slide
is disjoint from the basepoint, it holds as written; otherwise it holds up to
an inner automorphism, which has the same consequence for the kernel.  Thus
\[
 q_B U(y)=1
 \qquad\text{whenever}\qquad q_B(y)=1.
\]
Equivalently, the new boundary curve bounds the slid disk in \(H_B\).
\end{proof}

\begin{lm}[Relative realization of $F(a_1,\ldots ,a_g)$  Nielsen moves]
\label{lem:relative-nielsen-realization}
Suppose that some isolated dual pairs have already been kept fixed.  Every
Nielsen transformation of the free basis carried by the complementary
\(A\)-handles, extended by the identity on the fixed basis elements, is
realized by slides, permutations, and orientation changes of the active
\(A\)-disks supported outside the fixed neighborhoods.
\end{lm}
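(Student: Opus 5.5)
The proof reduces to generators. Aut$(F_n)$ is generated by elementary Nielsen moves: permutations of basis elements, inversions $a_j\mapsto a_j^{-1}$, and transvections $a_j\mapsto a_ja_k^{\pm1}$ or $a_j\mapsto a_k^{\pm1}a_j$ with $j\ne k$. So I only need to realize each elementary move on the complementary free factor $F(a_{j}: j\notin \text{fixed})$, extended by the identity on the fixed letters, by a support-controlled operation on the active $A$-disks. Composing these realizations then gives an arbitrary Nielsen transformation of the complementary factor.

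First I fix the setting. Let $N_1,\ldots,N_f$ be the chosen genus-one neighborhoods of the kept dual pairs, and let $S'$ be the complement in $S$ of their interiors. Then $S'$ is a surface of genus $g-f$ with $f$ boundary circles, and the active curves $\alpha_j$ lie in $S'$. Dually, the active $A$-disks together with the fixed ones cut $H_A$ into a ball. The active disks are dual to the free generators of the complementary factor, because $q_A$ sends a loop to the word recording its signed crossings of the $\alpha$-curves. I choose the basepoint and whiskers inside $S'$, away from the $N_i$. This gives the following dictionary.

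Reversing the orientation of $A_j$ inverts $a_j$.

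Relabeling active disks permutes the active generators.

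Sliding $A_j$ over $A_k$ along an arc $\delta\subset S'$ replaces $a_k$ by $a_ka_j^{\pm1}$ or $a_j^{\pm1}a_k$. This is the dual form of \eqref{eq:disk-slide-word}: the transformation is on the dual basis, and the side of the band and the orientation choose the sign and the side of multiplication.

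In each case the fixed generators are untouched, since the moves are supported in a regular neighborhood of $A_j\cup\delta\cup A_k$, which misses the fixed disks and the $N_i$.

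The main step is to produce the arc $\delta$ for a given ordered pair $(j,k)$ and a given side and sign. The arc must lie in $S'$ and have interior disjoint from all $\alpha$-curves. My plan is to cut $S$ along all $\alpha_i$. The result is a planar surface $P$, a sphere with $2g$ holes $\alpha_i^{\pm}$, with the $N_i$ meeting it in neighborhoods of the pairs $\alpha_i^{\pm}$ for fixed $i$. Removing those neighborhoods from $P$ leaves a connected planar surface, since we are removing disjoint disks, or annular collars around pairs of holes, from a sphere. Any two active holes $\alpha_j^{\epsilon}$ and $\alpha_k^{\epsilon'}$ can therefore be joined by an embedded arc in it. The four choices of $\epsilon,\epsilon'$ account for the side and sign of the band.

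The base-point issue is handled as follows. If the whisker route to $\alpha_j$ picks up a nontrivial word $w$, the slide gives a conjugated transvection $a_k\mapsto a_k\,w a_j^{\pm1}w^{-1}$. This is still a product of elementary Nielsen moves on the full basis, but it may involve fixed letters. I expect this to be the obstacle. I will avoid it by choosing the whiskers all in $P$ minus the fixed collars, where no $\alpha$-curve is crossed. Then $w$ is trivial, or at worst a word in active letters, which can be undone by further active slides. Finally, Lemma~\ref{lem:B-slide-preserves-kernel} is not needed here, because $A$-moves only change the target basis.
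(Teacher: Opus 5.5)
Your proposal is correct and takes essentially the same route as the paper: reduce to the elementary Nielsen generators, then realize inversions, permutations and transvections by reorientations, relabelings and slides of the active $A$-disks, all supported away from the fixed neighborhoods. Your extra details fill in what the paper asserts in one line. These are the existence of the band arc in the connected planar surface obtained by cutting along all $\alpha$-curves and removing the fixed neighborhoods, and the triviality of the band word, since such paths cross no $\alpha$-curve. That your slide changes the generator dual to the disk slid over, rather than the slid disk, is only the transposed convention and gives the same set of transvections.
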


\begin{proof}
Permutations and inversions are realized by permuting and reorienting the
active handles.  The elementary transformations
\[
 u_j\longmapsto u_i^{\pm1}u_j,
 \qquad
 u_j\longmapsto u_j u_i^{\pm1}
\]
are realized by sliding the \(j\)-th active \(A\)-disk over the \(i\)-th
active \(A\)-disk, with the side and orientation of the slide determining the
formula.  These moves generate the Nielsen group.  Since they involve only
active handles, they can be supported outside the fixed neighborhoods.
\end{proof}

The following lemma is the geometric replacement for the formal operation of
clearing a singleton relator.

\begin{lm}[Isolation of a primitive attaching curve]
\label{lem:geometric-singleton-cancellation}
Suppose that, in the current active  basis
\((u_1,\ldots,u_n)\) of the target group $F(a_1,\ldots ,a_g)$, an attaching curve \(\beta_p\) represents
\(t^{\varepsilon}\), where \(t\) is one of the active basis elements and
\(\varepsilon=\pm1\).  Then, by changes of the active \(A\)-disk system and
slides of the active \(B\)-disks, all supported outside the previously fixed
neighborhoods, one can arrange that
\[
 |\alpha_t\cap\beta_p|=1,
\]
\[
 \alpha_i\cap\beta_p=\varnothing\quad(i\neq t),
 \qquad
 \alpha_t\cap\beta_q=\varnothing\quad(q\neq p).
\]
Consequently \((A_t,B_p)\) is an isolated dual pair and may be kept fixed.
All \(B\)-disk slides used in this process preserve \(\ker q_B\), and hence
preserve the trivial \(b\)-parts of the meridians.
\end{lm}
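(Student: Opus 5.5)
The plan is to first arrange that \(\beta_p\) meets the \(A\)-disks in a single point of \(\alpha_t\), and then to remove the remaining intersections of \(\alpha_t\) with the other attaching curves by \(B\)-disk slides over \(B_p\). Throughout, the only tools are the changes of the active \(A\)-disk system allowed by Lemma~\ref{lem:relative-nielsen-realization}, slides of active \(B\)-disks as in Definition~\ref{def:disk-slide}, and Lemma~\ref{lem:B-slide-preserves-kernel}, which guarantees that every \(B\)-slide keeps the \(b\)-coordinates of the meridians trivial.

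\emph{Step 1: reduce \(\beta_p\) to one intersection point.} The word read off \(\beta_p\) from its intersections with \(\alpha_1,\ldots,\alpha_n\) represents \(t^{\varepsilon}\). In general this word is not reduced and has extra letters. First remove bigons between \(\beta_p\) and the \(\alpha_i\) by isotopy; this only removes cancelling pairs, so the reading becomes cyclically reduced, although it may still be written in an old basis. Next, use Lemma~\ref{lem:relative-nielsen-realization} to realize a Nielsen change after which \(t\) is a basis element and \(\beta_p\) still represents \(t^{\varepsilon}\). Then apply the Whitehead/Zieschang argument, i.e.\ the standard correspondence between disk slides and Nielsen moves \cite{C,H,J}: if a cyclically reduced reading of \(\beta_p\) has length greater than \(1\) while representing a primitive element, a Whitehead automorphism shortens it. Each such automorphism is a product of \(A\)-disk slides, and the band of each slide can be chosen in the complement of the fixed neighborhoods. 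Inducting on the geometric intersection number \(|\beta_p\cap\bigcup\alpha_i|\), we reach \(|\alpha_t\cap\beta_p|=1\) and \(\alpha_i\cap\beta_p=\varnothing\) for \(i\ne t\). We should expect this step to be the main obstacle. Whitehead's lemma is algebraic, while the reading of a curve can be non-reduced in ways that isotopy cannot fix because of the other \(\alpha\)'s. I would first invoke Zieschang's theorem that a simple closed curve on a handlebody boundary representing a primitive element is disk-busting-reducible. The fallback is a direct outermost-arc argument on the disk \(A_t\): cut \(A_t\) along \(\beta\)-arcs and slide \(A\)-disks across outermost subdisks.

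\emph{Step 2: clear \(\alpha_t\) of the other \(\beta_q\).} Once \(\alpha_t\) and \(\beta_p\) meet exactly once, take a regular neighborhood \(N\) of \(\alpha_t\cup\beta_p\). This \(N\) is a punctured torus, and \(\alpha_t\) is a closed curve through it. Consider any other \(\beta_q\) crossing \(\alpha_t\), and take the crossing point closest along \(\alpha_t\) to \(\alpha_t\cap\beta_p\). Let \(\delta\) be the subarc of \(\alpha_t\) from \(\beta_q\) to \(\beta_p\); its interior misses all \(\beta\)'s by this choice. Slide \(B_q\) over \(B_p\) along \(\delta\). The band sum travels around \(\beta_p\), which meets \(\alpha_t\) only once, so the slide removes this crossing of \(\beta_q\) with \(\alpha_t\) and adds no new crossings with any \(\alpha_i\), since \(\beta_p\) has no other crossings. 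Thus \(\sum_{q\ne p}|\alpha_t\cap\beta_q|\) drops by one. The slide is supported in a neighborhood of \(\beta_p\cup\delta\cup\beta_q\), which lies outside the fixed neighborhoods because \(\beta_p\) and \(\alpha_t\) are active.

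\emph{Step 3: conclude.} Iterate Step 2 until \(\alpha_t\cap\beta_q=\varnothing\) for all \(q\ne p\). The pair \((A_t,B_p)\) is then isolated, and we fix a genus-one neighborhood of \(\alpha_t\cup\beta_p\). On the algebraic side, the effect on words is exactly the formula \eqref{eq:disk-slide-word} with \(r_p=t^{\pm1}\). This deletes an occurrence of \(t\) in \(r_q\), which matches the singleton clearing of Lemma~\ref{lem:singleton-clearing}. Finally, Lemma~\ref{lem:B-slide-preserves-kernel} gives the statement about \(\ker q_B\) and the trivial \(b\)-parts.
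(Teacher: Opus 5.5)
Your Step~2 and Step~3 match the paper's argument. You slide $B_q$ over $B_p$ along the subarc of $\alpha_t$ ending at $\alpha_t\cap\beta_p$. No new $\alpha$-intersections appear because $\beta_p$ meets only $\alpha_t$. You repeat, and then quote Lemma~\ref{lem:B-slide-preserves-kernel}. One detail: choose the point adjacent to $\alpha_t\cap\beta_p$ among \emph{all} $\beta$-points on $\alpha_t$, and slide whichever $\beta_q$ passes through it. Taking the nearest point of a preassigned $\beta_q$ does not guarantee that the interior of $\delta$ misses the other attaching curves.

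For Step~1 the paper does not run a Whitehead induction. It cites the primitive-curve criterion, which gives an essential disk $D$ in the active handlebody meeting $\beta_p$ in exactly one point. It then extends $D$ to a complete active disk system whose other disks miss $\beta_p$. Finally it uses the fact that complete disk systems are related by slides supported away from the fixed pairs.

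Your main line for Step~1 has a genuine gap, which you half-notice yourself.
\begin{itemize}
\item Removing bigons puts $\beta_p$ in minimal position only with respect to the \emph{given} disks; it does not make the reading cyclically reduced. A cancelling pair $u_iu_i^{-1}$ can come from a wave. This is an arc of $\beta_p$ in the planar surface $S\setminus\bigcup\alpha_i$ with both endpoints on the same side of $\alpha_i$ that is not parallel into that boundary circle, and no isotopy removes it.
\item Realizing by slides an algebraic Whitehead automorphism that shortens the word gives no control of the new geometric intersection number. So the induction on $|\beta_p\cap\bigcup\alpha_i|$ is unsupported.
\end{itemize}
Repairing this requires the geometric version. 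Take a disk system minimizing $|\beta_p\cap\bigcup\alpha_i|$. Show that surgery along a wave would lower this number, so the reading is cyclically reduced. Then show that a cut vertex of the geometric Whitehead graph yields an intersection-reducing slide. That is exactly the proof of the primitive-curve criterion the paper cites.

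So use your fallback, but state it correctly: a disk meeting $\beta_p$ once exists. ``Disk-busting-reducible'' is not the theorem. You also need the extension step the paper includes, which your proposal omits.
\begin{itemize}
\item $D$ is nonseparating, because $\beta_p$ crosses it once.
\item Cutting along $D$ turns $\beta_p$ into an arc joining the two scars of $D$.
\item A complete disk system of the cut handlebody can be isotoped off a regular neighborhood of this arc and the scars.
\item Together with $D$, this system gives $|\alpha_t\cap\beta_p|=1$ and $\alpha_i\cap\beta_p=\varnothing$ for $i\neq t$.
\end{itemize}

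Two smaller points. Your ``outermost-arc argument on $A_t$ along $\beta$-arcs'' does not make sense as stated, since the $\beta_q$ meet $A_t$ only in points of $\alpha_t$. The relevant outermost arcs are those of $D\cap\bigcup A_i$, and they are how one sees that the new system is reached by slides. Also, $t$ is already an active basis element by hypothesis, so your preliminary Nielsen change is superfluous.
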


\begin{proof}
The curve \(\beta_p\) represents a primitive element of
\(\pi_1(H_A)\).  By the primitive-curve criterion for handlebodies, there is
an essential disk in the active \(A\)-handlebody meeting \(\beta_p\) in one
point; see, for example, \cite{H,Moriah}.  Extend it to a complete active disk
system whose remaining disks are disjoint from \(\beta_p\).  Complete disk
systems are related by disk slides, and the relative version of
Lemma~\ref{lem:relative-nielsen-realization} permits this change outside the
fixed neighborhoods.  Relabel the dual disk as \(A_t\).  We then have
\[
 |\alpha_t\cap\beta_p|=1,
 \qquad
 \alpha_i\cap\beta_p=\varnothing\quad(i\neq t).
\]

Let \(q\neq p\) and suppose that \(\beta_q\) meets \(\alpha_t\).  Along
\(\alpha_t\), choose an intersection point adjacent to the unique point of
\(\alpha_t\cap\beta_p\).  The intervening subarc of \(\alpha_t\) has interior
disjoint from the entire \(\beta\)-system.  A narrow band parallel to that
subarc defines a slide of \(B_q\) over \(B_p\).  With the appropriate
orientation, the slide removes the chosen intersection and creates no
intersection with any other \(\alpha_i\), because \(\beta_p\) is disjoint
from those curves.  Repeating the operation clears \(\alpha_t\) from every
\(\beta_q\), \(q\neq p\).  Lemma~\ref{lem:B-slide-preserves-kernel} shows that
all these slides preserve the trivial \(b\)-coordinate.  The pair
\((A_t,B_p)\) is now isolated.
\end{proof}

The next statement is the point which makes the relative induction
independent of the geometrically restricted conjugating words in
\eqref{eq:disk-slide-word}.

\begin{lm}
\label{lem:pivot-contraction}
Assume that the attaching word of $B_p$ is $t^{\pm1}$ in the current active
$A$-basis.  Perform any finite sequence of slides of the other active
$B$-disks over $B_p$, and let $r_q'$ be the resulting attaching words.  In
the quotient obtained by setting $t=1$, the images of $r_q'$ and $r_q$
agree up to conjugacy.  Consequently, after the isolated pair
$(A_t,B_p)$ is suppressed from the active notation, the remaining
presentation is Tietze equivalent to the presentation obtained by setting
$t=1$ in the words before the clearing slides.

The same conclusion holds for every block presentation being followed.  In
particular, every block which was trivial before the pair was suppressed
leaves the corresponding trivial block on the remaining exponent interval.
\end{lm}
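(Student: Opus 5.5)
The plan is to compute the effect of a single slide in the quotient, then induct on the number of slides. By \eqref{eq:disk-slide-word}, one slide of $B_q$ over $B_p$ replaces $r_q$ by
\[
 r_q\,w r_p^{\varepsilon}w^{-1}
 \qquad\text{or}\qquad
 w r_p^{\varepsilon}w^{-1}r_q,
\]
possibly followed by a whisker change, which conjugates the whole new word. Here $r_p=t^{\pm1}$, and $w$ is an arbitrary word, namely the $q_A$-image of the actual band route.

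Let $\pi_t$ be the retraction of the free group on the current active basis that kills $t$ and fixes the other active basis elements. Then $\pi_t(w r_p^{\varepsilon}w^{-1})=\pi_t(w)\pi_t(w)^{-1}=1$, whatever $w$ is. Hence $\pi_t(r_q')$ is conjugate to $\pi_t(r_q)$. This is the point where the geometric restriction on $w$ plays no role, which is why the lemma makes the induction independent of it.

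By induction on the length of the slide sequence, I obtain $\pi_t(r_q')\sim\pi_t(r_q)$ for all $q\neq p$. Any Nielsen changes of the active $A$-basis made along the way fix $t$, or are composed with it consistently, so they commute with $\pi_t$ up to the induced basis change. Meanwhile $r_p$ itself is never changed.

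For the Tietze statement, the presentation after the slides is $\langle u_1,\ldots,u_n\mid t,\ r_q'\ (q\neq p)\rangle$. Eliminating the generator $t$ with the relator $t$ gives $\langle \text{active }u_i\neq t\mid \pi_t(r_q')\rangle$. By the previous paragraph, this is the presentation $\langle\ \mid \pi_t(r_q)\rangle$ up to conjugating individual relators. This is exactly the presentation obtained by setting $t=1$ before the clearing slides, and it matches the formal singleton clearing of Lemma~\ref{lem:singleton-clearing}.

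For the block presentations, I apply the same slides to each block being followed. Each such relation is altered only by inserting conjugates of $r_p^{\pm1}$, or is left unchanged. So the same computation shows that its image under $\pi_t$ agrees up to conjugacy with the image before the slides. Then Lemma~\ref{lem:trivial-blocks-singleton-clearing} gives the conclusion: the block presentations after suppression define the same groups as after formal clearing. In particular, trivial blocks remain trivial, and the right-factor-at-a-bridge lemma identifies them with the corresponding blocks on the shorter interval.

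The main obstacle I expect is bookkeeping for the whiskers and basepoint. A slide whose support meets the basepoint changes words only up to an inner automorphism, as in Lemma~\ref{lem:B-slide-preserves-kernel}. I would handle this by stating every claim only up to conjugacy of individual relators, which is harmless for Tietze equivalence. A second subtlety is that the $A$-basis may have been changed in Lemma~\ref{lem:geometric-singleton-cancellation} before the slides. I would fix the active basis containing $t$ at the moment the slides begin, and define $\pi_t$ relative to that basis.
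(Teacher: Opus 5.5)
Your proposal is correct and follows the paper's proof. The shared argument is that each slide over $B_p$ multiplies $r_q$ by a conjugate $wt^{\pm1}w^{-1}$, which dies when $t=1$ whatever the band word $w$ is. Whisker changes only conjugate, and the block presentations are handled by the same clearing together with the right-factor-at-a-bridge lemma, which you reach through Lemma~\ref{lem:trivial-blocks-singleton-clearing}. Your explicit induction on the number of slides and the Tietze elimination of $t$ just spell out what the paper leaves implicit. The aside about Nielsen changes of the $A$-basis during the slides is unnecessary, since the lemma concerns only $B$-slides over $B_p$.
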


\begin{proof}
By~\eqref{eq:disk-slide-word}, each slide multiplies one attaching word, on
the left or on the right, by
\[
 w t^{\delta}w^{-1},
 \qquad \delta\in\{1,-1\},
\]
where $w$ is determined by the band.  This factor becomes trivial after
$t=1$.  A change of whisker only conjugates the whole attaching word.
Therefore the remaining full presentation is Tietze equivalent to the
direct quotient presentation.

Apply the same basis changes and disk slides to every block presentation
being followed.  If the block contains the isolated pair, the singleton is
cleared there as well; the lemma on the right factor at a bridge identifies
the remaining relations with the corresponding smaller block.  If the block
does not contain the pair, it is unchanged apart from basis changes.  Thus
triviality of the chosen blocks is preserved.
\end{proof}

\begin{remark}
Lemma~\ref{lem:pivot-contraction} does not assert that an arbitrary formal
Andrews--Curtis conjugation is realized by a disk slide.  The bands supply the
conjugating words.  The lemma says that their precise values are irrelevant
once the primitive singleton is set equal to one.
\end{remark}

\subsection{Active intervals in the Heegaard diagram}

After some isolated dual pairs have been kept fixed, the remaining handles
correspond to a consecutive exponent interval.  We use the same active
interval as in the Andrews--Curtis reduction, but now keep track of the
actual $B$-attaching curves.

\begin{df}
\label{def:relative-geometric-active-interval}
Let
\[
 I=(k_r,\ldots,k_s)
\]
be an exponent interval.  A Heegaard diagram is in active form on $I$ if the
following conditions hold.

\begin{enumerate}
\item Every handle already processed belongs to an isolated dual pair whose
chosen neighborhood is kept fixed.

\item The remaining $A$-disks determine active basis elements
\[
 u_r,\ldots,u_{s+1}.
\]
Together with the basis elements belonging to the fixed pairs, they form a
basis of $F(a_1,\ldots,a_g)$.

\item The current $B$-attaching curves in positions $r,\ldots,s+1$ are the
images of the actual attaching curves under the disk moves already
performed.  Their words are not assumed to be the defining relators of the
block $G(r,I)$.

\item At an exposed left endpoint, the two current words which enter the
next one-layer relation have the form
\[
 \Theta=t^{\alpha},\qquad P=t^{\beta},
\]
where $t$ is primitive.  The defining presentation of every trivial block
chosen in the side-triviality tree is followed under the same moves.  Its
truncated relation is an auxiliary relation of that block presentation, not
an actual attaching curve of the full Heegaard diagram.
\end{enumerate}
\end{df}

In reducing $I=(k_r,\ldots,k_s)$, we first treat the actual attaching curves
in positions $r,\ldots,s$, one for each exponent in $I$.  The actual curve in
position $s+1$ is kept until the end.

Suppose that a nonsigned exponent
\[
 k_j=m,\qquad |m|>1,
\]
is being processed and that the chosen trivial side is the left block
\[
 G(r,(k_r,\ldots,k_{j-1})).
\]
After its attaching curves corresponding to the exponent entries have been
processed, let its truncated last relation have word $\Theta$ and let the
other current boundary word be $P$.  If $z$ is the first live generator on
the right, then the actual attaching curve in position $j$ has word
\begin{equation}
 \Theta(Pz)^m.
 \label{eq:actual-cut-relation}
\end{equation}
The truncated word $\Theta$ is used only in the presentation of the chosen
block.  It is not an attaching curve in the full diagram.  If the chosen
trivial block is on the right, the same statement is used after applying the
symmetry of exponent tuples.

After an isolated primitive pair is suppressed from the active notation,
Lemma~\ref{lem:pivot-contraction} shows that every block presentation being
followed has been changed only by Tietze transformations and by setting the
singleton equal to one.  The lemma on the right factor at a bridge then
identifies the remaining presentation with the corresponding block.  Hence
the recursive choices of trivial blocks remain valid.

\begin{prop}[Active-interval reduction in the Heegaard diagram]
\label{prop:open-active-interval-reduction}
Let $I=(k_r,\ldots,k_s)$ be an active interval with its recursive
side-triviality tree.  By changes of the active $A$-disk system and slides of
the active $B$-disk system, all performed relative to the fixed pairs, the
actual attaching curves in positions $r,\ldots,s$ can be changed into
$s-r+1$ isolated dual curves.  One active handle remains.  Every truncated
relation of every block being followed becomes a power of the primitive
generator of this remaining handle.
\end{prop}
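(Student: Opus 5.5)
The plan is to mirror the proof of Proposition~\ref{prop:open-algebraic-active-interval-reduction} step by step. Each formal Andrews--Curtis operation there is replaced by a geometric move. Changes of the ambient free basis are realized by changes of the active $A$-disk system (Lemma~\ref{lem:relative-nielsen-realization}). Singleton clearing is realized by isolating a primitive attaching curve (Lemma~\ref{lem:geometric-singleton-cancellation}) followed by suppression of the isolated pair (Lemma~\ref{lem:pivot-contraction}). The argument is a strong induction on the number of nonsigned exponents in $I$. Throughout, every move is supported outside the neighborhoods of the previously fixed pairs. By Lemma~\ref{lem:B-slide-preserves-kernel}, all $B$-slides keep the $b$-coordinates of the meridians trivial.

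\emph{Base case: every exponent of $I$ is signed.} The current pair is $\Theta=t^\alpha$, $P=t^\beta$, and $z$ is the next active element. The actual curve in position $r$ has word $t^{\alpha}(t^{\beta}z)^{\pm1}$. The algebraic basis changes $z\mapsto t^{-\alpha-\beta}z$ or $z\mapsto t^{-\beta}z^{-1}t^{\alpha}$ are Nielsen transformations of the active basis. Lemma~\ref{lem:relative-nielsen-realization} realizes them by $A$-disk moves, after which the curve represents a single basis element $z$. Lemma~\ref{lem:geometric-singleton-cancellation} then makes $(A_z,B_r)$ an isolated dual pair, which is kept fixed. The clearing slides multiply the other words by conjugates of $z^{\pm1}$. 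By Lemma~\ref{lem:pivot-contraction} the remaining full diagram, and every block presentation being followed, therefore become exactly the presentations obtained by setting $z=1$. The next pair $(\Theta,P)$ consists of words in $t,z$, so after suppression it is again of the form $t^{\alpha'},t^{\beta'}$. Iterating along the interval gives the claim.

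\emph{Inductive step: $I$ contains a nonsigned $k_j=m$.} Suppose the tree chooses the left block $G(r,(k_r,\ldots,k_{j-1}))$. Apply the induction hypothesis to positions $r,\ldots,j-1$, which have fewer nonsigned exponents. This produces $j-r$ isolated pairs and one live handle $t$, and the truncated last relation of the block becomes $t^u$. The block stays trivial under these moves by Lemma~\ref{lem:pivot-contraction} together with the lemma on the right factor at a bridge. Hence $\langle t\mid t^u\rangle=1$, so $u=\pm1$. The actual curve in position $j$ has the word \eqref{eq:actual-cut-relation}, namely $t^{u}(t^{v}z)^m$. The Nielsen changes $z\mapsto t^{-v}z$, followed by $t\mapsto tz^{-m}$ or $t\mapsto z^mt^{-1}$, turn this word into $t$, and they are realized by $A$-disk moves. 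Then Lemmas~\ref{lem:geometric-singleton-cancellation} and \ref{lem:pivot-contraction} isolate and suppress the corresponding pair. The live handle becomes $z$, the left endpoint of $(k_{j+1},\ldots,k_s)$ is exposed in the form required by Definition~\ref{def:relative-geometric-active-interval}, and the induction hypothesis applies. If the tree chooses the right block, we use the handle-reversing symmetry, which is realized geometrically by relabeling the handles.

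\emph{Main obstacle.} The delicate point is that the geometric clearing slides carry band-determined conjugators $w$, not the formal ones used in the algebraic proof. The words obtained geometrically are therefore not literally those in the algebraic reduction. I expect to handle this entirely through Lemma~\ref{lem:pivot-contraction}: after suppression the words agree up to conjugacy and Tietze moves with the words obtained by setting the singleton equal to one. All later steps depend only on these quotients, namely that $\Theta$ and $P$ are powers of $t$ and that the chosen blocks remain trivial, so the discrepancy does no harm.

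The remaining check is that the exponents $u$ and $v$ are well defined under this conjugacy ambiguity. This holds because in the cyclic group $\langle t\rangle$ conjugation acts trivially. Finally, the auxiliary truncated relations are never realized as curves in the full diagram; they are tracked only algebraically, inside the block presentations being followed.
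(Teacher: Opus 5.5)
Your proposal is essentially the paper's own proof. It uses the same strong induction on the number of nonsigned exponents and the same target basis changes: $z\mapsto t^{-\alpha-\beta}z$ or $z\mapsto t^{-\beta}z^{-1}t^{\alpha}$ at signed entries, and $z\mapsto t^{-v}z$ followed by $t\mapsto tz^{-m}$ or $t\mapsto z^{m}t^{-1}$ at a nonsigned entry, with $u=\pm1$ forced by triviality of the chosen block. As in the paper, these changes are realized by active $A$-disk moves, the resulting primitive curve is isolated by Lemma~\ref{lem:geometric-singleton-cancellation}, and Lemma~\ref{lem:pivot-contraction} absorbs the band-determined conjugators and keeps the chosen blocks trivial.

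The one place you go beyond the paper is the claim that the right-block case is ``realized geometrically by relabeling the handles''. Relabeling alone does not turn the left-to-right recursion of the meridian words into the reversed one. The symmetry of Lemma~\ref{lem:symmetric-tuples} also inverts the one-layer automorphism and interchanges the two handlebodies, so you should invoke that symmetry exactly as the paper does rather than describe it as a relabeling.
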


\begin{proof}
We use strong induction on the number of nonsigned exponents in $I$.

If every exponent in $I$ is signed, let the current pair be
\[
 \Theta=t^{\alpha},\qquad P=t^{\beta},
\]
and let $z$ be the next unused active generator.  At a signed exponent
$\varepsilon=\pm1$ the attaching word is
\[
 R=t^{\alpha}(t^{\beta}z)^{\varepsilon}.
\]
For $\varepsilon=1$, the target basis change
\[
 z\longmapsto t^{-\alpha-\beta}z
\]
sends $R$ to $z$.  For $\varepsilon=-1$, the target basis change
\[
 z\longmapsto t^{-\beta}z^{-1}t^{\alpha}
\]
also sends $R$ to $z$.  By
Lemma~\ref{lem:relative-nielsen-realization}, these changes are realized by
changes of the active $A$-disk system.  Then
Lemma~\ref{lem:geometric-singleton-cancellation} isolates the corresponding
$B$-attaching curve by genuine $B$-disk slides.  Keep the resulting dual pair
fixed and set its singleton equal to one in the active words.
Lemma~\ref{lem:pivot-contraction} preserves the chosen trivial blocks.
Continuing through the signed interval proves the proposition in this case.

Now choose a nonsigned exponent $k_j=m$ and the trivial block specified by
the side-triviality tree.  Suppose first that the chosen block is
\[
 G(r,(k_r,\ldots,k_{j-1})).
\]
Apply the induction hypothesis to the actual attaching curves in positions
$r,\ldots,j-1$ and apply the same moves to the defining presentation of the
chosen block.  These curves become isolated dual curves and one primitive
element $t$ remains.  The truncated last relation of the chosen block has
word $t^u$, and the other current boundary word has the form $t^v$.  Since
the chosen block is trivial,
\(
 \langle t\mid t^u\rangle=1,
\)
so
\begin{equation}
 u=\pm1.
 \label{eq:terminal-exponent-sign}
\end{equation}
The truncated relation $t^u$ has been used only to determine the group of
the block.

Let $z$ be the first live generator on the right.  The actual attaching
curve in position $j$ has word
\[
 R=t^u(t^vz)^m.
\]
First make the target basis change
\(
 z\longmapsto t^{-v}z,
\)
which changes $R$ to $t^uz^m$.  If $u=1$, make
\(
 t\longmapsto tz^{-m};
\)
if $u=-1$, make
\(
 t\longmapsto z^m t^{-1}.
\)
Using~\eqref{eq:terminal-exponent-sign}, either change sends the actual cut
word to the primitive singleton $t$.  Realize these changes by active
$A$-disk moves and use
Lemma~\ref{lem:geometric-singleton-cancellation} to isolate the corresponding
$B$-disk by actual disk slides.

The remaining exponent interval is $(k_{j+1},\ldots,k_s)$, with $z$ as its
primitive boundary generator.  Lemma~\ref{lem:pivot-contraction} and the
right-factor lemma preserve all chosen trivial blocks, so the induction
hypothesis applies to the remaining interval.  If the chosen trivial block is on the right, use the symmetry described
above, apply the preceding left-side argument, and then reverse back.
\end{proof}

\begin{theorem}[Geometric active-interval theorem]
\label{thm:geometric-active-interval}
Assume that $G(K)=1$ and that the full exponent interval
\[
 (k_1,\ldots,k_{g-1})
\]
has its recursive side-triviality tree.  Then the full Heegaard diagram can
be changed into a complete collection of isolated dual pairs.
\end{theorem}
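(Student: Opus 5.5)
The plan mirrors the proof of Theorem~\ref{thm:recursive-tree-AC-reduction}, replacing each formal Andrews--Curtis step by its geometric counterpart. Start from the Heegaard diagram of $\phi_K$ with disk systems $\mathcal A,\mathcal B$, where the attaching word of $\beta_i$ is $r_i=Y_i$ up to inversion and conjugacy. No pairs are fixed yet, and the active basis is $(a_1,\ldots,a_g)$. This diagram is in active form on the full interval $I=(k_1,\ldots,k_{g-1})$ in the sense of Definition~\ref{def:relative-geometric-active-interval}. Condition (4) holds at the left endpoint with $\Theta_1=P_1=a_1$, i.e.\ $t=a_1$ and $\alpha=\beta=1$. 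Lemma~\ref{lem:recursive-side-triviality} supplies the side-triviality tree, since $G(K)=1$ and the full block is $G(K)$ itself.

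First apply Proposition~\ref{prop:open-active-interval-reduction} to $I$. Using only active $A$-disk changes and $B$-disk slides relative to the fixed pairs, this converts the attaching curves in positions $1,\ldots,g-1$ into $g-1$ isolated dual pairs. A single active handle then remains, with primitive generator $t$, and the pairs are kept fixed in disjoint genus-one neighborhoods. By Lemma~\ref{lem:B-slide-preserves-kernel}, every $B$-slide preserves $\ker q_B$, so the curves remain genuine $B$-meridians throughout.

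Next treat the remaining actual curve $\beta_g$, which was kept until the end. Its word was modified only by the slides and basis changes already performed. By Lemma~\ref{lem:pivot-contraction}, after the isolated pairs are suppressed, this word agrees up to conjugacy with the image of $Y_g$ obtained by setting the singletons equal to one. That image is a word in the single active generator, hence equals $t^q$ up to conjugacy, which in a rank-one free group is literally $t^q$. The suppressed presentation is Tietze equivalent to the full one, so $\langle t\mid t^q\rangle\cong G(K)=1$ and therefore $q=\pm1$.

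Finally, $\beta_g$ lies in the complement of the fixed neighborhoods, which is a genus-one piece of the surface with one $A$-handle. There it represents the primitive element $t^{\pm1}$. Apply Lemma~\ref{lem:geometric-singleton-cancellation}: the $A$-disk $A_t$ can be chosen so that $|\alpha_t\cap\beta_g|=1$, and the other conditions hold vacuously or by the earlier fixing. This yields the last isolated dual pair, so the diagram becomes a complete collection of $g$ isolated dual pairs.

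The main obstacle I expect is the passage from the algebraic statement that the word is $t^{\pm1}$ to an honest geometric curve meeting $\alpha_t$ exactly once. Lemma~\ref{lem:geometric-singleton-cancellation} relies on the primitive-curve criterion, which needs $\beta_g$ to be a simple closed curve in the active genus-one handlebody whose homotopy class is primitive, not merely a word up to conjugacy. I would handle this by noting that the relative version of the criterion applies inside the solid torus complementary to the fixed pairs. There, a simple closed curve representing a generator of $\pi_1=\mathbb Z$ is isotopic to a longitude and hence meets a meridian disk once.
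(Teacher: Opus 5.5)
Your proposal is correct and follows the paper's proof step for step. Proposition~\ref{prop:open-active-interval-reduction} isolates the curves in positions $1,\ldots,g-1$. The last attaching word then becomes $t^q$, and $q=\pm1$ because the resulting presentation is $\langle t\mid t^q\rangle$ together with the retained singletons and $G(K)=1$. Lemma~\ref{lem:geometric-singleton-cancellation}, applied after reorienting, isolates the final pair. Your checks of the initial active form and of the relative primitive-curve criterion in the complementary solid torus only make explicit what the paper leaves implicit.
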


\begin{proof}
By Proposition~\ref{prop:open-active-interval-reduction}, the actual
attaching curves in positions $1,\ldots,g-1$ become isolated dual curves and
one primitive live element $t$ remains.  The genuine last attaching curve
has word $Y_g=\Theta_g$, which becomes $t^q$ for some integer $q$.  The
resulting presentation is the retained singleton presentation together with
\(
 \langle t\mid t^q\rangle.
\)
Since $G(K)=1$, one has $q=\pm1$.  Reorient the last curve if necessary and
apply Lemma~\ref{lem:geometric-singleton-cancellation} to isolate the final
dual pair.
\end{proof}

\subsection{Standardness }

\begin{theorem}\label{th:14}
Every  one-layer splitting epimorphism
\[
 \phi_K:\pi_1(S_g)\longrightarrow F_g\times F_g
\]
is standard.
\end{theorem}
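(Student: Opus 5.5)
The plan is to combine the geometric active-interval theorem with the standard correspondence between Heegaard diagrams made of isolated dual pairs and the standard splitting homomorphism. Let $\phi_K$ be a one-layer splitting epimorphism. By \eqref{eq:one-layer-onto-trivial}, surjectivity gives $G(K)=1$. By Lemma~\ref{lem:recursive-side-triviality}, applied to the full trivial block $G(1,(k_1,\ldots,k_{g-1}))=G(K)$, the full exponent interval has a recursive side-triviality tree. Theorem~\ref{thm:geometric-active-interval} then turns the Heegaard diagram $(S;\alpha,\beta)$ of the splitting associated with $\phi_K$ into a complete collection of $g$ isolated dual pairs $(A_{t_i},B_{p_i})$. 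It uses only changes of the $A$-disk system and slides of the $B$-disks.

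The second step is to record what these moves do algebraically. A change of the $A$-disk system is a change of free basis of $F(a_1,\ldots,a_g)$, so it is an automorphism $\nu_A$ of the target factor. A $B$-disk slide is induced by a homeomorphism $h_B$ of $H_B$. Its restriction gives a source automorphism $U\in\Aut\pi_1(S)$, and by Lemma~\ref{lem:B-slide-preserves-kernel} it satisfies $q_BU=\nu_Bq_B$. One point needs care here: $h_B$ need not extend over $H_A$, so I will track how $q_A$ changes. My plan is to replace the composite of the $B$-slides by the surface homeomorphism $f=h_B|_S$ and consider the homomorphism $\phi'=(\nu_B^{-1}q_B,\ q_A)\circ U^{-1}$ paired with the new diagram $(S;\alpha',f(\beta))$. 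Since equivalence allows an automorphism of $\pi_1(S_g)$ and an automorphism of $F_g\times F_g$, the result is equivalent to $\phi_K$. It is exactly the splitting homomorphism of the new diagram: the $b$-kernel is normally generated by the new $\beta$-curves, and the $a$-kernel by the $\alpha'$-curves. This bookkeeping is the step I expect to be the main obstacle, because $A$-moves and $B$-moves act on different coordinates. I will handle it by doing all $B$-slides as ambient isotopy-free homeomorphisms of $S$ extending over $H_B$, and all $A$-moves as changes of the chosen complete disk system in the fixed handlebody $H_A$, which leaves $q_A$ unchanged up to target basis.

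Finally, I will treat a diagram consisting of $g$ isolated dual pairs with disjoint neighborhoods. Its complement of the pair neighborhoods is a sphere with $g$ holes, so the diagram is homeomorphic to the standard genus-$g$ diagram of $S^3$: $\alpha_i$ and $\beta_i$ are the meridian and longitude of the $i$-th handle. Choose standard generators $x_i,y_i$ of $\pi_1(S)$ adapted to this diagram, with $y_i$ parallel to $\beta_i$ and $x_i$ parallel to $\alpha_i$. Then $q_A$ sends $y_i\mapsto a_i$ and $x_i\mapsto 1$, while $q_B$ sends $x_i\mapsto b_i$ and $y_i\mapsto 1$, up to relabeling and basis change. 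This is the standard epimorphism after the automorphism of $\pi_1(S_g)$ realizing the change of standard generators. Composing the equivalences shows that $\phi_K$ is standard. The argument uses only Theorem~\ref{thm:geometric-active-interval} and the lemmas above, and no geometrization.
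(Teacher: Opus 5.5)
Your route is the paper's: surjectivity gives $G(K)=1$, Lemma~\ref{lem:recursive-side-triviality} supplies the tree, and Theorem~\ref{thm:geometric-active-interval} produces a complete dual diagram. A dual diagram then yields the standard homomorphism. Your choice of a standard generating system adapted to the $g$ isolated pairs is a more explicit version of the paper's ``a complete dual Heegaard diagram is the standard diagram.'' Two points need repair.

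First, the formula in your second step is wrong as stated. The $a$-coordinate of $\phi'=(\nu_B^{-1}q_B,q_A)\circ U^{-1}$ is $q_AU^{-1}$. Its kernel is $U(\ker q_A)=\langle\!\langle f(\alpha')\rangle\!\rangle$, not $\langle\!\langle\alpha'\rangle\!\rangle=\ker q_A$. These differ precisely because $f$ need not extend over $H_A$, as you note yourself. So $\phi'$ is not the splitting homomorphism of $(S;\alpha',f(\beta))$; its kernel pair is simply $U$ applied to that of $\phi_K$.

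This is harmless, because no source automorphism is needed at all. Every move replaces a disk system by another disk system of the same handlebody. Hence $\langle\!\langle\alpha'\rangle\!\rangle=\ker q_A$ and $\langle\!\langle f(\beta)\rangle\!\rangle=\ker q_B$. So $\phi_K=(q_B,q_A)$ itself, up to target basis changes, is the splitting homomorphism of every intermediate diagram, including the final dual one. That is exactly what your third step uses, with the original $q_A,q_B$. The paper instead writes $(\nu_B^{-1}\times\eta_A)\phi_KU$, with $U$ rather than $U^{-1}$. That homomorphism has the kernel pair of $f^{-1}$ of the new diagram, and it reads the new $\beta$-words on the fixed generators $y_i$.

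Second, you never treat tuples with some $k_i=0$, although the lemmas you cite assume nonzero exponents. The paper handles this case separately:
\begin{itemize}
\item the diagram is a connected sum of lower-genus one-layer diagrams, and the presentation is the corresponding free product;
\item triviality forces every factor to be trivial;
\item each summand is made standard by the main argument;
\item the boundary connected sum of standard diagrams is standard.
\end{itemize}
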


\begin{proof}
First assume that all \(k_i\) are nonzero.  Since \(\phi_K\) is onto,
\eqref{eq:one-layer-onto-trivial} gives \(G(K)=1\).  By Lemma~\ref{lem:recursive-side-triviality}, the full exponent interval
\[
 (k_1,\ldots,k_{g-1})
\]
has a recursive side-triviality tree.  Apply Theorem~\ref{thm:geometric-active-interval} to the full Heegaard diagram.
There are initially no fixed pairs, and the genuine terminal attaching curve
is \(Y_g=\Theta_g\).  The theorem changes the diagram into a complete dual
diagram
\[
 |\alpha_i\cap\beta_j|=\delta_{ij}
\]
after reindexing and reorienting the curves.

Let \(U\in\Aut\pi_1(S_g)\) be the product of the boundary automorphisms
induced by the \(B\)-disk slides.  Let \(\nu_B\in\Aut F(b_1,\ldots,b_g)\)
be the product of the corresponding handlebody automorphisms, so that
\[
 q_B U=\nu_B q_B.
\]
Let \(\eta_A\in\Aut F(a_1,\ldots,a_g)\) be the product of the target basis
changes induced by the \(A\)-disk moves.  Thus all moves used in the proof
preserve the equivalence class of the splitting homomorphism: the
\(B\)-moves are accounted for by precomposition with \(U\) and
postcomposition with \(\nu_B^{-1}\), while the \(A\)-moves are accounted for
by postcomposition with \(\eta_A\).

A complete dual Heegaard diagram is the standard diagram.  Consequently
\[
 (\nu_B^{-1}\times\eta_A)\,\phi_K\,U
\]
is equivalent, after the final permutations and orientation changes, to the
canonical splitting homomorphism.  Hence \(\phi_K\) is standard.

If some \(k_i=0\), the one-layer diagram separates at that position as a
connected sum of lower-genus one-layer diagrams, and the associated balanced
presentation is the free product of the corresponding presentations.  Since
the full group is trivial, every factor is trivial.  Apply the preceding
argument to each component.  The required moves have disjoint supports, and
their boundary connected sum gives the standard genus-\(g\) diagram.
\end{proof}

Theorem \ref{thm:recursive-tree-AC-reduction} follows as a corollary.

\begin{proof}
An \(A\)-disk move is a simultaneous Nielsen change of the target basis.  A
\(B\)-disk slide has the form~\eqref{eq:disk-slide-word}; reorientation gives
relator inversion and reindexing gives relator permutation.  The proof never
adds or deletes a generator--relator pair in the full rank-\(g\) presentation:
when an isolated singleton is ``suppressed,'' it remains as a fixed dual pair
in the full diagram.  The final tuple is a free basis.  No claim is made that
an arbitrary preassigned Andrews--Curtis conjugator is realized by one disk
slide.
\end{proof}

\subsection{Generalized one-layer and framed graph-layer presentations}
\label{subsec:generalized-one-layer-definitions}

The preceding theorem concerns  one-layer presentations.  We now give
precise terminology for the broader conjectural class.

\begin{df}
Let
\[
 \mathcal T_g=\langle T_{ij}\mid 1\leq i<j\leq g\rangle
 \leq \operatorname{PMod}(S_{0,g+1})
\]
be the subgroup generated by the paper twists introduced in
Section~\ref{Autom}.  A \emph{graph-layer word} is a finite word
\[
 W=T_{i_mj_m}^{k_m}\cdots T_{i_1j_1}^{k_1},
 \qquad k_\ell\in\mathbb Z\setminus\{0\}.
\]
Two graph-layer words are called \emph{equivalent} if they represent the same
element of $\mathcal T_g$, equivalently if one can be obtained from the other
using the defining twist relations listed in Section~\ref{Autom}.

The \emph{support graph} $\Gamma(W)$ has vertex set $\{1,\ldots,g\}$ and an
edge $\{i,j\}$ whenever a power of $T_{ij}$ occurs in $W$.
\end{df}

\begin{df}[Leaf peel and recursively peelable twist element]
\label{def:recursively-peelable}
Let $W$ be a graph-layer word.  We say that $W$ \emph{admits a leaf peel at
$v$} if, after replacing $W$ by an equivalent representative and collecting
adjacent powers, it can be written
\begin{equation}
 W\equiv T_{vw}^{p}\,W'\,T_{vw}^{q},
 \label{eq:leaf-peel-form}
\end{equation}
where
\[
 p,q\in\mathbb Z,
 \qquad (p,q)\neq(0,0),
\]
$v$ is a leaf of the support graph of the displayed representative, the
unique edge incident to $v$ is $\{v,w\}$, and $W'$ contains no twist whose
support contains $v$.

An element $\Omega\in\mathcal T_g$ is called \emph{recursively peelable} if
one of the following holds:
\begin{enumerate}
\item $\Omega=1$;
\item $\Omega$ has a representative of the form~\eqref{eq:leaf-peel-form}
and the element represented by $W'$ is recursively peelable.
\end{enumerate}
A graph-layer word is recursively peelable when the element it represents is
recursively peelable.  A graph-layer element which is not recursively
peelable will be called \emph{genuinely multilayer}.
\end{df}

The definition is made at the level of the element of $\mathcal T_g$, not at
the level of one chosen spelling.  Thus disjoint twists may first be commuted,
and the triangle and sphere relations may be used before a leaf is peeled.

\begin{example}
Every ordinary one-layer path word
\[
 T_{g-1,g}^{k_{g-1}}\cdots T_{23}^{k_2}T_{12}^{k_1}
\]
is recursively peelable: first peel the leaf $g$, then $g-1$, and continue
along the path.  The nested word
\[
 T_{12}^{p}\,T_{23}^{q}\,T_{12}^{r}
\]
is also recursively peelable, since vertex $1$ is a leaf and the two outer
$T_{12}$-powers are removed in one leaf peel.  By contrast, an interleaving
word or a word whose support contains a cycle is not declared peelable unless
the twist relations first transform it to a representative satisfying the
recursive definition.
\end{example}

Let
\[
 \gamma_0:\pi_1(S_g)\longrightarrow F(a_1,\ldots,a_g),
 \qquad
 \gamma_0(x_i)=a_i,
 \quad
 \gamma_0(y_i)=1,
\]
be the standard retraction.

\begin{df}[Graph-layer presentation]
For $\Omega\in\mathcal T_g$, the associated \emph{graph-layer presentation}
is
\[
 G(\Omega)=
 \left\langle a_1,\ldots,a_g\ \middle|\
 \gamma_0\Omega(y_1),\ldots,\gamma_0\Omega(y_g)
 \right\rangle.
\]
It is called a \emph{generalized one-layer presentation} when $\Omega$ is
recursively peelable.
\end{df}

\begin{df}[Framing and framed graph-layer presentation]
Let
\[
 \mathbf f=(f_1,\ldots,f_g)\in\mathbb Z^g,
 \qquad
 V_{\mathbf f}=T_{x_1}^{f_1}\cdots T_{x_g}^{f_g}.
\]
The vector $\mathbf f$ is called the \emph{framing vector}.  The
\emph{framed graph-layer presentation} associated with
$(\mathbf f,\Omega)$ is
\[
 G(\mathbf f,\Omega)=
 \left\langle a_1,\ldots,a_g\ \middle|\
 \gamma_0V_{\mathbf f}\Omega(y_1),\ldots,
 \gamma_0V_{\mathbf f}\Omega(y_g)
 \right\rangle.
\]
The twists $T_{x_i}$ commute with the paper twists $T_{ij}$, and therefore
this presentation may equivalently be written
\begin{equation}
 G(\mathbf f,\Omega)=
 \left\langle a_1,\ldots,a_g\ \middle|\
 a_1^{f_1}\gamma_0\Omega(y_1),\ldots,
 a_g^{f_g}\gamma_0\Omega(y_g)
 \right\rangle.
 \label{eq:framed-graph-layer-presentation}
\end{equation}
It is called a \emph{framed generalized one-layer presentation} if
$\Omega$ is recursively peelable.  The ordinary presentations used in the
one-layer and two-layer sections correspond to the framing vector
$\mathbf 1=(1,\ldots,1)$.
\end{df}

By Proposition~\ref{l3}, every presentation considered in this paper which
comes from an automorphism fixing the $x_i$'s can be expressed in framed
graph-layer form.

\begin{conjecture}[Generalized one-layer conjecture]
\label{conj:generalized-one-layer}
If a framed graph-layer presentation $G(\mathbf f,\Omega)$ defines the
trivial group, then $\Omega$ is recursively peelable; equivalently,
$G(\mathbf f,\Omega)$ is a framed generalized one-layer presentation.
\end{conjecture}

The present paper proves the ordinary one-layer standardness theorem and
several nontriviality results for two layers, but it does not prove
Conjecture~\ref{conj:generalized-one-layer} in full.

\section {Balanced presentations of genus 3 from two layers of Dehn Twists}\label{se:4}
\begingroup
\setlength{\abovedisplayskip}{4pt plus 2pt minus 1pt}
\setlength{\belowdisplayskip}{4pt plus 2pt minus 1pt}
\setlength{\abovedisplayshortskip}{2pt plus 1pt}
\setlength{\belowdisplayshortskip}{2pt plus 1pt minus 1pt}
\setlength{\jot}{2pt}
We prove results that are necessary for studying
Conjecture~\ref{conj:generalized-one-layer}.  In this section we consider
balanced presentations arising from genus \(3\) automorphisms obtained from
two layers of Dehn twists.  The goal is to show that none of these
presentations defines the trivial group.  We treat the exponent-sum case
$(0,0)$ first, because its marked-diagram argument will also be used in the other
cases .  The remaining exponent-sum cases are then handled by a
combination of generalized-triangle quotients, the same diagrammatic
argument, and finitely many GAP computations.  
\begin{theorem}\label{thm:genus3-two-layers}
Let \(G\) be a balanced presentation associated to a genus \(3\) automorphism
obtained from two layers of Dehn twists (all $k_i$'s are non-zero). Then
$G\neq 1.$ 
\end{theorem}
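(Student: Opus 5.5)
First I will write the genus-$3$ two-layer presentation explicitly, using the second example in Section~\ref{Autom}. Eliminate $y_i,z_i,x_i$ by Tietze moves to get a presentation $G(k_1,k_2,m_1,m_2)=\langle a_1,a_2,a_3\mid R_1,R_2,R_3\rangle$. Set $\Sigma_1=k_1+m_1$ and $\Sigma_2=k_2+m_2$; the case split will be organised by the exponent-sum pair $(\Sigma_1,\Sigma_2)$. Under abelianisation, each twist power $T_{i,i+1}^{k}$ contributes $k$ times the class $a_i+a_{i+1}$ to the relators. So the relation matrix of $G^{ab}$ depends only on $(\Sigma_1,\Sigma_2)$, and its determinant is a small polynomial in $\Sigma_1,\Sigma_2$. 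I will compute it first. Every pair for which $|\det|\neq1$ is disposed of immediately, since then $G^{ab}\neq 0$. The remaining pairs are those with $|\det|=1$, which include $(0,0)$ and finitely many lines or points. These are the genuine work.

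For the pair $(0,0)$, i.e.\ $m_1=-k_1$ and $m_2=-k_2$, I will show that the relators are not trivial in a suitable quotient. The introduction to this section suggests a marked-diagram argument. Concretely, I will impose $a_1=1$ (or the analogous peeling at the other end). This should reduce the presentation to a genus-$2$ Olshanskii-type group $\langle a_2,a_3\mid a_2^{k}(a_2a_3)^{m},\,a_3^{l}(a_2a_3)^{m}\rangle$, with nonsigned exponents whenever $|k_i|>1$. Its generalized triangle quotient is nontrivial by Baumslag--Morgan--Shalen, exactly as in Theorem~\ref{thm:ordinary-one-layer-reduction}. When some exponents are signed, this quotient degenerates. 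In that case I will instead use a van Kampen diagram argument: assume $a_1=1$ in $G$, take a minimal disc diagram, and mark its cells by which layer's twist produced them. Using the cancellation $T^{k}T^{-k}$ in exponent sum, I will show that no boundary label $a_1$ is possible. Equivalently, I will exhibit a map to a finite group such as $A_5$ or $PSL_2(p)$, chosen from the periodic pattern of the relators.

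For the other surviving pairs $(\Sigma_1,\Sigma_2)$, I will first treat the generic case: all of $|k_i|$ and $|m_i|$ are greater than $1$. Here I will add relations killing the appropriate power $(a_1^{z_1}a_2^{z_2})^{m_1}$, as in the bridge lemma. This splits the group as an amalgam-type quotient of two genus-$2$ or genus-$1$ pieces over a single bridge relation, and Baumslag--Morgan--Shalen then applies. The cases with some signed exponents form finitely many families. I will reduce each family, by Andrews--Curtis moves analogous to Proposition~\ref{prop:open-algebraic-active-interval-reduction}, to a one-layer or genus-$2$ group with a remaining parameter. For those I will either reuse the triangle quotient or, for the finitely many residual small-parameter cases, verify nontriviality by GAP/MAGMA low-index subgroup or $L_2(q)$-quotient searches.

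The main obstacle is the $(0,0)$ case with signed exponents. In that case both the abelianisation and the naive triangle quotients vanish, so a genuinely diagrammatic or representation-theoretic argument uniform in the remaining parameters is needed. I would first try to find a uniform $PSL_2(\mathbb{C})$ representation by solving trace equations for the bridge elements, in the spirit of the special representations of Baumslag--Morgan--Shalen. Only if that fails would I fall back on small cancellation over the marked diagrams.
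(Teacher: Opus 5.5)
Your first step is the paper's: the relation matrix depends only on $(\Sigma_1,\Sigma_2)$, its determinant is $1+2\Sigma_1+2\Sigma_2+3\Sigma_1\Sigma_2$, and only $(0,0),(-1,0),(0,-1),(-2,-1),(-1,-2)$ survive.

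\textbf{The end-generator reduction collapses.} The five surviving pairs are exactly the cases in which $G$ is perfect, and your reduction cannot work there. If you impose $a_1=1$, then $y_2=a_2^{1+k_1}$ and $A=a_2a_3$. The last two relators force $x_2=x_3$, i.e.\ $a_2^{1+\Sigma_1}A^{k_2}=a_3A^{k_2}$. Hence $a_3=a_2^{1+\Sigma_1}$, and $G/\langle\!\langle a_1\rangle\!\rangle$ is cyclic. A cyclic quotient of a perfect group is trivial. Killing $a_3$ instead gives $a_1=a_2^{1+\Sigma_2}$ and the same collapse.

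So no Olshanskii-type group appears, and the $(0,0)$ case with large exponents is left without any argument. This is precisely where the paper needs its marked van Kampen diagram (curvature) proof, for $\ell\ge 3$, $k\ge 2$ and the symmetric range, with $(2,2)$ checked by Magma. The case you single out as the main obstacle is the easy one in the paper. With $(1,-1)$ on one edge, an explicit Tietze elimination gives $\langle X,Y\mid Y^nX^nY^n=X^{3n-1},\ Y^{4n+1}=X^{4n-1}\rangle$. Its quotient by $X^{4n-1}$ is the triangle group $\Delta(|4n-1|,|4n+1|,2)$.

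\textbf{The bridge step fails for a similar reason.} Adding $Y^{m_1}=1$ gives $x_i=z_i$, and then $B=a_2^{z_2}a_3^{z_3}=A^{-k_2}AA^{k_2}=A$. The two $T_{23}$-layers merge, and you obtain the one-layer group $G_3(k_1,\Sigma_2)$ with one extra relator $Y^{m_1}$. Nothing separates into two blocks joined by a single bridge relation, so Baumslag--Morgan--Shalen has nothing to apply to. For $(0,0)$ this quotient is again cyclic, hence trivial.

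Your fallback of reducing the signed families to one-layer groups by Andrews--Curtis moves is also unsupported. Such moves do not change the group, and turning a two-layer presentation into a one-layer one is essentially the content of the open generalized one-layer conjecture.

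\textbf{What is missing} are quotients adapted to each exponent-sum class. For $(-1,-2)$, the paper adjoins $X^{2k+1},Y^{4k+3},A^{l+1}$ and identifies the result with the Edjvet--Thomas group $(|4k+3|,|l+1|\mid 2,|2k+1|)$, using their classification. Where this collapses, $l\in\{1,-3,2,-4\}$, it uses marked diagrams for $|k|\ge 4$ and explicit $\operatorname{PSL}(2,p)$ images for small $k$. For the family $(1,-1;k,-k-1)$ in the class $(0,-1)$, it adjoins $B^{4k+3}$ and obtains a generalized triangle group of type $(4k+3,4k+1,2)$.
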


The proof of this theorem begins now and will be completed at the end of Section \ref{se:-1,0}.

We begin with nontriviality of abelianization criterion which is valid in every genus, for
any number of adjacent path layers, and with arbitrary framing.

\begin{prop}[Abelianization of a  multilayer presentation]
\label{prop:multilayer-abelianization}
For $1\leq \nu\leq r$, let
\[
 \Lambda_\nu=
 T_{12}^{k_{\nu,1}}T_{23}^{k_{\nu,2}}\cdots
 T_{g-1,g}^{k_{\nu,g-1}},
\]
and put
\[
 \Omega=\Lambda_r\cdots\Lambda_1,
 \qquad
 s_i=\sum_{\nu=1}^r k_{\nu,i},
 \qquad s_0=s_g=0.
\]
Let $\mathbf f=(f_1,\ldots,f_g)$ be the framing vector.  Then the
abelianization of $G(\mathbf f,\Omega)$ has relation matrix
\begin{equation}
 M_g(\mathbf f,\mathbf s)=
 \begin{pmatrix}
 f_1+s_1&s_1&0&\cdots&0\\
 s_1&f_2+s_1+s_2&s_2&\ddots&\vdots\\
 0&s_2&f_3+s_2+s_3&\ddots&0\\
 \vdots&\ddots&\ddots&\ddots&s_{g-1}\\
 0&\cdots&0&s_{g-1}&f_g+s_{g-1}
 \end{pmatrix}.
 \label{eq:multilayer-abelianization-matrix}
\end{equation}
Consequently,
\[
 \det M_g(\mathbf f,\mathbf s)\neq\pm1
 \quad\Longrightarrow\quad
 G(\mathbf f,\Omega)\neq1.
\]
\end{prop}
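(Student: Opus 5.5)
We compute everything after abelianizing, where the paper twists become commuting translations. The key observation is that each paper twist $T_{ij}$, and each framing twist $T_{x_i}$, fixes every $a_l$. It changes a $y$-variable only by multiplying it by words in the $a$'s and $y$'s, possibly together with a conjugation.

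\textbf{Step 1: induced action on $H_1$.} Pass to the abelianization $H_1(F(a_1,y_1,\ldots,a_g,y_g))=\mathbb Z^{2g}$, with basis $\bar a_1,\ldots,\bar a_g,\bar y_1,\ldots,\bar y_g$. Every automorphism $\Omega$ of this type induces a linear map $\bar\Omega$ with
\[
\bar\Omega(\bar a_l)=\bar a_l,\qquad \bar\Omega(\bar y_l)=\bar y_l+\lambda_l(\bar a,\bar y).
\]
The relators of $G(\mathbf f,\Omega)$ are $\gamma_0V_{\mathbf f}\Omega(y_i)$, and $\gamma_0$ kills every $y$. So the $i$-th row of the relation matrix is the vector
\[
\bar\gamma_0\bar V_{\mathbf f}\bar\Omega(\bar y_i)\in\mathbb Z^g .
\]
We only need $\bar\Omega$ modulo the $\bar y$'s in the final output.

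\textbf{Step 2: a single twist.} Compute $\bar T_{i,i+1}$ from the formula in Proposition~\ref{l3}, written in the $y$-variables of equation~\eqref{2'}. Lemma~\ref{lem:equivalent-surface-equations} lets us use either convention, since only conjugations and a triangular basis change distinguish them. The conjugating letters disappear in $H_1$, and each factor $a_l^{z_l}$ contributes exactly $\bar a_l$. Hence
\[
\bar T_{i,i+1}^{\,k}(\bar y_i)=\bar y_i+k(\bar a_i+\bar a_{i+1}),\qquad
\bar T_{i,i+1}^{\,k}(\bar y_{i+1})=\bar y_{i+1}+k(\bar a_i+\bar a_{i+1}),
\]
and the other $\bar y_l$ are fixed.

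These maps are unipotent translations by vectors lying in the span of the $\bar a$'s, which all maps fix. Therefore they commute in $H_1$, and composing them simply adds the translation vectors. Summing over all layers $\Lambda_r\cdots\Lambda_1$ gives
\[
\bar\Omega(\bar y_i)=\bar y_i+s_{i-1}(\bar a_{i-1}+\bar a_i)+s_i(\bar a_i+\bar a_{i+1}),
\]
with the convention $s_0=s_g=0$.

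\textbf{Step 3: framing and assembly.} The framing twist $T_{x_i}^{f_i}$ contributes $f_i\bar a_i$ to $\bar y_i$, as in \eqref{eq:framed-graph-layer-presentation}. Applying $\bar\gamma_0$, row $i$ of the relation matrix has:
\begin{itemize}
\item coefficient $s_{i-1}$ at $\bar a_{i-1}$;
\item coefficient $f_i+s_{i-1}+s_i$ at $\bar a_i$;
\item coefficient $s_i$ at $\bar a_{i+1}$.
\end{itemize}
This is exactly $M_g(\mathbf f,\mathbf s)$.

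As a sanity check, the first relator $(a_1a_2)^{k_1}a_1$ of \eqref{eq:6} gives the row $(1+k_1,\;k_1,\;0,\ldots)$, as predicted.

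\textbf{Step 4: conclusion.} The abelianization of $G(\mathbf f,\Omega)$ is $\mathbb Z^g/M_g\mathbb Z^g$, whose order is $|\det M_g|$ when $\det M_g\neq0$ and which is infinite otherwise. If $G(\mathbf f,\Omega)=1$, this quotient is trivial, so $M_g$ is unimodular and $\det M_g=\pm1$. The contrapositive is the stated implication.

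\textbf{Main obstacle.} The delicate step is Step 2: getting the correct signs and index bookkeeping for the abelianized twist. This includes the right-to-left composition convention, the passage between the $z_i$ of \eqref{2} and the $y_i$ of \eqref{2'}, and the case $j>i+1$, where the correction term $A_{i+1,j-1}^{-1}$ cancels in $H_1$. It is also needed to confirm that the $\bar y$-dependence of the translations plays no role. I would settle this by checking the genus-$2$ Olshanskii-type case directly against the relators $a_1^{f_1}(a_1a_2)^{k}$ and $a_2^{f_2}(a_1a_2)^{k}$.
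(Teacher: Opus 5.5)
Your proposal is correct and follows essentially the same route as the paper: after abelianizing, each power $T_{i,i+1}^{k}$ adds $k(a_i+a_{i+1})$ to the relators in positions $i$ and $i+1$, and the framing adds $f_ia_i$ to the $i$-th relator. Hence the layer contributions depend only on the sums $s_i$, and triviality of the group forces the cokernel of $M_g$ to vanish, i.e.\ $\det M_g=\pm1$. Your version also spells out a point the paper leaves implicit: the abelianized twists are commuting unipotent translations by vectors in the fixed span of the $\bar a_l$, so composing them simply adds the translation vectors.
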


\begin{proof}
In abelianization all conjugations disappear.  A power
$T_{i,i+1}^{k}$ adds $k(a_i+a_{i+1})$ to each of the two relators in
positions $i$ and $i+1$, and does not change the other relators.  Hence the
contributions of the different layers depend only on the total exponent
$s_i$ on each adjacent edge.  The framing factor $T_{x_i}^{f_i}$ adds
$f_i a_i$ to the $i$-th relator.  This gives the matrix
\eqref{eq:multilayer-abelianization-matrix}.

The abelianization is the cokernel of this square integer matrix.  If the
presented group were trivial, this cokernel would be zero, which is equivalent
to the determinant being $\pm1$.
\end{proof}

\begin{cy}
\label{cor:genus3-two-layer-abelianization}
For the ordinary framing $\mathbf f=(1,1,1)$ and two layers, put
\[
 s_1=k_1+m_1,
 \qquad
 s_2=k_2+m_2.
\]
Then
\[
 \det M_3(\mathbf 1,(s_1,s_2))
 =1+2s_1+2s_2+3s_1s_2.
\]
This determinant is $\pm1$ only for
\[
 (-2,-1),\quad(-1,-2),\quad(0,0),\quad(-1,0),\quad(0,-1).
\]
Thus every other exponent-sum pair gives a nontrivial abelianization and
hence a nontrivial group.
\end{cy}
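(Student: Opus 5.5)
The corollary follows from Proposition~\ref{prop:multilayer-abelianization} with $g=3$, $\mathbf f=(1,1,1)$ and $r=2$. Then $s_1=k_1+m_1$ and $s_2=k_2+m_2$, and the relation matrix is
\[
 M_3(\mathbf 1,(s_1,s_2))=
 \begin{pmatrix}
 1+s_1&s_1&0\\
 s_1&1+s_1+s_2&s_2\\
 0&s_2&1+s_2
 \end{pmatrix}.
\]
The plan is to compute this determinant, solve the two Diophantine equations $\det=\pm1$, and then invoke the implication in Proposition~\ref{prop:multilayer-abelianization}.

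First I expand the determinant along the first row. The first term is $(1+s_1)\bigl[(1+s_1+s_2)(1+s_2)-s_2^2\bigr]$, and the bracket equals $1+s_1+2s_2+s_1s_2$. The second term is $-s_1\cdot s_1(1+s_2)$. Multiplying out gives
\[
 (1+s_1)(1+s_1+2s_2+s_1s_2)-s_1^2(1+s_2)=1+2s_1+2s_2+3s_1s_2 .
\]
This is a short check I would write out once.

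Next I solve $1+2s_1+2s_2+3s_1s_2=\pm1$. Multiplying by $3$ and adding $1$ factors the left side:
\[
 (3s_1+2)(3s_2+2)=3\det+1\in\{4,-2\}.
\]
Both factors are $\equiv 2\pmod 3$, so I enumerate the divisor pairs with that property.

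For the product $4$ the ordered pairs are $(1,4),(4,1),(2,2),(-1,-4),(-4,-1),(-2,-2)$. Since $1\not\equiv 2\pmod 3$, only $(2,2),(-1,-4),(-4,-1)$ survive. These give $(s_1,s_2)=(0,0),(-1,-2),(-2,-1)$.

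For the product $-2$ the ordered pairs are $(1,-2),(-2,1),(-1,2),(2,-1)$. Only $(-1,2)$ and $(2,-1)$ survive, giving $(s_1,s_2)=(-1,0),(0,-1)$.

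Together these are exactly the five listed pairs. For every other exponent-sum pair $\det\neq\pm1$, so the abelianization is nontrivial and hence $G\neq1$ by the proposition.

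There is no real obstacle here. The only points needing care are the sign bookkeeping in the cofactor expansion and checking that the divisor enumeration is exhaustive, including the negative divisor pairs.
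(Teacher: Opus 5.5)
Your proposal is correct and matches the paper's proof. Both rewrite $\det=\pm1$ as $(3s_1+2)(3s_2+2)\in\{4,-2\}$ and keep only the divisor pairs congruent to $2$ modulo $3$; you also write out the cofactor expansion of the determinant, which the paper leaves implicit.
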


\begin{proof}
For determinant $1$ one has
\[
 (3s_1+2)(3s_2+2)=4,
\]
and for determinant $-1$ one has
\[
 (3s_1+2)(3s_2+2)=-2.
\]
The integer factor pairs which are congruent to $2$ modulo $3$ give exactly
the five displayed pairs.
\end{proof}

The relations of $G=G_{\phi} $ are as follows: Let $K$ be a free group.
\[
\begin{aligned}
&K = \langle a_1,a_2,a_3,x_1,x_2,x_3,y_1,y_2,y_3,z_1,z_2,z_3 \rangle, \\[3pt]
&G = K \Big/ \Big\langle 
y_1 = a_1 (a_1 a_2)^{k_1},\;
 y_2 = a_2 (a_1 a_2)^{k_1},\;
 y_3 = a_3, \\[3pt]
&\qquad z_1 = y_1,\;
 z_2 = y_2 (a_2^{y_2} a_3)^{k_2},\;
 z_3 = y_3 (a_2^{y_2} a_3)^{k_2},\\[3pt]
&\qquad x_1 = z_1 (a_1^{z_1} a_2^{z_2})^{m_1},\;
 x_2 = z_2 (a_1^{z_1} a_2^{z_2})^{m_1},\;
 x_3 = z_3,\\[3pt]
&\qquad x_1 = 1,\;
 x_2 (a_2^{x_2} a_3^{x_3})^{m_2} = 1,\;
 x_3 (a_2^{x_2} a_3^{x_3})^{m_2} = 1
\Big\rangle.
\end{aligned}
\]

The proof is by a case analysis.  By
Corollary~\ref{cor:genus3-two-layer-abelianization}, only the five exponent-sum
pairs
\[
(-2,-1),\quad (-1,-2),\quad (0,0),\quad (-1,0),\quad (0,-1)
\]
can have trivial abelianization.  We eliminate these remaining cases by a combination of nontrivial
quotients and marked Van Kampen diagrams.  The exceptional small parameters
are verified by explicit finite quotients in GAP.

\begin{lm}[Symmetry of exponent tuples]
\label{lem:symmetric-tuples}
Let
\[
   \Gamma_g=\pi_1(S_g)
\]
with its standard canonical system
\[
   x_1,y_1,\ldots,x_g,y_g.
\]
Let
\[
   \beta_0:\Gamma_g\longrightarrow F(b_1,\ldots,b_g),
   \qquad
   \beta_0(x_i)=b_i,\quad \beta_0(y_i)=1,
\]
and
\[
   \gamma_0:\Gamma_g\longrightarrow F(a_1,\ldots,a_g),
   \qquad
   \gamma_0(x_i)=a_i,\quad \gamma_0(y_i)=1.
\]

Suppose that \(\Omega\in\Aut(\Gamma_g)\) fixes the \(x_i\)'s and let
\[
   \phi_\Omega=(\beta_0,\gamma_0\Omega):
   \Gamma_g\longrightarrow
   F(b_1,\ldots,b_g)\times F(a_1,\ldots,a_g).
\]
Thus
\[
   \phi_\Omega(x_i)=(b_i,a_i),
   \qquad
   \phi_\Omega(y_i)=
   \bigl(1,\gamma_0\Omega(y_i)\bigr).
\]

Let \(\rho\in\Aut(\Gamma_g)\) be the reflection which reverses the
order of the handles, and put
\[
   \Omega^{\mathrm{sym}}
   =\rho\Omega^{-1}\rho^{-1}.
\]
Then the homomorphisms
\[
   \phi_\Omega
   \qquad\text{and}\qquad
   \phi_{\Omega^{\mathrm{sym}}}
\]
are equivalent.

Consequently,
\[
   G(\Omega)=1
   \quad\Longleftrightarrow\quad
   G(\Omega^{\mathrm{sym}})=1,
\]
and, whenever these maps are epimorphisms,
\[
   \phi_\Omega\text{ is standard}
   \quad\Longleftrightarrow\quad
   \phi_{\Omega^{\mathrm{sym}}}\text{ is standard}.
\]
\end{lm}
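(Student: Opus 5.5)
We will exhibit an explicit equivalence between \(\phi_\Omega\) and \(\phi_{\Omega^{\mathrm{sym}}}\), built from source automorphisms of \(\Gamma_g\) and target automorphisms of the two free factors. The natural candidate combines three operations:
\begin{itemize}
\item precomposing with \(\Omega^{-1}\);
\item interchanging the two free factors, renaming \(a_i\leftrightarrow b_i\);
\item conjugating by the handle reversal \(\rho\).
\end{itemize}
First fix \(\rho\) concretely. Take \(\rho(x_i)=x_{g+1-i}^{-1}\) and \(\rho(y_i)=y_{g+1-i}^{-1}\), composed with the inner automorphism needed so that the surface relator is sent to a conjugate of itself or its inverse. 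Verify that \(\rho\) is an automorphism of \(\Gamma_g\), possibly orientation reversing, which is allowed since equivalence uses all of \(\Aut\). Then check two things: \(\rho\) preserves \(\ker\beta_0\) and \(\ker\gamma_0\), and it induces automorphisms \(\bar\rho_B\), \(\bar\rho_A\) of the free groups with \(\beta_0\rho=\bar\rho_B\beta_0\) and \(\gamma_0\rho=\bar\rho_A\gamma_0\). Both are inversions combined with the reversal of indices.

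The core step uses the hypothesis that \(\Omega\) fixes every \(x_i\), so \(\Omega^{-1}\) fixes every \(x_i\) as well. Hence \(\gamma_0\Omega^{-1}\Omega=\gamma_0\) and \(\beta_0\Omega^{-1}\) still sends \(x_i\mapsto b_i\). Since \(\beta_0\) and \(\gamma_0\) agree on the \(x_i\) after renaming letters, we get the following after swapping the factors via \(\tau:(b,a)\mapsto(a,b)\) with \(a_i\leftrightarrow b_i\):
\[
\tau\,\phi_\Omega\,\Omega^{-1}=(\beta_0,\;\gamma_0\Omega^{-1})\quad\text{up to renaming},
\]
because the first coordinate becomes \(\gamma_0\Omega\Omega^{-1}=\gamma_0\), read in the \(b\)-letters, and the second becomes \(\beta_0\Omega^{-1}\), read in the \(a\)-letters. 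The point to check carefully is that the second coordinate is indeed \(\gamma_0\Omega^{-1}\) and not merely something that agrees with it on the \(x_i\). This is where the roles of \(x_i\) and \(y_i\) are not symmetric: \(\beta_0\) kills the \(y_i\), whereas \(\gamma_0\Omega^{-1}\) need not.

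I expect exactly this to be the main obstacle. The fix I would try is to first precompose with the orientation-reversing automorphism that interchanges the roles of meridians and longitudes. Concretely, the correct reading of the swap is that the complete disk systems of the two handlebodies exchange roles, and in the standard marking one then uses a symplectic change of basis taking \(y_i\) to \(x_i\) (up to the correction \(\rho\)). This produces \(\phi\) of the form \((\beta_0,\gamma_0\Omega')\) with \(\Omega'=\rho\Omega^{-1}\rho^{-1}\). That \(\Omega'\) again fixes the \(x_i\) follows because conjugation by \(\rho\) only reindexes and inverts the \(x_i\), while \(\Omega^{-1}\) fixes them. Throughout, the explicit twist formulas for \(T_{ij}\) and \(T_{a_i}\) from Section~\ref{Autom} should be checked to see that \(\rho T_{i,i+1}\rho^{-1}=T_{g-i,g+1-i}^{\pm1}\) up to the edge twists. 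This gives \(\Omega^{\mathrm{sym}}\) in graph-layer form, which is what the later case analysis needs.

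Once the equivalence \(\phi_{\Omega^{\mathrm{sym}}}=\tau'\,\phi_\Omega\,\sigma\) is established, the consequences follow formally:
\begin{itemize}
\item Equivalence preserves surjectivity, so the criterion of Section~1 (\(\phi\) onto \(\Leftrightarrow G_\phi=1\)) gives \(G(\Omega)=1\Leftrightarrow G(\Omega^{\mathrm{sym}})=1\).
\item Equivalence is transitive, so \(\phi_\Omega\) is standard if and only if \(\phi_{\Omega^{\mathrm{sym}}}\) is.
\end{itemize}
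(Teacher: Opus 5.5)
You have the right three ingredients, and they are exactly the ones in the paper's proof. Your displayed identity $\tau\phi_\Omega\Omega^{-1}=(\beta_0,\gamma_0\Omega^{-1})$ is already correct, and the ``main obstacle'' you then raise does not exist. The maps $\beta_0$ and $\gamma_0$ coincide on all of $\Gamma_g$ after renaming $b_i\leftrightarrow a_i$, not just on the $x_i$: both send $x_i$ to the $i$-th letter and $y_i$ to $1$. Hence $\beta_0\Omega^{-1}$, read in the $a$-letters, is literally $\gamma_0\Omega^{-1}$. Your contrast between ``$\beta_0$ kills the $y_i$'' and ``$\gamma_0\Omega^{-1}$ need not'' compares the wrong pair of maps, and the hypothesis that $\Omega$ fixes the $x_i$ plays no role at this step. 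So $\tau\phi_\Omega\Omega^{-1}=\phi_{\Omega^{-1}}$ exactly.

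The ``fix'' you propose would actually break the argument. Precomposing with an automorphism carrying the $y_i$ to the $x_i$ turns the first coordinate $\beta_0$ into a map with $y_i\mapsto b_i$, up to the $\rho$-correction. Such an automorphism does not preserve $\ker\beta_0=\langle\!\langle y_1,\ldots,y_g\rangle\!\rangle$, and target automorphisms cannot repair a kernel. So the result cannot have the form $(\beta_0,\gamma_0\Omega')$.

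What is missing is the last step, which you never write out; instead you attribute $\Omega'=\rho\Omega^{-1}\rho^{-1}$ to the flawed fix. Start from $(\beta_0,\gamma_0\Omega^{-1})$, precompose with $\rho^{-1}$, and use $\Omega^{-1}\rho^{-1}=\rho^{-1}\Omega^{\mathrm{sym}}$ together with $\beta_0\rho^{-1}=\bar\rho_b\beta_0$ and $\gamma_0\rho^{-1}=\bar\rho_a\gamma_0$. Postcomposing with $\bar\rho_b^{-1}\times\bar\rho_a^{-1}$ then gives exactly $(\beta_0,\gamma_0\Omega^{\mathrm{sym}})=\phi_{\Omega^{\mathrm{sym}}}$, which is the paper's argument.

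Your concrete $\rho$ is also not an automorphism of $\Gamma_g$. The map $x_i\mapsto x_{g+1-i}^{-1}$, $y_i\mapsto y_{g+1-i}^{-1}$ sends the surface relator $r=[x_1,y_1]\cdots[x_g,y_g]$ to the cyclically reduced word $[x_g^{-1},y_g^{-1}]\cdots[x_1^{-1},y_1^{-1}]$. For $g\geq 2$ this is not a cyclic permutation of $r^{\pm1}$: look at the letter following $y_1$, which is $x_1^{-1}$ here, $x_2^{-1}$ in $r$, and $x_1$ in $r^{-1}$. Composing with a single inner automorphism cannot change a conjugacy class, so you need per-handle conjugators. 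For instance, take $\rho(x_i)=x_{g+1-i}^{-1}$ and $\rho(y_i)=x_{g+1-i}^{-1}y_{g+1-i}x_{g+1-i}$. This sends $[x_i,y_i]$ to $[y_{g+1-i},x_{g+1-i}]$, hence $r$ to $r^{-1}$. It is an involution, it preserves $\langle\!\langle y_1,\ldots,y_g\rangle\!\rangle$, and it inverts and reindexes the $x_i$ exactly. Consequently $\Omega^{\mathrm{sym}}$ again fixes the $x_i$.
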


\begin{proof}
First apply \(\Omega^{-1}\) to the surface group.  Since the second
coordinate of \(\phi_\Omega\) is \(\gamma_0\Omega\), one obtains
\[
   \phi_\Omega\Omega^{-1}
   =
   \bigl(\beta_0\Omega^{-1},\gamma_0\bigr).
\]
In particular,
\[
   \bigl(\phi_\Omega\Omega^{-1}\bigr)(y_i)
   =
   \bigl(\beta_0\Omega^{-1}(y_i),1\bigr).
\]
Thus the \(a\)-coordinate of every \(y_i\)-image has disappeared, while
the remaining \(b\)-coordinate is the meridian word associated with the
inverse automorphism \(\Omega^{-1}\).

Let
\[
   \mathfrak s:
   F(\bar b)\times F(\bar a)
   \longrightarrow
   F(\bar b)\times F(\bar a)
\]
be the automorphism which interchanges the two factors and at the same
time identifies their standard bases:
\[
   \mathfrak s\bigl(u(\bar b),v(\bar a)\bigr)
   =
   \bigl(v(\bar b),u(\bar a)\bigr).
\]
Here \(v(\bar b)\) is obtained from \(v(\bar a)\) by replacing
\(a_i\) by \(b_i\), and similarly for \(u(\bar a)\).  It follows that \begin{equation}\label{*}
 \mathfrak s\,\phi_\Omega\Omega^{-1}
   =
   (\beta_0,\gamma_0\Omega^{-1}).
\end{equation}

Choose the reflection \(\rho\) to fix the marked basepoint and to
reverse the chain of handles.  It extends over both standard
handlebodies.  Hence it induces automorphisms
\[
   \bar\rho_b\in\Aut F(\bar b),
   \qquad
   \bar\rho_a\in\Aut F(\bar a)
\]
such that
\begin{equation}\label{**}
   \beta_0\rho^{-1}=\bar\rho_b\beta_0,
   \qquad
   \gamma_0\rho^{-1}=\bar\rho_a\gamma_0.
\end{equation}
At the level of meridian curves, the reflected canonical system may be
chosen so that
\[
   \rho^{-1}(y_i)
\]
is represented, up to the harmless conjugation caused by the chosen
whisker, by
\[
   y_{g+1-i}^{-1}.
\]

By the definition of \(\Omega^{\mathrm{sym}}\),
\[
   \Omega^{-1}\rho^{-1}
   =
   \rho^{-1}\Omega^{\mathrm{sym}}.
\]
Using (\ref{*}) and (\ref{**}), we therefore obtain
\begin{equation}\label{***}
\begin{aligned}
 &(\bar\rho_b^{-1}\times\bar\rho_a^{-1})
   \,\mathfrak s\,\phi_\Omega\Omega^{-1}\rho^{-1}
\\
 &\qquad =
 (\bar\rho_b^{-1}\beta_0\rho^{-1},
  \bar\rho_a^{-1}\gamma_0\Omega^{-1}\rho^{-1})
\\
 &\qquad =
 (\beta_0,\gamma_0\Omega^{\mathrm{sym}})
 =
 \phi_{\Omega^{\mathrm{sym}}}.
\end{aligned}
\end{equation}
Thus \(\phi_\Omega\) and
\(\phi_{\Omega^{\mathrm{sym}}}\) differ by an automorphism of the
surface group and an automorphism of the target, and hence are
equivalent.

Equivalence preserves surjectivity.  Since the associated balanced
presentation is trivial exactly when the corresponding
coordinate-surjective homomorphism is onto, equation (\ref{***}) gives
\[
   G(\Omega)=1
   \quad\Longleftrightarrow\quad
   G(\Omega^{\mathrm{sym}})=1.
\]
Standardness is defined in terms of equivalence with the canonical
splitting homomorphism, so it is also preserved by (\ref{***}).

Finally, \(\rho^2=1\), and hence
\[
   \bigl(\Omega^{\mathrm{sym}}\bigr)^{\mathrm{sym}}
   =
   \Omega.
\]
Therefore all the implications above are equivalences.
\end{proof}

The proof of the above  theorem is the content of this section.

\subsection{Genus 3, case (0,0)}
For a general integer $n\in\Z$, consider
\[
G_n=\gp{a_1,a_2,a_3,\,x_1,x_2,x_3,\,y_1,y_2,y_3,\,z_1,z_2,z_3}{
\begin{aligned}[t]
&y_1=a_1\,(a_1a_2)^{-n},\quad y_2=a_2\,(a_1a_2)^{-n},\quad y_3=a_3,\\
&z_1=y_1,\quad z_2=y_2\,(a_2^{\,y_2}a_3),\quad z_3=y_3\,(a_2^{\,y_2}a_3),\\
&x_1=z_1\,(a_1^{\,z_1}a_2^{\,z_2})^{n},\quad x_2=z_2\,(a_1^{\,z_1}a_2^{\,z_2})^{n},\quad x_3=z_3,\\
&x_1=1,\quad x_2\,(a_2^{\,x_2}a_3^{\,x_3})^{-1},\quad x_3\,(a_2^{\,x_2}a_3^{\,x_3})^{-1}
\end{aligned}
}
\]
Let
\[
X=a_1a_2,\qquad
 A=a_2^{y_2}a_3,\qquad
Y=a_1^{z_1}a_2^{z_2},\qquad
B=a_2^{x_2}a_3^{x_3}.
\]

\begin{prop}\label{lm:genus3-00-quotient}
Let \(n\neq 0\).  Consider the genus \(3\), case \((0,0)\), with
exponents
\[
(-n,n)\quad\text{on }X,Y,
\qquad
(1,-1)\quad\text{on }A,B.
\]
Then $G_n$ has a quotient \[
\overline G_n=
\gp{X,Y}{
X^{4n-1},\;
Y^{4n+1},\;
(X^nY^n)^2
}.
\]
In particular, \(G_n\neq 1\).
\end{prop}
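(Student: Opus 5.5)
The plan is to produce an explicit surjection from \(G_n\) onto \(\overline G_n\): first rewrite all generators of \(G_n\) as words in \(X\) and \(Y\), then verify that every defining relation of \(G_n\) maps to a consequence of \(X^{4n-1}\), \(Y^{4n+1}\) and \((X^nY^n)^2\). The displayed relations of \(G_n\) are all "definitions" of the auxiliary letters \(y_i,z_i,x_i\), except the three final relators. So I first eliminate the auxiliary letters by Tietze moves, and then read off \(a_1,a_2,a_3\) as words in \(X\) and \(Y\).

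The elimination goes in the following order.
\begin{enumerate}
\item The relation \(x_1=1\) together with \(x_1=z_1Y^n\) and \(z_1=y_1\) gives \(y_1=Y^{-n}\).
\item Since \(y_1=a_1X^{-n}\), this yields \(a_1=Y^{-n}X^{n}\), and then \(a_2=a_1^{-1}X=X^{-n}Y^{n}X\).
\item Hence \(y_2=a_2X^{-n}\) and \(z_1\) are explicit words in \(X\) and \(Y\).
\item The definition of \(Y\) as \(a_1^{z_1}a_2^{z_2}\) becomes an equation determining \(z_2\) up to the centraliser of \(a_2\). Combined with \(z_2=y_2A\), it expresses \(A\), and hence \(a_3=(a_2^{y_2})^{-1}A\), through \(X\), \(Y\) and one remaining unknown.
\item The last two relators \(x_2=B\) and \(x_3=B\) (with \(x_3=z_3=a_3A\)) then give two relations in \(X\), \(Y\) and that unknown.
\end{enumerate}

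In the quotient I intend to specialise the remaining unknown to a convenient word, most likely by imposing commutation of \(A\) or \(z_2\) with \(X^n\) or \(Y^n\). After this specialisation, the two remaining relations should collapse, via cyclic reductions, to \(X^{4n-1}=1\) and \(Y^{4n+1}=1\); the relation \((X^nY^n)^2=1\) is the extra relation I add. The consistency check that the abelianized images match is a useful guide here. In the case \((0,0)\) the determinant is \(1\), so any nontrivial quotient must be perfect-like. The orders \(4n\mp1\) are coprime to \(2\), which is compatible with this.

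The main obstacle is this middle step. The bookkeeping of the nested conjugations \(a_2^{y_2}\), \(a_i^{z_i}\), \(a_i^{x_i}\) must produce exactly the exponents \(4n-1\) and \(4n+1\) and the word \(X^nY^n\). I would first try to determine the correct additional relations by computing the image of \(z_2\) and \(B\) for \(n=1,2\) by hand, or in MAGMA, and then guess the general pattern. I would then verify that each defining relator of \(G_n\) lies in the normal closure of the three relators of \(\overline G_n\). Once \(a_1,a_2,a_3\) are assigned words in \(X,Y\), this is a direct substitution.

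Nontriviality of \(\overline G_n\) then follows from the Baumslag--Morgan--Shalen generalized triangle group theorem already used in Theorem~\ref{thm:ordinary-one-layer-reduction}. Here \(\overline G_n=\langle X,Y\mid X^{4n-1}=Y^{4n+1}=(X^nY^n)^2=1\rangle\) with \(|4n-1|,|4n+1|\geq 3\) for \(n\neq0\) and exponent \(N=2\). The theorem supplies a representation in which \(X\) and \(Y\) have the prescribed orders, so \(\overline G_n\neq1\), and hence \(G_n\neq1\).
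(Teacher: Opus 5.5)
\textbf{There is a genuine gap.} Your outline has the right shape: eliminate the auxiliary letters, write $a_1,a_2,a_3$ in $X,Y$, check the relators, then cite a triangle-group result. But the one step that carries the content is not done. You never produce the word for $a_3$; you propose to find it by computing $n=1,2$ and guessing a pattern.

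The mechanism you sketch would also fail. Step~4 is not a Tietze elimination: the equation $z_2^{-1}a_2z_2=Y^nX^{-n}Y$ does not express $z_2$, so you are left with an uncontrolled ``unknown''. The specialisation you suggest, making $A$ or $z_2$ commute with $X^n$ or $Y^n$, actually destroys the group. For $n=1$ one has $G_1\cong\operatorname{SL}(2,5)$, whose only proper nontrivial quotient is $\overline G_1\cong A_5$. There $A$ maps to $X^{-1}Y^2$. Either $[A,X]=1$ or $[A,Y]=1$ forces $X$ and $Y$ to commute, since $Y\in\langle Y^2\rangle$ as $Y$ has order $5$, and then $A_5$ would be abelian. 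So these commutators lie outside the centre $\{\pm I\}$ of $\operatorname{SL}(2,5)$, and imposing either one gives the trivial group. A direct check shows the same for $z_2=BY^{-1}$: its image in $A_5$ is a $3$-cycle commuting with neither $X$ nor $Y$.

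The missing idea is to use the two final relators first and to keep $A$ and $B$ as generators; then nothing is undetermined.
\begin{itemize}
\item From $x_2=z_2Y^n$ and $x_2=B$ you get $z_2=BY^{-n}$.
\item From $x_3=z_3=a_3A=B$ you get $a_3=BA^{-1}$.
\item The decisive observation is that $x_2=x_3=B$ forces $B=a_2^{B}a_3^{B}=(a_2a_3)^B$, hence $B=a_2a_3$.
\item With this, the definitions of $A$ and $Y$ give $B=X^{n-1}Y^{-n}A^2$ and $A=X^{-n}Y^{n+1}$.
\end{itemize}
Tietze moves then give the exact presentation $G_n\cong\langle X,Y\mid Y^nX^nY^n=X^{3n-1},\ Y^{4n+1}=X^{4n-1}\rangle$. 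In $G_n$ one has $X^{4n-1}=(X^nY^n)^2=Y^{4n+1}$, a central element. Adding any one of these three relators, and nothing else, yields $\overline G_n$; no specialisation step is needed. Equivalently, the homomorphism you were after is
\[
a_1\mapsto Y^{-n}X^n,\qquad a_2\mapsto X^{-n}Y^nX,\qquad a_3\mapsto X^{n-1}Y^{-n}X^{-n}Y^{n+1}.
\]

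Your final nontriviality step is correct but heavier than necessary. Since $\gcd(n,4n\pm1)=1$, putting $x=X^n$, $y=Y^n$ identifies $\overline G_n$ with the ordinary triangle group $\Delta(2,|4n-1|,|4n+1|)$.
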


\begin{proof}

The defining relations in this case are
\[
\begin{aligned}
y_1&=a_1X^{-n},& y_2&=a_2X^{-n},& y_3&=a_3,\\
z_1&=y_1,& z_2&=y_2A,& z_3&=y_3 A,\\
x_1&=z_1Y^n,& x_2&=z_2Y^n,& x_3&=z_3,
\end{aligned}
\]
and
\[
x_1=1,\qquad x_2B^{-1}=1,\qquad x_3B^{-1}=1.
\]

First we eliminate the original generators and obtain a presentation in
\(X,Y, A,B\).  Since \(x_1=1\), we have
\[
z_1Y^n=1,
\]
so
\[
z_1=Y^{-n}.
\]
But \(z_1=y_1=a_1X^{-n}\).  Hence
\[
a_1=Y^{-n}X^n.
\]
Since \(X=a_1a_2\), it follows that
\[
a_2=a_1^{-1}X=X^{-n}Y^nX.
\]

The final two relations give
\[
x_2=x_3=B.
\]
Hence
\[
z_2=BY^{-n},\qquad z_3=B.
\]
Since \(z_3=a_3\mathcal A\), we get
\[
a_3=B A^{-1}.
\]

Now \(z_2=y_2A\), while
\[
y_2=a_2X^{-n}=X^{-n}Y^nX^{1-n}.
\]
Therefore
\[
B=z_2Y^n
  =X^{-n}Y^nX^{1-n}A Y^n.
\]
This gives the first relation
\[
B=X^{-n}Y^nX^{1-n}A Y^n. \tag{1}
\]

Next, because \(x_2=x_3=B\), we have
\[
B=a_2^{x_2}a_3^{x_3}
  =a_2^B a_3^B
  =(a_2a_3)^B.
\]
Thus
\[
B=a_2a_3.
\]
Using the expressions for \(a_2\) and \(a_3\), this becomes
\[
B=X^{-n}Y^nX\,BA^{-1}.
\]
A\[
A=B^{-1}X^{-n}Y^nXB. \tag{2}
\]

Now use the definition of \(A\).  Since
\[
y_2=a_2X^{-n},
\]
we have
\[
a_2^{y_2}=X^na_2X^{-n}.
\]
Substituting \(a_2=X^{-n}Y^nX\), we get
\[
a_2^{y_2}=Y^nX^{1-n}.
\]
Therefore
\[
\mathcal A=a_2^{y_2}a_3
          =Y^nX^{1-n}B A^{-1}.
\]
Hence
\[
B=X^{n-1}Y^{-n} A^2. \tag{3}
\]

Finally, use
\[
Y=a_1^{z_1}a_2^{z_2}.
\]
Since \(z_1=Y^{-n}\), we get
\[
a_1^{z_1}=X^nY^{-n}.
\]
Also \(z_2=BY^{-n}\), and by \((2)\),
\[
a_2^{z_2}
=
Y^nB^{-1}X^{-n}Y^nXBY^{-n}
=
Y^nA Y^{-n}.
\]
Thus
\[
Y=X^nY^{-n}Y^n A Y^{-n}
 =X^nA Y^{-n}.
\]
SoA
\[
Y=X^n A Y^{-n}. \tag{4}
\]

Therefore \(G_n\) has the four-generator presentation
\[
G_n\cong
\gp{X,Y,A,B}{
\begin{array}{l}
Y=X^nA Y^{-n},\\
B=X^{n-1}Y^{-n}A^2,\\
\mathcal A=B^{-1}X^{-n}Y^nXB,\\
B=X^{-n}Y^nX^{1-n} A Y^n
\end{array}
}.
\]
Then the group \(G_n\) has the three-generator presentation
\[
G_n\cong
\gp{X,Y,A}{
\begin{array}{l}
Y=X^nAY^{-n},\\[2mm]
A=Y^nX^{1-2n}Y^nX^nY^{-n},\\[2mm]
X^{n-1}Y^{-n}A^2=X^{-n}Y^nX^{1-n}AY^n
\end{array}}\] 
Exclude $A $ and the two remaining relations are precisely
\[
Y^nX^nY^n=X^{3n-1},
\qquad
Y^{4n+1}=X^{4n-1}.
\]
Hence
\[
G_n\cong
\gp{X,Y}{
Y^nX^nY^n=X^{3n-1},\;
Y^{4n+1}=X^{4n-1}
}.
\]
Now kill \(X^{4n-1}\).  Let
\[
\overline G_n=G_n/ncl(X^{4n-1}).
\]

\[
\overline G_n=
\gp{X,Y}{
X^{4n-1},\;
Y^{4n+1},\;
(X^nY^n)^2
}.
\]

This is a non-trivial triangular group.
\end{proof}

\subsection{Non-triviality of the two-layer presentation with exponent sum $(0,0)$ by marked Van Kampen diagrams}

In this subsection we prove non-triviality for the two-layer presentation with
exponent sum $(0,0)$ represented by the exponents $(\ell, -\ell); (k,-k)$ in the range
\[
        \ell\geq 3,\qquad k\geq 2.
\]

We put
\[
\begin{aligned}
Q&=a_1a_2,& P&=a_2a_3,\\
A&=Q^{-\ell}a_2Q^{\ell}a_3,&
X&=P^{-k}a_2^{-1}P^ka_1^{-1}.
\end{aligned}
\]
Up to cyclic conjugacy, before reduction the three principal relators are
\[
R_1=a_1Q^{\ell}X^{\ell},\qquad
R_2=P^{-k}a_2Q^{\ell}A^kX^{\ell},\qquad
R_3=a_3A^kP^{-k}.
\]
The proof is carried out after free and cyclic reduction of the relators.  The
small cancellations which truncate an $X$-, $A$-, or $P$-period are then
removed by a local surgery of the diagram.  The resulting marked diagram is
normalized in the order
\[
        \boxed{X\ \longrightarrow\ A\ \longrightarrow\ Q\ \longrightarrow\ P.}
\]
All markings used below are markings of words in the original generators
$a_1,a_2,a_3$; no auxiliary generator is added to the presentation.

\subsubsection*{Reduction and period completion}

Set
\[
        X_0=P^{-k}a_3P^{k-1},
        \qquad
        A_0=Q^{-\ell}a_2Q^{\ell-1}a_1.
\]
Then
\[
        X=X_0a_1^{-1},\qquad A=A_0P.
\]
Also put
\[
L=P^{-(k-1)}a_3^{-1}a_2,\qquad
J=A_0P^{-(k-1)}a_3P^{k-1}a_1^{-1},\qquad
K=A_0P^{-(k-1)}.
\]

\begin{lm}[The cyclically reduced relators]\label{lm:reduced-marked-relators}
The freely and cyclically reduced forms of the principal relators are
\[
\begin{gathered}
\overline R_1=Q^{\ell}X^{\ell-1}X_0,\\[-1pt]
\overline R_2=LQ^{\ell}a_3A^{k-2}JX^{\ell-1},\\[-1pt]
\overline R_3=a_3A^{k-1}K.
\end{gathered}
\]
Here an exponent zero is interpreted as the empty word.
\end{lm}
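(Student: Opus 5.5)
The lemma is a free-group bookkeeping statement, and I plan to check each of the three relators directly. The only inputs are the definitions $Q=a_1a_2$, $P=a_2a_3$, $X=X_0a_1^{-1}$, $A=A_0P$, together with the elementary identities
\[
Q^{\ell}=Q^{\ell-1}a_1a_2,\qquad P^{k}=P^{k-1}a_2a_3,\qquad X_0=P^{-k}a_3P^{k-1}.
\]
For each $R_i$ I will locate the cancellations where two adjacent syllables meet (inside a product or across the cyclic seam). I will cancel them and then verify that what remains is the displayed word and is cyclically reduced.

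\textbf{$R_1$.} Expanding $X^\ell$, the factor $X$ ends in $a_1^{-1}$. Cyclically, the leading $a_1$ of $R_1$ sits next to this trailing $a_1^{-1}$. Conjugating $R_1$ by $a_1$ moves the $a_1$ to the end, where it cancels the last $a_1^{-1}$ of $X^\ell$. This gives $Q^\ell X^{\ell-1}X_0$. Next I check that $X_0=P^{-k}a_3P^{k-1}$ is reduced. Its pieces are $P^{-k}=(a_3^{-1}a_2^{-1})^k$, then $a_3$, then $(a_2a_3)^{k-1}$, and after $P^{-k}$ ends in $a_2^{-1}$ the letter $a_3$ enters without cancelling. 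I also check that the junctions $Q^\ell\cdot X$, $X\cdot X$ and the cyclic seam $X_0\cdot Q^\ell$ do not cancel. Ending with $a_3$ against $a_1$, or $a_1^{-1}$ against $a_3^{-1}$, gives no reduction.

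\textbf{$R_3$.} The relator is $a_3A^kP^{-k}$. Write the last factor of $A^k$ as $A_0P$, so that its $P$ cancels against one $P^{-1}$. This leaves $a_3A^{k-1}A_0P^{-(k-1)}=a_3A^{k-1}K$. I then check reducedness: $A_0$ ends in $a_1$, and $P^{-1}$ begins with $a_3^{-1}$. At the cyclic seam, $P^{-1}$ ends in $a_2^{-1}$ and $a_3$ follows.

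\textbf{$R_2$.} This is the main place where care is needed. The leading $P^{-k}$ meets $a_2Q^\ell$. Writing $P^{-k}=P^{-(k-1)}a_3^{-1}a_2^{-1}$ would cancel $a_2^{-1}a_2$, but the statement keeps $L=P^{-(k-1)}a_3^{-1}a_2$. So the intended grouping is $P^{-k}a_2 = P^{-(k-1)}a_3^{-1}a_2^{-1}a_2$, and this reduces to $P^{-(k-1)}a_3^{-1}$. I will therefore recheck this step against the cyclic seam with the final factor $X$, whose last letter is $a_1^{-1}$; an apparent mismatch in letters may simply reflect a cyclic rearrangement. Similarly, $Q^\ell A^k$ must be rewritten so that an $a_3$ is extracted: $A$ begins with $Q^{-\ell}$, which cancels $Q^\ell$. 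One copy of $A$ and the junction $A^{k}X$, using $P\cdot P^{-k}$ inside $X$, then produce $J=A_0P^{-(k-1)}a_3P^{k-1}a_1^{-1}$, with $k-2$ intact copies of $A$ left over.

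I will carry out exactly these cancellations letter by letter. In each case I will confirm that the cancellations stop at a single period, so that $A_0$, $P^{\pm1}$ and $Q$ syllables are not consumed any further. This works because $\ell\ge 3$ and $k\ge2$ keep the exponents $\ell-1$ and $k-2$ nonnegative, and the letter boundaries between $a_1,a_2,a_3$ blocks never produce inverse pairs. Any discrepancy in the $R_2$ bookkeeping will be resolved by choosing the appropriate cyclic conjugate, since the lemma asserts equality only up to cyclic conjugacy.
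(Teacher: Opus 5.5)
\paragraph{Review.} Your treatment of $R_1$ and $R_3$ is correct and is exactly the paper's computation. The gap is in $R_2$.

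You correctly find that $P^{-k}a_2$ reduces to $P^{-(k-1)}a_3^{-1}$. You then read the final $a_2$ of $L=P^{-(k-1)}a_3^{-1}a_2$ as the original $a_2$ of $R_2$, which ``should have'' cancelled. From this you conclude there is a mismatch, and you propose to repair it by a cyclic rearrangement at the seam with the last $X$. That repair cannot work. The seam letters are $a_1^{-1}$ (the end of $X^{\ell-1}$) and $a_3^{-1}$ (the start of $L$), so nothing cancels there. Moreover, passing to a cyclic conjugate of a cyclically reduced word never creates a new letter. In fact the displayed $\overline R_2$ is the plain free reduction of $R_2$, and no cyclic step is involved at all. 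So your closing claim, that any discrepancy will be resolved by choosing a cyclic conjugate, is a step that would fail.

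The missing observation is one you almost state yourself. Cancelling $Q^{\ell}$ against the initial $Q^{-\ell}$ of the first copy of $A$ releases a letter on \emph{both} sides:
\[
Q^{\ell}A=Q^{\ell}Q^{-\ell}a_2Q^{\ell}a_3=a_2Q^{\ell}a_3 .
\]
You extract the $a_3$ but drop the $a_2$. Keeping it, your two partial computations combine to give
\[
P^{-k}a_2\,Q^{\ell}A=\bigl(P^{-(k-1)}a_3^{-1}a_2^{-1}\bigr)\,a_2\,\bigl(a_2Q^{\ell}a_3\bigr)=P^{-(k-1)}a_3^{-1}a_2\,Q^{\ell}a_3=LQ^{\ell}a_3 .
\]
So the $a_2$ in $L$ is the released letter, not the original one. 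Add the reduction
\[
AX=A_0P\cdot P^{-k}a_3P^{k-1}a_1^{-1}=J
\]
and the $k-2$ untouched middle copies of $A$. This yields $\overline R_2=LQ^{\ell}a_3A^{k-2}JX^{\ell-1}$, which is how the paper argues. Your junction checks for reducedness then complete the proof.
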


\begin{proof}
The first reduction is
\[
 P^{-k}a_2^{-1}P^k
 =P^{-k}a_3P^{k-1},
\]
which gives $X=X_0a_1^{-1}$.  The initial $a_1$ in $R_1$ then cancels
cyclically with the final $a_1^{-1}$ in its last copy of $X$, giving
$\overline R_1$.

Next,
\[
 Q^{\ell}A=a_2Q^{\ell}a_3,
\]
and hence
\[
 \operatorname{red}\bigl(P^{-k}a_2Q^{\ell}A\bigr)
 =P^{-(k-1)}a_3^{-1}a_2Q^{\ell}a_3
 =LQ^{\ell}a_3.
\]
Moreover,
\[
 \operatorname{red}(AX)
 =A_0P^{-(k-1)}a_3P^{k-1}a_1^{-1}
 =J.
\]
This gives the displayed form of $\overline R_2$.  Finally,
\[
 \operatorname{red}(AP^{-k})=A_0P^{-(k-1)}=K,
\]
which gives $\overline R_3$.
\end{proof}

The following local move permits us to keep track of the periods which were
truncated in Lemma~\ref{lm:reduced-marked-relators}.

\begin{lm}[Period-completion surgery]\label{lm:period-completion-surgery}
Let $W=uW_1$ be a marked period word.  Suppose that a principal region $S$
has a boundary segment
\[
        wW_1W^m,
        \qquad W_1=u^{-1}W,
\]
and that the $W_1$-part is incident to the corresponding terminal $W_1$-part
of a complete marked $W=uW_1$-side of a region $S_1$.  The complementary
$u$-part of that side may be subdivided into subpaths, each incident to a
possibly different region.

There is a local surgery which transfers the whole $u$-strip from $S_1$ to
$S$.  After the surgery, $S$ and $S_1$ are incident along a complete
$W$-period, every region formerly incident to a subpath of the $u$-strip on
$S_1$ is incident to the corresponding subpath on $S$, and the displayed
segment on $S$ becomes
\[
        wu^{-1}W^{m+1}.
\]
The surgery preserves the outer boundary, the number of principal relator
regions, and the topology of the disc, and it is reversible.  The same move
is allowed cyclically across the chosen initial point of the boundary of a
principal region.
\end{lm}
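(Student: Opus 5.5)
The surgery is local, so I will describe it on a neighbourhood of the common boundary of $S$ and $S_1$ and then glue it back into the unchanged remainder of the diagram. First I fix the local picture. Along $\partial S$ we read $w\,W_1\,W^m$. The $W_1$-part is glued to the terminal $W_1$-subpath of the complete side $W=uW_1$ of $S_1$. The initial $u$-subpath of that side is a concatenation $u=u^{(1)}\cdots u^{(p)}$. Each piece $u^{(i)}$ is incident to a region $D_i$, or lies on the outer boundary. Since both regions carry the same marked period $W$, the vertex $o$ at the start of the $u$-strip on $\partial S_1$ is joined to the vertex $o'$ at the end of the $W_1$-part by a path reading $W$ around $\partial S_1$.

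The construction is to cut along the $u$-strip and reglue. Concretely:
\begin{enumerate}
\item Take a thin disc neighbourhood $N$ of the $u$-strip, meeting $S_1$, the regions $D_i$, and the vertices at its ends.
\item Inside $N$, introduce a \emph{doubled} copy of the strip. This is a bigon bounded by two paths, each labelled $u$. It has zero area, since it reads $uu^{-1}$, and it is equivalent to a diamond move or a zero-cell insertion.
\item Reattach the copy on the $D_i$ side to the boundary of $S$, just before the $W_1$-part. The boundary segment of $S$ then reads $w\,u^{-1}\,(uW_1)\,W^m = wu^{-1}W^{m+1}$.
\item Reattach the other copy to $S_1$, so that $S_1$ keeps its complete $W$-side.
\end{enumerate}

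After this, $S$ and $S_1$ are glued along the full $W=uW_1$. Each $D_i$ is now glued to the corresponding subpath $u^{(i)}$ on $S$ instead of on $S_1$. The boundary label of every principal region is unchanged up to cyclic permutation, because we only regrouped the word $wu^{-1}uW_1W^m$ as an unreduced word. Hence the regions remain principal relator regions of the same type and the same number.

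Finally I check the invariants:
\begin{itemize}
\item The outer boundary is untouched, because $N$ can be chosen in the interior, or only pushed along the outer boundary where some $u^{(i)}$ is outer.
\item The Euler characteristic is preserved, since one adds a bigon and then collapses it. So the result is still a disc.
\item Reversibility comes from performing the same move with the roles of $S$ and $S_1$ exchanged; this transfers the strip back.
\item The cyclic version works the same way, since the argument only uses a neighbourhood of the strip, which may straddle the chosen base point of $\partial S$.
\end{itemize}

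The main obstacle I expect is step 3 when some $D_i$ coincides with $S$ or $S_1$ itself, or when the $u$-strip meets the $W_1$-part in a vertex of high degree. Then the regluing could create a non-planar identification or an annular region. I would first try to handle this by choosing $N$ small enough that it meets only edges of the $u$-strip and the two endpoints. I would then argue from planarity of the original disc that the cyclic order of the $D_i$ around the strip is preserved on $\partial S$.
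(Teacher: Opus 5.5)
Your argument is essentially the paper's: the paper also takes a parallel copy of the path labelled $u$ and pushes the $S$--$S_1$ interface across the thin strip between the two copies, separately on each subpath of $u$. It likewise notes that the cyclic order at every branching vertex is unchanged, and checks that $wu^{-1}W^{m+1}=wu^{-1}(uW_1)W^m$ freely reduces to $wW_1W^m$.

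There are a few small wording issues. The label of $S$ is preserved after free reduction, not up to cyclic permutation. Reversibility comes from folding the two parallel copies of $u$ back together, i.e.\ removing the zero-area strip; this is not literally the same move with $S$ and $S_1$ exchanged. The degenerate case $D_i=S$ that you flag is not treated in the paper's proof either.
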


\begin{proof}
Take a parallel copy of the path labelled $u$ and push the $S$--$S_1$
interface across the strip between the two copies.  The old incidences are
replaced locally by
\[
 S_1\stackrel{W_1}{\longleftrightarrow}S,
 \qquad
 S_1\stackrel{u}{\longleftrightarrow}S_2
 \quad\leadsto\quad
 S_1\stackrel{W}{\longleftrightarrow}S,
 \qquad
 S\stackrel{u}{\longleftrightarrow}S_2,
\]
with the same operation performed separately on every subpath of $u$ when
several regions meet the strip.  The cyclic order at every branching vertex
is unchanged.  Finally,
\[
        u^{-1}W^{m+1}=u^{-1}(uW_1)W^m=W_1W^m,
\]
so the boundary label is unchanged after free reduction.
\end{proof}

Starting from a reduced Van Kampen diagram over the cyclically reduced
relators $\overline R_i$, apply the surgery whenever a truncated period is
incident to the corresponding part of a complete period.  If such a surgery
cannot be performed because the opposite path changes neighbour before the
missing prefix is reached, then that change of neighbour gives an additional
marked incidence; all degree estimates below only improve.  If the missing
prefix lies on the exterior boundary, it is left untouched, and again it
contributes an additional exterior incidence.  Thus the worst case for the
degree estimates is the fully completed case.

After completion we use the following marked contours:
\[
\begin{gathered}
\boxed{R_1^{\sharp}=a_1\;Q^{\ell}\;X^{\ell},}\\[-1pt]
\boxed{R_2^{\sharp}=P^{-(k-1)}\;a_3^{-1}\;a_2\;Q^{\ell}\;a_3\;A^{k-1}\;X^{\ell},}\\[-1pt]
\boxed{R_3^{\sharp}=a_3\;A^k\;P^{-k}.}
\end{gathered}
\]
The second contour contains only $k-1$ completed $A$-periods and $k-1$
completed $P^{-1}$-periods; the first cancellation in $R_2$ cannot be undone
by the local surgery.  This is the only loss relevant to the subsequent
counts.

Every power in the three displayed contours is subdivided into its individual
periods.  Thus, for example,
\[
 Q^{\ell}=Q_1\cdots Q_{\ell},\qquad
 A^{k-1}=A_1\cdots A_{k-1},\qquad
 P^{-(k-1)}=P_1^{-1}\cdots P_{k-1}^{-1}.
\]
The same convention is used on inverses of relators, with the order and
orientations reversed.

A marked neighbour-incidence means a maximal common boundary arc between two
cells, or between a cell and the exterior boundary, measured after
subdivision at the retained marked endpoints.  A cell or macro-cell is called
\emph{safe} if it has at least six exterior marked neighbour-incidences.
The hierarchical joining order is
\[
        X\longrightarrow A\longrightarrow Q\longrightarrow P.
\]

\begin{lm}[Synchronization of the period words]
In each of the marked powers
\[
 X^{\ell},\qquad A^{k-1},\qquad A^k,
 \qquad P^{-(k-1)},\qquad P^{-k},
\]
the only whole period subwords are the marked periods.
\end{lm}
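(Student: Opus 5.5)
The statement is purely combinatorial on words in $F(a_1,a_2,a_3)$. I would first write each period in reduced form:
\[
Q=a_1a_2,\qquad P=a_2a_3,
\]
\[
X=\operatorname{red}\bigl(P^{-k}a_3P^{k-1}a_1^{-1}\bigr)=a_3^{-1}a_2^{-1}\cdots a_3^{-1}a_2^{-1}\,a_3\,a_2a_3\cdots a_2\,a_1^{-1},
\]
\[
A=\operatorname{red}\bigl(Q^{-\ell}a_2Q^{\ell-1}a_1\,a_2a_3\bigr).
\]
Each of these is cyclically reduced: for $X$ the first letter is $a_3^{-1}$ and the last is $a_1^{-1}$, and for $A$ they are $a_2^{-1}$ and $a_3$. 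So every power $W^m$ is freely reduced as written.

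The claim is then that $W$ occurs as a subword of $W^m$ only at positions that are multiples of $|W|$. This is equivalent to saying that $W$ is not a proper rotation of itself, i.e.\ that $W$ is primitive (not a proper power) as a cyclic word. The standard fact I would use is this: if $W$ occurs in $WW$ at an offset $0<d<|W|$, then $W$ has period $d$. Hence $W=UV=VU$ for suitable nonempty $U,V$, and by Lyndon--Schützenberger $W$ is a proper power of a shorter word. So it suffices to show that each of $X$, $A$, $P^{-1}$ is not a proper power.

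For $P^{\pm1}=(a_2a_3)^{\pm1}$ this is immediate, since the word has length two with distinct letters. For $X$ and $A$ I would use a letter-count or unique-letter argument. In $X$, the letter $a_1^{-1}$ occurs exactly once. A proper power $U^r$ with $r\ge 2$ would contain it $r$ times, so $X$ is primitive. In $A$, I would look for a syllable that appears exactly once. The $a_1^{-1}$ letters occur only inside the $Q^{-\ell}$ block, so they are not unique. However, the pattern $a_2a_3$ at the very end, preceded by $a_1$, occurs once; alternatively the single letter $a_3$ occurs exactly once in $A$. That letter then plays the same role as $a_1^{-1}$ in $X$.

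The only point needing care, and the main obstacle, is how occurrences interact with the markings. A whole period subword could straddle a marked boundary inside $W^m$, and this is exactly what the periodicity argument excludes. Also, the statement is about the subwords $A^{k-1},A^k$ as they sit inside the contours $R_2^\sharp$ and $R_3^\sharp$. For those I would only check the powers themselves, which is literally what the statement asks; in particular I would not consider occurrences spilling into the neighbouring $a_3$ or $P^{-k}$. If a stronger reading were intended, the same unique-letter count shows that a full period occupies a window containing exactly one $a_3$. That window must then be aligned to the marked $a_3$, which forces the position.
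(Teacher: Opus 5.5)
Your proof is correct, but it takes a detour that the paper avoids.

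The paper uses the distinguished letter directly for alignment. In the reduced word $X=P^{-k}a_3P^{k-1}a_1^{-1}$, the letter $a_1^{-1}$ occurs exactly once, in the last position. So any copy of $X$ inside $X^{\ell}$ must end at an occurrence of $a_1^{-1}$ in $X^{\ell}$, and these are exactly the terminal letters of the marked periods. The same argument works for $A=Q^{-\ell}a_2Q^{\ell}a_3$ with its unique terminal $a_3$, and $P^{-1}=a_3^{-1}a_2^{-1}$ has no shifted copy in its powers.

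You instead reduce to primitivity: an off-phase occurrence in $W^m$ gives $W=UV=VU$, hence a proper power by Lyndon--Sch\"utzenberger. You then use the same letter count only to show that $X$ and $A$ are not proper powers. Your route is more general: it needs only that the period be cyclically reduced and primitive, with no condition on where a distinguished letter sits. The cost is the rotation lemma. The paper's argument is one line, and it uses the terminal position of the unique letter, which is precisely what ties copies to the markings.

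Two small remarks. First, in your expansion of $X$ the factor $P^{k-1}$ ends in $a_3$, so the word should end $\cdots a_2a_3\,a_1^{-1}$. Second, your closing aside on a ``stronger reading'' is too quick as stated. The contours contain other occurrences of $a_3$: the separate $a_3$ sides of $R_2^{\sharp}$ and $R_3^{\sharp}$, and the $a_3$'s inside $X$. So a window ending in $a_3$ need not end at a marked $A$-period unless you check the neighbouring letters. This is not needed for the lemma as stated.
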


\begin{proof}
The freely reduced word
\[
        X=P^{-k}a_3P^{k-1}a_1^{-1}
\]
contains exactly one letter $a_1^{-1}$, at its end.  Hence a complete copy of
$X$ in $X^{\ell}$ must end at one of the marked terminal occurrences of
$a_1^{-1}$ and is therefore one of the marked periods.  Similarly,
\[
        A=Q^{-\ell}a_2Q^{\ell}a_3
\]
contains exactly one $a_3$, at its end, so complete copies of $A$ in
$A^{k-1}$ or $A^k$ are precisely the marked ones.  Finally,
$P^{-1}=a_3^{-1}a_2^{-1}$ has no shifted copy in a power of $P^{-1}$.
\end{proof}

\begin{df}[$X$-, $A$-, and $P$-components]
An $X$-component is a connected component formed by gluing whole marked
$X$-periods of $R_1^{\sharp}$- and $R_2^{\sharp}$-cells.  Similarly, an $A$-component is formed
by whole marked $A$-period gluings, and a $P$-component is formed by whole
marked $P^{-1}$-period gluings.  A cell belongs to such a component as soon
as one of its corresponding marked periods belongs to the component.
\end{df}

\begin{lm}[Cells outside $X$-components are safe]
If $\overline R_2$-cell does not
belong to an $X$-component, then it is safe.  If $\overline R_1$-cell does not
belong to an $X$-component and is not attached to $\overline R_2$-cell along $Q^{\ell}$, then it is safe.
\end{lm}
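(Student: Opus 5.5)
The statement is a counting claim: a cell that does not lie in an $X$-component has at least six exterior marked neighbour-incidences. I would prove it by listing the marked pieces of each completed contour and showing that many of them cannot be glued as whole periods to a single neighbour. I work with the contours $R_1^{\sharp}$ and $R_2^{\sharp}$ after period completion. By the remark preceding the statement, any failed completion only adds incidences, so the fully completed case is the worst case.

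For an $\overline R_2$-cell I would subdivide
\[
R_2^{\sharp}=P^{-(k-1)}\,a_3^{-1}\,a_2\,Q^{\ell}\,a_3\,A^{k-1}\,X^{\ell}
\]
into its marked pieces. These are the $k-1$ pieces $P_i^{-1}$, the letters $a_3^{-1}$, $a_2$ and $a_3$, the $\ell$ pieces $Q_i$, the $k-1$ pieces $A_i$, and the $\ell$ pieces $X_i$. The hypothesis says that no whole marked $X$-period of the cell is glued to a whole $X$-period of a neighbour.

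The key step is a synchronization argument, using the Synchronization Lemma together with the occurrence of the unique letter $a_1^{-1}$ at the end of each $X$-period. Suppose a single neighbouring arc covers the end of $X_i$ and the end of $X_{i+1}$. Then it contains a whole subword $X$ ending at a marked $a_1^{-1}$. In every relator, the letter $a_1^{-1}$ occurs only as the terminal letter of an $X$-period, or at the end of $J$ in $\overline R_2$. In the first case the opposite arc would carry a whole marked $X$-period, which is excluded. The $J$ case has to be excluded by comparing the prefixes $A_0P^{-(k-1)}a_3P^{k-1}$ and $P^{-k}a_3P^{k-1}$, which differ at the start.

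Hence every terminal letter $a_1^{-1}$ of $X_1,\dots,X_\ell$ is separated by a marked endpoint from the next period. This already gives at least $\ell\geq 3$ distinct neighbour-incidences inside $X^{\ell}$. The same argument applied to the unique $a_3$ at the end of each $A$-period, and to the unique $a_2$ in each $Q$-block flanked by letters that differ from those of any other relator, gives further incidences along $A^{k-1}$ and in the $a_3^{-1}a_2Q^{\ell}a_3$ part.

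To reach six I would add the following. At most $\ell-1$ of the $X$-periods can share a neighbour, and a piece longer than half a period forces a whole period. Together with $k\geq2$, this gives at least two more incidences from the segment $a_3^{-1}a_2Q^{\ell}a_3A^{k-1}$, which is not a piece of any other relator of that length. That brings the total to at least $3+2+1=6$, where the final incidence comes from the junction between $X^{\ell}$ and $P^{-(k-1)}$, which is cyclically adjacent. An $\overline R_2$-cell outside an $X$-component is therefore safe.

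For an $\overline R_1$-cell, $R_1^{\sharp}=a_1Q^{\ell}X^{\ell}$, and the same $X$-synchronization gives at least $\ell\geq3$ incidences along $X^{\ell}$. The additional hypothesis excludes the only long gluing of $Q^{\ell}$, namely the one to the $Q^{\ell}$-block of an $\overline R_2$-cell. By the same case check on $Q^{\ell}$ from below, any other neighbour covers less than a full $Q$-period. So $Q^{\ell}$ splits into at least $\lceil \ell/1\rceil$ pieces, or at least two incidences. Together with the boundary pieces $a_1$ and $X_0$, this again gives at least six.

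The main obstacle is the piece analysis. One must show that no neighbouring arc in any of $R_1^{\sharp},R_2^{\sharp},R_3^{\sharp}$, or their inverses, covers more than one marked endpoint of $X^{\ell}$ without containing a whole period. This requires checking the overlaps with $J$, $K$ and $A_0$ letter by letter, and I would settle it first by a short explicit enumeration of the maximal common subwords.
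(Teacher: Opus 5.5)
\paragraph{The gap.} The problem is the step that takes you from the $\ell$ incidences on $X^{\ell}$ up to six. You count unsubdivided maximal common arcs and try to show, by piece analysis, that the rest of the contour cannot be covered by only a few arcs. The piece facts you rely on are false for this presentation.

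\begin{itemize}
\item \emph{The long piece $W$.} Since $A=Q^{-\ell}a_2Q^{\ell}a_3$, the words $Q^{\ell}$ and $a_2Q^{\ell}a_3$ occur inside every $A$-period. In fact a cyclic conjugate of $\overline R_3=a_3A^{k-1}A_0P^{-(k-1)}$ begins with
\[
 W=a_2Q^{\ell}a_3A^{k-2}A_0P^{-(k-1)}a_3 ,
\]
and $W$ is also the subword of $\overline R_2=LQ^{\ell}a_3A^{k-2}JX^{\ell-1}$ running from the $a_2$ of $L$ to the $a_3$ inside $J$. So one oppositely oriented $\overline R_3$-cell can cover almost the whole non-$X$ part of an $\overline R_2$-cell in a single arc. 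Your two extra incidences from $a_3^{-1}a_2Q^{\ell}a_3A^{k-1}$ are therefore not available.

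\item \emph{The junction.} The stretch $a_1^{-1}P^{-(k-1)}a_3^{-1}$ between $X_\ell$ and $L$ agrees with the corresponding stretch of every junction $X_iX_{i+1}$ in any $X^{\ell}$. Hence the arc through the end of $X_\ell$ can run across it, and there is no extra incidence at the junction.

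\item \emph{Half periods.} The claim that a piece longer than half a period forces a whole period fails, since $J$ ends with $X$ minus its first $P^{-1}$.

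\item \emph{The $\overline R_1$ case.} An $\overline R_3$-cell can cover all of $Q^{\ell}$; the hypothesis only excludes $\overline R_2$-cells. Also, $a_1$ and $X_0$ are not new pieces: $a_1$ cancels cyclically against the last $a_1^{-1}$, and $X_0$ is the truncated last $X$-period, already counted.

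\item \emph{Synchronization.} Your stated reason is inaccurate, because $a_1^{-1}$ occurs in every $Q^{-1}$ and hence in every $A$-period.
\end{itemize}

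At the level of pieces, nothing prevents an $\overline R_2$-cell from being covered by $W$ together with the $\ell$ arcs through the ends of its $X$-periods. That gives only $\ell+1$ incidences, which is $4$ when $\ell=3$. So the enumeration you postpone as the main obstacle cannot produce six.

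\paragraph{The missing idea.} The paper defines a marked neighbour-incidence as a maximal common arc \emph{measured after subdivision at the retained marked endpoints}, with every power subdivided into its periods. Each marked side therefore contributes at least one incidence, whichever neighbour covers it, and no piece analysis of the non-$X$ part is needed.

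The paper's proof is just this count. No marked $X$-period is a whole $X$-overlap, so each one is cut internally, and the $X^{\ell}$-side gives at least $\ell+1$ incidences. Your arcs through the ends of the periods supply $\ell$ of these, and the arc through the start of the first period supplies one more.
\begin{itemize}
\item For an $\overline R_2$-cell, the remaining marked sides $P^{-(k-1)},a_3^{-1},a_2,Q^{\ell},a_3,A^{k-2},J$ are added directly, giving $\ell+5\geq 8$.
\item For an $\overline R_1$-cell, the two parts of $Q^{\ell}$ give $\ell+3\geq 6$.
\end{itemize}
You can keep your $X^{\ell}$ estimate, but the rest of your argument should be replaced by this direct count under the subdivided notion of incidence.
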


\begin{proof}
Since no marked $X$-period occurs as a whole $X$-overlap, each of the
$\ell$ marked periods on the $X^{\ell}$-side is cut internally.  Therefore
that side contributes at least $\ell+1$ neighbour-incidences.

For an $\overline R_2$-cell, the remaining marked sides include
\[
 P^{-(k-1)},\quad a_3^{-1},\quad a_2,\quad Q^{\ell},
 \quad a_3,\quad A^{k-2}, J.
\]
Thus
\[
        d(\overline R_2)\geq(\ell )+5\geq 8.
\]

For an $\overline R_1$-cell, the remaining marked sides include two parts of
\(
        Q^{\ell},
\)
so
\[
        d(F)\geq (\ell+1)+2=\ell+3\geq6.
\]

\end{proof}

\begin{lm}[Closed $X$-components are safe]
Let $K$ be a closed $X$-component in a reduced marked diagram.  Then $K$ is
safe.
\end{lm}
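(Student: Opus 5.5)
The plan is to count the exterior marked neighbour-incidences of $K$ by looking at how the periods of its constituent cells sit inside the component. A closed $X$-component is built from $R_1^{\sharp}$- and $R_2^{\sharp}$-cells glued along whole marked $X$-periods. By the synchronization lemma, every such gluing matches a whole period $X_i$ of one cell with a whole period of another, with opposite orientation, since the diagram is reduced. I will first describe the gluing pattern. Each cell carries a single block $X^{\ell}$. Two cells glued along several consecutive periods can be merged into one macro-cell. Gluing $X^{\ell}$ against $X^{-\ell}$ along all $\ell$ periods is impossible for an $R_1^{\sharp}$--$R_1^{\sharp}$ or $R_2^{\sharp}$--$R_2^{\sharp}$ pair, since the cells would then be mirror images and the diagram would not be reduced. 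So a full gluing pairs an $R_1^{\sharp}$-cell with an $R_2^{\sharp}$-cell, or else the gluing is partial. In every case $K$ is a union of cells arranged along a tree. I read ``closed'' as meaning that no further $X$-period of $K$ can be glued to anything, and I expect the underlying gluing graph of $K$ to be a tree, because a cycle of $X$-gluings would enclose a subdiagram whose boundary label is a product of $X$-powers, contradicting reducedness.

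Next I will bound the exterior incidences of the macro-cell $K$ by adding up the non-$X$ parts of the boundaries of its cells. When two cells are glued along $j$ whole $X$-periods, their remaining boundaries join into one contour. The non-$X$ marked segments of each cell survive and stay exterior to $K$: for $R_1^{\sharp}$ these are $a_1$ and the $\ell$ periods of $Q^{\ell}$; for $R_2^{\sharp}$ they are $P^{-(k-1)}$, $a_3^{\pm1}$, $a_2$, $Q^{\ell}$ and $A^{k-1}$. A partial gluing also leaves $\ell-j$ unglued $X$-periods exposed. Each of these is either cut internally, which gives at least two incidences, or is glued to something outside $K$, which gives one marked incidence.

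A single $R_1^{\sharp}$--$R_2^{\sharp}$ pair glued along all of $X^{\ell}$ yields the boundary
\[
a_1Q^{\ell}\cdot(\text{inverse of }P^{-(k-1)}a_3^{-1}a_2Q^{\ell}a_3A^{k-1}).
\]
This contour carries well over six marked segments, since $\ell\ge3$ and $k\ge2$. A neighbouring region may swallow a whole segment, for example a $Q^{\ell}$-block. Even so, the marked endpoints at the junctions $a_1|Q$, $Q|a_1$, $P|a_3$, $a_2|Q$, $Q|a_3$ and $a_3|A$ still subdivide the boundary into at least six maximal common arcs, because consecutive segments carry different letters at their junctions, so one region cannot cover two consecutive junctions without producing cancellation. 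For larger tree components, each added leaf cell removes at most $j\le\ell$ $X$-periods but contributes all of its non-$X$ segments. The count therefore only increases, and I will argue by induction on the number of cells.

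I expect the main obstacle to be making sure that an exterior neighbouring region cannot run along several consecutive marked segments of $K$ at once. Such a region would merge incidences and could drop the count below six. To rule this out I will use the distinct first and last letters of the segments ($a_1$, $Q=a_1a_2$, $a_3$, $P^{-1}=a_3^{-1}a_2^{-1}$, $A$ ending in $a_3$). I will also use the fact that a long common arc with a neighbouring region would itself have to contain a whole period, which by the synchronization lemma forces that region into $K$, contradicting closedness.
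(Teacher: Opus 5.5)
\paragraph{The tree claim fails.}
In this paper a \emph{closed} $X$-component is one in which every marked $X$-period of every cell of $K$ is glued, along a whole period, to a period of another cell of $K$. This is the meaning used when closure of a two-cell component ``forces all marked $X$-periods of one cell to be paired with those of the other''. Open components, by contrast, have free ends that receive caps. Your discussion of exposed periods therefore concerns open components.

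With this meaning, your claim that the gluing graph is a tree is false as soon as $K$ has at least three cells. Each of $R_1^{\sharp}$ and $R_2^{\sharp}$ carries exactly $\ell$ marked $X$-periods. A leaf cell would spend all $\ell$ of them on its unique neighbour. That exhausts the neighbour's $\ell$ periods, so $K$ would consist of just these two cells.

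Your justification would not give a tree under any reading. Reducedness only excludes cancellable pairs; it does not exclude subdiagrams whose boundary label is a product of $X$-powers. Nothing in your argument rules out a cycle of gluings around a single interior vertex, which encloses nothing at all. So your leaf induction never reaches closed components with three or more cells.

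The tree structure is also unnecessary. Whatever the gluing pattern, the non-$X$ marked sides of each cell stay on the frontier of $K$. These are $a_1,Q^{\ell}$ for an $R_1^{\sharp}$-cell and $P^{-(k-1)},a_3^{-1},a_2,Q^{\ell},a_3,A^{k-1}$ for an $R_2^{\sharp}$-cell. This gives directly
\[
d(K)\geq 2n_1+6n_2 .
\]

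\paragraph{The decisive case is missing.}
You concede that a neighbour may swallow a whole $Q^{\ell}$-block, so an $R_1^{\sharp}$-cell is guaranteed only two marked sides. The only components whose safety is in doubt are then those with $n_2=0$ and $n_1\leq 2$. Your induction starts from an $R_1^{\sharp}$--$R_2^{\sharp}$ pair and never treats $n_2=0$. What is needed there is the following.
\begin{itemize}
\item If $n_1=1$, the component would be a self-gluing, which is excluded in a reduced normal diagram.
\item If $n_1=2$, closure pairs all $\ell$ periods of one cell with those of the other. Synchronization makes this a full $X^{\ell}$-to-$X^{\ell}$ gluing, so the two cells form a cancellable pair.
\end{itemize}
You state the cancellable-pair fact but never use it where it matters.

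\paragraph{The ``main obstacle'' is not one.}
Marked neighbour-incidences are measured after subdivision at the retained marked endpoints. So every marked side counts at least once by definition, and no merging can occur.

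The argument you propose for it would fail anyway, for two reasons.
\begin{itemize}
\item A neighbouring cell's boundary can run across a junction such as $a_1\mid Q$ without any cancellation.
\item A long common arc along $Q^{\ell}$, $A^{k-1}$ or $P^{-(k-1)}$ contains $Q$-, $A$- or $P$-periods, not $X$-periods. Synchronization of $X$-periods therefore cannot force the neighbour into $K$.
\end{itemize}
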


\begin{proof}
Let
\[
        n_i=\#\{R_i^{\sharp}\text{-cells in }K\},\qquad i=1,2.
\]
Every $R_1^{\sharp}$-cell contributes at least the two non-$X$ marked sides
$a_1,Q^{\ell}$, while every $R_2^{\sharp}$-cell contributes at least the six
non-$X$ marked sides
\[
 P^{-(k-1)},\ a_3^{-1},\ a_2,\ Q^{\ell},\ a_3,\ A^{k-1}.
\]
Consequently
\[
        d(K)\geq2n_1+6n_2.
\]
If $n_2>0$, then $d(K)\geq6$.  If $n_2=0$ and $n_1\geq3$, then again
$d(K)\geq6$.  A one-cell closed component would be a self-gluing and is
excluded in a reduced normal diagram.  If $n_1=2$, closure forces all
marked $X$-periods of one cell to be paired with those of the other.
Synchronization then gives a full $X^{\ell}$-to-$X^{\ell}$ gluing, so the
cells form a cancellable pair, contrary to reducedness.
\end{proof}

\begin{lm}[The $\overline R_3$-part]
Let $\overline R_3=a_3A^{k-1}A_0P^{-(k-1)}$.  Then:
\begin{enumerate}
\item an $\overline R_3$-cell which belongs to neither an $A$-component nor a
$Q$-component is safe;
\item every open or closed $A$-component which occurs in a reduced diagram is
safe after the usual $A$-caps are added at its open ends;
\item every open or closed $Q$-component which occurs in a reduced diagram is
safe after the usual $Q$-caps are added at its open ends.
\end{enumerate}
\end{lm}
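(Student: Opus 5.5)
The plan is to mirror the counting scheme used for the $X$-part, with the roles of the relators redistributed. For part 1, let $S$ be an $\overline R_3$-cell that lies in no $A$-component and no $Q$-component. Then no marked $A$-period on its $A^{k-1}$-side is glued whole to a marked $A$-period of another cell. By the synchronization lemma, the only whole $A$-subwords of $A^{k-1}$ (and of the completed $A^k$) are the marked periods, so each of the $k-1$ periods is cut internally. That side therefore contributes at least $k$ marked neighbour-incidences.

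Since $S$ is also not in a $Q$-component, the $Q^{-\ell}$ and $Q^{\ell}$ blocks inside $A_0$ are not glued wholly. Hence $A_0$ is subdivided, giving at least two further incidences. To these we add the two marked sides $a_3$ and $P^{-(k-1)}$. This gives $d(S)\geq k+4\geq 6$ for $k\geq 2$. For $k=2$ the count is tight, so I would check that the $P^{-(k-1)}$ side and the $a_3$ side cannot merge into a single incidence. This follows because they are separated by the marked $A$-periods in the cyclic word, so the bound holds.

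For part 2, let $K$ be an $A$-component, and let $n_2,n_3$ be the numbers of $R_2^\sharp$- and $R_3^\sharp$-cells in it. I would first bound the contribution of non-$A$ marked sides. Each $R_2^\sharp$-cell keeps $P^{-(k-1)},a_3^{-1},a_2,Q^\ell,a_3$ and the $X^\ell$-side, which gives at least $6$. Each $R_3^\sharp$-cell keeps $a_3$ and $P^{-k}$, which gives at least $2$. Hence $d(K)\geq 6n_2+2n_3$. This settles every case except $n_2=0$ with $n_3\leq 2$.

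For a closed component with $n_2=0$, $n_3=1$ is a self-gluing, which is excluded. For $n_3=2$, closure forces all $k$ marked $A$-periods of one cell onto those of the other. Synchronization then yields a full $A^k$-to-$A^k$ gluing, and reducedness forces the flanking $a_3$ letters to match, producing a cancellable pair. For open components, an $A$-cap at each open end contributes the unused periods on that end, each cut internally and each at least one incidence. These come together with the ends' $a_3$ and $P^{-k}$ sides, and I would verify $d\geq 6$ by the same count.

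For part 3, I would rerun the part 2 argument with $Q^{\ell}$ in place of $A^{k}$. The $Q$-periods occur in $R_1^\sharp$, $R_2^\sharp$ and, twice, inside each $A$-period of $R_3^\sharp$. Since $\ell\geq 3$, a cap on a $Q$-block already supplies at least $\ell+1\geq 4$ incidences. The closed case again reduces to a cancellable pair via synchronization of $Q$-periods; a $Q$-synchronization statement is needed here, and it holds because $Q=a_1a_2$ has no shifted copy in $Q^{\pm\ell}$.

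I expect the main obstacle to be the open-component cap count in parts 2 and 3 for the minimal parameters ($k=2$, $\ell=3$). There the inequality $d\geq 6$ is nearly tight, and I must make sure that distinct marked sides really give distinct maximal incidences rather than being absorbed into one common boundary arc.
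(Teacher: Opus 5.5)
This is essentially the paper's argument. For parts 2 and 3 you use the same bound $d\geq 6n_2+2n_3$, rule out the one-cell case as a self-gluing and the pure two-cell case as a cancellable pair by synchronization, and add caps at open ends. The open-end count you flag is handled in the paper by noting that each cap, being a period $Q^{-\ell}a_2Q^{\ell}a_3$, contributes at least two non-$A$ sides, so $d\geq 6n_2+2n_3+4\geq 6$. In part 1 you count internally cut $A$-periods plus $A_0$ instead of the paper's internally cut $Q^{\ell}$-periods, which is an equally valid use of the same synchronization idea.

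Your reason why the $P^{-(k-1)}$ and $a_3$ sides cannot merge is wrong, since they are cyclically adjacent in $\overline R_3$. Nothing is lost, though: sides are subdivided at marked endpoints, and $A_0$ alone contains $2\ell-1$ marked $Q$-periods, each cut internally. So your bound is far from tight even when $k=2$.
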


\begin{proof}
We have $\overline R_3=a_3Q^{-\ell}a_2Q^{\ell}a_3Q^{-\ell}a_2Q^{\ell -1}a_1P^{-(k-1)}$. If an $R_3^{\sharp}$-cell lies outside both kinds of components, each of the
$\ell$ marked periods of $Q^{\ell}$ is cut internally, so the $Q^{\ell}$-side contributes
at least $\ell$ incidences. 

Now let $C$ be an $A$-component.  Every $R_2^{\sharp}$-cell contributes at
least the six non-$A$ marked sides
\[
 P^{-(k-1)},\ a_3^{-1},\ a_2,\ Q^{\ell},\ a_3,\ X^{\ell},
\]
and every $R_3^{\sharp}$-cell contributes the two non-$A$ sides
$a_3,P^{-k}$.  Thus a closed component satisfies
\[
        d(C)\geq6n_2+2n_3.
\]
If $n_2>0$ or $n_3\geq3$, this is at least $6$.  The one-cell pure case is a
self-gluing.  In the remaining pure two-cell case, the two
$R_3^{\sharp}$-cells are glued along all $k$ marked $A$-periods; by
synchronization this is a full $A^k$-to-$A^k$ gluing and hence a cancellable
pair.  Therefore every closed $A$-component is safe.

If $C$ is open, attach the two standard $A$-caps.  Each cap contributes at
least two non-$A$ marked sides, since
\[
        A=Q^{-\ell}a_2Q^{\ell}a_3.
\]
Hence an open component has at least
\[
        6n_2+2n_3+4\geq6
\]
exterior incidences after capping.

The proof for $Q$-components is similar.  
\end{proof}

\begin{df}[Minimal relatively safe $X$-blocks]
Let $C$ be a connected subcluster of an $X$-component.  Write
\[
        n_i(C)=\#\{R_i^{\sharp}\text{-cells in }C\},\qquad i=1,2,
\]
and let $b_X(C)$ be the number of marked $X$-periods on the frontier of $C$,
that is, the marked $X$-periods of cells of $C$ which are not glued to a cell
of $C$ along a whole marked $X$-period.  Define the relative $X$-degree by
\[
        d_X(C)=2n_1(C)+6n_2(C)+b_X(C).
\]
We call $C$ relatively safe if $d_X(C)\geq 6$.  A relatively safe connected
subcluster is a minimal relatively safe $X$-block if no proper non-empty
connected subcluster of it is relatively safe.
\end{df}

\begin{lm}[Proper subblocks of minimal $X$-blocks are small]
Let $B$ be a minimal relatively safe $X$-block, and let
\[
        \emptyset\neq S\subsetneq B
\]
be a proper connected subcluster.  Then $S$ contains no
$R_2^{\sharp}$-cell and at most one $R_1^{\sharp}$-cell.  Consequently the
non-$X$ frontier of $S$ has at most two marked sides.
\end{lm}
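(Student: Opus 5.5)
The plan is to argue directly from the definitions of relative $X$-degree and minimality. Let $B$ be a minimal relatively safe $X$-block and $\emptyset\neq S\subsetneq B$ a proper connected subcluster. By minimality, $S$ is not relatively safe, so
\[
 d_X(S)=2n_1(S)+6n_2(S)+b_X(S)\leq 5 .
\]
All three terms are nonnegative integers, so the conclusions should follow from counting.

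\emph{Step 1: no $R_2^{\sharp}$-cells.} If $n_2(S)\geq1$, then $d_X(S)\geq 6n_2(S)\geq 6$. This contradicts the displayed bound, so $n_2(S)=0$.

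\emph{Step 2: at most one $R_1^{\sharp}$-cell.} With $n_2(S)=0$ the bound reads $2n_1(S)+b_X(S)\leq5$, which already gives $n_1(S)\leq2$. To rule out $n_1(S)=2$, I would use the frontier term $b_X(S)$. Take two $R_1^{\sharp}$-cells of $S$, each carrying $\ell\geq3$ marked $X$-periods. If the two cells are joined along all $\ell$ periods, synchronization of the period words gives a full $X^{\ell}$-to-$X^{\ell}$ gluing. That makes them a cancellable pair, which is excluded in a reduced diagram, exactly as in the lemma on closed $X$-components. Otherwise at least one period of each cell is not glued inside $S$. Counting more carefully, connectedness of $S$ through whole-period gluings leaves at least $2\ell-2(\ell-1)=2$ frontier periods. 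Hence $d_X(S)\geq 4+2=6$, a contradiction. This step is the main obstacle. The delicate point is making sure that a partial gluing of two $X^{\ell}$-sides (orientations reversed, as for relator and inverse relator) cannot pair all periods without forcing the cancellable configuration. I would first try to handle this by synchronization: a whole-period overlap must match marked terminal $a_1^{-1}$'s. So if two cells share $j$ consecutive periods in an orientation-compatible way, the matching propagates along the power. Either it is total, and then we have a cancellable pair, or it leaves at least one unmatched period on each cell.

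\emph{Step 3: the non-$X$ frontier.} Once $n_2(S)=0$ and $n_1(S)\leq1$, the non-$X$ marked sides of $S$ are those of a single $R_1^{\sharp}$-contour $a_1\,Q^{\ell}\,X^{\ell}$, namely $a_1$ and $Q^{\ell}$. These are at most two marked sides, which is the claimed consequence. The case $n_1(S)=0$ is vacuous, or gives zero such sides.
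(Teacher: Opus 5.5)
Your proposal is correct and is essentially the paper's argument: minimality gives $d_X(S)\le 5$, and the term $6n_2(S)$ rules out $R_2^{\sharp}$-cells. For $n_1(S)=2$ the paper observes that $b_X(S)$ is even, which is the same as your count $b_X(S)=2\ell-2j$; hence $b_X(S)=0$, and synchronization forces a full $X^{\ell}$-to-$X^{\ell}$ gluing, i.e.\ a cancellable pair. The paper asserts this synchronization step just as you propose to use it. The bound on the non-$X$ frontier is then read off from the sides $a_1$ and $Q^{\ell}$ of the single remaining $R_1^{\sharp}$-cell, exactly as in your Step 3.
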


\begin{proof}
Minimality gives $d_X(S)\leq5$.  If $S$ contained an
$R_2^{\sharp}$-cell, its six non-$X$ sides would already give
$d_X(S)\geq6$.  Hence no such cell occurs.

If $S$ contained at least three $R_1^{\sharp}$-cells, then
$2n_1(S)\geq6$.  If it contained exactly two, then
\[
        d_X(S)=4+b_X(S).
\]
The number $b_X(S)$ is even.  Thus $d_X(S)\leq5$ forces $b_X(S)=0$, so all
marked $X$-periods of the two cells are paired.  Synchronization gives a full
$X^{\ell}$-to-$X^{\ell}$ gluing and hence a cancellable pair.  Therefore $S$
contains at most one $R_1^{\sharp}$-cell.  Its only non-$X$ marked sides are
$a_1$ and $Q^{\ell}$.
\end{proof}

\begin{df}[Block lenses]
Let $B$ and $B'$ be two distinct minimal relatively safe $X$-blocks.  A
non-$X$ lens between $B$ and $B'$ is a disc subdiagram $\Lambda$ whose
boundary is a union
\[
        \partial\Lambda=\alpha\beta^{-1},
\]
where $\alpha$ lies on the non-$X$ frontier of $B$, $\beta$ lies on the
non-$X$ frontier of $B'$, and neither $\alpha$ nor $\beta$ contains a marked
$X$-period.  We choose such a lens innermost if its interior contains no
smaller non-$X$ lens between minimal $X$-blocks.
\end{df}

\begin{prop}[No non-$X$ lenses between minimal $X$-blocks]
Let $\Delta$ be a reduced marked diagram.  After decomposing
$X$-components into minimal relatively safe $X$-blocks, two distinct minimal
$X$-blocks cannot form a non-$X$ lens.
\end{prop}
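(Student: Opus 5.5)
We argue by contradiction, using an innermost lens. Suppose two distinct minimal relatively safe $X$-blocks $B$ and $B'$ bound a non-$X$ lens. Among all such lenses, choose an innermost $\Lambda$ with $\partial\Lambda=\alpha\beta^{-1}$. Then no pair of minimal $X$-blocks forms a lens inside $\Lambda$. The aim is to show that $\Lambda$ is empty, or else forces a cancellable pair. Either outcome contradicts reducedness, or contradicts the way $X$-components were decomposed into minimal blocks.

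\textbf{Step 1: restrict the labels of $\alpha$ and $\beta$.} By the lemma that proper subblocks of minimal $X$-blocks are small, every proper connected subcluster of $B$ has at most two non-$X$ marked sides. These sides are $a_1$ and $Q^{\ell}$, coming from a single $R_1^{\sharp}$-cell. Since $\alpha$ contains no marked $X$-period, it runs along non-$X$ frontier sides of the cells of $B$ that it meets. We must show these cells form a proper subcluster. If $\alpha$ met all of $B$, the six non-$X$ sides of any $R_2^{\sharp}$-cell, or the frontier $X$-periods, would have to lie on $\alpha$. Since $\alpha$ has no marked $X$-period, I would argue this is impossible.

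Hence the label of $\alpha$ is a subword of $a_1Q^{\ell}$, or of its cyclic rotation or inverse. The same holds for $\beta$. Therefore the boundary label of $\Lambda$ is a word in $a_1,a_2$ alone, namely a product of two subwords of $a_1(a_1a_2)^{\ell}$.

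\textbf{Step 2: show such a lens is trivial.} The label of $\partial\Lambda$ must equal $1$ in $G$. It is a word in $a_1,a_2$ only, of length bounded by about $4\ell+4$. We would show that any reduced diagram with such a boundary and with nonempty interior must contain a cell. Every cell of $\Lambda$ carries $X$- or $A$-periods, which involve $a_3$; more precisely, every relator $\overline R_i$ contains $a_3$. So the $a_3$-edges inside $\Lambda$ pair up cells along $X$-, $A$- or $P$-periods. Applying the safety lemmas for cells outside $X$-components, for closed $X$-components, and for the $\overline R_3$-part, every such configuration has a safe cell or component. Such a piece has at least six exterior marked incidences. Meanwhile $\partial\Lambda$ has at most four marked sides, namely two from each of $\alpha$ and $\beta$.

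A curvature or Euler-characteristic count then gives a contradiction. Using the planar-graph inequality that in a disc of safe ($\geq 6$) faces some face must have at least $\ge$ many boundary incidences, we obtain the required degree bound. Consequently $\Lambda$ contains no cells, and $\alpha$ and $\beta$ coincide as paths.

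\textbf{Step 3: derive the contradiction.} With $\Lambda$ empty, $B$ and $B'$ share a common non-$X$ arc. This arc lies in $a_1Q^{\ell}$ on both sides, and the subwords are synchronized: $Q$ has no shifted copy in $Q^{\ell}$. So two $R_1^{\sharp}$-cells are glued along matching $Q$-periods. These cells are adjacent to $X$-components via their $X^{\ell}$-sides. One then checks that the union $B\cup B'$ together with this gluing yields a proper connected subcluster $S$ with $d_X(S)\le 5$ containing cells from both blocks. This contradicts the minimal block decomposition. Alternatively, the gluing extends to a full $R_1^{\sharp}$-to-$R_1^{\sharp}$ cancellable pair, contradicting reducedness.

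\textbf{Main obstacle.} The hardest part is Step 2: the degree count inside $\Lambda$ when its cells belong to $A$- or $Q$-components that also touch $B$ or $B'$. I would first handle it by capping these components as in the lemma on the $\overline R_3$-part, and then apply the standard $(6)$-type Euler estimate to the capped lens.
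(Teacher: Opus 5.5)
Your Steps 1 and 2 follow the paper's route. You choose an innermost lens and use the lemma on proper subblocks of minimal $X$-blocks to bound each of $\alpha,\beta$ by two marked sides of type $a_1$, $Q^{\ell}$, so $|\partial\Lambda|\le 4$. You then get safety of everything inside $\Lambda$ from the component lemmas and finish with a curvature estimate. Your worry about open components is handled as in the paper: $\partial\Lambda$ carries no $X$-, $A$- or $P$-period, so those components are closed. Only $Q$-components can be open, and the $\overline R_3$-lemma caps them.

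The estimate you need is the paper's disc curvature lemma: a \emph{non-empty} disc diagram whose faces have degree at least $6$ and whose interior vertices have valence at least $3$ has at least $6$ boundary sides. The valence hypothesis is where innermostness is used (no lens inside $\Lambda$, after suppressing valence-two vertices). You state innermostness but never use it for this, and the planar-graph inequality you invoke is garbled. Since a lens is by definition a disc subdiagram, it contains a face. Then $|\partial\Lambda|\ge 6$ against $|\partial\Lambda|\le 4$ is already the contradiction, and this is where the paper's proof ends.

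Instead you conclude that $\Lambda$ has no cells and try to refute that in Step 3, and Step 3 is wrong. If $\alpha$ and $\beta$ coincide, $B$ and $B'$ simply share a non-$X$ arc. Any two adjacent macro-cells do this, so nothing is contradicted. Neither of your proposed contradictions holds:
\begin{itemize}
\item A proper connected subcluster with $d_X\le 5$ is exactly what minimality permits; minimality only forbids proper subclusters with $d_X\ge 6$. Moreover, cells of $B$ and $B'$ joined along a $Q$-arc do not form a connected subcluster of one $X$-component, since subclusters are connected through whole $X$-period gluings.
\item Two $R_1^{\sharp}$-cells sharing synchronized $Q$-periods need not match along their $a_1$- and $X^{\ell}$-sides, so no cancellable pair is forced.
\end{itemize}
As written, your final contradiction rests on an invalid step. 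The repair is to delete Step 3 and apply the curvature lemma to the necessarily non-empty disc $\Lambda$.

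A smaller point concerns Step 1. Your reason that the cells of $B$ along $\alpha$ form a proper subcluster (that all non-$X$ sides of an $R_2^{\sharp}$-cell would have to lie on $\alpha$) does not follow from $\alpha$ merely meeting those cells. The paper obtains properness from the innermost choice of $\Lambda$ instead.
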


\begin{proof}
Suppose that such a lens exists, and choose an innermost one, denoted
$\Lambda$.  Write
\[
        \partial\Lambda=\alpha\beta^{-1},
\]
where $\alpha$ lies on a minimal block $B$ and $\beta$ lies on a minimal block
$B'$.

The arc $\alpha$ cuts off a proper connected subcluster of $B$ adjacent to
$\Lambda$.  Otherwise we could replace $B$ by a smaller relatively safe block
and obtain a smaller lens, contradicting the choice of $\Lambda$.  By the
previous lemma, this proper subcluster has at most two non-$X$ frontier sides.
Thus
\[
        |\alpha|\leq 2.
\]
Similarly,
\[
        |\beta|\leq 2.
\]
Therefore
\[
        |\partial\Lambda|\leq 4.
\]
Moreover the boundary of $\Lambda$ contains no marked $X$-period.  Since the
boundary arcs come from proper subblocks of minimal $X$-blocks, the only
possible marked boundary sides are of type
\[
        a_1,
        \qquad Q^{\ell}.
\]
In particular $\partial\Lambda$ contains no marked $A$- or $P$-period either.

Now consider the diagram inside $\Lambda$.  Since $\partial\Lambda$ contains
no $X$-, $A$-, or $P$-period, every $X$-, $A$-, and $P$-component inside
$\Lambda$ is closed.  By the component lemmas above, all such components are
safe.  The remaining individual $R_1^{\sharp}$- and $R_2^{\sharp}$-cells are safe by the
outside-$X$-component lemma, and the remaining $R_3^{\sharp}$-cells are safe by the
$R_3^{\sharp}$-part.

Thus, after replacing components by macro-cells, every face in the normal
macro-diagram inside $\Lambda$ has degree at least $6$.  Since $\Lambda$ was
chosen innermost, there is no lens inside it; after suppressing valence-two
vertices, every interior vertex has valence at least $3$.  The standard disc
curvature estimate for such a diagram gives
\[
        |\partial\Lambda|\geq 6,
\]
contradicting $|\partial\Lambda|\leq 4$.  Hence no non-$X$ lens exists.
\end{proof}

\begin{lm}[$X$-lenses are removed by saturation]
After decomposing every $X$-component into minimal relatively safe $X$-blocks,
merge blocks which form an $X$-lens, and repeat this operation until it stops.
The resulting $X$-blocks are still safe, and no two distinct resulting
$X$-blocks form an $X$-lens.
\end{lm}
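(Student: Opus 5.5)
The plan is to run the saturation process and prove termination and the two claims separately. First define an $X$-lens between two blocks $B,B'$ as the analogue of a non-$X$ lens: a disc subdiagram $\Lambda$ with $\partial\Lambda=\alpha\beta^{-1}$, $\alpha$ on the frontier of $B$, $\beta$ on the frontier of $B'$, where now at least one of $\alpha,\beta$ contains a marked $X$-period. This is possible only inside one $X$-component, because whole-period $X$-gluings are what create $X$-boundary on a lens. The merging operation replaces $B\cup B'$, together with the cells of $\Lambda$, by a single connected subcluster $B\cup\Lambda\cup B'$, which we regard as one macro-cell.

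Termination is immediate. Each merge strictly decreases the number of blocks, and the reduced diagram $\Delta$ is finite, so the process stops after finitely many steps. When it stops, no two distinct resulting blocks form an $X$-lens, simply by the stopping rule. The second claim is therefore automatic, and the content lies in the first: the merged blocks remain safe.

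For safety I would proceed by induction on the number of merges, with the invariant that each current block $C$ satisfies $d_X(C)\geq6$, measured now as a count of exterior marked neighbour-incidences.
\begin{itemize}
\item The base case holds because each minimal relatively safe $X$-block satisfies $d_X\geq6$ by definition.
\item For the inductive step, the incidences of $B\cup\Lambda\cup B'$ consist of the incidences of $B$ and of $B'$, minus those absorbed along $\alpha$ and $\beta$.
\item The key estimate is that $\alpha$ and $\beta$ each cut off a proper subcluster adjacent to $\Lambda$, and the lemma that proper subblocks of minimal $X$-blocks are small shows that each such subcluster carries at most two non-$X$ sides and at most one $R_1^{\sharp}$-cell. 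For blocks produced by earlier merges, I would apply the lemma to the constituent minimal block that meets $\Lambda$.
\item The loss from $B$ and from $B'$ is then at most $2$ non-$X$ sides plus the marked $X$-periods absorbed into $\Lambda$.
\item The absorbed $X$-periods are glued whole, inside $\Lambda$, to other cells of the same $X$-component, so these cells enter the merged cluster. By the counting in the definition of $d_X$, each such cell contributes $2$ or $6$ to $d_X$ and returns at least as much as it absorbs.
\item Hence $d_X(B\cup\Lambda\cup B')\geq d_X(B)+d_X(B')-4\geq 8\geq6$.
\end{itemize}

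The main obstacle is controlling the cells strictly inside $\Lambda$ that are not in the $X$-component, such as $R_3^{\sharp}$-cells and $A$-, $Q$- or $P$-components. For these I would argue as in the proposition on non-$X$ lenses. Choose $\Lambda$ innermost, replace every component inside by a safe macro-cell (using the $\overline R_3$-part lemma and the lemma on closed $X$-components), and apply the disc curvature estimate. This shows that such interior material only adds boundary incidences to the merged block and never reduces them, so it can be absorbed without breaking the invariant.
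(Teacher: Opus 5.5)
Your termination argument, and your observation that lens-freeness at the end is just the stopping rule, agree with the paper. The gap is in the only substantive claim, the safety of merged blocks.

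\textbf{The incremental estimate fails.} The inequality $d_X(B\cup\Lambda\cup B')\geq d_X(B)+d_X(B')-4$ is not justified, and the supporting claim that every absorbed cell ``returns at least as much as it absorbs'' is false.
\begin{enumerate}
\item Each whole marked $X$-period gluing between cells of the merged cluster removes two frontier periods. Hence $d_X(B\cup B')=d_X(B)+d_X(B')-2e$, where $e$ is the number of such gluings between $B$ and $B'$. Nothing you cite bounds $e$ by $2$.
\item Adjoining an $R_1^{\sharp}$-cell glued along $j$ of its $\ell$ periods to the cluster changes $d_X$ by $2+\ell-2j$. This equals $2-\ell<0$ when $j=\ell$. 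An $R_2^{\sharp}$-cell similarly gives $6-\ell$, which is negative for $\ell>6$.
\item The small-subblock lemma cannot control this. In $d_X$ the non-$X$ sides enter only through the fixed per-cell terms $2n_1+6n_2$ and are never ``lost''. You are mixing $d_X$ with the actual number of exterior incidences: your base case uses the former and your invariant the latter.
\item Your last paragraph does not do what it says. The disc curvature estimate bounds $|\partial\Lambda|$ from below. But $\partial\Lambda$ is interior to $B\cup\Lambda\cup B'$, so it contributes nothing to that block's exterior incidences. Moreover, once non-$X$ cells of $\Lambda$ are put into the block, $d_X$ is no longer defined for it.
\end{enumerate}

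\textbf{The missing idea.} No incremental bookkeeping is needed. Merge only the blocks, so the result is again a connected $X$-subcluster $C$, and use the absolute count $d_X(C)=2n_1(C)+6n_2(C)+b_X(C)$.
\begin{itemize}
\item If $n_2(C)>0$ or $n_1(C)\geq3$, this count is already at least $6$.
\item A union of two distinct blocks has at least two cells, so the only remaining case is $n_1=2$, $n_2=0$. Then $d_X=4+b_X$ with $b_X=2\ell-2e$ even, so $d_X<6$ forces $b_X=0$.
\item That is a full $X^{\ell}$-to-$X^{\ell}$ gluing, which is a cancellable pair by synchronization and reducedness, contradicting reducedness of the diagram.
\end{itemize}
This is the paper's argument. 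It applies uniformly after any number of merges, so no induction on merges and no treatment of the interior of $\Lambda$ is required.
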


\begin{proof}
The union of blocks joined through an $X$-lens is again a connected
$X$-subcluster.  The same $X$-component count used above applies to this
union.  The only connected $X$-subclusters which could fail to be safe are the
very small all-$R_1^{\sharp}$ cases: a single $R_1^{\sharp}$-cell, or a two-cell all-$R_1^{\sharp}$
cluster glued along all marked $X$-periods.  A single $R_1^{\sharp}$-cell is not an
$X$-lens union, and the two-cell full gluing is a cancellable pair by
synchronization and reducedness.  Therefore every genuine union obtained by
filling an $X$-lens is safe.

Merging the blocks in such a lens removes that lens and does not create an
unsafe block.  Since the diagram is finite, the process terminates.  At the
end no two distinct resulting $X$-blocks form an $X$-lens.
\end{proof}

\begin{cy}[Lens-free block decomposition]
After decomposing $X$-components into minimal relatively safe blocks and then
saturating along $X$-lenses, the resulting safe $X$-blocks form a lens-free
macro-decomposition.
\end{cy}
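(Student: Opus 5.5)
The corollary should follow by combining the preceding lemma on removing $X$-lenses by saturation with the proposition excluding non-$X$ lenses between minimal $X$-blocks. The plan is to check, in order, three things: every resulting block is safe; no two resulting blocks form an $X$-lens; and no two resulting blocks form a non-$X$ lens.

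First, decompose each $X$-component of the reduced marked diagram into minimal relatively safe $X$-blocks. Such a decomposition exists because every $X$-component is itself a connected subcluster. If it is relatively safe, it contains a minimal relatively safe subblock. If it is not relatively safe, then by the lemmas on cells outside $X$-components and on closed $X$-components it falls into the excluded small cases (a single $R_1^{\sharp}$-cell, or a two-cell full gluing). The full gluing is a cancellable pair and so cannot occur. The single cell is treated as a trivial block; when it is not attached along $Q^{\ell}$ it is safe by the outside-$X$ lemma.

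Second, apply the saturation lemma. Repeatedly merging blocks that form an $X$-lens terminates, since the diagram is finite. The same lemma shows that every merged block stays safe and that no two final blocks form an $X$-lens.

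Third, and this is the main obstacle, show that saturation creates no non-$X$ lenses. The proposition excluding non-$X$ lenses is stated only for minimal blocks, so it does not apply directly to merged ones. The approach is to take a hypothetical innermost non-$X$ lens $\Lambda$ between two saturated blocks and bound its boundary. The arcs $\alpha$ and $\beta$ lie on the non-$X$ frontier of some constituent minimal block, because merging only glues along $X$-periods and $X$-lens interiors; the non-$X$ frontier of a union is therefore contained in the union of the constituents' non-$X$ frontiers. Innermostness then lets us cut $\alpha$ down to a proper subcluster of a single minimal block, so the bound on small subblocks gives $|\alpha|,|\beta|\le 2$. The proposition's curvature argument then applies verbatim: inside $\Lambda$ all components are closed and hence safe, so $|\partial\Lambda|\ge 6>4$, a contradiction.

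Together these three steps show that the saturated safe $X$-blocks, together with the safe $A$-, $Q$-, $P$-components and the individual safe cells, form a lens-free macro-decomposition.
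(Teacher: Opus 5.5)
Your argument is essentially the paper's: $X$-lens-freeness holds by construction from the saturation lemma, and a hypothetical innermost non-$X$ lens between two saturated blocks is pushed onto minimal constituent blocks, where the proposition excluding non-$X$ lenses gives the contradiction. At that point you can simply cite the proposition rather than re-running its curvature count. Your first step is unnecessary, since the decomposition into minimal blocks is part of the statement's setup, and its justification is faulty and should be dropped. The lemma on cells outside $X$-components does not apply to cells lying in an $X$-component, and containing one minimal relatively safe subblock does not give a partition into such blocks.
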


\begin{proof}
By construction there are no $X$-lenses between the saturated blocks.  If a
non-$X$ lens appeared between two saturated blocks, an innermost such lens
would be supported on minimal subblocks inside them, contradicting the
previous proposition.
\end{proof}





\begin{lm}[Disc curvature estimate]\label{lm:disc-curvature-new}
Let $D$ be a non-empty disc diagram such that every face has degree at least
$6$ and every interior vertex has valence at least $3$ after suppression of
inessential subdivision vertices.  If $B$ is the number of boundary sides,
then $B\geq6$.
\end{lm}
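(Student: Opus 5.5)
We prove the estimate by the standard combinatorial Gauss--Bonnet (Euler characteristic) count for disc diagrams. Let $V,E,F$ denote the numbers of vertices, edges and faces of $D$ after suppressing inessential subdivision vertices, so that every remaining vertex is either a boundary vertex or an interior vertex of valence at least $3$. Since $D$ is a disc, $V-E+F=1$. The plan is to give every face, vertex and the exterior curvatures that sum to $6$, and then to use the degree and valence hypotheses to show that only the boundary can carry positive curvature, with at most one unit per boundary side.

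First we reduce to the case where $D$ is nonsingular, that is, homeomorphic to a closed disc. If $D$ has cut vertices or spurs, we decompose it into its maximal nonsingular disc components. A spur or bridge arc only adds boundary sides, and a component that is a single face already has at least $6$ boundary sides by the degree hypothesis. It therefore suffices to treat one extremal component and note that its boundary sides are also boundary sides of $D$.

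For a nonsingular disc we count as follows. Let $E_\partial$ be the number of boundary edges, which equals $B$, and let $V_\partial=B$ be the number of boundary vertices. Counting edge--face incidences gives $2E-B=\sum_f \deg f\ge 6F$. Counting vertex degrees gives $2E=\sum_v \deg v\ge 3V_{\mathrm{int}}+\sum_{v\in\partial}\deg v$, where every boundary vertex has degree at least $2$. We combine these in the form $6=6V-6E+6F$ and write
\[
6=\sum_f(6-\deg f)+\sum_{v\ \mathrm{int}}(6-2\deg v)+\sum_{v\in\partial}(6-2\deg v)+\bigl(\text{boundary correction}\bigr).
\]
More cleanly, we use $6V-6E+6F\le 6V-6E+(2E-B)$, which gives $6\le 6V-4E-B$, and we also use $4E\ge 2(3V_{\mathrm{int}})+2\sum_{\partial}\deg v\ge 6V_{\mathrm{int}}+4B$. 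Putting these together yields $6\le 6V_{\mathrm{int}}+6B-6V_{\mathrm{int}}-4B-B=B$, so $B\ge 6$.

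The main point that needs care is the boundary vertex count. Each boundary vertex has degree at least $2$, and the inequality $2E\ge 3V_{\mathrm{int}}+2B$ must be applied with $V=V_{\mathrm{int}}+B$. This step is only valid when the boundary cycle is simple, which is exactly why the reduction to nonsingular components is needed. A second delicate point is the meaning of ``boundary sides'' in the macro-cell setting. A side is a maximal arc of common incidence, so after suppressing valence-two vertices we must check that boundary edges correspond exactly to boundary sides, and this is true by the definition of marked neighbour-incidence. I expect the singular (non-disc) case to be the obstacle. There I would argue by induction on the number of cut points, using the fact that at least one extremal component meets the rest of the diagram at a single vertex and therefore contributes at least $5$ sides of its own, plus one side across the cut vertex.
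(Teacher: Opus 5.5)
Your proof is correct and uses the same Euler-characteristic count as the paper: $V-E+F=1$ combined with $6F\le 2E-B$ and $3V_{\mathrm{int}}+2V_\partial\le 2E$. The only difference is that the paper concludes with $V_\partial\le B$ instead of $V_\partial=B$ (its chain gives $1\le (2V_\partial-B)/6\le B/6$), which covers boundary paths through cut vertices directly, so your reduction to nonsingular components and the cut-point induction in your last paragraph are not needed, though your treatment of spurs does make explicit the paper's unstated assumption that every boundary vertex has degree at least $2$.
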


\begin{proof}
Let $V_i,V_b,E,F$ be the numbers of interior vertices, boundary vertices,
edges, and faces.  The face-degree and vertex-valence assumptions give
\[
        6F\leq2E-B,
        \qquad
        3V_i+2V_b\leq2E.
\]
Together with Euler's formula $1=V_i+V_b-E+F$, these inequalities imply
\[
\begin{aligned}
1
&\leq {2E-2V_b\over3}+V_b-E+{2E-B\over6}\\
&={2V_b-B\over6}
\leq {B\over6},
\end{aligned}
\]
where the last inequality uses $V_b\leq B$.  Thus $B\geq6$.
\end{proof}

\begin{lm}[The exterior boundary is inherited]\label{lm:boundary-inherited-new}
The exterior boundary of $D$ is the exterior boundary of $\Delta$ as a
combinatorial edge path.  In particular,
\[
        |\partial D|_{\mathrm m}\leq |E(\partial\Delta)|,
\]
where $|\partial D|_{\mathrm m}$ is the number of marked boundary sides of
$D$.
\end{lm}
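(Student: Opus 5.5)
The plan is to check that every operation used to pass from the reduced diagram $\Delta$ to the macro-diagram $D$ leaves the exterior boundary cycle unchanged as a combinatorial edge path. After that, a marked boundary side of $D$ is a union of whole edges of $\partial\Delta$, and distinct sides are edge-disjoint, so the inequality follows by counting edges. I will assume, as the context indicates, that $D$ is obtained from $\Delta$ by period-completion surgery, by grouping $X$-, $A$-, $Q$- and $P$-components (and the saturated $X$-blocks) into macro-cells, and by suppressing inessential subdivision vertices.

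\emph{Step 1: the operations one by one.}
\begin{enumerate}
\item Period completion. Lemma~\ref{lm:period-completion-surgery} states explicitly that the surgery preserves the outer boundary. When the missing prefix lies on the exterior boundary, the surgery is not performed at all, by the convention stated after that lemma. Hence every surgery is supported in the interior, and $\partial\Delta$ is untouched.
\item Grouping into macro-cells. Forming a macro-cell only erases interior arcs along which cells of the same component or block are glued. It never removes an edge adjacent to the exterior face. It also does not change the cyclic order of edges along the outer face.
\item Suppressing subdivision vertices. Erasing a vertex of valence two only concatenates consecutive edges. It cannot delete an edge or reverse the traversal of the boundary cycle.
\end{enumerate}
Composing these three observations shows that $\partial D$ and $\partial\Delta$ are the same closed edge path, up to the concatenations in (3).

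\emph{Step 2: the count.} A marked boundary side is a maximal common arc between a (macro-)cell and the exterior, subdivided at the retained marked endpoints. All markings are markings of words in the original generators $a_1,a_2,a_3$, and the periods $X$, $A$, $Q$, $P$ and the singleton letters are words in these generators. Therefore every retained marked endpoint is a vertex of $\Delta$, never an interior point of an edge. It follows that every marked side is a nonempty union of consecutive edges of $\partial\Delta$, and that distinct marked sides share no edge. Sending each side to one of its edges is then an injection into $E(\partial\Delta)$, which gives $|\partial D|_{\mathrm m}\leq|E(\partial\Delta)|$.

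\emph{Main obstacle.} The delicate point is the claim that no edge of the exterior boundary becomes interior, or conversely, during saturation along lenses. I would argue that lenses are, by definition, disc subdiagrams bounded by arcs on the frontiers of blocks. Filling a lens therefore only absorbs interior faces, and the exterior face is never absorbed. A secondary subtlety is the case of the surgery that is not performed because the missing prefix is exterior. There I would note that leaving the prefix in place can only add marked exterior incidences. Such extra incidences still consist of whole edges of $\partial\Delta$, so the counting in Step 2 is unaffected.
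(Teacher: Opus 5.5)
Your proposal is correct and follows essentially the same route as the paper. Completion surgeries are interior, or omitted when the truncation reaches $\partial\Delta$, and macro-faces only erase interior interfaces; so marked boundary sides are nonempty consecutive subpaths of $\partial\Delta$ with disjoint interiors, and choosing one edge from each gives the injection into $E(\partial\Delta)$. Your extra remarks---that marked endpoints are vertices because all markings are words in $a_1,a_2,a_3$, and that lens saturation only absorbs interior faces---make explicit points that the paper's short proof leaves implicit.
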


\begin{proof}
Every completion surgery is supported in the interior of the disc, unless a
truncation reaches the exterior boundary, in which case no surgery is made.
Every macro-face is obtained by erasing interfaces contained in the interior.
Thus no exterior edge is collapsed, no two exterior points are identified,
and no free reduction is performed on the exterior boundary.

A marked boundary side of $D$ is therefore a non-empty consecutive subpath of
$\partial\Delta$, and distinct marked boundary sides have disjoint interiors.
Assigning to each marked side its first exterior edge gives an injection into
$E(\partial\Delta)$, which proves the inequality.
\end{proof}

\begin{prop}[Non-triviality]\label{prop:two-layer-marked-nontrivial-new}
For every $\ell\geq3$ and $k\geq2$, the group defined by the above two-layer
presentation is non-trivial.
\end{prop}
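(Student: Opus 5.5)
We argue by contradiction, in the style of small-cancellation arguments. Suppose the two-layer presentation with $\ell\geq3$, $k\geq2$ defines the trivial group. Then some generator, say $a_1$ (or any nonempty cyclically reduced word of short length, e.g.\ a single letter), is a consequence of the relators $\overline R_1,\overline R_2,\overline R_3$ of Lemma~\ref{lm:reduced-marked-relators}. Choose a reduced Van Kampen diagram $\Delta$ with boundary label $a_1$ and a minimal number of cells. Its exterior boundary has exactly one edge, so $|E(\partial\Delta)|=1$. The aim is to show that any such diagram has at least six marked boundary sides, which contradicts this.

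The construction proceeds in the order already prepared. First apply the period-completion surgery of Lemma~\ref{lm:period-completion-surgery} wherever possible. This replaces the relators by the marked contours $R_i^{\sharp}$ without changing the outer boundary or the number of cells; where completion is blocked, only extra marked incidences arise. Next group cells into $X$-components, then $A$-, $Q$- and $P$-components, following the hierarchy $X\to A\to Q\to P$. Each $X$-component is decomposed into minimal relatively safe $X$-blocks, and these are saturated along $X$-lenses. By the lemma that cells outside $X$-components are safe, the closed $X$-component lemma, the $\overline R_3$-part lemma, and the lens-saturation lemma, every resulting macro-cell is safe, i.e.\ it has degree at least $6$. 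By the corollary on lens-free block decomposition, the macro-diagram $D$ has no lenses between macro-cells. After suppressing valence-two vertices, every interior vertex therefore has valence at least $3$. (Two macro-cells meeting along two disjoint arcs would bound a lens.)

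Now apply the disc curvature estimate, Lemma~\ref{lm:disc-curvature-new}, to $D$. This gives $|\partial D|_{\mathrm m}\geq6$. By Lemma~\ref{lm:boundary-inherited-new}, $|\partial D|_{\mathrm m}\leq|E(\partial\Delta)|=1$, a contradiction. Hence $a_1\neq1$ in the group, and so the group is nontrivial. If $D$ could be empty, that would mean $a_1$ is freely trivial, which is absurd; the disc lemma requires $D$ nonempty, and this holds because $\Delta$ has at least one cell.

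The main obstacle is verifying the hypotheses of the curvature lemma in full. Three points need care: open components touching the exterior boundary must still count as safe when their caps are replaced by actual exterior incidences; macro-cells must form a genuine disc decomposition, so that no two macro-cells share more than one boundary arc; and the "worst case is the fully completed case" reduction must hold uniformly. To address the first point, each open component would be treated so that its missing caps are charged to exterior boundary sides. These contribute to $|\partial D|_{\mathrm m}$ rather than to face degree, and the inequality $6F\leq 2E-B$ in the proof of Lemma~\ref{lm:disc-curvature-new} absorbs them. For the $Q$- and $P$-components, which are treated only briefly as "similar", the counts of non-$Q$ and non-$P$ marked sides of each contour would be checked directly, in the same way as the $A$-component count.
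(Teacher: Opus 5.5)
Your outline is the paper's proof of this proposition essentially verbatim. You use a minimal reduced diagram with boundary label $a_1$, period completion, the $X\to A\to Q\to P$ macro-decomposition, and the clash between Lemma~\ref{lm:disc-curvature-new} and Lemma~\ref{lm:boundary-inherited-new}. However, the point you list as the ``main obstacle'' is a genuine gap. Neither your sketch nor the lemmas you cite close it, and the paper's short proof rests on those same lemmas.

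Lemma~\ref{lm:disc-curvature-new} needs face degrees and vertex valences measured in one and the same graph, because its proof uses $6F\le 2E-B$ and $3V_i+2V_b\le 2E$ simultaneously. ``Safe'' instead counts \emph{marked} incidences: one maximal common arc is subdivided at every retained period endpoint. Such points are interior vertices of valence two unless a neighbour actually changes there. If you keep them, $3V_i+2V_b\le 2E$ fails. If you suppress them, the degree of a face is its number of genuine neighbour changes, and ``at least six marked incidences'' gives no bound on that. So when you write ``safe, i.e.\ it has degree at least $6$'' you are conflating the two counts. Your parenthetical about lenses addresses the wrong issue. Valence at least three is automatic once valence-two vertices are suppressed, and the Euler count allows two faces to share several arcs. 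What is lost under suppression is the face degree.

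The genuine degrees really can be small. Cyclically, $\overline R_3=Q^{-\ell}\cdot W$ with $W=a_2Q^{\ell}a_3A^{k-2}A_0P^{-(k-1)}a_3$, and $W$ is also a subword of $\overline R_2$. Likewise $\overline R_1=Q^{\ell}\cdot X^{\ell-1}P^{-(k-1)}a_3^{-1}\cdot L^{-1}$, with each factor a subword of $\overline R_2^{\pm1}$. So the presentation is not even $C(3)$, and everything rests on the component lemmas.

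Those lemmas' counts assume that the listed sides of each cell are pairwise distinct \emph{exterior} sides of the component. An example is $d(K)\ge 2n_1+6n_2$, which uses the six non-$X$ sides $P^{-(k-1)},a_3^{-1},a_2,Q^{\ell},a_3,A^{k-1}$ of an $R_2^{\sharp}$-cell. In the configurations just exhibited this assumption fails in two ways:
\begin{itemize}
\item $P^{-(k-1)}a_3^{-1}$, as the end of the common piece $X^{\ell-1}P^{-(k-1)}a_3^{-1}$, can be glued to an $R_1$-cell of the same $X$-component, so it is interior to that component.
\item $a_2,Q^{\ell},a_3,A^{k-2}$ together with the initial segment $A_0P^{-(k-1)}a_3$ of $J$ can form one arc shared with a single $R_3$-cell, so several ``sides'' are one genuine side.
\end{itemize}

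Your remedy for open components does not help either. The inequality $6F\le 2E-B$ comes from summing the actual face degrees, with each boundary edge counted once. A face of degree below six therefore cannot be compensated by charging to the exterior boundary, which here consists of a single edge anyway.

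To make the argument work you would need two things, and neither is provided:
\begin{itemize}
\item lower bounds on \emph{genuine} neighbour changes for every macro-cell;
\item a proof that the macro-cells are discs.
\end{itemize}
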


\begin{proof}
Assume that the group is trivial.  Then the generator $a_1$ represents the
identity, so by Van Kampen's lemma there is a reduced disc diagram $\Delta$
whose exterior boundary word is the single letter $a_1$.  Choose $\Delta$ of
minimal area and construct the boundary-faithful macro-diagram $D$ described
above.

Every face of $D$ is safe, and every
interior vertex has valence at least three after suppression of inessential
subdivision vertices.  Lemma~\ref{lm:disc-curvature-new} therefore gives
\[
        |\partial D|_{\mathrm m}\geq6.
\]
On the other hand, Lemma~\ref{lm:boundary-inherited-new} gives
\[
        |\partial D|_{\mathrm m}
        \leq |E(\partial\Delta)|
        =1,
\]
a contradiction.  Thus no disc diagram with boundary label $a_1$ exists,
and the group is non-trivial.
\end{proof}

\begin{prop}
For the two-layer presentation with exponent sum $(0,0)$ and parameters
$\ell\geq 3$, $k\geq 2$, the group is non-trivial.  Consequently, by the analogue of
$C(6)$ infiniteness theorem, it is infinite.
\end{prop}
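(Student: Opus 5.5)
The statement has two parts: non-triviality and infiniteness. For non-triviality I will invoke Proposition~\ref{prop:two-layer-marked-nontrivial-new} directly. For $\ell\geq3$, $k\geq2$ it says that no reduced disc diagram over the completed marked contours $R_1^{\sharp},R_2^{\sharp},R_3^{\sharp}$ has boundary label $a_1$. Hence $a_1\neq1$ and the group is non-trivial. The first thing to check is that the hypotheses match. The present presentation is the two-layer presentation with exponents $(\ell,-\ell);(k,-k)$, so its exponent sums are $(0,0)$. Its principal relators, up to cyclic conjugacy, are exactly $R_1,R_2,R_3$ of Lemma~\ref{lm:reduced-marked-relators}, since the $x_i,y_i,z_i$ were eliminated by Tietze moves. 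So that proposition applies verbatim.

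For infiniteness I would run the same curvature argument with an arbitrary boundary word instead of the single letter $a_1$. The goal is a Dehn-type, or at least non-torsion, statement: exhibit an element of infinite order, or infinitely many distinct elements. The concrete choice is to take the powers $a_1^N$ (or $Q^N$). If $a_1^N=1$, there would be a minimal reduced diagram $\Delta$ with boundary label $a_1^N$. Build the boundary-faithful macro-diagram $D$ exactly as before: period completion (Lemma~\ref{lm:period-completion-surgery}), decomposition into safe $X$-, $A$-, $Q$- and $P$-components, and lens saturation. In $D$ every face has degree at least $6$, so this is the $C(6)$-type analogue. I would then apply the standard $C(6)$ boundary estimate in Strebel's / Lyndon--Schupp form rather than Lemma~\ref{lm:disc-curvature-new}. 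That estimate says some face meets the boundary in a long consecutive arc: a face with at most $3$ interior sides, or, for $C(6)$, a shell or a spur. Consequently a large portion, more than half, of the marked contour of some relator appears as a subword of a cyclic permutation of the boundary label.

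The step I expect to be the main obstacle is this last comparison. The boundary label $a_1^N$ must contain no such long piece of any of the contours $R_1^{\sharp},R_2^{\sharp},R_3^{\sharp}$, and the same must hold for the macro-faces, whose boundaries are long words. This should be easy, because every contour involves at least two distinct generators in each marked side of length more than one. The real difficulty is that the construction of $D$ was only justified for a boundary of length one: the claim that truncations reaching the exterior boundary only add incidences needs rechecking when the boundary is long. If the power argument becomes messy, my fallback is to derive infiniteness from a Greendlinger-type lemma for the macro-diagram. Such a lemma gives a solution to the word problem and shows that the cyclic subgroup $\langle a_1\rangle$ is infinite, so the group is infinite.
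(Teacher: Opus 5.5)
Your non-triviality argument is the paper's: the first sentence of the statement is Proposition~\ref{prop:two-layer-marked-nontrivial-new} restated, and the paper adds nothing further. One correction to your hypothesis check. Eliminating $x_i,y_i,z_i$ leaves the relators $x_1$, $x_2B^{-k}$, $x_3B^{-k}$ as long words in $a_1,a_2,a_3$, and these are not cyclic conjugates of $R_1,R_2,R_3$. The $R_i$ appear only after using the consequences $x_2=x_3=P^k$ and $a_1^{z_1}a_2^{z_2}=X^{-1}$, so the converse must also be checked. It does hold: $R_1,R_2,R_3$ give $z_1=X^{-\ell}$, $z_2=P^kX^{-\ell}$, $z_3=P^k$, hence $a_1^{z_1}a_2^{z_2}=X^{-1}$, $x_1=1$, $x_2=x_3=P^k$ and $B=P$. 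The presentations are therefore Tietze equivalent, and the citation is legitimate.

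For infiniteness the paper offers only the phrase ``by the analogue of $C(6)$ infiniteness theorem''. You are not missing an argument the paper contains, but your sketch does not supply one either, and the difficulty you isolate is not the real one.

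\begin{itemize}
\item The construction of $D$ is local and never uses the length of $\partial\Delta$. This covers period completion, the component and lens lemmas, and Lemma~\ref{lm:boundary-inherited-new}. The boundary enters only in the final inequality $6\le|\partial D|_{\mathrm m}\le|E(\partial\Delta)|$. For the boundary word $a_1^N$ with $N\ge 6$ this inequality is no contradiction, so everything rests on your Greendlinger/Strebel step.
\item That step fails as written, because a $C(6)$ estimate counts sides, not letters. It yields a face of $D$ with at most three interior sides. It does not yield ``more than half of a relator'' on the boundary.
\item The faces of $D$ are macro-cells, and the paper never determines their boundary labels; the safeness lemmas only count incidences. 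Nothing prevents an exterior arc of a macro-cell from running through many constituent cells, each contributing one side labelled $a_1^{\pm1}$. Examples of such sides are the $a_1$-side of $R_1^{\sharp}$ and the first letter of a $Q$-period.
\item Your remark that marked sides of length greater than one involve two generators says nothing about such arcs.
\item Your fallback simply assumes the Greendlinger-type lemma that has to be proved.
\end{itemize}

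A route that avoids boundary words is to run the same degree count on a reduced \emph{spherical} diagram.
\begin{itemize}
\item Faces of degree at least $6$ and vertices of valence at least $3$ force $V-E+F\le 0$, contradicting $\chi(S^2)=2$.
\item This would make the presentation diagrammatically reducible, hence aspherical.
\item The presentation is balanced, so if $G$ were finite its universal cover would be a finite contractible complex with Euler characteristic $|G|$. Hence $|G|=1$.
\end{itemize}
Non-triviality would then give infiniteness directly, which fits the word ``consequently'' in the statement. You would still need to check that the macro-decomposition does not degenerate on the sphere, for instance into a single closed component covering all of it.
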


Since the cases are symmetric, we have also
\begin{prop}
For the two-layer presentation with exponent sum $(0,0)$ and parameters
$\ell\geq 2$, $k\geq 3$, the group is non-trivial. \end{prop}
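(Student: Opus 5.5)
The plan is to deduce the statement from Proposition~\ref{prop:two-layer-marked-nontrivial-new} and the preceding proposition (range $\ell\geq3$, $k\geq2$) via the handle-reversing symmetry of Lemma~\ref{lem:symmetric-tuples}. The argument will not redo the marked-diagram analysis. Let $\Omega$ be the two-layer automorphism with exponents $(\ell,-\ell)$ on the $T_{12}$-edge and $(k,-k)$ on the $T_{23}$-edge. We then compute $\Omega^{\mathrm{sym}}=\rho\Omega^{-1}\rho^{-1}$. Inverting $\Omega$ reverses the order of the two layers and negates every exponent. Conjugating by the reflection $\rho$, which reverses the chain of handles $1\leftrightarrow 3$, sends $T_{12}$ to $T_{23}$ and $T_{23}$ to $T_{12}$, possibly up to inversion. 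This depends on how $\rho$ acts on the curves enclosing consecutive marked points; a reflection reverses the orientation of twists, so I expect $\rho T_{ij}\rho^{-1}=T_{4-j,4-i}^{-1}$.

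Combining the two effects, the exponent negation from inversion and the sign change from the orientation reversal cancel. So $\Omega^{\mathrm{sym}}$ is again a two-layer word of the shape in Definition~\ref{onel}: the $T_{12}$-exponents are $(k,-k)$ and the $T_{23}$-exponents are $(\ell,-\ell)$, possibly with the order inside each pair swapped. The exponent sum is still $(0,0)$. For parameters $\ell\geq2$, $k\geq3$, the symmetric word therefore has first-edge parameter $k\geq3$ and second-edge parameter $\ell\geq2$. These are exactly the hypotheses of the preceding proposition, after renaming. Hence $G(\Omega^{\mathrm{sym}})\neq1$, and Lemma~\ref{lem:symmetric-tuples} gives $G(\Omega)\neq1$.

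The main obstacle is bookkeeping. First, the two-layer normal form $T_{12}^{k_1}T_{23}^{k_2}T_{12}^{m_1}T_{23}^{m_2}$ after symmetrization might come out as $T_{23}^{\ast}T_{12}^{\ast}T_{23}^{\ast}T_{12}^{\ast}$ rather than in the stated order. Second, the sign pattern might come out as $(-\ell,\ell)$ instead of $(\ell,-\ell)$. For the first issue, I would use the fact that $G(\Omega)$ depends only on the class of $\Omega$ up to the moves preserving the relators: the outermost twist acting last, and conjugation of relator tuples. If necessary, I would apply Lemma~\ref{lem:equivalent-surface-equations} to move between the two surface equations. For the second issue, the marked-diagram proof of the previous proposition used only the absolute sizes of the periods $Q^{\ell}$, $A^{k}$, $P^{k}$, $X^{\ell}$ through the counts $\ell+1$, $k-1$, together with synchronization. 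A sign flip replaces each period by its inverse, and a relabelling $a_i\mapsto a_i^{-1}$ (a Nielsen move) restores the original form. If this check fails, the fallback is to repeat the marked Van Kampen argument verbatim with the hierarchy reversed ($P\to Q\to A\to X$). In that version the roles of $\ell$ and $k$ are exchanged, so the threshold $\ell\geq3$ in the safety count for $R_1^{\sharp}$-cells becomes $k\geq3$.
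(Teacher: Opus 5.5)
There is a genuine gap at the step where you read off the parameters of $\Omega^{\mathrm{sym}}$. Apply your own rules to the edge pairs $(k_1,m_1)=(\ell,-\ell)$ on $T_{12}$ and $(k_2,m_2)=(k,-k)$ on $T_{23}$. Inversion reverses the order of the two layers and negates every exponent. Conjugation by the orientation-reversing $\rho$ swaps $T_{12}\leftrightarrow T_{23}$ and negates again. Hence
\[
 \Omega^{\mathrm{sym}}
 =T_{12}^{m_2}T_{23}^{m_1}T_{12}^{k_2}T_{23}^{k_1}
 =T_{12}^{-k}T_{23}^{-\ell}T_{12}^{k}T_{23}^{\ell}.
\]
The same formula gives the exponent-sum swap $(s_1,s_2)\mapsto(s_2,s_1)$ that the paper uses in Proposition~\ref{prop:minus-two-minus-one}. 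Your first worry is therefore unfounded: the pattern $T_{12}T_{23}T_{12}T_{23}$ is preserved.

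The second worry is not a possibility but a certainty. The twist signs cancel, but the layer reversal swaps the order inside every edge pair. The new pairs are $(-k,k)$ and $(-\ell,\ell)$, so the parameters become $(\ell',k')=(-k,-\ell)$ with $\ell'\le-3$ and $k'\le-2$. Lemma~\ref{lem:symmetric-tuples} thus sends your range to the negative quadrant, not into the range $\ell\ge3$, $k\ge2$ of the preceding proposition. Your sentence ``the symmetric word has first-edge parameter $k\geq3$'' is false.

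None of your repairs closes this gap.

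\begin{itemize}
\item The relabelling $a_i\mapsto a_i^{-1}$ inverts the framing letters together with the periods, so it cannot change the sign of an exponent relative to the framing. For example, $a_1(a_1a_2)^{k_1}\mapsto a_1^{-1}(a_2a_1)^{-k_1}$, whose inverse $(a_2a_1)^{k_1}a_1$ is again conjugate to $a_1(a_1a_2)^{k_1}$.
\item A global sign change at framing $\mathbf 1$ is not a symmetry of these families in general. By Corollary~\ref{cor:genus3-two-layer-abelianization}, $\mathbf s=(-2,-1)$ gives determinant $1$, while $\mathbf s=(2,1)$ gives $13$.
\item The marked-diagram proof does not depend only on $|\ell|,|k|$. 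The reductions in Lemma~\ref{lm:reduced-marked-relators} depend on the signs: the framing letter $a_1$ cancels the final $a_1^{-1}$ of the last $X$-period, and $P^{-k}a_2=P^{-(k-1)}a_3^{-1}$ truncates a $P^{-1}$-period in $\overline R_2$. If you substitute negative parameters, the cancellations occur at different junctions, for instance the final $a_1^{-1}$ of $Q^{-|\ell|}$ against the initial $a_1$ of an inverted $X$-period.
\end{itemize}

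To finish along your route you would need a further symmetry, at fixed framing, that interchanges the two layers; composed with $\Omega^{\mathrm{sym}}$ it would give $(\ell,k)\mapsto(k,\ell)$. Such a symmetry needs its own proof, because it amounts to conjugating the gluing map by one layer of paper twists, and those do not extend over the handlebodies.

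The alternative is an actual marked-diagram argument in the range $\ell\ge2$, $k\ge3$. The paper's proof is only the remark that ``the cases are symmetric'', which is best read as this rerun with roles exchanged, i.e.\ your fallback. It is not verbatim, however, because the completed contours are asymmetric: $R_2^{\sharp}$ loses one $A$-period and one $P^{-1}$-period but no $X$-period. So the counts must be redone, in particular the bound $\ell+3\ge6$ for isolated $R_1^{\sharp}$-cells and its analogue for $R_3^{\sharp}$-cells.
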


The case when one of the parameters is 1 has been considered in
Proposition~\ref{lm:genus3-00-quotient}.  The remaining case $(2,2)$ has a
non-trivial quotient by Magma.  This completes the exponent-sum $(0,0)$ case.  The two remaining
exponent-sum pairs are treated in the next two subsections.
\begin{cy}
For a presentation with more than two layers in genus 3, if two adjacent layers have
exponents of absolute value greater than \(1\), and if the preceding layer is
kept as marked generators \(h_1,h_2,h_3\), then the macro \(C(6)\) argument
depends only on these two adjacent layers. All other layers enter only through
marked complementary contours. The group $G_{\phi}$ in this case is infinite.
\end{cy}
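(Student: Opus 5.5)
The plan is to localize the marked small-cancellation argument of Proposition~\ref{prop:two-layer-marked-nontrivial-new} to the two adjacent layers with large exponents, and to treat all other layers as inert marked words. Let the two adjacent layers be $\Lambda_{\nu}$ and $\Lambda_{\nu+1}$, with exponents of absolute value greater than $1$. Write the composite automorphism as $\Omega=\Omega_{>}\Lambda_{\nu+1}\Lambda_\nu\Omega_{<}$. Let $h_1,h_2,h_3$ be the images of the conjugating variables after $\Omega_{<}$, that is, the words which play the role of $y_1,y_2,y_3$ in the displayed genus-$3$ presentation. These are kept as marked words in $a_1,a_2,a_3$. Exactly as in the two-layer case, one writes the periods $Q=a_1^{h_1}a_2^{h_2}$ and $P=a_2^{h_2'}a_3^{h_3'}$ of the two chosen layers, the corresponding $A$- and $X$-type words, and the three principal relators in the form
\[
R_i=U_i\,(\text{period powers from }\Lambda_\nu,\Lambda_{\nu+1})\,V_i .
\]
Here $U_i,V_i$ are the marked complementary contours coming from $\Omega_{<}$ and from $\Omega_{>}$ together with the final substitution $\gamma_0$.

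First, I would redo Lemma~\ref{lm:reduced-marked-relators} and the period-completion surgery (Lemma~\ref{lm:period-completion-surgery}) in this setting. Free and cyclic reduction can only truncate a boundary period where a marked contour $U_i$ or $V_i$ meets a period power. By the surgery, the worst case is again the completed marked contours $R_i^\sharp$. The synchronization lemma carries over, since each of $X$, $A$, $P$ still contains a distinguished letter occurring exactly once in it. Second, I would check the degree bounds. Each non-period marked side, including every marked complementary contour, contributes at least one neighbour-incidence, so the outer layers can only add incidences. The counts $2n_1+6n_2$ and $6n_2+2n_3$ in the component lemmas, and the bound $d\ge\ell+3$ for isolated cells, therefore survive unchanged or improve. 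Third, I would repeat verbatim the chain of minimal relatively safe $X$-blocks, no non-$X$ lenses, saturation, the disc curvature estimate (Lemma~\ref{lm:disc-curvature-new}) and boundary inheritance (Lemma~\ref{lm:boundary-inherited-new}). The conclusion is that no reduced diagram has boundary $a_1$, so $G_\phi\neq1$.

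For infiniteness, I would use the same macro-$C(6)$ structure. In such a diagram every face has degree at least $6$ and every interior vertex has valence at least $3$. I would then invoke the standard $C(6)$ (Lyndon--Schupp) argument: the quotient is infinite as long as no relator is a proper power. The relators cannot be proper powers, because each contains a uniquely occurring letter such as $a_1^{\pm1}$ in its marked contour.

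The main obstacle is the second step, namely making sure that the marked contours $U_i,V_i$ coming from other layers cannot contain hidden whole periods $X$, $A$ or $P$. Such a hidden period would break synchronization and allow an unmarked period gluing that evades the component counts. My first attempt would be to mark these contours as single opaque sides, so that an incidence inside them is counted but never merged into a component. This needs the claim that the words $h_i$ coming from the preceding layer are reduced without cancellation against the period words. Where this fails, I would apply the period-completion surgery once more, at the junction between the outer layers and the two chosen layers.
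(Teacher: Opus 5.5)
Your outline is the localization that the paper asserts; the corollary has no separate proof there. As a proof, however, two steps do not go through as written.

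The first is \emph{infiniteness}. The Lyndon--Schupp consequences you invoke are torsion-freeness when no relator is a proper power, asphericity, and hence infinitude of a nontrivial group. They require $C(6)$ for the symmetrized relator set itself, so that the degree condition holds in every reduced diagram, in particular in annular and spherical ones. The marked argument gives nothing of that kind. It produces face degree at least $6$ only in macro-diagrams built from disc diagrams, and their faces are unions of relator cells whose boundary labels are not relators, so the proper-power hypothesis does not even apply to them. Its final contradiction $6\le|\partial D|_{\mathrm m}\le1$ also uses that the boundary has length one. For a boundary label $w^n$, which is what an element of infinite order requires, the bound $|\partial D|_{\mathrm m}\ge6$ contradicts nothing. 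So at best you obtain $G_\phi\neq1$.

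Your reason why the relators are not proper powers is also wrong. The letter $a_1$ occurs in every $Q=a_1a_2$, $a_1^{-1}$ occurs in every $X$, and $R_3=a_3A^kP^{-k}$ repeats every letter. In the trivial-abelianization case the conclusion does follow from the relation matrix having determinant $\pm1$, since $R_i=W^n$ would make its $i$-th row divisible by $n$. But non-properness is not the missing ingredient.

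The second is the \emph{localization} itself, which is the real content of the statement. Your decomposition $R_i=U_i(\cdots)V_i$ with inert $V_i$ is not what later layers produce. A twist $T_{j,j+1}^{e}$ applied after $\Lambda_{\nu+1}$ multiplies by $(a_j^{w_j}a_{j+1}^{w_{j+1}})^{e}$, and the current values $w_j$ already contain the chosen period powers. So $V_i$ can contain whole blocks of the chosen periods, for instance $X^{\ell}$ or $A^{k}$.

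Declaring $V_i$ an opaque side does not neutralize these blocks:
\begin{itemize}
\item A cell whose marked $X$-periods are glued whole to such a hidden block lies in no $X$-component. Yet the count for its $X^{\ell}$-side, in the lemma that cells outside $X$-components are safe, rests on every marked period being cut internally.
\item A full $X^{\ell}$-gluing to a hidden block is not a cancellable pair. So the reducedness contradictions in the minimal-block analysis are lost.
\end{itemize}
Saying ``apply the surgery again'' does not address either point.

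With nontrivial $h_i$, the unique-letter justification of synchronization also fails, since $h_1$ may contain $a_1^{\pm1}$. That part is repairable: a cyclically reduced word which is not a proper power occurs in its own powers only at multiples of its length. This repair does not touch hidden periods in the contours.

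Finally, the hypothesis allows every exponent of the chosen layers to be $\pm2$. Then the isolated-cell bound $d\ge\ell+3$ gives only $5$. This is exactly why Proposition~\ref{prop:two-layer-marked-nontrivial-new} needs $\ell\ge3$ and why the pair $(2,2)$ was settled by Magma. Extra contour sides cannot be relied on to make up the deficit, because they can vanish under free reduction. The framing letter $a_1$ does exactly that in passing from $R_1$ to $\overline R_1$.
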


\subsection{Genus 3, cases $(-2,-1)$ and $(-1,-2)$}
\begin{prop}\label{prop:minus-two-minus-one}
Let \(G\) be a genus \(3\), two-layer presentation whose exponent sum is
\((-2,-1)\) or \((-1,-2)\). Then \(G\neq 1\).
\end{prop}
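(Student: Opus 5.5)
First, I will use Lemma~\ref{lem:symmetric-tuples} to reduce to a single case. The handle-reversing symmetry $\Omega\mapsto\Omega^{\mathrm{sym}}$ inverts the two-layer word and reverses the order of the handles. On exponent sums this sends $(s_1,s_2)$ to $(-s_2,-s_1)$ up to the sign convention coming from inversion, composed with the reflection. I will check directly on the matrix $M_3(\mathbf 1,\mathbf s)$ that the two unimodular pairs $(-2,-1)$ and $(-1,-2)$ are interchanged, since the determinant $1+2s_1+2s_2+3s_1s_2$ is symmetric in $s_1,s_2$. If the symmetry turns out not to swap them exactly, I will instead run the argument below twice with the roles of $X,Y$ and $A,B$ exchanged. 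So I fix $s_1=k_1+m_1=-2$ and $s_2=k_2+m_2=-1$, with all four exponents nonzero. This leaves a two-parameter family: $m_1=-2-k_1$ and $m_2=-1-k_2$, with $k_1\neq 0,-2$ and $k_2\neq 0,-1$.

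Next I will eliminate the auxiliary letters exactly as in the proof of Proposition~\ref{lm:genus3-00-quotient}. Put $X=a_1a_2$, $A=a_2^{y_2}a_3$, $Y=a_1^{z_1}a_2^{z_2}$, $B=a_2^{x_2}a_3^{x_3}$. The relation $x_1=1$ expresses $a_1$ as $Y^{-m_1}X^{k_1}$-type words, and the last two relations then give $x_2=x_3$ up to a power of $B$. Carrying this through, I expect a presentation of $G$ on $X,Y$ (and possibly $A$) with two relators of the shape
\[
\text{(power of }X)\cdot\text{(power of }Y)\cdots\;=\;\text{(power of }X),
\]
in analogy with $Y^{4n+1}=X^{4n-1}$. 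I will then kill a suitable power $X^{p}$. The aim is a generalized triangle group $\langle X,Y\mid X^{p},Y^{q},w(X,Y)^{N}\rangle$ with $|p|,|q|,|N|>1$, which is nontrivial by the Baumslag--Morgan--Shalen theorem as used in Theorem~\ref{thm:ordinary-one-layer-reduction}. The exponents $p,q$ will be linear functions of $k_1,k_2$, so this settles all parameters outside a finite box where some of $|p|,|q|,|N|$ equal $1$.

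I would split according to which layer carries the nonsigned exponents. When $|k_1|,|m_1|>1$ and $|k_2|,|m_2|>1$, I will instead reuse the marked Van Kampen argument of the $(0,0)$ case. That argument only used the shape of two adjacent layers with large periods, so the corollary following it applies. This covers the generic region without computation and reduces the triangle-quotient computation to the cases where one layer has a signed exponent. In those cases the relators are short enough that the elimination above is explicit.

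The main obstacle is the elimination step: obtaining closed-form two-generator relators for general $k_1,k_2$ when $s_1=-2$, $s_2=-1$. Unlike the $(0,0)$ case, the layers do not cancel to give $x_2=x_3$ cleanly, so $A$ may not be eliminable. If that fails, I would first look for a finite quotient uniform in the parameters, for example mapping $a_i$ into $\mathrm{PSL}_2(q)$ so that $X,Y$ have prescribed orders. Failing that, I would fall back on verifying the finitely many residual small-parameter cases in GAP/Magma by low-index or $L_2(q)$ quotients, as the section already does for the exceptional parameters.
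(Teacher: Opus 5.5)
Your proposal has a genuine gap. The decisive step, an explicit quotient together with a proof that it is nontrivial, is stated only as an expectation, and that expectation is contradicted by what the elimination actually produces.

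Work in the paper's normalization: exponent sum $(-1,-2)$ with edge exponents $(k,-k-1)$ and $(l,-l-2)$, which is equivalent to yours by Lemma~\ref{lem:symmetric-tuples}.

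\begin{itemize}
\item The identity $x_2=x_3=B^{l+2}$, and hence $a_2a_3=B$, follows automatically from the last two relators. What blocks the reduction to two generators is the first-layer power $A^l$, together with $X^k$ and $Y^{k+1}$.
\item The paper therefore eliminates not in $G$ but in the quotient by $X^{2k+1}$, $Y^{4k+3}$, $A^{l+1}$. These relations make $A^l=A^{-1}$ and turn $X^k,Y^{k+1}$ into invertible powers. One then still has to check that the $B$-relators impose nothing further.
\item The result is not a generalized triangle group but the four-relator group
\[
(s,q\mid 2,r)=\langle a,b\mid a^s,\ b^q,\ (ab)^2,\ (ab^{-1})^r\rangle,\qquad s=|4k+3|,\ q=|l+1|,\ r=|2k+1|.
\]
So Baumslag--Morgan--Shalen does not apply, and the Edjvet--Thomas classification is needed instead. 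Your hoped-for triangle group appears only for $l=-1$, where the last relator disappears and one gets $\Delta(2,s,r)$.
\item Worse, $(s,q\mid 2,r)$ is trivial for $q=2$ and $q=3$, for every $k$. For $q=2$: $b=b^{-1}$, so $(ab^{-1})^r=(ab)^r$; with $(ab)^2=1$ and $r$ odd this forces $ab=1$, and then $a^2=a^s=1$ with $s$ odd gives $a=1$. It is also trivial for $k=1$, $q=5$.
\end{itemize}

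So your claim that the quotient settles everything outside a finite box is false. The entire one-parameter families $l\in\{1,-3,2,-4\}$ survive, even though every order involved exceeds $1$. The collapse comes from the interaction of the two non-power relators, not from some order equalling $1$.

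These families are where the real work lies. Your case split sends two of them, $l=1$ and $l=-3$ (second edge $(1,-3)$ or $(-3,1)$), into the ``signed'' branch. That is exactly where the natural quotient dies, so you would need a different quotient, and you have not produced one. The paper covers $l\in\{1,-3,2,-4\}$ as follows:
\begin{itemize}
\item by the marked Van Kampen argument, but only for $|k|\geq 4$ (both first-edge exponents of absolute value at least $3$, and one second-edge occurrence carrying at least three periods);
\item by explicit $\operatorname{PSL}(2,p)$ images for $k\in\{-3,2,3\}$;
\item by $\operatorname{PSL}(2,7)$, $\operatorname{PSL}(2,11)$ and $A_5$ images for $k=1$;
\item by transferring $k=-2$ through the internal layer reversal $G_{k,l}=1\Leftrightarrow G_{-k-1,-l-2}=1$, which your plan does not use.
\end{itemize}

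Conversely, your branch with all four exponents of absolute value $>1$ asks more of the diagram argument than is available. In the $(0,0)$ case it is proved only for $\ell\geq 3$, $k\geq 2$, and the pair $(2,2)$ needed a computer. The corollary you cite concerns additional layers, not a change of exponent sums. Small tuples such as $(2,-3)$, $(2,-4)$ would therefore also require separate finite checks.
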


\begin{proof}
By Lemma~\ref{lem:symmetric-tuples}, it is enough to consider exponent sum
\((-1,-2)\).  Write the exponents on the two adjacent edges as
\[
 (k,-k-1),\qquad (l,-l-2).
\]
Thus the first layer has exponents \((k,l)\), while the second has
exponents \((-k-1,-l-2)\).  Since all four twist exponents are nonzero,
\[
 k\neq0,-1,\qquad l\neq0,-2.
\]
Let \(G_{k,l}\) denote the corresponding presentation.  We shall use
once more the layer-reversal symmetry from
Lemma~\ref{lem:symmetric-tuples}.  In the present parametrization it
interchanges the two layer exponent vectors and gives
\begin{equation}\label{eq:minus-two-layer-reversal}
 G_{k,l}=1
 \quad\Longleftrightarrow\quad
 G_{-k-1,-l-2}=1.
\end{equation}
Indeed, after the two handlebodies are interchanged, the first layer has
exponents \((-k-1,-l-2)\), while the second has exponents \((k,l)\),
which is the same family with parameters
\((-k-1,-l-2)\).

Put
\[
 X=a_1a_2,\qquad A=a_2^{y_2}a_3,\qquad
 Y=a_1^{z_1}a_2^{z_2},\qquad
 B=a_2^{x_2}a_3^{x_3}.
\]
The defining equations are
\[
\begin{aligned}
y_1&=a_1X^k,& y_2&=a_2X^k,& y_3&=a_3,\\
z_1&=y_1,& z_2&=y_2A^l,& z_3&=y_3A^l,\\
x_1&=z_1Y^{-k-1},& x_2&=z_2Y^{-k-1},& x_3&=z_3,
\end{aligned}
\]
and the final balanced relations are
\[
 x_1=1,\qquad x_2B^{-l-2}=1,\qquad x_3B^{-l-2}=1.
\]

We pass to the quotient obtained by adjoining
\[
 X^{2k+1}=1,\qquad Y^{4k+3}=1,
 \qquad A^{l+1}=1,
\]
where the third relation is omitted when \(l=-1\).  It gives
\(A^l=A^{-1}\).  From \(x_1=1\) we obtain
\[
 z_1=Y^{k+1},\qquad
 a_1=Y^{k+1}X^{-k},\qquad
 a_2=X^kY^{-k-1}X.
\]
The two remaining balanced relations first imply \(x_2=x_3\), and hence
\[
 z_2Y^{-k-1}=z_3.
\]
Since
\[
 a_2^{y_2}=X^{-k}a_2X^k=Y^{-k-1}X^{k+1},
\]
we have
\[
 z_3=a_3A^{-1}=(a_2^{y_2})^{-1}
     =X^{-k-1}Y^{k+1}=X^kY^{k+1}
\]
and therefore
\[
 z_2=X^kY^{2k+2}.
\]
The identity \(z_2=y_2A^{-1}\) now gives
\[
 A^{-1}=y_2^{-1}z_2
       =X^kY^{3k+3}
       =X^kY^{-k},
\]
where the last equality uses \(Y^{4k+3}=1\).

Substitution in \(Y=a_1^{z_1}a_2^{z_2}\) gives
\[
 Y=X^{-k}Y^{-2k-2}X^{k+1}Y^{2k+2}.
\]
Using \(X^{2k+1}=Y^{4k+3}=1\), this is equivalent to
\[
 \bigl(X^kY^{-2k-1}\bigr)^2=1.
\]
The relation \(A^{l+1}=1\) becomes
\[
 \bigl(X^kY^{-k}\bigr)^{l+1}=1.
\]
Consequently the candidate two-generator quotient is
\begin{equation}\label{eq:minus-two-general-quotient}
 Q_{k,l}=
 \left\langle X,Y\ \middle|\
 X^{2k+1},\ Y^{4k+3},\
 \bigl(X^kY^{-2k-1}\bigr)^2,\
 \bigl(X^kY^{-k}\bigr)^{l+1}
 \right\rangle,
\end{equation}
with the fourth relator omitted for \(l=-1\).

We verify that the final \(B\)-relations impose no further relation.
Put
\[
 p=2k+1,\qquad m=4k+3,
\qquad u=X^kY^{-p},\qquad v=Y^{k+1}.
\]
Since
\[
 \gcd(k,p)=1,\qquad \gcd(k+1,m)=1,
\]
this is a reversible change of generators.  Moreover
\[
 X^k=uv^{-2},\qquad A^{-1}=uv,
 \qquad x_2=x_3=X^kY^{k+1}=uv^{-1}.
\]
A direct substitution in \(B=a_2^{x_2}a_3^{x_3}\) gives
\[
 B=Y^{-2k-2}X^{-k}Y^{-k-1}=uv^{-1}=x_2=x_3.
\]
Now set
\[
 a=v^{-1},\qquad b=uv.
\]
Then \(A=b^{-1}\) and \(B=ba^2\).  Since \(ab\) is conjugate to
\(u\), the relation \(u^2=1\) is equivalent to \((ab)^2=1\).  It follows
that
\[
 ba^2=a^{-1}b^{-1}a,
\]
so \(B\) is conjugate to \(A=b^{-1}\).  Hence \(A^{l+1}=1\) implies
\(B^{l+1}=1\), and the two relations
\(x_iB^{-l-2}=1\), \(i=2,3\), are automatic.  Thus
\eqref{eq:minus-two-general-quotient} is indeed a quotient of \(G_{k,l}\).

For \(l\neq-1\), put
\[
 r=|2k+1|,\qquad s=|4k+3|,\qquad q=|l+1|.
\]
In the generators \(u,v\), presentation
\eqref{eq:minus-two-general-quotient} becomes
\[
 Q_{k,l}\cong
 \left\langle u,v\ \middle|\
 u^2,\ v^m,\ (uv^{-2})^p,\ (uv)^{l+1}
 \right\rangle.
\]
Since \(a=v^{-1}\), \(b=uv\), the element \(ab\) is conjugate to
\(u\), while
\[
 ab^{-1}=v^{-2}u
\]
is conjugate to \(uv^{-2}\).  Replacing signed power exponents by their
absolute values therefore gives
\[
 Q_{k,l}\cong
 \left\langle a,b\ \middle|\
 a^s,\ b^q,\ (ab)^2,\ (ab^{-1})^r
 \right\rangle.
\]
Following Edjvet and Thomas \cite{ET}, write
\[
 (\lambda,\mu\mid\nu,\kappa)
 =\left\langle a,b\ \middle|\
 a^\lambda,\ b^\mu,\ (ab)^\nu,\ (ab^{-1})^\kappa
 \right\rangle.
\]
We have therefore proved the useful identification
\begin{equation}\label{eq:minus-two-ET-identification}
 \boxed{\quad Q_{k,l}\cong(s,q\mid2,r).\quad}
\end{equation}

If \(l=-1\), the last relator is absent.  Since \(s\) is odd, replacing
\(v\) by \(v^{-2}\) identifies \(Q_{k,-1}\) with the ordinary triangle
group
\[
 \Delta(2,s,r),
\]
which is nontrivial.  Hence \(G_{k,-1}\neq1\) for every admissible \(k\).

We now apply the classification in \cite{ET}.  First suppose
\[
 r\ge5,\qquad q\ge4.
\]
Then \(s\ge9\).  After interchanging the first two parameters if
necessary, Theorem~1.3 of \cite{ET} applies to \((s,q\mid2,r)\).  None of
its finite cases can occur: the only finite cases there with fourth
parameter at least \(5\) have first parameters \((4,4)\) or \((4,5)\),
whereas one of our first parameters is \(s\ge9\).  Thus
\[
 Q_{k,l}\text{ is infinite whenever }
 |2k+1|\ge5\text{ and }|l+1|\ge4.
\]
Equivalently, this proves \(G_{k,l}\neq1\) for
\[
 k\ge2\text{ or }k\le-3,
 \qquad l\ge3\text{ or }l\le-5.
\]

For completeness, the same classification also explains exactly why this
quotient cannot handle the two smallest values of \(q\).  Since \(r\) is
odd, Proposition~1.1 of \cite{ET} gives
\[
 (s,2\mid2,r)\cong(2,s\mid2,r)
 \cong\Delta\bigl(2,s,\gcd(2,r)\bigr)=1.
\]
For \(q=3\), use the symmetry
\((3,m\mid n,k)\cong(3,n\mid m,k)\) recorded before
Proposition~1.2 of \cite{ET}.  Since
\[
 \gcd(s,r)=\gcd(|4k+3|,|2k+1|)=1,
\]
we get
\[
 (s,3\mid2,r)\cong(3,s\mid2,r)
 \cong(2,3\mid s,r)
 \cong\Delta(2,3,1)=1.
\]
Thus \(Q_{k,l}=1\) whenever
\[
 l\in\{1,-3,2,-4\}.
\]
Different proof is required for these
families.

We next complete the case $k=1$.  Here $(r,s)=(3,7)$, so
\[
 Q_{1,l}\cong(7,q\mid2,3).
\]
The finite groups listed in Proposition~2.1 of \cite{ET}, together with
Theorem~2.2 there, give
\[
(4,7\mid2,3)\cong\operatorname{PSL}(2,7),\qquad
(6,7\mid2,3)\cong(7,7\mid2,3)\cong\operatorname{PSL}(2,13),
\]
and $(7,8\mid2,3)$ has order $10752$.  Theorem~2.7 of \cite{ET}
shows that $(7,q\mid2,3)$ is infinite for $q\ge9$.  The quotient is
trivial for $q=2,3,5$.  These three values correspond to
\[
 l\in\{1,-3,2,-4,4,-6\}.
\]
For precisely these six values, Appendix~\ref{app:k-one-gap} gives
explicit nontrivial finite quotients of the original group $G_{1,l}$:
\[
\begin{array}{c|c|c}
 l & \text{finite image} & \text{order}\\ \hline
 1,-3 & \operatorname{PSL}(2,7) & 168\\
 2,-4 & \operatorname{PSL}(2,11) & 660\\
 4,-6 & A_5 & 60.
\end{array}
\]
Therefore $G_{1,l}\neq1$ for every admissible $l$.  By
\eqref{eq:minus-two-layer-reversal}, the case $k=-2$ is equivalent to the
case $k=1$ (with $l$ replaced by $-l-2$), and is therefore also complete.

It remains to consider
\[
 l\in\{1,-3,2,-4\}.
\]
First suppose
\[
 k\leq-4\qquad\text{or}\qquad k\geq4.
\]
The marked Van Kampen diagram proof is the same as the proof of
Proposition~\ref{prop:two-layer-marked-nontrivial-new} for exponent sum
$(0,0)$, and we only indicate the numerical change.  On the first edge the
two twist powers have absolute values $|k|$ and $|k+1|$; in the displayed
range both are at least $3$.  On the second edge one has
\[
\begin{array}{c|c}
 l & (|l|,|l+2|)\\ \hline
 1&(1,3)\\
 -3&(3,1)\\
 2&(2,4)\\
 -4&(4,2).
\end{array}
\]
Thus one occurrence supplies at least three consecutive marked periods.
The shorter occurrence is confined to a terminal truncated block and is
handled by the same period-completion surgery as in the $(0,0)$ proof.
After completion, the synchronization, no-lens, and hierarchical
macro-cell arguments are unchanged.  Every final macro-cell therefore has
at least six marked incidences.  If $G_{k,l}$ were trivial, a reduced disc
diagram with boundary word $a_1$ would give, exactly as in
Proposition~\ref{prop:two-layer-marked-nontrivial-new}, at least six marked
boundary sides, whereas boundary faithfulness gives at most one.  This is a
contradiction.  Hence
\[
 G_{k,l}\neq1
 \qquad
 (k\leq-4\text{ or }k\geq4,
  \ l\in\{1,-3,2,-4\}).
\]

The only remaining parameter pairs are
\[
 k\in\{-3,2,3\},
 \qquad
 l\in\{1,-3,2,-4\}.
\]
Appendix~\ref{app:small-k-gap} gives explicit epimorphisms of the original
three-generator presentations onto finite simple groups.  The images are
\[
\begin{array}{c|c|c}
 k & l & \text{finite image}\\ \hline
 -3 & 1,-3 & \operatorname{PSL}(2,13)\\
 -3 & 2,-4 & \operatorname{PSL}(2,11)\\
  2 & 1,-3 & \operatorname{PSL}(2,13)\\
  2 & 2,-4 & \operatorname{PSL}(2,11)\\
  3 & 1,-3 & \operatorname{PSL}(2,19)\\
  3 & 2,-4 & \operatorname{PSL}(2,11).
\end{array}
\]
Consequently all these groups are nontrivial.  Together with the
Edjvet--Thomas quotient, the case $l=-1$, the $k=1$ computation and its
layer-reversal partner $k=-2$, and the marked-diagram range above, this
proves the proposition for every admissible $k$ and $l$.
\end{proof}

\subsection{ Genus 3,  cases $(-1,0)$, $(0,-1)$.}  \label{se:-1,0}
\begin{prop}
Consider the genus \(3\), two-layer case with parameters
\[
(0,-1)=(1,-1;\,k,-k-1), k\neq 0,-1,
\]
Let \(G_k\) be the corresponding balanced presentation. Then \(G_k\) is
nontrivial.
\end{prop}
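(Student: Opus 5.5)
The plan is to follow the pattern of Proposition~\ref{prop:minus-two-minus-one}: eliminate the auxiliary generators, pass to a quotient in which the remaining relations collapse to a small two-generator presentation, and identify that presentation with a triangle or Edjvet--Thomas group. Only finitely many exceptional $k$ should be left, and these I would settle by explicit finite quotients. The marked Van Kampen argument of Proposition~\ref{prop:two-layer-marked-nontrivial-new} is not available here, since the first edge carries the signed exponents $(1,-1)$.

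\emph{Step 1 (elimination).} With $X=a_1a_2$, $A=a_2^{y_2}a_3$, $Y=a_1^{z_1}a_2^{z_2}$ and $B=a_2^{x_2}a_3^{x_3}$, the defining equations are
\[
y_1=a_1X,\ y_2=a_2X,\ z_2=y_2A^{k},\ z_3=a_3A^{k},\ x_1=z_1Y^{-1},\ x_2=z_2Y^{-1},\ x_3=z_3 .
\]
The relation $x_1=1$ gives $a_1=YX^{-1}$ and $a_2=XY^{-1}X$. The two last relations give $x_2=x_3$, so $z_2Y^{-1}=z_3$, and hence $x_2=x_3=B^{k+1}$. Substituting, exactly as in the $(-1,-2)$ computation, I expect:
\begin{itemize}
\item $a_3$ and $z_3$ become words in $X,Y,A$;
\item $Y=a_1^{z_1}a_2^{z_2}$ becomes one relation in $X,Y,A$;
\item the $B$-relation becomes a second one.
\end{itemize}
This yields a balanced presentation on $X,Y,A$ with two relations, after which $A$ is expressed through $X,Y$. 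In this way $G_k$ becomes a two-generator, two-relator group in $X,Y$, as in Proposition~\ref{lm:genus3-00-quotient}.

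\emph{Step 2 (quotient).} I would adjoin power relations suggested by the abelianization data. Proposition~\ref{prop:multilayer-abelianization} with $(s_1,s_2)=(0,-1)$ gives determinant $\pm1$, so the torsion has to come from the shape of the two-generator relators. Concretely, I would impose $X^{p}=Y^{m}=1$, with $p,m$ read off from the two surviving relators, and $A^{k+1}=1$, which replaces $A^{k}$ by $A^{-1}$. The goal is a quotient of the form $(\lambda,\mu\mid\nu,\kappa)$, or a generalized triangle group $\langle x,y\mid x^r,y^t,w^N\rangle$ with $|r|,|t|,|N|>1$. In the first case nontriviality or infiniteness follows from \cite{ET}; in the second it follows from the Baumslag--Morgan--Shalen theorem already used in Theorem~\ref{thm:ordinary-one-layer-reduction}.

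As in Proposition~\ref{prop:minus-two-minus-one}, I would check that the remaining $B$-relations are automatic in the quotient. The way to do this is to show that $B$ is conjugate to $A$ there, so that $A^{k+1}=1$ forces $B^{k+1}=1$. I would also check, using Lemma~\ref{lem:symmetric-tuples}, whether the layer-reversal symmetry pairs $k$ with $-k-1$; if it does, it suffices to treat $k\ge1$.

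\emph{Main obstacle.} The hardest part is the small values of $|k+1|$, where the Edjvet--Thomas quotient degenerates to a trivial group (as happened for $q=2,3$ before). For those finitely many $k$, and for $k$ with $|k|$ small, I would search with GAP/Magma for epimorphisms of the original three-generator presentation onto small groups such as $\mathrm{PSL}(2,p)$ or $A_5$, and record them in an appendix as in Appendix~\ref{app:small-k-gap}. If the generic quotient still fails for an infinite family, I would fall back on a $C(6)$-type marked-diagram argument. The long periods would then come from $A^{k}$ and $B^{k+1}$ on the second edge, and the signed first edge would be absorbed as in the truncated-block treatment in Proposition~\ref{prop:minus-two-minus-one}.
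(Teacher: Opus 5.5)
There is a genuine gap: the proposal never produces a quotient, and the concrete choices it commits to fail in this case.

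\textbf{Wrong generators.} The elimination is aimed at the wrong pair of generators. The twist powers are $X^{1},Y^{-1}$ on the first edge but $A^{k},B^{-k-1}$ on the second. So it is $X,Y$ (with $a_1,a_2,a_3$) that can be eliminated, and the last sentence of your Step~1 ("$A$ is expressed through $X,Y$") is unjustified. Carrying out your own substitutions gives
\[
a_3=B^{k+1}A^{-k},\qquad a_2a_3=B,\qquad X=a_1a_2=B^{-k}A^{k}.
\]
Then $G_k$ becomes a two-generator, two-relator group in $A,B$:
\[
 A^{k}B^{k+1}A^{k}=B^{3k+2},\qquad
 A^{2k+1}=B^{k+1}A^{k}B^{-2k-1}A^{k}B^{k+1}.
\]
This is the mirror image of Proposition~\ref{lm:genus3-00-quotient}, where the signed exponents sit on $A,B$ and one reduces to $X,Y$.

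\textbf{Wrong torsion.} The relation $A^{k+1}=1$ imported from Proposition~\ref{prop:minus-two-minus-one} is the wrong one here. For $k=1$, adding $A^2=1$ to the presentation above gives $B^2AB^2=1$, so $A=B^{-4}$. Hence $B^{8}=B^{11}=1$ and the group collapses. Thus $G_1/\langle\!\langle A^2\rangle\!\rangle=1$, whatever $X^p,Y^m$ you add. Your proposed check makes matters worse. If $B$ were conjugate to $A$, so that also $B^{k+1}=1$, then $x_2=x_3=1$ and the two relations reduce to $B=A^2$ and $B=A$. The quotient is then trivial for every $k$.

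\textbf{Missing idea.} Read the torsion off the first relation, whose two $B$-exponents differ by $4k+3$, and impose only $B^{4k+3}=1$.
\begin{itemize}
\item The first relation becomes $(A^{k}B^{k+1})^2=1$, so $B^{k+1}A^{k}B^{k+1}=A^{-k}$.
\item Since $B^{-2k-1}=B^{2k+2}$, the second becomes $A^{2k+1}=A^{-2k}$, i.e.\ $A^{4k+1}=1$.
\end{itemize}
Because $\gcd(k,4k+1)=\gcd(k+1,4k+3)=1$, this quotient is the ordinary triangle group $\Delta(2,|4k+1|,|4k+3|)$. It is hyperbolic, hence nontrivial, for every $k\neq0,-1$. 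So one quotient covers all admissible $k$ at once. You need no exceptional values, no computer search, no diagram fallback, and not even the layer-reversal symmetry.
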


\begin{proof}
Interchanging the two layers replaces $k$ by $-k-1$.  Thus it is enough to
consider $k\geq1$.

Let
\[
A=a_2^{y_2}a_3,
\qquad
B=a_2^{x_2}a_3^{x_3}.
\]
We add the relation
\[
B^{4k+3}=1
\]
and denote the resulting quotient by \(L_k\). It is enough to show that
\(L_k\neq 1\).

For this two-layer case the defining relations are
\[
\begin{aligned}
y_1&=a_1(a_1a_2),& y_2&=a_2(a_1a_2),& y_3&=a_3,\\
z_1&=y_1,& z_2&=y_2A^k,& z_3&=y_3A^k,\\
x_1&=z_1Y^{-1},& x_2&=z_2Y^{-1},& x_3&=z_3,
\end{aligned}
\]
where
\[
Y=a_1^{z_1}a_2^{z_2}.
\]
The final relations are
\[
x_1=1,\qquad
x_2B^{-k-1}=1,\qquad
x_3B^{-k-1}=1.
\]

From these final relations we obtain
\[
x_2=x_3=B^{k+1}.
\]
Since
\[
x_3=z_3=a_3A^k,
\]
we get
\[
a_3=B^{k+1}A^{-k}.
\]
Also,
\[
B=a_2^{x_2}a_3^{x_3}.
\]
Because \(x_2=x_3=B^{k+1}\), this gives
\[
a_2a_3=B.
\]
Therefore
\[
a_2=BA^kB^{-k-1}.
\]

Next, the relation \(x_1=1\) gives \(z_1=Y\). Since \(z_1=y_1\), and
\(y_1=a_1(a_1a_2)\), we obtain
\[
Y=a_1(a_1a_2).
\]
Using also
\[
Y=a_1^{z_1}a_2^{z_2}
\]
and \(z_2=B^{k+1}Y\), we get
\[
a_1a_2=B^{-k}A^k.
\]
Thus all generators \(a_1,a_2,a_3,x_i,y_i,z_i\) can be expressed in terms
of \(A\) and \(B\).

Substituting these expressions into the remaining defining relations gives
the two relations
\[
A^{4k+1}=1
\]
and
\[
(B^{-k-1}A^{-k})^2=1.
\]
Together with the added relation
\[
B^{4k+3}=1,
\]
this gives a two-generator presentation for \(L_k\):
\[
L_k\cong
\left\langle A,B \ \middle|\
B^{4k+3},\ A^{4k+1},\
(B^{-k-1}A^{-k})^2
\right\rangle .
\]

This is a generalized triangle group of type
\[
(4k+3,\;4k+1,\;2).
\]
For \(k\geq 1\),
\[
\frac{1}{4k+3}+\frac{1}{4k+1}+\frac12<1.
\]
Hence, by the generalized triangle group theorem used above, this group is
infinite. In particular, \(L_k\neq 1\). Since \(L_k\) is a quotient of
\(G_k\), it follows that
\[
G_k\neq 1.
\]
\end{proof}

All the other cases for the exponent sums $(0,-1)$ can be proved using Van-Kampen diagrams. The case $(-1,0)$ is symmetric.

Theorem \ref {thm:genus3-two-layers} is proved now.

\begin{theorem}
\label{thm:genus3-two-layer-finite-reduction}
Let $G$ be a genus-three two-layer group whose abelianization is trivial.
Then $G$ is finite if and only if, up to the symmetry of
Lemma~\ref{lem:symmetric-tuples}, it is one of the two groups $G_1,G_{-1}$
from Proposition~\ref{lm:genus3-00-quotient}.  In these two cases
\[
       G\cong\operatorname{SL}(2,5),
       \qquad |G|=120.
\]
Every other genus-three two-layer group with trivial abelianization is
infinite.
\end{theorem}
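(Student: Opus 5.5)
The plan is to run through the same case analysis used for Theorem~\ref{thm:genus3-two-layers}, this time proving infiniteness instead of mere nontriviality. By Corollary~\ref{cor:genus3-two-layer-abelianization}, trivial abelianization forces the exponent-sum pair $(s_1,s_2)$ to be one of
\[
(-2,-1),\quad(-1,-2),\quad(0,0),\quad(-1,0),\quad(0,-1).
\]
Lemma~\ref{lem:symmetric-tuples} lets us treat only one member of each symmetric pair. In each family we will upgrade every nontriviality certificate from Section~\ref{se:4} to an infiniteness certificate. Separately we will identify the two finite exceptions.

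\emph{The two finite groups.} For $n=\pm1$, Proposition~\ref{lm:genus3-00-quotient} gives an exact Tietze reduction
\[
G_n\cong\langle X,Y\mid Y^nX^nY^n=X^{3n-1},\ Y^{4n+1}=X^{4n-1}\rangle .
\]
For $n=1$ this is $\langle X,Y\mid YXY=X^2,\ Y^5=X^3\rangle$. I will show that $Z=X^3=Y^5$ is central. Then the quotient by $Z$ is the $(2,3,5)$ triangle group $A_5$, with $Z$ of order $2$. Concretely, I would find an explicit map onto $\operatorname{SL}(2,5)$ and run a coset enumeration over $\langle Y\rangle$ in Magma to get index $24$, hence order $120$. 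The case $n=-1$ is analogous, or follows by inverting the generators.

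\emph{Infiniteness in the $(0,0)$ family.}
\begin{itemize}
\item If one edge carries exponents $\pm1$ and $|n|\ge2$, Proposition~\ref{lm:genus3-00-quotient} gives the quotient $\langle X,Y\mid X^{4n-1},Y^{4n+1},(X^nY^n)^2\rangle$. This is a generalized triangle group with hyperbolic signature. By Baumslag--Morgan--Shalen its special representation has infinite image, so $G_n$ is infinite.
\item For $\ell\ge3,k\ge2$ (and symmetrically $\ell\ge2,k\ge3$), the marked $C(6)$ macro-diagram argument of Proposition~\ref{prop:two-layer-marked-nontrivial-new} already yields infiniteness. This is the ``analogue of the $C(6)$ infiniteness theorem'' stated after it: there are no spherical macro-diagrams, so the presentation is aspherical and the group is infinite.
\item The leftover $(2,2)$ case will be done by Magma, by finding a finite-index subgroup with infinite abelianization.
\end{itemize}

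\emph{Infiniteness in the other families.}
\begin{itemize}
\item For $(0,-1)$, the quotient $L_k$ is a hyperbolic generalized triangle group for $k\ge1$, so it is infinite; the remaining cases are covered by the diagrams mentioned there.
\item For $(-1,-2)$, the Edjvet--Thomas quotient $(s,q\mid2,r)$ is infinite when $r\ge5,q\ge4$. For $l=-1$ the quotient $\Delta(2,s,r)$ is hyperbolic once $r\ge3$ and $s\ge7$. The cases $k=1$ with $q\ge9$ are infinite by Theorem~2.7 of \cite{ET}. The range $|k|\ge4$ with $l\in\{1,-3,2,-4\}$ is infinite by the marked-diagram argument.
\end{itemize}

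The main obstacle is the finitely many small parameters where the paper so far only exhibits finite simple images. These are $k\in\{-3,1,2,3\}$ (with $-2$ by symmetry) for the listed $l$, the $k=1$ cases with $q\in\{4,6,7,8\}$, and the small $(0,-1)$ and $(2,2)$ cases. For these I would first try Magma's \texttt{LowIndexSubgroups} on the original three-generator presentation, looking for a subgroup of small index whose abelian quotient invariants contain a $0$, which proves infiniteness. Failing that, I would pull back the finite quotient's kernel and compute its abelianization, or build an alternative infinite generalized-triangle quotient by killing a different power (for example $B^{N}$ for suitable $N$). This step is where I expect genuine case-by-case computation. It is also where the risk lies that some small case is in fact finite, which would contradict the stated classification and need re-examination.
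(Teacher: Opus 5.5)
\textbf{The gap.} Your treatment of the two finite groups agrees with the paper. So does your treatment of every parameter range where the paper already has an infinite quotient: the hyperbolic triangle quotient when one edge carries $(1,-1)$ in the $(0,0)$ family, $L_k$ for $(0,-1)$, the Edjvet--Thomas range, the $l=-1$ triangle groups, $(7,q\mid 2,3)$ for $q\ge 9$, and the marked-diagram ranges.

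The gap is the step you flag yourself. It concerns the finitely many small parameters:
\begin{itemize}
\item the $(-1,-2)$ pairs with $k\in\{-3,1,2,3\}$ and exceptional $l$;
\item the $k=1$ cases where $(7,q\mid2,3)$ is finite or trivial;
\item the $(0,0)$ pair $(2,2)$.
\end{itemize}
For these you offer only a search: low-index subgroups with infinite abelianization, kernels of finite quotients, or an infinite quotient not yet found. There is no certificate and no guarantee of success, so the ``only if'' direction is not proved. Nothing in your argument excludes the possibility you raise, that one of these groups is an unexpected finite group.

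\textbf{The missing idea.} Use the $3$-manifold structure.
\begin{itemize}
\item $G=\pi_1(M)$ for the closed orientable $3$-manifold given by the Heegaard splitting, and $G_{\mathrm{ab}}=0$ makes $M$ an integral homology sphere.
\item The paper cites Kervaire's theorem here; the statement used is the classification of finite $3$-manifold groups, which rests on elliptization and the spherical space forms. By it, a finite such $G$ is trivial or $\operatorname{SL}(2,5)$.
\item Triviality is already excluded, and the only nontrivial proper quotient of $\operatorname{SL}(2,5)$ is $A_5$.
\item Hence every finite image already in the paper other than $A_5$ proves that $G$ is infinite, with no further computation. This covers the groups $\operatorname{PSL}(2,q)$ with $q\in\{7,11,13,19\}$, including $(4,7\mid2,3)$ and $(6,7\mid2,3)$, and the group $(7,8\mid2,3)$ of order $10752$.
\item The only cases left are those whose known image is $A_5$ or unspecified: $(2,2)$ in the $(0,0)$ family, and $(k,l)=(1,4),(1,-6)$. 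The paper closes these with explicit $\operatorname{PSL}(2,17)$ quotients.
\end{itemize}
This also removes the risk you mention, since the only possible finite candidate is $\operatorname{SL}(2,5)$. Your route would avoid the $3$-manifold input if every search succeeded, but it needs a separate infiniteness certificate in each of roughly two dozen cases.

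\textbf{A small correction.} $G_{-1}$ does not follow from $G_1$ by inverting the generators; doing so returns the $n=1$ presentation. Instead, the $n=-1$ relations give $YXY=X^4$ and $Y^3=X^5$ directly, hence $(XY)^2=X^5=Y^3$. This is again the binary icosahedral group.
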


\begin{proof}
Let $M$ be the associated closed orientable $3$-manifold.  Since
$G_{\mathrm{ab}}=0$, the manifold $M$ is an integral homology sphere.
Kervaire's finite-group theorem says that a finite fundamental group of an
integral homology $3$-sphere is either trivial or the binary icosahedral
group
\[
       2I\cong\operatorname{SL}(2,5).
\]
Triviality has already been excluded by the preceding genus-three case
analysis.  It remains to exclude $\operatorname{SL}(2,5)$ outside the two
stated examples.

The preceding proof divides all parameter cases into four classes.  First,
whenever the displayed quotient is infinite, the original group is infinite.
Second, each displayed finite simple image
$\operatorname{PSL}(2,q)$ in the proof has
$q\in\{7,11,13,17,19\}$ and therefore is not isomorphic to $A_5$.
Such an image excludes $\operatorname{SL}(2,5)$, since the only non-trivial
proper quotient of $\operatorname{SL}(2,5)$ is
$\operatorname{PSL}(2,5)\cong A_5$.  The same conclusion follows from the
finite image of order $10752$ occurring in the $k=1$ calculation.  Third,
all marked-diagram ranges are infinite. 

Only three computations are not covered by those observations.  They are the
$(0,0)$ parameter pair $(2,2)$ and, in the $(-1,-2)$ normal form, the two
pairs
\[
       (k,l)=(1,4),\qquad (1,-6).
\]
Appendix~\ref{app:psl17-finite-classification} gives epimorphisms in all
three cases onto $\operatorname{PSL}(2,17)$.  Hence none can be
$\operatorname{SL}(2,5)$, and Kervaire's theorem implies that all three are
infinite.  The layer-reversal partners with $k=-2$ are covered by
Lemma~\ref{lem:symmetric-tuples}.  Thus no remaining parameter case can be
isomorphic to $\operatorname{SL}(2,5)$.

For $n=1$, Proposition~\ref{lm:genus3-00-quotient} gives the exact
presentation
\[
 G_1=\left\langle X,Y\ \middle|\
 YXY=X^2,\quad Y^5=X^3\right\rangle .
\]
Since
\[
       (XY)^2=X^3=Y^5,
\]
this is the standard binary icosahedral presentation.  For $n=-1$ one
similarly obtains
\[
       (XY)^2=X^5=Y^3,
\]
and hence the same group.  Therefore these two groups, and only these two up
to the stated symmetry, are finite.
\end{proof}

\subsection{Balanced presentations of genus 4 from two layers of Dehn Twists}
In this section we will prove the following result.
\begin{theorem}\label{thm:genus3-two-layers1}
Let \(G\) be a balanced presentation associated to a genus \(4\) automorphism
obtained from two layers of Dehn twists (all $k_i$'s are non-zero). Then
$G\neq 1.$ 
\end{theorem}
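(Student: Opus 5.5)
I would follow the template of the genus-three proof (Theorem~\ref{thm:genus3-two-layers}) and start with the abelianization filter. By Proposition~\ref{prop:multilayer-abelianization} with framing $\mathbf 1=(1,1,1,1)$ and exponent sums $s_i=k_i+m_i$ ($i=1,2,3$), the relation matrix is tridiagonal. Expanding along the chain gives
\[
D(s_1,s_2,s_3)=1+2(s_1+s_2+s_3)+3(s_1s_2+s_2s_3)+4s_1s_3+4s_1s_2s_3 .
\]
I would solve $D=\pm1$ over the integers. Fix $s_2$; then $D$ is bilinear in $s_1,s_3$, so the equation factors as $(\alpha s_1+\beta)(\alpha s_3+\beta')=c$ with $\alpha=2+4s_2$. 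This yields a short explicit list, presumably $(0,0,0)$, sign patterns with entries in $\{0,-1,-2\}$, and a few others. Every other triple already gives $G\neq1$.

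For each surviving exponent-sum triple I would split off the parts that reduce to genus three. If some $s_i=0$ and the corresponding layer pair is $(k,-k)$, killing the bridge element on that edge (the analogue of the right-factor-at-a-bridge lemma, now applied to two layers) produces a quotient that is a free product of a genus-$\le 3$ two-layer block with a smaller block, amalgamated along a generalized triangle relation. When both side blocks are nontrivial, I would apply Baumslag--Morgan--Shalen exactly as in Theorem~\ref{thm:ordinary-one-layer-reduction}. The side blocks are genus-two Olshanskii-type groups $\langle a_1,a_2\mid a_1^{k}(a_1a_2)^{m},a_2^{l}(a_1a_2)^m\rangle$, which are nontrivial via triangle quotients, or genus-three two-layer groups, which are nontrivial by Theorem~\ref{thm:genus3-two-layers}. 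When a side block is trivial, I would instead eliminate generators as in Proposition~\ref{lm:genus3-00-quotient}: set $x_1=1$, solve for $a_1,a_2$ in terms of $X,Y$ and for $a_3,a_4$ in terms of the bridge words, and add torsion relations chosen to make the outer relators collapse. The target is a two-generator quotient of Edjvet--Thomas type $(\lambda,\mu\mid\nu,\kappa)$ or an ordinary triangle group, whose nontriviality can be read off \cite{ET}.

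For large parameters I would reuse the marked Van Kampen argument of Proposition~\ref{prop:two-layer-marked-nontrivial-new}. By the corollary following it, once two adjacent layers on one edge have exponents of absolute value greater than $1$, the $C(6)$ macro-cell count depends only on that edge, and the remaining relators enter as marked complementary contours. So whenever some edge has $|k_i|,|m_i|\ge 2$, with the excluded small cases $\ell\geq3,k\geq2$ handled by symmetry, the group is infinite. This leaves families in which every edge carries a signed exponent in at least one layer. There the signed twist acts as a basis change (as in Proposition~\ref{prop:open-algebraic-active-interval-reduction}) and lowers the effective genus, reducing the problem to genus three or to a one-layer group with a nonsigned exponent.

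The main obstacle is the last bookkeeping step. Signed exponents in genus four lead to mixed configurations, signed on one edge and both-large on another, that fit neither the pure triangle-quotient approach nor the single-edge diagram argument. In addition, the Edjvet--Thomas quotients may again collapse for small $q$, as happened for $l\in\{1,-3,2,-4\}$ in Proposition~\ref{prop:minus-two-minus-one}. For those finitely many residual parameter tuples I would search for explicit epimorphisms onto $\operatorname{PSL}(2,q)$ with Magma, following Appendix~\ref{app:small-k-gap}, and use Lemma~\ref{lem:symmetric-tuples} to halve the number of cases.
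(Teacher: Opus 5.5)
\paragraph{Gap 1: the abelianization filter and the bridge splitting.} Your determinant is correct, but its solution set is not a short list, and this changes the structure of the argument.

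\begin{itemize}
\item The coefficient of $s_1s_3$ is $4(1+s_2)$, not $2+4s_2$, and your factorization degenerates exactly where it matters.
\begin{itemize}
\item For $s_2=-1$ one has $D=-1-(s_1+s_3)$. This gives the whole families $(x,-1,-x)$ and $(x,-1,-x-2)$.
\item For $s_2=-2$ one has $D=1-4(s_1+1)(s_3+1)$. This gives $(x,-2,-1)$ and $(-1,-2,z)$.
\item For $s_1=-1$ one has $D=1-(s_2+2)(s_3+1)$. This gives $(-1,y,-1)$ for every $y$.
\end{itemize}
So, unlike genus three, infinitely many exponent-sum classes survive, each with further free exponents. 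Your closing claim that only finitely many tuples remain for a $\operatorname{PSL}(2,q)$ search is therefore unsupported.

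\item The bridge-splitting step also fails. In two layers an edge carries two different bridge words; on the middle edge these are $A=a_2^{y_2}a_3$ and $B=a_2^{x_2}a_3^{x_3}$. If you kill one of them, the other layer still couples the two sides. So there is no one-relator product $(G_L*G_R)/\langle\!\langle w^N\rangle\!\rangle$ to which Baumslag--Morgan--Shalen could apply.

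\item If you kill both bridge words on the middle edge, the two twists on each outer edge merge into a single twist of exponent $s_1$, resp.\ $s_3$.
\begin{itemize}
\item The sides collapse to cyclic groups of orders $|2s_1+1|$ and $|2s_3+1|$.
\item The quotient is just the triangle group $\Delta(|2s_1+1|,|2s_3+1|,\gcd(k_2,m_2))$.
\item When $s_2=0$ one has $D=(2s_1+1)(2s_3+1)$, so trivial abelianization makes this quotient trivial.
\end{itemize}
At an end edge, one side is the trivial genus-one block.
\end{itemize}
So this device gives nothing in precisely the cases that survive the filter. In particular, the side blocks are neither Olshanskii groups with large parameters nor genus-three two-layer groups.

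\paragraph{Gap 2: the diagram argument and the signed-exponent reduction.}

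\begin{itemize}
\item The marked Van Kampen argument of Proposition~\ref{prop:two-layer-marked-nontrivial-new} needs long periods on every edge (there $\ell\ge3$ and $k\ge2$ on the two edges). The corollary after it concerns genus three with more than two layers.
\begin{itemize}
\item Nothing yields a $C(6)$ count when one edge is long and another carries a short pair such as $(1,-1)$, whose cells have too few marked sides.
\item In genus four the paper reserves the diagram argument for the range where all six exponents have absolute value at least two.
\end{itemize}

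\item The signed-exponent trick of Proposition~\ref{prop:open-algebraic-active-interval-reduction} works in one layer for a specific reason: the relator at a signed position has the form $t^{\alpha}(t^{\beta}z)^{\pm1}$ with $t$ primitive, so it is a basis element.
\begin{itemize}
\item In two layers that relator still carries the other layer's factor. For instance, the first relator is $a_1(a_1a_2)^{k_1}X^{m_1}$ with $X=a_1^{t_1}a_2^{t_2}$, and $X$ depends on the edge-$2$ twist. Taking $k_1=\pm1$ does not make this relator primitive.
\item Even when a generator can be eliminated (both layers signed on one edge, as in Proposition~\ref{lm:genus3-00-quotient}), what remains is a new two-generator presentation. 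After cutting you instead get a framed presentation as in Proposition~\ref{prop:cut-at-a4}. Neither is an unframed one-layer or genus-three two-layer group covered by Theorem~\ref{thm:genus3-two-layers}.
\end{itemize}
\end{itemize}

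\paragraph{The missing idea.} This is the paper's method.
\begin{enumerate}
\item Fix the short pair.
\item Tietze-reduce to the twist words $X,Y,A,B,C,D$.
\item Impose suitable relations on two of them, e.g.\ $X^l=D^k=1$.
\item Treat the resulting parametric two-generator quotients ($P_{l,k}$, $Q_k$, $P_{l,x,k}$) by relative $C(4)$--$T(4)$ or parallel-edge curvature, triangle quotients, and explicit $\operatorname{PSL}(2,p)$ images.
\end{enumerate}
Be aware that these quotients collapse for some parameters: $P_{-2,2}=P_{2,-2}=1$, and $Q_k=1$ for $k\in\{-3,-2,-1,1,2,3,4\}$. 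The paper itself leaves those subfamilies, and several others, without a complete argument. So any complete proof along these lines still needs replacement quotients there.
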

The proof of the   theorem is the content of this section.

We begin with a necessary condition coming from abelianization.  Recall that
if
\[
G=\langle a_1,\ldots,a_4\mid r_1,\ldots,r_4\rangle
\]
is a balanced presentation, then the abelianization of \(G\) is obtained from
the integer relation matrix whose \((i,j)\)-entry is the exponent sum of
\(a_j\) in the relator \(r_i\).  In the genus \(4\) case under
consideration, if $x,y,z$ is the triple of the exponent sums $(k_1+m_1,k_2+m_2, k_3+m_3)$, the defining relators give a \(4\times 4\) integer relation
matrix in the abelianization, and its determinant is
\[
4xyz+3xy+4xz+3yz+2x+2y+2z+1.
\]
Therefore, if the group is trivial, its abelianization must also be trivial,
so this determinant has to be equal to \(\pm 1\).  This explains why we begin
by solving the equation

\begin{lm}
The infinite families of solutions of \begin{equation}\label{eq:main}
4xyz+3xy+4xz+3yz+2x+2y+2z+1=\pm 1.
\end{equation} are:
\[
{(x,-2,-1),\quad x\in\mathbb Z,}\qquad
{(-1,y,-1),\quad y\in\mathbb Z,}\qquad
{(-1,-2,z),\quad z\in\mathbb Z.}
\]
 The only solution with no coordinate equal to $-1$ is
\[
{(0,0,0)}.
\]

For $x=-1$, the additional isolated solutions with right-hand side $-1$ are
\[
{(-1,-1,1),\quad (-1,0,0),\quad (-1,-3,-3),\quad (-1,-4,-2)}.
\]

For $y=-1$, the isolated solutions are
\[
{(0,-1,-2),\quad (-2,-1,0),\quad (0,-1,0)} 
\] and there are families $(x,-1,-x)$, $(x,-1,-x-2)$.
\end{lm}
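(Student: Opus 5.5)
The plan is to exploit the fact that the determinant is affine in each variable, and to isolate the variable $y$ first. Write $D(x,y,z)$ for the left-hand side of \eqref{eq:main}. A direct regrouping gives
\[
 D=\frac{y\bigl((4x+3)(4z+3)-1\bigr)}{4}+(2x+1)(2z+1),
\]
and similarly, since $D$ is symmetric under $x\leftrightarrow z$,
\[
 D=x\bigl((4z+3)(y+1)-1\bigr)+(3yz+2y+2z+1).
\]
From these two forms the infinite families drop out at once. The coefficient of $x$ vanishes exactly when $(4z+3)(y+1)=1$, i.e.\ $(y,z)=(-2,-1)$. There the constant term equals $1$, so every $x$ is a solution, giving $(x,-2,-1)$, and by the $x\leftrightarrow z$ symmetry also $(-1,-2,z)$. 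The coefficient of $y$ vanishes exactly when $(4x+3)(4z+3)=1$, i.e.\ $x=z=-1$. There the constant term is $(-1)(-1)=1$, giving $(-1,y,-1)$. Away from these loci every solution is determined by the other two coordinates, so there are no further families except inside the slices $x=-1$ and $y=-1$ (and $z=-1$, by symmetry); we treat those next.

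\emph{Slice $x=-1$.} The formula collapses to $D=1-(z+1)(y+2)$. Hence $D=1$ gives $(z+1)(y+2)=0$, which is the two families already found. The value $D=-1$ gives $(z+1)(y+2)=2$. Running over the four factorizations of $2$ yields exactly
\[
 (-1,0,0),\quad(-1,-1,1),\quad(-1,-4,-2),\quad(-1,-3,-3).
\]

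\emph{Slice $y=-1$.} The formula collapses to $D=-x-z-1$. This gives the families $z=-x$ (where $D=-1$) and $z=-x-2$ (where $D=1$). The listed points $(0,-1,-2)$, $(-2,-1,0)$, $(0,-1,0)$ lie on these. The slice $z=-1$ is the mirror of $x=-1$.

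\emph{Main case: no coordinate equals $-1$.} Solve for $y$:
\[
 y=\frac{4\bigl(\pm1-(2x+1)(2z+1)\bigr)}{(4x+3)(4z+3)-1}.
\]
Put $p=2x+1$ and $q=2z+1$, both odd with $p,q\neq-1$. Then $(4x+3)(4z+3)-1=4pq+2p+2q$, so
\[
 y=\frac{2(\pm1-pq)}{2pq+p+q}.
\]
When $|p|,|q|$ are moderately large this quotient lies strictly between $-2$ and $0$ and is not $-1$. The reason is that $y+1=\frac{p+q\pm2}{2pq+p+q}$, which has absolute value $<1$ and is nonzero unless $p+q=\mp2$; and $p+q=\mp2$ would force $y=-1$, which is excluded in this case. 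So $y$ cannot be an integer.

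The genuinely delicate step is making the inequality $|p+q\pm2|<|2pq+p+q|$ rigorous uniformly, including the mixed-sign regime ($pq<0$) where the denominator can be small. I expect this to be the main obstacle. I would handle it by the elementary estimate $|2pq|-|p|-|q|\ge |p|+|q|+3$ once $\min(|p|,|q|)\ge3$. That leaves only $|p|\le1$ or $|q|\le1$, i.e.\ $x$ or $z$ in $\{0,-1\}$. Setting $x=0$ gives $D=(3y+2)(2z+1)+\ldots$ linear in two variables, which has finitely many solutions; checking them (and the symmetric case $z=0$) should leave only $(0,0,0)$.
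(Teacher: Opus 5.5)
Your proposal is correct. Its outline coincides with the paper's. You isolate the coefficient of each variable and find the degenerate loci $(y,z)=(-2,-1)$ and $(x,z)=(-1,-1)$; your conditions $(4z+3)(y+1)=1$ and $(4x+3)(4z+3)=1$ are the paper's $(4y+4)(4z+3)=4$ and $(4x+3)(4z+3)=1$. You then specialize to the slices $x=-1$ and $y=-1$, where your $D$ becomes $1-(y+2)(z+1)$ and $-x-z-1$ respectively.

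The real difference is the claim that $(0,0,0)$ is the only solution with no coordinate equal to $-1$. The paper states this with no argument at all. You prove it via $y+1=(p+q\pm2)/(2pq+p+q)$, with $p=2x+1$, $q=2z+1$, and the bound $|2pq+p+q|\geq |p|+|q|+3$ for $\min(|p|,|q|)\geq 3$. Since $p,q$ are odd and $\neq -1$, this leaves only $x=0$ or $z=0$.

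Your early remark that ``every solution is determined by the other two coordinates'' does not by itself exclude further families; $(x,-1,-x)$ is such a family. It is the size estimate that actually confines everything except $(0,0,0)$ to the three slices.

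The residual case should be finished with the identity $3D(0,y,z)=(3y+2)(3z+2)-1$. ``Linear in two variables, hence finitely many solutions'' is not a valid justification. With the identity, $D=\pm1$ becomes $(3y+2)(3z+2)\in\{4,-2\}$ with both factors $\equiv 2 \pmod 3$. This gives $(y,z)\in\{(0,0),(-1,-2),(-2,-1),(0,-1),(-1,0)\}$, and only $(0,0)$ avoids $-1$.

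Your $y=-1$ slice is also more accurate than the paper's proof. The paper passes from $-x-z-1=s$ to the false ``equivalently $(x+1)(z+1)=-s$''. That step produces isolated points, including the non-solution $(-2,-1,-2)$, whose value is $3$. You correctly obtain just the two lines $z=-x$ and $z=-x-2$. The three points the statement calls isolated all lie on these lines.
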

\begin{proof}It is convenient to write the right-hand side as
$
s,  s\in\{1,-1\}.
$
Thus the equation is
\begin{equation}\label{eq:s}
4xyz+3xy+4xz+3yz+2x+2y+2z+1=s.
\end{equation}

{Solving for $x$:}
Grouping the terms containing $x$, we get
\[
x(4yz+3y+4z+2)+(3yz+2y+2z+1)=s.
\]
Hence, if
\[
4yz+3y+4z+2\neq 0,
\]
then
\[
{
 x=\frac{s-(3yz+2y+2z+1)}{4yz+3y+4z+2}
}.
\]

The exceptional case is
\[
4yz+3y+4z+2=0.
\]
Multiplying by $4$ and completing the product gives
\[
(4y+4)(4z+3)=4.
\]
The only integer solution is
\[
y=-2,\qquad z=-1.
\]
For this pair, equation \eqref{eq:s} becomes identically equal to $1$, so we obtain the infinite family
\[
{(x,-2,-1),\quad x\in\mathbb Z}.
\]

{Solving for $y$:}
Grouping the terms containing $y$, we get
\[
y(4xz+3x+3z+2)+(4xz+2x+2z+1)=s.
\]
Hence, if
\[
4xz+3x+3z+2\neq 0,
\]
then
\[
{
 y=\frac{s-(4xz+2x+2z+1)}{4xz+3x+3z+2}
}.
\]

The exceptional case is
\[
4xz+3x+3z+2=0.
\]
Multiplying by $4$ and completing the product gives
\[
(4x+3)(4z+3)=1.
\]
The only integer solution is
\[
x=-1,
\qquad
z=-1.
\]
For this pair, equation \eqref{eq:s} becomes identically equal to $1$, so we obtain the infinite family
\[
{(-1,y,-1),\quad y\in\mathbb Z}.
\]

Solving for $z$ is symmetric to solving for $x$.
We obtain the infinite family
\[
{(-1,-2,z),\quad z\in\mathbb Z}.
\]

The only solution when none of the variables is equal to -1 is (0,0,0).
Substituting $x=-1$ into \eqref{eq:s}, we get
\[
-yz-y-2z-1=s.
\]
Equivalently,
\[
(y+2)(z+1)=1-s.
\]

If $s=1$, then
\[
(y+2)(z+1)=0.
\]
Therefore
\[
{(-1,-2,z),\quad z\in\mathbb Z}
\]
or
\[
{(-1,y,-1),\quad y\in\mathbb Z}.
\]

If $s=-1$, then
\[
(y+2)(z+1)=2.
\]
Thus
\[
(y+2,z+1)=(1,2),(2,1),(-1,-2),(-2,-1),
\]
and the corresponding solutions are
\[
{(-1,-1,1),\quad (-1,0,0),\quad (-1,-3,-3),\quad (-1,-4,-2)}.
\]

Substituting $y=-1$ into \eqref{eq:s}, we get
\(
-x-z-1=s.
\)
Equivalently,
\(
(x+1)(z+1)=-s.
\)

If $s=1$, then
\(
(x+1)(z+1)=-1,
\)
which gives
\[
{(0,-1,-2),\quad (-2,-1,0)}.
\]

If $s=-1$, then
\[
(x+1)(z+1)=1,
\]
which gives
\[
{(0,-1,0),\quad (-2,-1,-2)}.
\]

\end{proof}

{\bf 1. Case $(0,0,0).$}
\begin{prop}[Two-generator quotient of the genus-four presentation] Let \(K\)  be genus 4 group with exponent sums $(0,0,0)$, exponents $(l,-l),(1,-1),(k-k)$.
\[
\begin{aligned}
K=\big\langle\,&
a_1,a_2,a_3,a_4,\,
x_1,x_2,x_3,x_4,\,
y_1,y_2,y_3,y_4,\\
& z_1,z_2,z_3,z_4,\,
t_1,t_2,t_3,t_4,\,
s_1,s_2,s_3,s_4
\ \big|\ \\[2mm]
& z_1=y_1,\quad
  t_1=z_1,\quad
  t_2=z_2,\\
& y_1=a_1(a_1a_2)^l,\quad
  y_2=a_2(a_1a_2)^l,\quad
  y_3=a_3,\quad
  y_4=a_4,\\
& z_2=y_2(a_2^{y_2}a_3),\quad
  z_3=y_3(a_2^{y_2}a_3),\quad
  z_4=a_4,\\
& t_3=z_3(a_3^{z_3}a_4^{z_4})^k,\quad
  t_4=z_4(a_3^{z_3}a_4^{z_4})^k,\\
& x_1=t_1(a_1^{t_1}a_2^{t_2})^{-l},\quad
  x_2=t_2(a_1^{t_1}a_2^{t_2})^{-l},\\
& x_3=t_3,\quad
  x_4=t_4,\\
& s_1=x_1,\quad
  s_2=x_2(a_2^{x_2}a_3^{x_3})^{-1},\\
& s_3=x_3(a_2^{x_2}a_3^{x_3})^{-1},\quad
  s_4=x_4,\\
& s_1=1,\quad
  s_2=1,\\
& s_3(a_3^{s_3}a_4^{s_4})^{-k}=1,\quad
  s_4(a_3^{s_3}a_4^{s_4})^{-k}=1
\big\rangle .
\end{aligned}
\]

Let
\[
Y=a_1a_2,\qquad
X=a_1^{t_1}a_2^{t_2},\qquad
A=a_2^{y_2}a_3,\qquad
B=a_2^{x_2}a_3^{x_3},
\]
and
\[
C=a_3^{z_3}a_4^{z_4},
\qquad
D=a_3^{s_3}a_4^{s_4}.
\]
 Let
\[
\overline{K}
=
K/\left\langle\!\left\langle X^l,\ D^k\right\rangle\!\right\rangle .
\]
Then \(\overline{K}\) admits the two-generator, four-relator presentation
\[
{
\overline{K}
\cong
\left\langle A,Y \ \middle|\ 
Y^{2l+1},\;
\left(Y^{l+1}A^{-1}Y^{l+1}A\right)^l,\;
\left(A^{-1}Y^lA^2\right)^{2k+1},\;
\left(Y^{-1}A^2\right)^k
\right\rangle .
}\]

\end{prop}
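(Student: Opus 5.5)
The plan is to run the same Tietze elimination used in Proposition~\ref{lm:genus3-00-quotient} and in the $(-1,-2)$ and $(0,-1)$ genus-three cases. We eliminate the auxiliary letters $y_i,z_i,t_i,x_i,s_i$ and then express $a_1,\dots,a_4$ through $A$ and $Y$. The added relators $X^l$ and $D^k$ are exactly what make the layer factors collapse. Once $X^l=1$, we get $x_1=t_1$ and $x_2=t_2$. Once $D^k=1$, the last two relations give $s_3=s_4=1$, so $x_4=s_4=1$ and hence $t_4=1$, i.e.\ $z_4=a_4=C^{-k}$.

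The elimination goes in the following order.

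\begin{enumerate}
\item From $s_1=1$ we get $x_1=t_1=z_1=y_1=1$, so $a_1=Y^{-l}$ and $a_2=a_1^{-1}Y=Y^{l+1}$. Thus the first handle is expressed entirely in $Y$, with $y_2=Y^{2l+1}$.
\item Next, $x_2=t_2=z_2=y_2A$, and $s_2=1$ gives $x_2=B$. Since $s_3=1$, we also have $x_3=B$. Then $B=a_2^{x_2}a_3^{x_3}=(a_2a_3)^B$, so $B=a_2a_3$. This gives $a_3=Y^{-l-1}B$.
\item The relation $z_3=a_3A=t_3C^{-k}$ and the expression $A=a_2^{y_2}a_3$ then let us write $B$, and also $C^{k}$, as words in $A$ and $Y$. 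Comparing $x_2=y_2A=B$ with $B=a_2a_3$ should produce the relator $Y^{2l+1}$, in the same way as in the genus-three $(0,0)$ computation.
\end{enumerate}

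Once all letters are words in $A$ and $Y$, the remaining relations are imposed one at a time. The definitions $X=a_1^{t_1}a_2^{t_2}$ and $X^l=1$ should become $(Y^{l+1}A^{-1}Y^{l+1}A)^l$, after substituting $t_1=1$ and $t_2=Y^{2l+1}A$. The definition of $C$, together with $t_4=z_4C^k=1$, should give a period relation that simplifies to $(A^{-1}Y^lA^2)^{2k+1}$. This is the analogue of $X^{2k+1}$ in the $(-1,-2)$ case, and the exponent $2k+1$ arises from combining $C^{k}$ with $C^{k+1}$-type terms. Finally, $D=a_3^{s_3}a_4^{s_4}=a_3a_4$ (since $s_3=s_4=1$), combined with $D^k=1$, should yield $(Y^{-1}A^2)^k$ after using $Y^{2l+1}=1$ to rewrite $a_3a_4$ up to conjugacy as $Y^{-1}A^2$.

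The main obstacle is the bookkeeping in the third step and in the $C$ relation. Several nested conjugations ($a_3^{z_3}$ and $a_4^{z_4}$, with $z_3=a_3A$) have to reduce to the displayed words, and this requires repeated use of $Y^{2l+1}=1$ to replace $Y^{-l}$ by $Y^{l+1}$. My first attempt would be to set $u=Y^{l+1}$ and express every letter as a word in $u$ and $A$ before simplifying. I would also need to check, as in the $(-1,-2)$ proof, that the remaining defining relations impose nothing new. This amounts to verifying that $x_3=t_3$ and $s_3=x_3B^{-1}$ are consistent with $x_3=B$ and $s_3=1$. If the hand reduction becomes unwieldy, I would verify the final identification with a Magma \texttt{SearchForIsomorphism} computation for several small $(k,l)$, as the paper does elsewhere, and use those outputs to guide the general symbolic simplification.
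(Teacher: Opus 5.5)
Your proposal is correct and is essentially the paper's Tietze elimination. The only difference is the order: you impose $X^l=D^k=1$ at the outset, so that $t_1=1$ and $s_3=s_4=1$ immediately, whereas the paper first derives a six-generator presentation of $K$ in $X,Y,A,B,C,D$ and specializes afterwards. The paper's order has the advantage that this intermediate presentation is reused later. The steps you hedged with ``should'' all go through by hand, so no Magma check is needed: $A=a_2^{y_2}a_3=Y^{l+1}a_3=B$ together with $B=y_2A=Y^{2l+1}A$ gives $Y^{2l+1}=1$; the $z_3$-relation gives $W:=C^{-k}=A^{-1}Y^lA^2$, hence $C=a_3^{z_3}a_4=W^2$ and $W^{2k+1}=1$; and $a_3a_4=Y^{2l}A^2=Y^{-1}A^2$ exactly, not just up to conjugacy.
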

\begin{proof}

First we pass from the original presentation to the six Dehn-twist generators
\[
Y=a_1a_2,\qquad
X=a_1^{t_1}a_2^{t_2},\qquad
A=a_2^{y_2}a_3,\qquad
B=a_2^{x_2}a_3^{x_3},
\]
and
\[
C=a_3^{z_3}a_4^{z_4},
\qquad
D=a_3^{s_3}a_4^{s_4}.
\]

From the relations \(s_1=1\) and \(s_2=1\), we get
\[
x_1=1,\qquad x_2=B.
\]
Since
\[
x_1=t_1X^{-l},\qquad x_2=t_2X^{-l},
\]
it follows that
\[
t_1=X^l,\qquad t_2=BX^l.
\]
But \(t_1=z_1=y_1\), and
\[
y_1=a_1Y^l.
\]
Therefore
\[
a_1Y^l=X^l,
\]
so
\[
a_1=X^lY^{-l}.
\]
Similarly,
\[
t_2=z_2=y_2A.
\]
Since \(y_2=a_2Y^l\), we get
\[
a_2Y^lA=BX^l,
\]
and hence
\[
a_2=BX^lA^{-1}Y^{-l}.
\]

Next, from
\[
s_3D^{-k}=1,\qquad s_4D^{-k}=1,
\]
we get
\[
s_3=D^k,\qquad s_4=D^k.
\]
Since
\[
s_3=x_3B^{-1}=t_3B^{-1}
\]
and
\[
t_3=z_3C^k,
\]
we have
\[
z_3C^kB^{-1}=D^k.
\]
Thus
\[
z_3=D^kBC^{-k}.
\]
But
\[
z_3=y_3A=a_3A,
\]
so
\[
a_3=D^kBC^{-k}A^{-1}.
\]
Likewise,
\[
s_4=x_4=t_4=z_4C^k.
\]
Since \(z_4=a_4\), we get
\[
a_4C^k=D^k,
\]
and hence
\[
a_4=D^kC^{-k}.
\]

Therefore the original presentation is Tietze-equivalent to the
six-generator presentation
\[
\begin{aligned}
K\cong
\langle X,Y,A,B,C,D \mid\;&
Y=X^lY^{-l}BX^lA^{-1}Y^{-l},\\
&
A=Y^{-l}BX^lA^{-1}D^kBC^{-k}A^{-1},\\
&
C=A^{-1}D^kBC^{-k}D^kC^{-k},\\
&
X=Y^{-l}X^lA^{-1}Y^{-l}BX^l,\\
&
B=X^lA^{-1}Y^{-l}BC^{-k}A^{-1}D^kB,\\
&
D=BC^{-k}A^{-1}D^kC^{-k}D^k
\rangle .
\end{aligned}
\]

Now pass to the quotient
\[
\overline K
=
K/\left\langle\!\left\langle X^l,\ D^k\right\rangle\!\right\rangle .
\]
Thus in \(\overline K\) we impose
\[
X^l=1,\qquad D^k=1.
\]
The six relations reduce to
\[
\begin{aligned}
Y&=Y^{-l}BA^{-1}Y^{-l},&\qquad
A&=Y^{-l}BA^{-1}BC^{-k}A^{-1},\\
C&=A^{-1}BC^{-2k},&
X&=Y^{-l}A^{-1}Y^{-l}B,\\
B&=A^{-1}Y^{-l}BC^{-k}A^{-1}B,&
\text{and}\quad D&=BC^{-k}A^{-1}C^{-k}.
\end{aligned}
\]

From the first relation,
\[
Y=Y^{-l}BA^{-1}Y^{-l},
\]
we obtain
\[
Y^{l+1}=BA^{-1}Y^{-l}.
\]
Multiplying on the right by \(Y^l\), we get
\[
Y^{2l+1}=BA^{-1}.
\]
Hence
\[
B=Y^{2l+1}A.
\]

Substituting this into the second relation gives
\[
A
=
Y^{-l}(Y^{2l+1}A)A^{-1}
(Y^{2l+1}A)C^{-k}A^{-1}.
\]
Thus
\[
A=Y^{3l+2}AC^{-k}A^{-1}.
\]
Multiplying on the right by \(A\), and then multiplying on the left by
\((Y^{3l+2}A)^{-1}=A^{-1}Y^{-(3l+2)}\), gives
\[
C^{-k}=A^{-1}Y^{-(3l+2)}A^2.
\]

Now substitute
\[
B=Y^{2l+1}A
\]
and
\[
C^{-k}=A^{-1}Y^{-(3l+2)}A^2
\]
into the fifth relation
\[
B=A^{-1}Y^{-l}BC^{-k}A^{-1}B.
\]
We get
\[
Y^{2l+1}A
=
A^{-1}Y^{-l}(Y^{2l+1}A)
\left(A^{-1}Y^{-(3l+2)}A^2\right)
A^{-1}(Y^{2l+1}A).
\]
The right-hand side simplifies as follows:
\[
\begin{aligned}
& A^{-1}Y^{-l}(Y^{2l+1}A)
\left(A^{-1}Y^{-(3l+2)}A^2\right)
A^{-1}(Y^{2l+1}A) \\
&\qquad =
A^{-1}Y^{l+1}Y^{-(3l+2)}AY^{2l+1}A \\
&\qquad =
A^{-1}Y^{-(2l+1)}AY^{2l+1}A .
\end{aligned}
\]
Therefore
\[
Y^{2l+1}A
=
A^{-1}Y^{-(2l+1)}AY^{2l+1}A.
\]
Canceling the final \(A\) gives
\[
Y^{2l+1}
=
A^{-1}Y^{-(2l+1)}AY^{2l+1}.
\]
Multiplying on the right by \(Y^{-(2l+1)}\), we obtain
\[
1=A^{-1}Y^{-(2l+1)}A.
\]
Hence
\[
Y^{2l+1}=1.
\]
Consequently,
\[
B=A.
\]

Since \(Y^{2l+1}=1\), we have
\[
Y^{-(3l+2)}=Y^l.
\]
Therefore
\[
C^{-k}=A^{-1}Y^lA^2.
\]

The third relation is
\[
C=A^{-1}BC^{-2k}.
\]
Since \(B=A\), this becomes
\[
C=C^{-2k}.
\]
Using
\[
C^{-k}=A^{-1}Y^lA^2,
\]
we get
\[
C=\left(A^{-1}Y^lA^2\right)^2.
\]
Substituting this back into \(C^{-k}=A^{-1}Y^lA^2\), we obtain
\[
\left(\left(A^{-1}Y^lA^2\right)^2\right)^{-k}
=
A^{-1}Y^lA^2.
\]
Equivalently,
\[
\left(A^{-1}Y^lA^2\right)^{-2k}
=
A^{-1}Y^lA^2.
\]
Hence
\[
\left(A^{-1}Y^lA^2\right)^{2k+1}=1.
\]

The fourth relation gives
\[
X=Y^{-l}A^{-1}Y^{-l}B.
\]
Using \(B=A\) and \(Y^{2l+1}=1\), we have
\[
Y^{-l}=Y^{l+1}.
\]
Therefore
\[
X=Y^{l+1}A^{-1}Y^{l+1}A.
\]
Since \(X^l=1\) in \(\overline K\), we obtain the relator
\[
\left(Y^{l+1}A^{-1}Y^{l+1}A\right)^l=1.
\]

Finally, the sixth relation gives
\[
D=BC^{-k}A^{-1}C^{-k}.
\]
Using \(B=A\) and \(C^{-k}=A^{-1}Y^lA^2\), we get
\[
D
=
A(A^{-1}Y^lA^2)A^{-1}(A^{-1}Y^lA^2).
\]
Thus
\[
D=Y^{2l}A^2.
\]
Since \(Y^{2l+1}=1\), this becomes
\[
D=Y^{-1}A^2.
\]
Because \(D^k=1\) in \(\overline K\), we obtain
\[
\left(Y^{-1}A^2\right)^k=1.
\]

We have eliminated \(B,C,X,D\), and the remaining generators are only
\(A\) and \(Y\). The remaining relators are
\[
\begin{aligned}
Y^{2l+1}&=1,&
\left(Y^{l+1}A^{-1}Y^{l+1}A\right)^l&=1,\\
\left(A^{-1}Y^lA^2\right)^{2k+1}&=1,&
\text{and}\quad \left(Y^{-1}A^2\right)^k&=1.
\end{aligned}
\]
Therefore
\[
\overline K
\cong
\left\langle A,Y \ \middle|\ 
Y^{2l+1},\;
\left(Y^{l+1}A^{-1}Y^{l+1}A\right)^l,\;
\left(A^{-1}Y^lA^2\right)^{2k+1},\;
\left(Y^{-1}A^2\right)^k
\right\rangle .
\]
This proves the proposition. 

\end{proof}

\begin{prop}[A picture criterion for the quotient]
\label{prop:genus-four-zero-quotient-picture}
Assume
\[
 |l|>1,\qquad |k|>1,\qquad l\neq-2,\qquad k\neq-2.
\]
Then the natural maps from the cyclic groups of orders
\(|2l+1|\) and \(|2k+1|\) to \(\overline K\) are injective.
In particular,
\[
 \overline K\neq1.
\]
\end{prop}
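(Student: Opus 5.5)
The plan is to prove the statement as a Freiheitssatz-type result for a relative presentation, using pictures over a free product of two cyclic groups.

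\textbf{Step 1: rewrite $\overline K$ as a relative presentation.} Put
\[
 W=A^{-1}Y^{l}A^{2}.
\]
Then $A^{2}=Y^{-l}AWA^{-1}$. In the presence of $Y^{2l+1}=1$ we have $Y^{-l-1}=Y^{l}$, so
\[
 Y^{-1}A^{2}=Y^{l}AWA^{-1}.
\]
The relator $\bigl(Y^{l+1}A^{-1}Y^{l+1}A\bigr)^{l}$ has the same form, since $Y^{l+1}=Y^{-l}$. Hence $\overline K$ is the quotient of
\[
 H*\langle A\rangle,\qquad H=\langle Y\mid Y^{2l+1}\rangle*\langle W\mid W^{2k+1}\rangle,
\]
by the normal closure of the two relators
\[
 r_1=\bigl(Y^{-l}A^{-1}Y^{-l}A\bigr)^{l},\qquad r_2=\bigl(Y^{l}AWA^{-1}\bigr)^{k}.
\]
The relation $W=A^{-1}Y^{l}A^{2}$ is needed to eliminate $W$ again. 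I will keep it as a third relative relator $r_3=W^{-1}A^{-1}Y^{l}A^{2}$, of length $3$ in $A$, rather than substitute it. The statement to prove is then exactly that $H$ embeds in $\langle H,A\mid r_1,r_2,r_3\rangle$, restricted to the two cyclic factors. Since the orders $|2l+1|,|2k+1|$ are at least $3$ under the hypotheses, this gives $\overline K\neq1$.

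\textbf{Step 2: set up the picture argument.} Following Bogley--Pride, I will show that this relative presentation is aspherical, or at least satisfies the weight test, in the sense of reduced spherical pictures over $H$. That implies injectivity of $H$, and so of each cyclic factor. The first task is to compute the star graph. Its vertices are $A^{\pm1}$. Its edges are labelled by the $H$-coefficients $Y^{\pm l}$, $W$ and $Y^{l}$ occurring between consecutive $A$-letters of $r_1,r_2,r_3$, read cyclically.

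\textbf{Step 3: assign weights.} Because $r_1$ and $r_2$ are proper powers with exponents $|l|,|k|\ge2$, each corresponding vertex of a picture has degree at least $2|l|$ or $2|k|$. I would first try the weight test: weight $0$ on the edges coming from the power structure of $r_1,r_2$, and weight $1$ on the edges coming from $r_3$. The required condition is that every admissible cycle in the star graph has weight at least $2$. Admissible cycles are those whose product of labels is trivial in $H$.

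\textbf{Step 4: check admissible cycles (main obstacle).} The hard part is to enumerate the short admissible cycles and show that none has weight $<2$. A cycle of weight $\le1$ has label a product of at most a few terms $Y^{\pm l}$ and $W^{\pm1}$. In the free product $H$, such a product is trivial only if it lies in a single factor and its exponent vanishes modulo the order. For the $Y$-factor this means $Y^{cl}=1$ with small $c$, which forces $|2l+1|$ to divide $c\,l$, and hence to divide $c$. For $|c|\le2$ this can only happen when $|2l+1|\le2$, or when $|2l+1|=3$ together with an extra coincidence. The case $l=-2$ gives $|2l+1|=3$ and $Y^{l}=Y$, producing an admissible cycle of length $3$. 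This is exactly why $l,k\neq-2$ is excluded; the symmetric consideration applies to $k$ and $W$.

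If some residual configuration survives the weight test, I would fall back on a curvature argument on a minimal reduced picture. Curvature is distributed from the high-degree $r_1$- and $r_2$-vertices, of degree $\ge 4$, to the neighbouring $r_3$-vertices. As a final resort, for any finitely many small exceptional pairs $(l,k)$ left over, I would exhibit explicit finite quotients in which the images of $Y$ and $W$ have the full orders $|2l+1|$ and $|2k+1|$, in the spirit of the Magma and GAP computations used earlier in the paper.
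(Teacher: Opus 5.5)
There is a genuine gap, and it starts in Step~1. From $W=A^{-1}Y^{l}A^{2}$ one gets $A^{2}=Y^{-l}AW$, not $Y^{-l}AWA^{-1}$; the latter word equals $A$. So the second relator must be $r_2=(Y^{l}AW)^{k}$. Your $r_2=(Y^{l}AWA^{-1})^{k}$ becomes $(Y^{-1}A)^{k}$ once $W$ is eliminated, so your relative presentation does not present $\overline K$. Also, with the corrected $r_2$, an $r_2$-vertex carries only $|k|$ letters $A^{\pm1}$, not $2|k|$.

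Step~3 then cannot pass the weight test, for two reasons. First, $r_3=W^{-1}A^{-1}Y^{l}A^{2}$ has three $A$-letters, so the relator condition $\sum(1-\theta)\ge2$ forces its three corner weights to total at most $1$. You give each of them weight $1$. Second, weight $0$ on the corners of $r_1$ is fatal. Those corners, $AY^{-l}A^{-1}$ and $A^{-1}Y^{-l}A$, are loops in the star graph. Going $|2l+1|$ times around one of them is an admissible cycle with label $(Y^{-l})^{|2l+1|}=1$ and weight $0$. So the claim in Step~4, that a cycle of weight $\le1$ involves only a few factors $Y^{\pm l},W^{\pm1}$, is false for your weights.

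Step~4 is where all the content lies, and it is left as a programme with fallbacks. A successful weight test would in fact embed the whole free product $\langle Y\rangle*\langle W\rangle$. That is strictly more than the statement claims, and you give no reason to expect it. The fallback to finite quotients only makes sense once you know that just finitely many pairs $(l,k)$ remain, which you have not shown.

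The missing idea is to eliminate the infinite-order generator $A$ altogether, instead of adding relators around it. Put $x=Y^{l}$ and $y=Y^{l}A$. Because $Y^{2l+1}=1$, this is reversible: $Y=x^{-2}$ and $A=x^{-1}y$. One computes
\[
 Y^{l+1}A^{-1}Y^{l+1}A=x^{-1}y^{-1}x^{-1}y=R,\qquad
 A\bigl(A^{-1}Y^{l}A^{2}\bigr)A^{-1}=y,\qquad
 Y^{-1}A^{2}=xyx^{-1}y=S.
\]
Hence $\overline K\cong\langle C_m*C_n\mid R^{l},S^{k}\rangle$, with $m=|2l+1|$, $n=|2k+1|$, the factors generated by $x$ and $y$, and no extra generator. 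This is the paper's route, and it reduces to a plain small-cancellation check over the free product:
\begin{itemize}
\item The sixteen cyclic subwords of syllable length three of $R^{\pm1},S^{\pm1}$ are pairwise distinct. So pieces have syllable length at most $2$, and since $|l|,|k|\ge2$, every vertex label is a product of at least four pieces. This is $C(4)$.
\item Corner labels in a factor region are $x^{\pm1}$ or $y^{\pm1}$. A reduced region of degree at most $3$ would need an exponent sum $\pm1$ or $\pm3$ to vanish modulo $m$ or $n$. That is impossible, because the hypotheses give exactly $m,n\ge5$. This is $T(4)$, and it is precisely where $l,k\neq-2$ enters.
\item The standard curvature count finishes the argument. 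Take a minimal reduced spherical picture with a distinguished region. Ordinary regions have curvature $\le0$ and the distinguished one at most $2\pi$, against a total of $4\pi$. Hence no nontrivial element of either cyclic factor dies.
\end{itemize}
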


\begin{proof}
Put
\[
 x=Y^l,
 \qquad
 y=Y^lA.
\]
Since \(Y^{2l+1}=1\), we have
\[
 Y=x^{-2},
 \qquad
 A=x^{-1}y.
\]
Thus this change of generators is reversible.  Moreover,
\[
\begin{aligned}
Y^{l+1}A^{-1}Y^{l+1}A
 &=x^{-1}y^{-1}x^{-1}y,\\
A\bigl(A^{-1}Y^lA^2\bigr)A^{-1}
 &=y,\\
Y^{-1}A^2
 &=xyx^{-1}y.
\end{aligned}
\]
Consequently
\[
\overline K\cong P_{l,k},
\]
where
\begin{equation}
\label{eq:genus-four-zero-relative-presentation}
P_{l,k}=
\left\langle x,y\ \middle|\
 x^{2l+1},\ y^{2k+1},\
 \bigl(x^{-1}y^{-1}x^{-1}y\bigr)^l,\
 \bigl(xyx^{-1}y\bigr)^k
\right\rangle .
\end{equation}
Set
\[
 m=|2l+1|,
 \qquad n=|2k+1|,
 \qquad p=|l|,
 \qquad q=|k|,
\]
and write
\[
 R=x^{-1}y^{-1}x^{-1}y,
 \qquad
 S=xyx^{-1}y.
\]
By the assumptions,
\[
 m,n\geq5,
 \qquad
 p,q\geq2.
\]
We regard \eqref{eq:genus-four-zero-relative-presentation} as the
relative presentation
\[
 \left\langle C_m*C_n\ \middle|\ R^p,S^q\right\rangle,
\]
where \(x\) and \(y\) generate \(C_m\) and \(C_n\), respectively.

We first check the piece condition.  The cyclic subwords of syllable
length three in \(R,R^{-1},S,S^{-1}\) are
\[
\begin{array}{c|l}
R
 &x^{-1}y^{-1}x^{-1},\quad y^{-1}x^{-1}y,\quad
   x^{-1}yx^{-1},\quad yx^{-1}y^{-1},\\[1mm]
R^{-1}
 &y^{-1}xy,\quad xyx,\quad yxy^{-1},\quad xy^{-1}x,\\[1mm]
S
 &xyx^{-1},\quad yx^{-1}y,\quad x^{-1}yx,\quad yxy,\\[1mm]
S^{-1}
 &y^{-1}xy^{-1},\quad xy^{-1}x^{-1},\quad
   y^{-1}x^{-1}y^{-1},\quad x^{-1}y^{-1}x.
\end{array}
\]
No word in this table occurs twice.  Hence a piece in the symmetrized
set generated by \(R^p\) and \(S^q\) has syllable length at most two.
Since \(R^p\) and \(S^q\) have syllable lengths \(4p\) and \(4q\),
respectively, the boundary label of every vertex in a reduced picture
is a product of at least four pieces.  This is the relative
\(C(4)\) condition.

The relative \(T(4)\) condition is equally immediate.  A reduced cycle
of length at most three in a factor region is labelled entirely by
letters \(x^{\pm1}\), or entirely by letters \(y^{\pm1}\).  A cycle of
length one cannot be trivial.  A cycle of length two is trivial only
when its two labels are mutually inverse, in which case it backtracks
and is not reduced.  For a cycle of length three the exponent sum is
one of
\[
 -3,-1,1,3,
\]
which is nonzero modulo \(m\) and modulo \(n\), since \(m,n\geq5\).
Thus every non-distinguished region in the derived picture has degree
at least four.

We now give the curvature argument.  Suppose that a nonempty reduced
spherical picture exists, and form its standard piece tessellation, as
in the relative picture construction of Edjvet--Thomas
\cite[Sections~3--4]{ET}.  Every vertex has degree at least four by
\(C(4)\), and every non-distinguished region has degree at least four
by \(T(4)\).  At a vertex \(v\) of degree \(d(v)\), assign the angle
\(2\pi/d(v)\) to every incident corner.  If a non-distinguished region
\(F\) has degree \(r\), then
\[
\begin{aligned}
 c(F)
 &=(2-r)\pi+\sum_{v\in\partial F}\frac{2\pi}{d(v)}\\
 &\leq(2-r)\pi+\frac{r\pi}{2}
  =\left(2-\frac r2\right)\pi
 \leq0.
\end{aligned}
\]
For the distinguished region the same estimate gives
\[
 c(F)\leq2\pi.
\]
The total curvature of a tessellation of the sphere is \(4\pi\), which
is impossible.  Hence there is no nonempty reduced spherical picture.

If a nontrivial element of either factor \(C_m\) or \(C_n\) became
trivial in \(P_{l,k}\), a picture of minimal size for that equality
would give a nonempty reduced spherical picture with that factor as
the distinguished region.  This is impossible.  Therefore both cyclic
factors inject, and \(P_{l,k}\), hence \(\overline K\), is nontrivial.
\end{proof}

\begin{remark}[The exceptional value \(-2\)]
\label{rem:genus-four-zero-minus-two}
The stronger assertion that \(\overline K\neq1\) whenever
\(|l|,|k|>1\) is false.  In the notation of
\eqref{eq:genus-four-zero-relative-presentation}, a complete
Todd--Coxeter enumeration gives
\[
 P_{-2,2}
 =\left\langle x,y\ \middle|\
 x^3,\ y^5,\
 \bigl(x^{-1}y^{-1}x^{-1}y\bigr)^2,\
 \bigl(xyx^{-1}y\bigr)^2
 \right\rangle
 =1.
\]
Interchanging \(x\) and \(y\) sends \(R\) to a cyclic conjugate of
\(S^{-1}\), and sends \(S\) to a cyclic conjugate of \(R^{-1}\).
Consequently
\[
 P_{l,k}\cong P_{k,l},
\]
and therefore \(P_{2,-2}=1\) as well.

The diagonal case is different.  For \((l,k)=(-2,-2)\), the assignment
\[
 x\longmapsto(3\,4\,5),
 \qquad
 y\longmapsto(1\,2\,3)
\]
defines an epimorphism onto \(A_5\): the two displayed permutations
generate \(A_5\), and the images of both \(R\) and \(S\) are
involutions.  Hence
\[
 P_{-2,-2}\neq1.
\]
Thus Proposition~\ref{prop:genus-four-zero-quotient-picture} proves
nontriviality whenever neither parameter is \(-2\), the pair
\((-2,-2)\) is also nontrivial, and the two mixed pairs
\((-2,2)\) and \((2,-2)\) are trivial.  The remaining cases in which
exactly one parameter is \(-2\) require a separate quotient or a more
refined picture argument.
\end{remark}
\begin{prop}[The quotient for the family $(k,-k),(1,-1),(1,-1)$]
\label{prop:genus-four-k-minus-k-one-one-quotient}
Let $L=L_k$ be the genus $4$ group with parameters
\[
 (k,-k),\qquad (1,-1),\qquad (1,-1).
\]
The quotient of $L$ obtained by adding the relations
\[
 Y^k=1,
 \qquad
 X^{4k-5}=1
\]
has the presentation
\begin{equation}
\label{eq:genus-four-k-minus-k-corrected-quotient}
 Q_k=
 \left\langle X,Y\ \middle|\
 Y^k,\ X^{4k-5},\
 (X^2Y^{-1})^3,\
 (X^{k-3}Y)^2
 \right\rangle .
\end{equation}
  Moreover,
\[
 Q_k=1
 \quad\Longleftrightarrow\quad
 k\in\{-3,-2,-1,1,2,3,4\}.
\]
Thus $Q_k\neq1$ for $k\leq-4$, for $k=0$, and for $k\geq5$.
\end{prop}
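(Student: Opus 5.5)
Two things have to be shown: that adding $Y^k=1$ and $X^{4k-5}=1$ to $L_k$ yields the presentation \eqref{eq:genus-four-k-minus-k-corrected-quotient}, and for which $k$ the group $Q_k$ is trivial.

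\textbf{Deriving the presentation.} I will follow the elimination scheme of the preceding genus-four $(0,0,0)$ proposition. Write out the defining relations of $L_k$ with layer exponents $(k,1,1)$ and $(-k,-1,-1)$. Introduce the twist words
\[
Y=a_1a_2,\qquad X=a_1^{t_1}a_2^{t_2},\qquad A=a_2^{y_2}a_3,\qquad B=a_2^{x_2}a_3^{x_3},\qquad C=a_3^{z_3}a_4^{z_4},\qquad D=a_3^{s_3}a_4^{s_4}.
\]
The final relations $s_1=s_2=1$ and $s_3D=s_4D=1$ let me solve for $a_1,\dots,a_4$, exactly as before.
\begin{itemize}
\item $s_1=1$ and $s_2=1$ express $a_1,a_2$ through $X,Y,A,B$.
\item $s_3=s_4=D^{-1}$ expresses $a_3,a_4$ through $A,B,C,D$.
\end{itemize}
This gives a six-generator Tietze-equivalent presentation with six relations, obtained by substituting back into the definitions of $Y,X,A,B,C,D$. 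Now impose $Y^k=1$ and $X^{4k-5}=1$. Since the $Y$-layer contributes $Y^{\pm k}$, the first-layer powers collapse.

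The middle and right twists have exponent $\pm1$, so the relations for $A,B,C,D$ become short. I expect them to let me express $A$, $B$, $C$ and $D$ as words in $X$ and $Y$. This is the analogue of the steps $B=Y^{2l+1}A$ and $C^{-k}=\dots$ in the earlier proof, but with the roles of the layers shifted. The two surviving relations should then reduce, after cyclic conjugation, to $(X^2Y^{-1})^3$ and $(X^{k-3}Y)^2$. I will verify this reduction with MAGMA (\texttt{Simplify}, \texttt{SearchForIsomorphism}) for several small $k$ as a check on the hand computation.

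\textbf{Nontriviality for $k\le -4$, $k=0$, $k\ge 5$.} For $k=0$ the relator $Y^0$ is empty. Then $Q_0=\langle X,Y\mid X^{-5},(X^2Y^{-1})^3,(X^{-3}Y)^2\rangle$. Substituting $u=X^2Y^{-1}$ (so $Y=u^{-1}X^2$) sends $X^{-3}Y$ to $X^{-3}u^{-1}X^2$, which is conjugate to $X^{-1}u^{-1}$. So $Q_0$ maps onto (and is) the triangle group $\langle X,u\mid X^5,u^3,(uX)^2\rangle\cong A_5\neq 1$.

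For $|k|$ large, I will exhibit a nontrivial quotient.
\begin{itemize}
\item First try setting $u=X^2Y^{-1}$ again. The presentation becomes $\langle X,u\mid X^{4k-5},u^3,(u^{-1}X^2)^k,(X^{k-1}u^{-1})^2\rangle$ (up to conjugation), which has Edjvet--Thomas type $(4k-5,3\mid \cdot,\cdot)$ or is a generalized triangle group.
\item Apply the Baumslag--Morgan--Shalen generalized triangle theorem, or the Edjvet--Thomas classification, in the hyperbolic range $|k|\ge 5$ to get infinite or nontrivial groups.
\item If that classification does not apply directly, fall back on explicit epimorphisms onto $\mathrm{PSL}(2,p)$ with $p\mid 4k-5$, choosing traces so that $X$ has order dividing $4k-5$, $X^2Y^{-1}$ has order $3$, and $X^{k-3}Y$ has order $2$.
\end{itemize}

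\textbf{Triviality for $k\in\{-3,\dots,4\}\setminus\{0\}$.} For these finitely many values, I will verify $Q_k=1$ by Todd--Coxeter coset enumeration (GAP/MAGMA). Some cases are immediate by hand: for $k=\pm1$ one has $Y=1$, so $X^6=X^{-4k+\dots}$ forces $X=1$ by a gcd argument, and $k=2$ is similar.

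\textbf{Main obstacle.} The hard part is the uniform nontriviality argument for all $|k|\ge5$. The derivation of the presentation is mechanical, and the small cases are finite checks. The first thing I would try is a uniform generalized-triangle reduction. Failing that, I would use a $C(4)$--$T(4)$ picture argument like the one in Proposition~\ref{prop:genus-four-zero-quotient-picture}, applied to the relative presentation $\langle C_{|4k-5|}*C_3\mid\cdots\rangle$.
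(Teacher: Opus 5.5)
Your treatment of $k=0$ is correct and matches the paper: the substitution $u=X^2Y^{-1}$ gives the $(2,3,5)$ triangle group $A_5$. Your treatment of the trivial values also matches: hand collapses for $k=\pm1,2$ and coset enumeration for the rest. The two substantive parts, however, are missing.

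\textbf{The derivation is announced, not carried out, and the planned elimination stalls.} Take the six-generator presentation with $l=k$ and third-layer parameter $1$. Note that there $s_3=s_4=D$, not $D^{-1}$, since the closing relators are $s_3D^{-1}$ and $s_4D^{-1}$. Imposing $Y^k=1$ does give $B=AX^{1-2k}$, $C=AX^{-k}$, $D=A^2X^{-1}$ by linear substitution. But each of the three remaining relations contains $A$ more than once (e.g.\ $Y=X^kAX^{1-k}A^{-1}$), so $A$ cannot be solved for as in the $(0,0,0)$ proof you intend to imitate. The missing idea is as follows. Put $u=X^{1-k}$; then $X^{4k-5}=1$ gives $u^4=X^{-1}$. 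The relation coming from $C$ is exactly the braid relation $uAu=AuA$. The relation coming from $A$ then reads $A^2=uAu^4Au$, which forces $\Delta^2=1$ for $\Delta=uAu$. Setting $q=u^{-1}=X^{k-1}$ and $s=\Delta$ gives $X=q^4$, $A=qsq$, $Y=q^7s$. This yields the reversible form $Q_k\cong\langle q,s\mid q^{4k-5},s^2,(sq)^3,(q^7s)^k\rangle$. MAGMA checks for a few $k$ cannot replace this argument for all $k$.

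\textbf{There is no argument for the infinite range $k\ge5$, $k\le-4$.} Note also that $k=-4$ lies outside your range $|k|\ge5$. None of the proposed tools works on $\langle X,u\mid X^{4k-5},u^3,(u^{-1}X^2)^k,(X^{k-1}u^{-1})^2\rangle$:

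\emph{Baumslag--Morgan--Shalen and Edjvet--Thomas.} There are two relators besides the factor orders, so Baumslag--Morgan--Shalen does not apply. The presentation is of Edjvet--Thomas type, or collapses to a generalized triangle group, only when $X^{k-1}=X^{\pm2}$ in $C_{|4k-5|}$. That means $|4k-5|$ divides $7$ or $9$, i.e.\ $k\in\{-1,1,2,3\}$, all in the trivial range.

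\emph{Small cancellation.} The $C(4)$--$T(4)$ argument fails in the $C_3$ factor. Every corner label there is $u^{\pm1}$, so a degree-three region labelled $u^{-1}u^{-1}u^{-1}$ is trivial. The syllable-length-four relator $(X^{k-1}u^{-1})^2$ also defeats a plain $C(6)$--$T(3)$ count.

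\emph{$\operatorname{PSL}(2,p)$ images.} Your recipe omits the condition $Y^k=1$. The three conditions you list are precisely the conditions making $\langle q,s\rangle$ an image of the $(2,3,|4k-5|)$ triangle group. Such representations are rigid, determined by the traces of $q$, $s$, $sq$. So the order of the image of $Y=q^7s$ is forced, not chosen, and you only get sporadic primes; the paper needs $p=4523,139,809$ for $k=6,7,8$.

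\textbf{What the paper does instead.} It sets $a=s$, $b=sq$ and views $Q_k$ as $\langle C_2*C_3\mid (ab)^{|4k-5|},V^{|k|}\rangle$, where $V=(ab)^5ab^{-1}$ is conjugate to $(ab)^7a$. The unique $b^{-1}$-syllable of $V$ fixes the cyclic phase, which bounds parallel families by $11$ edges. This makes all vertex degrees at least $6$ for $k\ge9$ or $k\le-6$. A curvature count with all regions of degree at least $3$ then gives the contradiction. The values $k\in\{-5,-4,5,6,7,8\}$ are settled by explicit $\operatorname{PSL}(2,p)$ images.
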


\begin{proof}
We first derive the corrected presentation.  Temporarily write $n$ for
$k$, in order not to confuse this parameter with the third-layer
parameter in the six-generator presentation above.  In that
presentation put $l=n$ and put the third-layer parameter equal to $1$.
After imposing
\[
 Y^n=1,
 \qquad
 X^{4n-5}=1,
\]
the fourth, fifth and sixth relations give, successively,
\begin{equation}
\label{eq:genus-four-k11-BCD}
 B=AX^{1-2n},
 \qquad
 C=AX^{-n},
 \qquad
 D=A^2X^{-1}.
\end{equation}
Indeed, the fourth relation gives the formula for $B$; after this
substitution the fifth relation gives
$D=ACX^{n-1}$, while the sixth gives $C=AX^{-n}$, and hence the
formula for $D$ in \eqref{eq:genus-four-k11-BCD}.

After eliminating $B,C,D$, the first three relations become
\begin{align}
 Y&=X^nAX^{1-n}A^{-1},
 \label{eq:genus-four-k11-Y}\\
 X^{1-n}AX^{-1}AX^{1-n}&=A^2,
 \label{eq:genus-four-k11-middle}\\
 X^{1-n}&=AX^{1-n}AX^{n-1}A^{-1}.
 \label{eq:genus-four-k11-braid-source}
\end{align}
Put
\[
 u=X^{1-n}.
\]
Since $X^{4n-5}=1$ and
$4(1-n)=-(4n-5)-1$, we have
\[
 u^4=X^{-1}.
\]
Relation \eqref{eq:genus-four-k11-braid-source} is equivalent to the
braid relation
\begin{equation}
\label{eq:genus-four-k11-braid}
 uAu=AuA.
\end{equation}
Let
\[
 \Delta=uAu=AuA.
\]
Using \eqref{eq:genus-four-k11-braid}, relation
\eqref{eq:genus-four-k11-middle} gives
\[
\begin{aligned}
 A^2
 &=uAu^4Au
  =\Delta u^3Au
  =A^3\Delta Au\\
 &=A^3u\Delta u
  =A^3uA\Delta
  =A^2\Delta^2.
\end{aligned}
\]
Consequently
\[
 \Delta^2=1.
\]
Now put
\[
 r=uA,
 \qquad
 s=\Delta,
 \qquad
 q=u^{-1}.
\]
Then
\[
 r^3=(uA)^3=\Delta^2=1,
 \qquad
 s^2=1,
 \qquad
 q=sr,
 \qquad
 r=sq.
\]
Since $q=X^{n-1}$, the relation $X^{4n-5}=1$ gives
\[
 X=q^4.
\]
Also
\[
 A=q r=q s q.
\]
Substitution in \eqref{eq:genus-four-k11-Y} yields
\[
\begin{aligned}
 Y
 &=q^{4n}(qsq)q^{-1}(q^{-1}sq^{-1})\\
 &=q^6sq^{-1}sq^{-1}
  =q^7s,
\end{aligned}
\]
where the last equality follows from $(sq)^3=1$.  It follows that
\[
 X^{n-3}Y=q^{4(n-3)}q^7s=q^{4n-5}s=s,
\]
and hence
\[
 (X^{n-3}Y)^2=1.
\]
Moreover,
\[
 X^2Y^{-1}
 =q^8sq^{-7}
 =q^8(sq)q^{-8},
\]
so $(X^2Y^{-1})^3=1$.  Finally, $Y^n=1$ becomes
$(q^7s)^n=1$.  We have therefore obtained the Tietze-equivalent
presentation
\begin{equation}
\label{eq:genus-four-k11-qs-presentation}
 Q_n\cong
 \left\langle q,s\ \middle|\
 q^{4n-5},\ s^2,\ (sq)^3,\ (q^7s)^n
 \right\rangle .
\end{equation}
The change of generators is reversible: from $q,s$ one recovers
\[
 X=q^4,
 \qquad
 Y=q^7s,
 \qquad
 A=qsq,
\]
and then $B,C,D$ from \eqref{eq:genus-four-k11-BCD}.  Thus
\eqref{eq:genus-four-k11-qs-presentation} is a presentation of the
quotient itself, and not merely of a further quotient.  Renaming $n$
back to $k$ proves \eqref{eq:genus-four-k-minus-k-corrected-quotient}.

We next determine when $Q_k$ is trivial.  The presentation
\eqref{eq:genus-four-k11-qs-presentation} gives short collapses for
five of the exceptional values.

If $k=1$, then $q=1$, and $s^2=s^3=1$, so $s=1$.  If $k=-1$, then
$q^9=1$ and $q^7s=1$, whence $s=q^2$; now $s^2=1$ gives $q^4=1$, and
therefore $q=s=1$.

If $k=2$, then $q^3=1$ and $q^7s=qs$ is an involution.  The elements
$qs$ and $sq$ are conjugate, while $(sq)^3=1$; hence $sq=1$, and then
$q=s=1$.  If $k=-2$, put $w=q^7s$.  We have
\[
 q^{13}=1,
 \qquad
 w^2=s^2=1.
\]
Set $t=ws=q^7$.  Since $w$ and $s$ are involutions,
$sts=t^{-1}$, and, since $q^{13}=1$, we have $q=t^2$.  It follows that
$sq=st^2$ is an involution.  Since it also has order dividing $3$, it
is trivial, and again $q=s=1$.

If $k=3$, then $q^7=1$ and $(q^7s)^3=s^3=1$.  Hence $s=1$, and
$q^3=q^7=1$ gives $q=1$.

For the two remaining trivial values, complete Todd--Coxeter
enumerations of the trivial subgroup close with one coset:
\[
\begin{aligned}
 Q_{-3}&=\left\langle q,s\ \middle|\
 q^{17},s^2,(sq)^3,(q^7s)^3\right\rangle=1,\\
 Q_4&=\left\langle q,s\ \middle|\
 q^{11},s^2,(sq)^3,(q^7s)^4\right\rangle=1.
\end{aligned}
\]
Thus $Q_k=1$ for
$k\in\{-3,-2,-1,1,2,3,4\}$.

We prove nontriviality for all remaining values.  First suppose that
\[
 k\geq9
 \qquad\text{or}\qquad
 k\leq-6.
\]
Put
\[
 a=s,
 \qquad
 b=sq.
\]
Then $a^2=b^3=1$ and $q=ab$.  The word $(ab)^7a$ is conjugate in
$C_2*C_3$ to the cyclically reduced word
\[
 V=(ab)^5ab^{-1}.
\]
Consequently, with
\[
 m=|4k-5|,
 \qquad
 n=|k|,
 \qquad
 R=ab,
\]
we may regard $Q_k$ as the relative presentation
\begin{equation}
\label{eq:genus-four-k11-relative-picture}
 \left\langle C_2*C_3\ \middle|\ R^m,V^n\right\rangle .
\end{equation}

We use the standard parallel-edge reduction for relative pictures, as
in \cite[Sections~3--4]{ET}.  A maximal family of parallel edges
determines a common segment in cyclic powers of
$R^{\pm1}$ and $V^{\pm1}$.  Every noncancelling such segment has
syllable length at most $11$.  Indeed, the cyclic word $V$ has six
$C_3$-syllables, five equal to $b$ and one equal to $b^{-1}$.  Thus an
overlap of a cyclic power of $V^{\pm1}$ with a cyclic power of
$R^{\pm1}$ has length at most $11$.  An overlap of length $12$ between
two cyclic powers of $V^{\pm1}$ fixes the position of the unique
exceptional $C_3$-syllable and hence the cyclic phase; it then gives a
cancellable pair of vertices.  Similarly, an overlap of length $2$
between two cyclic powers of $R^{\pm1}$ fixes the phase and gives a
cancellable pair.  Therefore a parallel family in a reduced picture
has at most $11$ edges.

After all maximal parallel families have been identified, an
$R^m$-vertex has degree at least
\[
 \left\lceil\frac{2m}{11}\right\rceil,
\]
and a $V^n$-vertex has degree at least
\[
 \left\lceil\frac{12n}{11}\right\rceil.
\]
If $k\geq9$, then $m\geq31$ and $n\geq9$; if $k\leq-6$, then
$m\geq29$ and $n\geq6$.  Thus every vertex in the derived picture has
degree at least $6$.

Suppose that a nontrivial element of one of the factors $C_2$ or
$C_3$ becomes trivial in $Q_k$, and choose a reduced picture of
minimal size for such an equality.  Contracting its boundary gives a
spherical tessellation with a distinguished region.  A
non-distinguished region has degree at least $3$: a one-sided region
cannot have trivial factor label, and every two-sided region has
already been absorbed into a parallel family.  Assign the angle
$2\pi/d(v)$ at a corner incident to a vertex of degree $d(v)$.  If
$F$ is a non-distinguished region of degree $r$, then
\[
\begin{aligned}
 c(F)
 &=(2-r)\pi+\sum_{v\in\partial F}\frac{2\pi}{d(v)}\\
 &\leq(2-r)\pi+\frac{r\pi}{3}
  =\left(2-\frac{2r}{3}\right)\pi
 \leq0.
\end{aligned}
\]
The distinguished region has degree at least $1$, and the same
estimate gives curvature at most $4\pi/3$.  This contradicts the fact
that the total curvature of a spherical tessellation is $4\pi$.
Hence the factors $C_2$ and $C_3$ do not collapse, and in particular
\[
 Q_k\neq1
 \qquad
 (k\geq9\text{ or }k\leq-6).
\]

It remains to consider
\[
 k\in\{-5,-4,0,5,6,7,8\}.
\]
For $k=0$, presentation
\eqref{eq:genus-four-k11-qs-presentation} is the spherical triangle
presentation
\[
 \left\langle s,q\ \middle|\ s^2,q^5,(sq)^3\right\rangle
 \cong A_5,
\]
so it is nontrivial.

For the other six values we give explicit projective linear images.
For the prime $p$ in the following table, work in
$\operatorname{PSL}(2,p)$ and put
\[
 S=\begin{pmatrix}0&-1\\1&0\end{pmatrix},
 \qquad
 s\longmapsto\overline S,
 \qquad
 q\longmapsto\overline{ST}.
\]
Every displayed matrix $T$ has determinant $1$ and trace $1$, so
$T^3=-I$ and hence $(sq)^3$ is satisfied.  Direct calculation gives
the projective orders recorded in the last two columns.
\[
\renewcommand{\arraystretch}{1.45}
\begin{array}{c|c|c|c|c}
 k&p&T&|\overline{ST}|&|\overline{(ST)^7S}|\\ \hline
 -5,\ 5&5&
 \begin{pmatrix}0&1\\-1&1\end{pmatrix}
 &5&5\\
 -4&7&
 \begin{pmatrix}0&1\\-1&1\end{pmatrix}
 &7&2\\
 6&4523&
 \begin{pmatrix}8&3\\-19&-7\end{pmatrix}
 &19&6\\
 7&139&
 \begin{pmatrix}11&14\\2&-10\end{pmatrix}
 &23&7\\
 8&809&
 \begin{pmatrix}114&142\\-142&-113\end{pmatrix}
 &27&4
\end{array}
\]
In each row, the order in the fourth column divides $|4k-5|$, and the
order in the fifth column divides $|k|$.  Hence every row defines a
nontrivial homomorphism from $Q_k$ to the indicated projective linear
group.  This proves $Q_k\neq1$ for all the remaining values and
completes the classification.
\end{proof}

\begin{remark}
The triviality assertions for $k=-3$ and $k=4$ are the only
computer-assisted part of Proposition~\ref{prop:genus-four-k-minus-k-one-one-quotient};
they are certified by complete one-coset Todd--Coxeter enumerations.
All other triviality and nontriviality assertions in the proof are
explicit.
\end{remark}

The remaining family $(k,-k),(1,-1),(-2,-2)$ can be considered similarly to the family $(k,-k),(1,-1),(1,-1)$.

\bigskip
The following proposition completes the case $(0,0,0)$.
\begin{prop} Let $K$ be a genus 4 group with parameters $(l,-l),(m,-m),(k,-k)$, where $|l|,|m|,|k|>1$ and at least one of them is greater than $2$. Then $K\neq 1$.\end{prop}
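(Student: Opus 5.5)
The plan is to adapt the marked Van Kampen diagram argument of Proposition~\ref{prop:two-layer-marked-nontrivial-new} from genus $3$ to genus $4$. As in the genus-three $(0,0)$ case, we use two things. First, we argue relative to the preceding layer, as in the corollary following the genus-three $(0,0)$ case, so that only the two adjacent layers matter. Second, we use the layer-reversal symmetry of Lemma~\ref{lem:symmetric-tuples} to move the exponent of absolute value greater than $2$ into a convenient position.

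By that symmetry we may assume the large exponent sits on the first or second edge, say $|l|\geq3$ or $|m|\geq3$. We also assume the signs are $l,m,k>0$; the negative cases are handled by inverting relators, which interchanges the roles of $(l,-l)$ and $(-l,l)$.

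\textbf{Step 1 (reduced relators).} Write the four principal relators in the period words
\[
Q=a_1a_2,\quad P=a_2a_3,\quad U=a_3a_4,
\]
together with the conjugated periods
\[
A=Q^{-l}a_2Q^{l}a_3,\quad C=P^{-m}a_3P^{m}a_4,
\]
and the $X$-type periods coming from the second layer. Compute their freely and cyclically reduced forms, as in Lemma~\ref{lm:reduced-marked-relators}. Then apply period-completion surgery (Lemma~\ref{lm:period-completion-surgery}) to obtain marked contours. Each contour should contain at least $|e|-1$ complete marked periods for every exponent $e$ on it.

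\textbf{Step 2 (synchronization).} Check that each period contains a unique letter marking its end, for example a single $a_1^{-1}$, $a_3$ or $a_4$. This gives synchronization: whole periods inside a power occur only at the marked positions.

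\textbf{Step 3 (component lemmas).} Prove the analogues of the component lemmas for the relevant components. For the first edge these are the $X$-, $A$- and $Q$-components. For the middle and third edges they are the corresponding $B$-, $C$-, $D$- and $P$-, $U$-components. The key count is that every cell outside a component, and every closed or capped component, has at least six marked exterior incidences. Two-cell pure components are excluded because synchronization turns them into cancellable pairs. We then build minimal relatively safe blocks, saturate along lenses, and run the innermost-lens argument as before.

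\textbf{Step 4 (conclusion).} Lemmas~\ref{lm:disc-curvature-new} and~\ref{lm:boundary-inherited-new} then give the contradiction $6\leq|\partial D|_{\mathrm m}\leq1$ for a diagram with boundary label $a_1$. If $a_1=1$ in $G$ but $a_1$ survives, we instead use a boundary label $a_4$.

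\textbf{Main obstacle.} The hardest step is the safety count when two of the three exponents equal $2$ in absolute value. In that case the completed contours carry only one complete period for those edges, and the earlier bounds such as $d(\overline R_2)\geq\ell+5$ become tight or fail. To handle this, we would first use the hierarchical joining order $X\to A\to Q\to P$, extended by the third-layer words, so that the large exponent (at least $3$) absorbs the deficit. If that is not enough, the fallback is an explicit quotient: impose the large-exponent period relation and a second torsion relation, as in the two-generator quotient $\overline K$, and apply a relative $C(4)$--$T(4)$ picture argument, as in Proposition~\ref{prop:genus-four-zero-quotient-picture}. Any leftover small parameter triples would be settled by finite $\operatorname{PSL}(2,p)$ images found by computer search.
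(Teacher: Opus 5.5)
Your strategy is the one the paper indicates. However, the paper's proof is only the sentence that the statement ``can be proved using Van-Kampen diagrams,'' so nothing there supplies the steps you defer, and two of your steps fail as written.

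\textbf{The sign normalization is invalid.} Inverting relators is an Andrews--Curtis move, so it leaves the presented group unchanged. It does not convert the presentation with $(l,-l)$ into the one with $(-l,l)$: for instance, each relator $a_i\gamma_0\Omega(y_i)$ keeps its framing letter $a_i$ with exponent $+1$ when the twist exponents change sign. Signs genuinely matter. For the genus-two one-layer group, $k_1=1$ gives $\mathbb Z/3$ while $k_1=-1$ gives the trivial group. In the neighbouring genus-four family $(l,-l),(1,-1),(k,-k)$, Remark~\ref{rem:genus-four-zero-minus-two} and Proposition~\ref{prop:genus-four-zero-quotient-picture} give $P_{-2,2}=1$ but $P_{2,2}\neq1$. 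The symmetries actually available act only globally on the triple: the layer interchange changes all three signs at once, and the handle reversal of Lemma~\ref{lem:symmetric-tuples} reverses the order of the edges. So a mixed pattern such as $l,k>0>m$ cannot be reduced to the all-positive case and needs its own treatment. That treatment is not a formality, because the reductions you intend to copy depend on the signs. For example, $P^{-k}a_2^{-1}P^{k}$ collapses to $P^{-k}a_3P^{k-1}$ only for $k>0$. For negative exponents the reduced relators of Lemma~\ref{lm:reduced-marked-relators} are different, and so are the terminal letters used for synchronization and the periods lost in completion.

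\textbf{The range you call the main obstacle lies inside the statement and is left open.} When two of $|l|,|m|,|k|$ equal $2$, take $(2,3,2)$ as an example, which the reversal symmetry cannot improve. There the genus-three count $d(\overline R_1)\geq \ell+3$ gives only $5$. What you offer instead is a fallback: an unspecified two-generator quotient, a relative $C(4)$--$T(4)$ argument, and computer searches. That is not an argument. Quotients of this kind can collapse exactly in this range: the $T(4)$ step of Proposition~\ref{prop:genus-four-zero-quotient-picture} needs $|2l+1|,|2k+1|\geq5$, which excludes $-2$, and indeed $P_{-2,2}=1$.

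More generally, Step~3 is the entire content of the genus-four proof rather than a routine analogue. The third-edge twist feeds into the middle relators through $t_3=z_3C^k$ and $B=a_2^{x_2}a_3^{x_3}$ with $x_3=t_3$. So the contours in positions $2$ and $3$ carry periods of several types, possibly truncated at both ends. The synchronization, the safety counts, the no-lens proposition and the joining order must all be re-proved for components meeting on both sides. Finally, the corollary after the genus-three $(0,0)$ case, which you invoke to argue relative to the preceding layer, concerns additional layers, not additional edges. It gives nothing for a two-layer genus-four word.
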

This can be proved using Van-Kampen diagrams.

\paragraph{2. Case $(0,-1,-2)$}

We record and verify the two quotient constructions needed for the
short-exponent subfamilies.

\begin{prop}
Let the exponents in the following presentation be
\[
   (2k,-2k),\qquad (1,-2),\qquad (-1,-1).
\]
Then
\[
\begin{aligned}
G = \langle\,& a_1,a_2,a_3,a_4,\ x_1,x_2,x_3,x_4,\ y_1,y_2,y_3,y_4,\\
& z_1,z_2,z_3,z_4,\ t_1,t_2,t_3,t_4,\ s_1,s_2,s_3,s_4 \mid \\
& z_1 = y_1,\ t_1 = z_1,\ t_2 = z_2, \\
& y_1 = a_1 (a_1 a_2)^{2k},\ y_2 = a_2 (a_1 a_2)^{2k},\ y_3 = a_3,\ y_4 = a_4, \\
& z_2 = y_2 (a_2^{y_2} a_3),\ z_3 = y_3 (a_2^{y_2} a_3),\ z_4 = a_4, \\
& t_3 = z_3 (a_3^{z_3} a_4^{z_4})^{-1},\ t_4 = z_4 (a_3^{z_3} a_4^{z_4})^{-1}, \\
& x_1 = t_1 (a_1^{t_1} a_2^{t_2})^{-2k},\ x_2 = t_2 (a_1^{t_1} a_2^{t_2})^{-2k}, \\
& x_3 = t_3,\ x_4 = t_4, \\
& s_1 = x_1,\ s_2 = x_2 (a_2^{x_2} a_3^{x_3})^{-2}, \\
& s_3 = x_3 (a_2^{x_2} a_3^{x_3})^{-2},\ s_4 = x_4, \\
& s_1 = 1,\ s_2 = 1, \\
& s_3 (a_3^{s_3} a_4^{s_4})^{-1}=1,\quad
  s_4 (a_3^{s_3} a_4^{s_4})^{-1}=1
\rangle .
\end{aligned}
\]
Put
\[
 Y=a_1a_2,\qquad
 X=a_1^{t_1}a_2^{t_2},\qquad
 B=a_2^{x_2}a_3^{x_3}.
\]
After adding the relators
\[
Y^{2k},\qquad X^{4k-1},\qquad B^7,
\]
one obtains the two-generator quotient
\[
K = \left\langle x,y \ \middle|\
 y^{7},\ (x^{-1}y^{-2})^{3},\ (x^{-1}y^{3})^{3},\
 (xy)^{4k-1},\ (y^{-1}xy^{3}x)^{2k}
\right\rangle ,
\]
where
\[
   x=X^{2k}B^{-1},\qquad y=B.
\]
\end{prop}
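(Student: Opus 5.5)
I would follow exactly the elimination scheme used in the proof of the two-generator quotient for the case $(0,0,0)$, with the exponents $(l,l')=(2k,-2k)$, $(1,-2)$, $(-1,-1)$. With the six Dehn-twist words
\[
Y=a_1a_2,\quad X=a_1^{t_1}a_2^{t_2},\quad A=a_2^{y_2}a_3,\quad B=a_2^{x_2}a_3^{x_3},\quad C=a_3^{z_3}a_4^{z_4},\quad D=a_3^{s_3}a_4^{s_4},
\]
the relations $s_1=s_2=1$ give $x_1=1$ and $x_2=B^2$. Hence $t_1=X^{2k}$, $t_2=B^2X^{2k}$, $a_1=X^{2k}Y^{-2k}$ and $a_2=B^2X^{2k}A^{-1}Y^{-2k}$. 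The last two relations give $s_3=s_4=D$. Then $t_3=z_3C^{-1}$ together with $s_3=x_3B^{-2}$ gives $a_3=DB^2CA^{-1}$. Similarly $a_4C^{-1}=D$ gives $a_4=DC$. Substituting these into the six defining identities for $Y,X,A,B,C,D$ yields a Tietze-equivalent six-generator, six-relator presentation, analogous to the displayed one in the $(0,0,0)$ case.

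Next I impose $Y^{2k}=X^{4k-1}=B^7=1$. As in the earlier proposition, I would use the relations one at a time to eliminate the remaining generators.
\begin{enumerate}
\item The $Y$-relation should express $A$ in terms of $B$, $Y$ and $X$ (in the $(0,0,0)$ case this produced $B=Y^{2l+1}A$).
\item With $A$ eliminated, the $A$-relation and the $C$-relation should express $C^{\pm1}$ through $X$ and $B$.
\item The $D$-relation then eliminates $D$.
\end{enumerate}
Because $Y^{2k}=1$, every power $Y^{\pm 2k}$ appearing in $a_1,a_2$ disappears. This simplifies the formulas considerably and should leave $X$ and $B$ as the only surviving generators.

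Then I perform the change of generators $x=X^{2k}B^{-1}$, $y=B$. It is reversible because $\gcd(2k,4k-1)=1$, so $X^{2k}$ generates $\langle X\rangle$ modulo $X^{4k-1}$: from $X^{2k}=xy$ one recovers $X=(xy)^{m}$ with $2km\equiv1 \pmod{4k-1}$, namely $m=-2$, so $X=(xy)^{-2}$. The added relators translate as follows:
\begin{itemize}
\item $B^7=1$ gives $y^7$;
\item $X^{4k-1}=1$ gives $(xy)^{4k-1}$;
\item $Y^{2k}=1$ gives $(y^{-1}xy^3x)^{2k}$, once $Y$ is expressed as a conjugate of $y^{-1}xy^3x$.
\end{itemize}
The two cubic relators $(x^{-1}y^{-2})^3$ and $(x^{-1}y^3)^3$ should come from the two leftover relations. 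I expect these to be the $C$- and $D$-relations, where the exponents $-1,-1$ in the third edge force an element to equal its own inverse square; this is the same mechanism that produced $(A^{-1}Y^lA^2)^{2k+1}$ earlier.

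The main obstacle is this last bookkeeping step. I must track exactly which conjugates and inverses appear so that the relators come out cyclically equal to the displayed ones, and I must check that no extra relator survives. Any surviving relator would have to be shown to follow from the listed five, for example by rewriting it as a conjugate of one of them using $y^7=1$, $(xy)^{4k-1}=1$ and $X=(xy)^{-2}$.

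I would first verify the computation symbolically for $k=1$, for instance in MAGMA, by checking that the map from $G$ with the added relators to $K$ is an isomorphism. After that I would write the general Tietze chain.
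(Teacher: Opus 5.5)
Your preliminary eliminations are correct ($t_1=X^{2k}$, $t_2=B^2X^{2k}$, $a_1=X^{2k}Y^{-2k}$, $a_2=B^2X^{2k}A^{-1}Y^{-2k}$, $a_3=DB^2CA^{-1}$, $a_4=DC$). However, the proposal stops where the content of the proposition begins. The two cubic relators are never derived, and the claim that no further relator survives is left open. A MAGMA check at $k=1$ does not replace the general-$k$ Tietze chain.

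Two of your concrete predictions are also wrong.
\begin{itemize}
\item Since $2k\cdot 2=4k\equiv 1\pmod{4k-1}$, the inverse change of variables is $X=X^{4k}=(xy)^{2}$, not $(xy)^{-2}$. With $u=X^{2k}$, feeding $X=u^{-2}$ into the $X$-relation gives $A=B^2u^4$ instead of $A=B^2$, and every later relator is then wrong.
\item Your guess about the cubes is also off. No element is forced to equal its inverse square here: the third edge has total exponent $-2$, so there is no analogue of $C=C^{-2k}$ from the $(0,0,0)$ case. In the natural elimination, the $C$- and $D$-relations determine $C$ and $D$ explicitly. The cubes come from the $A$- and $B$-relations after substitution and use of $B^7=1$.
\end{itemize}

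Here is the missing computation, in your notation.
\begin{itemize}
\item Impose $Y^{2k}=1$ and set $u=X^{2k}$, so $X=u^2$. Then $a_1=t_1=u$, $t_2=B^2u$ and $a_2=B^2uA^{-1}$.
\item The $X$-relation reads $u^2=u\cdot A^{-1}B^2u$. This forces $A=B^2$, hence $a_2=B^2uB^{-2}$.
\item The $Y$-relation then only eliminates $Y=uB^2uB^{-2}$. Once $u=xy$ and $B=y$, this is conjugate to $y^{-1}xy^3x$.
\item Put $P=B^{-2}DB^2$ and $Q=C$. Then $a_3=B^2PQB^{-2}$ and $a_4=B^2PB^{-2}Q$.
\item The $A$-relation $A=a_2a_3$ becomes $PQ=u^{-1}B^2$. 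The $B$-relation becomes $B=uQP$, i.e.\ $QP=u^{-1}B$.
\item The $C$-relation is $Q=PQ\,B^2PB^{-2}Q$. Since $s_3=s_4=D$, the $D$-relation is $D=a_3a_4$, i.e.\ $1=B^2QPB^{-2}Q$.
\item Combining the $C$-relation with $PQ=u^{-1}B^2$ gives $P=B^{-4}uB^2$. Combining the $D$-relation with $QP=u^{-1}B$ gives $Q=BuB^{-2}$.
\item Substitute these into $PQ=u^{-1}B^2$ and $QP=u^{-1}B$ and use $B^7=1$. You get exactly $(uB^3)^3=1$ and $(uB)^3=1$. Up to inversion and cyclic conjugation these are $(x^{-1}y^{3})^3$ and $(x^{-1}y^{-2})^3$.
\end{itemize}
Four of the six relations have eliminated $A,Y,C,D$, and the other two have become the cubes, so no further relator survives. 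All steps are reversible because $u\mapsto u^2$ inverts $X\mapsto X^{2k}$ modulo $X^{4k-1}$.
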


\begin{proof}
We verify a slightly more general calculation.  Replace the first
exponent pair by
\[
   (l,-l)
\]
and adjoin
\[
   Y^l=1,\qquad X^{2l-1}=1,\qquad B^7=1.
\]
In addition to $X,Y,B$, put
\[
 A=a_2^{y_2}a_3,\qquad
 C=a_3^{z_3}a_4^{z_4},\qquad
 D=a_3^{s_3}a_4^{s_4},
\]
and set
\[
   u=X^l,\qquad b=B.
\]
Since $X^{2l-1}=1$, the change from $X$ to $u$ is reversible and
\[
   u^2=X.
\]

The relation $Y^l=1$ reduces the first stage to
\[
 y_1=a_1,\qquad y_2=a_2,\qquad
 z_1=t_1=a_1,\qquad z_2=t_2=a_2A,\qquad z_3=a_3A.
\]
Consequently
\[
 X=a_1^{a_1}a_2^{a_2A}=a_1A^{-1}a_2A.
\]
The relation $s_1=x_1=1$ gives $a_1=u$.  Since $X=u^2$, the preceding
equality gives
\[
   A^{-1}a_2A=u.
\]
Hence $a_2A=Au$, and therefore
\[
 x_2=t_2X^{-l}=a_2Au^{-1}=A.
\]
The relation $s_2=x_2B^{-2}=1$ now gives
\[
   A=b^2.
\]
It follows that
\[
   a_2=b^2ub^{-2},\qquad
   a_3=a_2^{-1}A=b^2u^{-1}.
\]

The last two relators give $s_3=s_4=D$.  Thus
\[
   x_3=Db^2,
\]
and, since $s_4=x_4=t_4=a_4C^{-1}$,
\[
   a_4=DC.
\]
Put
\[
   P=b^{-2}Db^2,\qquad Q=C.
\]
The equation $t_3=z_3C^{-1}=x_3$ and the definition of $B$ give,
respectively,
\begin{equation}
\label{eq:case-0-minus1-minus2-PQ}
   PQ=u^{-1}b^2,
   \qquad
   QP=u^{-1}b.
\end{equation}
Indeed, the first equality follows from
\[
 b^2u^{-1}b^2Q^{-1}=Db^2,
\]
whereas
\[
 b=a_2^{A}a_3^{Db^2}
   =uP^{-1}u^{-1}b^2P
   =uQP
\]
gives the second.

Next, the definitions of $C$ and $D$ become
\begin{equation}
\label{eq:case-0-minus1-minus2-CD}
 Q=PQ\,b^2Pb^{-2}Q,
 \qquad
 P=QP\,b^{-2}Qb^2P.
\end{equation}
For the first equality, observe that
\[
   a_3^{z_3}=u^{-1}b^2=PQ,
   \qquad
   a_4=DC=b^2Pb^{-2}Q.
\]
For the second, the equality $D=a_3^Da_4^D$ is equivalent to
\[
   D=b^2u^{-1}DQ;
\]
after substituting $D=b^2Pb^{-2}$ and using
$u^{-1}b^2=PQ$, this is equivalent to the second equality in
\eqref{eq:case-0-minus1-minus2-CD}.

Canceling the final $Q$ and $P$, respectively, in
\eqref{eq:case-0-minus1-minus2-CD}, and then using
\eqref{eq:case-0-minus1-minus2-PQ}, gives
\[
   P=b^{-4}ub^2,
   \qquad
   Q=bub^{-2}.
\]
Substitution in \eqref{eq:case-0-minus1-minus2-PQ}, together with
$b^7=1$, gives
\begin{equation}
\label{eq:case-0-minus1-minus2-ub}
   (ub)^3=1,
   \qquad
   (ub^3)^3=1.
\end{equation}

Finally set
\[
   x=ub^{-1}=X^lB^{-1},\qquad y=b=B.
\]
Then $u=xy$.  Up to inversion and cyclic conjugation, the two
relations in \eqref{eq:case-0-minus1-minus2-ub} become
\[
   (x^{-1}y^{-2})^3=1,
   \qquad
   (x^{-1}y^3)^3=1,
\]
where the second simplification uses $y^7=1$.  The reversible change
$u=X^l$, $X=u^2$ transforms $X^{2l-1}=1$ into
\[
   (xy)^{2l-1}=1.
\]
Moreover,
\[
   Y=a_1a_2=ub^2ub^{-2}=xy^3xy^{-1},
\]
which is conjugate to $y^{-1}xy^3x$.  Hence $Y^l=1$ becomes
\[
   (y^{-1}xy^3x)^l=1.
\]
We have therefore obtained, by reversible Tietze transformations,
\begin{equation}
\label{eq:case-0-minus1-minus2-general-quotient}
\left\langle x,y \ \middle|\
 y^7,\ (x^{-1}y^{-2})^3,\ (x^{-1}y^3)^3,\
 (xy)^{2l-1},\ (y^{-1}xy^3x)^l
\right\rangle .
\end{equation}
Taking $l=2k$ gives the displayed quotient.
\end{proof}

For $k=0,-1,1$, this quotient is trivial.

\begin{prop}
Let the exponents in the presentation be
\[
   (2k-1,1-2k),\qquad (1,-2),\qquad (-1,-1).
\]
After adding the relators
\[
(a_1 a_2)^{2k-1},\qquad
(a_1^{t_1} a_2^{t_2})^{4k-3},\qquad
(a_2^{x_2} a_3^{x_3})^{7},
\]
one obtains the two-generator quotient
\[
K = \left\langle x,y \ \middle|\
 y^{7},\ (x^{-1}y^{-2})^{3},\ (x^{-1}y^{3})^{3},\
 (xy)^{4k-3},\ (y^{-1}xy^{3}x)^{2k-1}
\right\rangle ,
\]
where
\[
   x=X^{2k-1}B^{-1},\qquad y=B.
\]
\end{prop}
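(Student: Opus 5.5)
The plan is to reduce this statement to the immediately preceding proposition, whose proof was deliberately carried out for an arbitrary first exponent pair $(l,-l)$. In that proof we adjoined
\[
 Y^l=1,\qquad X^{2l-1}=1,\qquad B^7=1,
\]
and obtained, by reversible Tietze transformations, the presentation \eqref{eq:case-0-minus1-minus2-general-quotient}. So the first and essentially only step is to put $l=2k-1$. Then $Y^{l}=(a_1a_2)^{2k-1}$, $X^{2l-1}=(a_1^{t_1}a_2^{t_2})^{4k-3}$ and $B^7=(a_2^{x_2}a_3^{x_3})^7$ are exactly the three added relators. The generators specialize to $x=X^{l}B^{-1}=X^{2k-1}B^{-1}$ and $y=B$, and the relators $(xy)^{2l-1}$ and $(y^{-1}xy^3x)^l$ become $(xy)^{4k-3}$ and $(y^{-1}xy^3x)^{2k-1}$.

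I would then check that no step of the general computation used parity of $l$ or the hypothesis $l=2k$. The points to verify are the following.

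\begin{enumerate}
\item The substitution $u=X^l$ with $u^2=X$ is reversible. This needs only $X^{2l-1}=1$, since $X^{2l}=X$ and $\gcd(l,2l-1)=1$ for every integer $l$.
\item The relation $Y^l=1$ collapses $y_i=a_iY^l$ to $y_i=a_i$. This holds with any $l$.
\item The derivation of \eqref{eq:case-0-minus1-minus2-PQ}, \eqref{eq:case-0-minus1-minus2-CD} and \eqref{eq:case-0-minus1-minus2-ub} uses only the second- and third-layer exponents $(1,-2)$ and $(-1,-1)$, which are unchanged here, together with $b^7=1$.
\end{enumerate}

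Hence the argument transfers verbatim. One should also note the degenerate value $l=0$, i.e.\ $k=\tfrac12$, cannot occur, and the excluded value $2k-1=0$ never occurs for integer $k$, so all four twist exponents remain nonzero as required.

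The main obstacle, if any, is bookkeeping rather than mathematics. One has to confirm that the identification $Y=xy^3xy^{-1}$, which is conjugate to $y^{-1}xy^3x$, and the cyclic/inverse simplification $(ub^3)^3\mapsto(x^{-1}y^3)^3$ using $y^7=1$, are independent of $l$. I would recheck these two substitutions explicitly with $u=xy$, $b=y$. If a sign discrepancy appeared, I would absorb it by inverting or cyclically conjugating the relator, which does not change the presented group.
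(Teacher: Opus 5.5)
Your proposal is correct and is exactly the paper's argument: the paper's proof consists solely of applying the general quotient \eqref{eq:case-0-minus1-minus2-general-quotient} with $l=2k-1$. Your additional checks are accurate and only make explicit what the paper leaves implicit: the reversibility of $u=X^l$ follows from $X^{2l-1}=1$ and $\gcd(l,2l-1)=1$, and the general derivation never uses the parity of $l$.
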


\begin{proof}
Apply \eqref{eq:case-0-minus1-minus2-general-quotient} with
$l=2k-1$.
\end{proof}

We do not include the non-triviality proof for the whole case.  The
intended strategy is as follows.  Write the three exponent pairs as
\[
   (l,-l),\qquad (m,-m-1),\qquad (k,-k-2).
\]
If the absolute values of all six exponents are greater than one, use
marked Van--Kampen diagrams.  If, for one Dehn twist, one exponent has
absolute value one, fixing that short pair leaves a two-parameter family.
For each such family, construct a two-generator quotient and apply the
same relative-picture strategy as in the $(0,0,0)$ case.  The finitely
many exceptional parameter values are to be treated separately by finite
quotients or GAP computations.

\paragraph{3. Case $(0,0,-1)$} The proof goes similarly to the previous two cases, we only record the required quotient.
\begin{prop}[Two-generator quotient for the genus-four presentation]
Consider the genus-four presentation with exponent pairs
\[
(l,-l),\qquad (-1,1),\qquad (k,-1-k).
\]
Let
\[
Y=a_1a_2,\qquad
X=a_1^{t_1}a_2^{t_2},\qquad
A=a_2^{y_2}a_3,\qquad
B=a_2^{x_2}a_3^{x_3},
\]
and
\[
C=a_3^{z_3}a_4^{z_4},
\qquad
D=a_3^{s_3}a_4^{s_4}.
\]
Let
\[
\overline K
=
K/\left\langle\!\left\langle Y^l,\ C^k\right\rangle\!\right\rangle .
\]
Equivalently, in \(\overline K\) we impose
\[
(a_1a_2)^l=1,
\qquad
(a_3^{z_3}a_4^{z_4})^k=1.
\]
Then \(\overline K\) admits the two-generator presentation
\[
{
\overline K
\cong
\left\langle A,D\ \middle|\ 
D^{2k+1},\;
\left(A^2D^kA^{-1}\right)^{2l-1},\;
\left(A^2D^{-1}\right)^l,\;
\left(AD^{-k}A^{-1}D^{-k}\right)^k
\right\rangle .
}
\]
Moreover, the eliminated generators are
\[
{
B=A,\qquad
Y=A^2D^{-1},\qquad
C=AD^{-k}A^{-1}D^{-k},\qquad
X=\left(A^2D^kA^{-1}\right)^2.
}
\]
\end{prop}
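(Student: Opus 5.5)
The plan is to repeat the elimination used for the $(0,0,0)$ quotient proposition, now with the parameter pattern $(l,-l),(-1,1),(k,-1-k)$. The steps, in order, are as follows.

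\emph{Step 1: read off the substituted equations.} I first write the genus-four presentation with $(a_1a_2)^{l}$ on the first edge of the first layer and $(a_1^{t_1}a_2^{t_2})^{-l}$ on the second. The middle edge gets the single twist $A^{-1}$ in the first layer and $B$ in the second. The last edge gets $C^{k}$ in the first layer and $D^{-1-k}$ in the second. I then impose $Y^l=1$ and $C^k=1$. These kill $(a_1a_2)^l$ and $C^k$, so the first-layer substitutions become
\[
y_1=a_1,\qquad y_2=a_2,\qquad z_2=a_2A^{-1},\qquad z_3=a_3A^{-1},\qquad t_3=z_3,\qquad t_4=z_4=a_4.
\]

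\emph{Step 2: express the $a_i$ through the final relations.} The relations are $s_1=s_2=1$ and $s_3D^{k+1}=s_4D^{k+1}=1$ (up to the sign convention on the last exponent). They give $x_1=1$, so $t_1=X^l$. They give $x_2=B^{-1}$, so $t_2=B^{-1}X^l$. They also give $s_3=s_4=D^{-k-1}$. From these I read off
\[
a_1=X^l,\qquad a_2=B^{-1}X^lA,\qquad a_4=D^{-k-1},\qquad a_3=D^{-k-1}B^{-1}A,
\]
the last one via $x_3=t_3=z_3$. This mirrors exactly the $(0,0,0)$ computation.

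\emph{Step 3: reduce to $A,D$.} Substituting these into the six defining equations for $X,Y,A,B,C,D$ gives a six-generator, six-relator presentation. I then eliminate successively, as in the earlier proof.

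First I use the $B$-equation, $B=a_2^{x_2}a_3^{x_3}$ with $x_2=x_3$ conjugate. It collapses to $B=a_2a_3$ up to conjugation, and I expect it to force $B=A$.

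Next I use the $Y$-equation together with $Y^l=1$ to obtain $Y=A^2D^{-1}$.

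Then the $C$-equation, $C=a_3^{z_3}a_4^{z_4}$, should give $C=AD^{-k}A^{-1}D^{-k}$.

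Then the $X$-equation should give $X=(A^2D^kA^{-1})^2$.

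Finally the $D$-equation should reduce to a torsion relation $D^{2k+1}=1$. This is analogous to how $Y^{2l+1}=1$ emerged before, by cancelling a common conjugate.

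\emph{Step 4: collect the relators.} The surviving relators come from four sources. One is the $D$-relation. The others are the images of $Y^l$, $C^k$, and the remaining consistency relation on $X$. After $D^{2k+1}=1$ is used, that consistency relation turns into $(A^2D^kA^{-1})^{2l-1}=1$, arising from $X^l=a_1$ combined with the expression for $X$. Since all the eliminations are reversible Tietze moves, the four displayed relators present $\overline K$.

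The main obstacle is the bookkeeping in Step 3. Specifically, I need to derive $B=A$ and $D^{2k+1}=1$ cleanly, and to check that the exponent $2l-1$, and not $2l+1$, appears. I would settle the sign conventions on the single twists $(-1,1)$ first, by checking the abelianization against Proposition~\ref{prop:multilayer-abelianization}. As a final check I would verify the identity numerically in Magma for small $l,k$, confirming that $\overline K$ has the same order as the displayed group.
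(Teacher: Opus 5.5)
Your framework (pass to the six twist generators $X,Y,A,B,C,D$, impose $Y^l=C^k=1$, eliminate down to $A,D$) is the paper's. But Step~3 as you plan it does not go through, and the relation that carries the proof is never obtained.

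First, a sign error. The last two relators are $s_3D^{-(k+1)}=s_4D^{-(k+1)}=1$, so $s_3=s_4=D^{k+1}$, $a_4=D^{k+1}$ and $a_3=D^{k+1}B^{-1}A$. Your $D^{-k-1}$ corresponds to the pair $(k,k+1)$.

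More seriously, $x_2=B^{-1}$ but $x_3=t_3=D^{k+1}B^{-1}$. So $x_2\neq x_3$, and the $B$-equation does not collapse to $B=a_2a_3$. After substitution it reads $B=X^lAB^{-1}AD^{k+1}B^{-1}$, which by itself does not give $B=A$. The $D$-equation does collapse, since $s_3=s_4$: it becomes $D=a_3a_4=D^{k+1}B^{-1}AD^{k+1}$, i.e.\ $B^{-1}A=D^{-2k-1}$. Hence $D^{2k+1}=1$ is \emph{equivalent} to $B=A$. Neither can be used to produce the other, and your plan produces neither.

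The missing idea is to derive $X^{2l-1}=1$ first, using the equations in a specific order.
\begin{enumerate}
\item The $X$-equation $X=X^lAB^{-1}X^l$ gives $X^{1-2l}=AB^{-1}$, i.e.\ $B=X^{2l-1}A$. This, not the definitional elimination $a_1=X^l$, is the source of the exponent $2l-1$.
\item The $D$-equation then gives $X^{1-2l}=AD^{-2k-1}A^{-1}$.
\item The $A$-equation $A=B^{-1}X^lAD^{k+1}B^{-1}A$ gives $X^{1-l}=A^2D^kA^{-1}$.
\item Substituting all three into the $B$-equation gives
\[
X^{2l-1}A=B=X^{1-l}AD^{k+1}A^{-1}X^{1-2l}=A^2D^kA^{-1}\cdot AD^{k+1}A^{-1}\cdot AD^{-2k-1}A^{-1}=A,
\]
so $X^{2l-1}=1$.
\end{enumerate}

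Only now do $B=A$ and $D^{2k+1}=1$ follow. Then $X^l=X^{1-l}=A^2D^kA^{-1}$, hence $X=X^{1-l}X^l=(A^2D^kA^{-1})^2$. Substituting this back into $X^l=A^2D^kA^{-1}$ gives $(A^2D^kA^{-1})^{2l-1}=1$. Likewise $Y=X^lA^{-1}X^lA=A^2D^{2k}=A^2D^{-1}$ and $C=AD^{k+1}A^{-1}D^{k+1}=AD^{-k}A^{-1}D^{-k}$.

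Finally, a Magma comparison of orders for small $l,k$ cannot replace this derivation. Equal orders do not give an isomorphism, and the check is unavailable when the groups are infinite.
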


\begin{proof}
For the genus-four presentation with exponent pairs
\[
(l,-l),\qquad (-1,1),\qquad (k,-1-k),
\]
the corresponding six-generator presentation in the Dehn-twist generators
\(X,Y,A,B,C,D\) is
\[
\begin{aligned}
K\cong
\langle X,Y,A,B,C,D \mid\;&
Y=X^lY^{-l}B^{-1}X^lAY^{-l},\\
&
A=Y^{-l}B^{-1}X^lAD^{k+1}B^{-1}C^{-k}A,\\
&
C=AD^{k+1}B^{-1}C^{-k}D^{k+1}C^{-k},\\
&
X=Y^{-l}X^lAY^{-l}B^{-1}X^l,\\
&
B=X^lAY^{-l}B^{-1}C^{-k}AD^{k+1}B^{-1},\\
&
D=B^{-1}C^{-k}AD^{k+1}C^{-k}D^{k+1}
\rangle .
\end{aligned}
\]

Now pass to the quotient
\[
\overline K
=
K/\left\langle\!\left\langle Y^l,\ C^k\right\rangle\!\right\rangle .
\]
Thus in \(\overline K\) we impose
\[
Y^l=1,\qquad C^k=1.
\]
Hence \(Y^{-l}=1\) and \(C^{-k}=1\), and the six relations reduce to
\[
\begin{aligned}
Y&=X^lB^{-1}X^lA,&\qquad
A&=B^{-1}X^lAD^{k+1}B^{-1}A,\\
C&=AD^{k+1}B^{-1}D^{k+1},&
X&=X^lAB^{-1}X^l,\\
B&=X^lAB^{-1}AD^{k+1}B^{-1},&
\text{and}\quad D&=B^{-1}AD^{2k+2}.
\end{aligned}
\]

We now eliminate \(B,X,Y,C\) in terms of \(A\) and \(D\).

From
\(
X=X^lAB^{-1}X^l,
\)
we get
\(
X^{1-l}=AB^{-1}X^l,
\)
and hence
\(
X^{1-2l}=AB^{-1}.
\)
Therefore
\(
B^{-1}=A^{-1}X^{1-2l},
\)
so
\(
B=X^{2l-1}A.
\)

Using this in the last relation
\(
D=B^{-1}AD^{2k+2},
\)
we obtain
\(
D=A^{-1}X^{1-2l}AD^{2k+2}.
\)
Multiplying on the right by \(D^{-2k-2}\), we get
\(
D^{-2k-1}=A^{-1}X^{1-2l}A.
\)
Equivalently,
\(
X^{1-2l}=AD^{-2k-1}A^{-1}.
\)

Next, use the second relation
\(
A=B^{-1}X^lAD^{k+1}B^{-1}A.
\)
Substituting \(B^{-1}=A^{-1}X^{1-2l}\), we get
\(
A
=
A^{-1}X^{1-l}AD^{k+1}A^{-1}X^{1-2l}A.
\)
Using
\(
A^{-1}X^{1-2l}A=D^{-2k-1},
\)
this becomes
\(
A=A^{-1}X^{1-l}AD^{-k}.
\)
Hence
\(
X^{1-l}=A^2D^kA^{-1}.
\)

Now use the fifth relation
\(
B=X^lAB^{-1}AD^{k+1}B^{-1}.
\)
After substituting
\[
B=X^{2l-1}A,
\qquad
B^{-1}=A^{-1}X^{1-2l},
\]
we obtain
\[
X^{2l-1}A
=
X^{1-l}AD^{k+1}A^{-1}X^{1-2l}.
\]
Using
\[
X^{1-l}=A^2D^kA^{-1},
\qquad
X^{1-2l}=AD^{-2k-1}A^{-1},
\]
the right-hand side simplifies to \(A\). Thus
\(
X^{2l-1}A=A.
\)
Therefore
\[
X^{2l-1}=1.
\]
But we also have
\[
X^{1-2l}=AD^{-2k-1}A^{-1}.
\]
Since \(X^{2l-1}=1\), it follows that \(X^{1-2l}=1\), and hence
\[
AD^{-2k-1}A^{-1}=1.
\]
Thus
\[
D^{2k+1}=1.
\]

Since \(D^{2k+1}=1\), the relation
\(
X^{1-2l}=AD^{-2k-1}A^{-1}
\)
gives
\(
X^{1-2l}=1.
\)
Hence
\(
X^{2l-1}=1.
\)
Together with
\(
X^{1-l}=A^2D^kA^{-1},
\)
this implies
\(
X^l=A^2D^kA^{-1}.
\)

Consequently,
\[
X=X^{1-l}X^l
=
\left(A^2D^kA^{-1}\right)^2.
\]
Substituting this into \(X^l=A^2D^kA^{-1}\), we obtain
\[
\left(A^2D^kA^{-1}\right)^{2l}
=
A^2D^kA^{-1}.
\]
Therefore
\[
\left(A^2D^kA^{-1}\right)^{2l-1}=1.
\]

Now the first relation gives
\[
Y=X^lB^{-1}X^lA.
\]
Since \(X^{2l-1}=1\), we have
\[
B=X^{2l-1}A=A.
\]
Therefore
\[
Y=X^lA^{-1}X^lA.
\]
Using
\[
X^l=A^2D^kA^{-1},
\]
we get
\[
Y
=
(A^2D^kA^{-1})A^{-1}(A^2D^kA^{-1})A
=
A^2D^{2k}.
\]
Since \(D^{2k+1}=1\), this becomes
\[
Y=A^2D^{-1}.
\]
Because \(Y^l=1\) in \(\overline K\), we obtain the relator
\[
\left(A^2D^{-1}\right)^l=1.
\]

Finally, the third relation gives
\[
C=AD^{k+1}B^{-1}D^{k+1}.
\]
Since \(B=A\), this becomes
\[
C=AD^{k+1}A^{-1}D^{k+1}.
\]
Using \(D^{2k+1}=1\), we have
\[
D^{k+1}=D^{-k}.
\]
Thus
\[
C=AD^{-k}A^{-1}D^{-k}.
\]
Because \(C^k=1\) in \(\overline K\), we obtain
\[
\left(AD^{-k}A^{-1}D^{-k}\right)^k=1.
\]

We have therefore eliminated \(B,X,Y,C\), leaving only the generators
\(A\) and \(D\). The remaining relators are
\[
\begin{aligned}
D^{2k+1}&=1,&
\left(A^2D^kA^{-1}\right)^{2l-1}&=1,\\
\left(A^2D^{-1}\right)^l&=1,&
\text{and}\quad \left(AD^{-k}A^{-1}D^{-k}\right)^k&=1.
\end{aligned}
\]
Therefore
\[
\overline K
\cong
\left\langle A,D\ \middle|\ 
D^{2k+1},\;
\left(A^2D^kA^{-1}\right)^{2l-1},\;
\left(A^2D^{-1}\right)^l,\;
\left(AD^{-k}A^{-1}D^{-k}\right)^k
\right\rangle .
\]
This proves the proposition.
\end{proof}

\paragraph{4. Case $(x,-2,-1)$.}

Write the three exponent pairs in the form
\[
   (l,x-l),\qquad (m,-m-2),\qquad (k,-k-1),
\]
and continue to assume that none of the six exponents is zero.  If
\[
 |l|,|x-l|,|m|,|m+2|,|k|,|k+1|\geq2,
\]
non-triviality can be proved by the marked Van--Kampen diagram argument.
We do not reproduce that argument here.  Thus only the subfamilies in which
at least one exponent has absolute value one remain.  Equivalently,
\[
\begin{aligned}
 l&\in\{-1,1,x-1,x+1\},\\
 m&\in\{-3,-1,1\},\\
 k&\in\{-2,1\}.
\end{aligned}
\tag{\(*\)}
\]
Interchanging the two layers sends
\[
 (l,m,k)\longmapsto (x-l,-m-2,-k-1).
\]
Consequently, up to layer interchange, it is enough to treat the representative
short values
\[
       l\in\{-1,1\},\qquad m\in\{-1,1\},\qquad k=1.
\]
Fixing any one of these short pairs leaves a two-parameter family.  The
appropriate method is to construct a two-generator quotient and then use a
relative-picture argument, as in the case $(0,0,0)$.

We first treat the central short pair
\[
      (m,-m-2)=(-1,-1).
\]
Set
\[
\begin{aligned}
X&=a_1^{t_1}a_2^{t_2},& Y&=a_1a_2,& A&=a_2^{y_2}a_3,\\
B&=a_2^{x_2}a_3^{x_3},& C&=a_3^{z_3}a_4^{z_4},& D&=a_3^{s_3}a_4^{s_4}.
\end{aligned}
\]
For the family with blocks
\[
    (l,x-l),\qquad (-1,-1),\qquad (k,-k-1),
\]
put
\[
    d=l-x,\qquad p=2d-1=2(l-x)-1,\qquad r=k+1.
\]

\begin{prop}
\label{prop:genus-four-x-minus-two-minus-one-quotient}
Let $G_{l,x,k}$ be the above genus-four group.  Adjoin only the two
relations
\[
      D^{k+1}=1,\qquad C=X^p,
      \qquad p=2(l-x)-1.
\]
Then the resulting quotient is generated by $X$ and $Y$ and has the
presentation
\begin{equation}
\label{eq:genus-four-x-minus-two-minus-one-four-relator}
P_{l,x,k}
\cong
\gp{Y,X}{
    X^{2p(k+1)+1},\;
    Y^{2l+1},\;
    \bigl(X^{p(k+1)}Y^{-l}\bigr)^2,\;
    \bigl(X^{p+1}Y^{-1}\bigr)^{k+1}
}.
\end{equation}

\end{prop}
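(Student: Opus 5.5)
The plan follows the pattern of the two previous genus-four quotient computations. First I write the genus-four presentation for the exponent blocks $(l,x-l)$, $(-1,-1)$, $(k,-k-1)$ in the style of $K$ above: the first layer uses $(a_1a_2)^l$, $A$ and $C^k$, and the second layer uses $X^{x-l}$, $B^{-1}$ and $D^{-k-1}$. I then pass, by the same Tietze eliminations, to a six-generator presentation in the Dehn-twist generators $X,Y,A,B,C,D$. The final relations $s_1=1$ and $s_2=1$ give $t_1=X^{l-x}$ and $t_2=BX^{l-x}$. Together with $y_1=a_1Y^l$ and $y_2=a_2Y^l$, this expresses $a_1$ and $a_2$ in the twist generators. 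The relations $s_3D^{k+1}=1$ and $s_4D^{k+1}=1$ (the second-layer exponent on the third edge is $-k-1$) give $s_3=s_4=D^{-k-1}$. This expresses $a_3$ and $a_4$ through $z_3=a_3A$, $t_3=z_3C^k$ and $t_4=a_4C^k$. Substituting into the definitions of $X,Y,A,B,C,D$ gives six relations, exactly as in the $(0,0,0)$ computation.

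Next I impose $D^{k+1}=1$. This makes $s_3=s_4=1$, so $x_3=B$ and $x_4=1$, which collapses much of the right-hand side. I expect the relation $D=a_3^{s_3}a_4^{s_4}$ then to read $D=a_3a_4$, and the $B$-relation to give $B$ in terms of $A$ and $X$. The second added relation $C=X^p$ with $p=2d-1$ is chosen so that the $C$-relation and the $X$-relation become compatible. Eliminating $B,C,D$ and then $A$ should follow the order used earlier. That order is: derive $B$ from the $X$-relation (analogue of $B=X^{2l-1}A$), then $A$ from the $A$-relation, then substitute into the remaining ones. The goal is to end with exactly four relators in $X,Y$: $X^{2p(k+1)+1}$, $Y^{2l+1}$, $(X^{p(k+1)}Y^{-l})^2$ and $(X^{p+1}Y^{-1})^{k+1}$. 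Here $X^{p(k+1)}$ should come from $C^{k+1}=X^{p(k+1)}$ after using $D^{k+1}=1$, and the last relator is the image of $D^{k+1}=1$ once $D$ is written as a conjugate of $X^{p+1}Y^{-1}$.

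To show the quotient is presented precisely by this, and not merely maps onto it, I check reversibility as in the $(k,-k),(1,-1),(1,-1)$ proposition. From $X$ and $Y$ I recover $A,B,C,D$ by explicit words and verify that every original relation holds. I also verify that the relations derived in the middle of the elimination (for instance the analogue of $Y^{2l+1}=1$, obtained there by cancelling a conjugate) are consequences of the four displayed relators.

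The main obstacle is the middle elimination: tracking how the exponents $l$ and $x-l$ interact. In the $(0,0,0)$ case they cancelled to give $Y^{2l+1}$. Here $d=l-x$ appears, and the cancellation must be shown to yield both $Y^{2l+1}$ and an $X$-power of order $2p(k+1)+1$. I would first handle the special case $x=0$ to fix signs (it should essentially reproduce the earlier quotient computations), then redo the computation with general $d$. Finally I would confirm the exponents against the abelianization determinant as a sanity check.
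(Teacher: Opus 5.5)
Your route matches the paper's: pass to the twist generators, let $D^{k+1}=1$ force $s_1=\dots=s_4=1$, eliminate down to $X,Y$, and check reversibility. But the proposal stops where the proof begins, and two of the concrete choices you commit to would not reach the stated presentation.

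The first is a setup error. The middle block is $(-1,-1)$, so the first layer uses $A^{-1}$, not $A$; you carried over the $(1,-1)$ pattern of the $(0,0,0)$ computation. Correctly, $z_2=y_2A^{-1}$ and $z_3=a_3A^{-1}$. Hence, with $d=l-x$,
$$y_2=BX^dA,\quad a_1=X^dY^{-l},\quad a_2=BX^dAY^{-l},\quad a_3=BC^{-k}A,\quad a_4=C^{-k}.$$
With your $z_3=a_3A$, the same eliminations give $A=Y^lX^{1-d-pk}$ and $B=Y^lX^{pk+d}$. The $A$- and $Y$-relations then become $A^2=X^{2p(k+1)+1}=Y^{2l+1}$, instead of $X^{2p(k+1)+1}=Y^{2l+1}=1$. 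That is a different group: its abelianization is $\mathbb{Z}/\gcd(p,l)$, for example $\mathbb{Z}/3$ for $(l,x)=(3,1)$, whereas $P_{l,x,k}$ has trivial abelianization. There is also a sign slip: $s_3D^{k+1}=1$ should be $s_3D^{-(k+1)}=1$. This one is harmless once $D^{k+1}=1$ is imposed.

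The second problem is the elimination order. Suppose you solve the $X$-relation $X=Y^{-l}X^dAY^{-l}BX^d$ for $B$ and substitute into the $A$-relation. The result has three non-cancelling occurrences of $A$, so $A$ is not determined. That order worked in the $(0,0,-1)$ case only because $Y^l=1$ had been imposed there. Nor does the $B$-relation give $B$ in terms of $A$ and $X$.

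The missing step is the following.
\begin{enumerate}
\item Since $x_2=x_3=B$, the $B$-relation is $B=a_2a_3$, that is, $1=X^dAY^{-l}BC^{-k}A$.
\item Substituting $AY^{-l}B=X^{-d}A^{-1}C^k$ into the $X$-relation gives $A=C^kX^{d-1}Y^{-l}=X^{pk+d-1}Y^{-l}$.
\item The $C$-relation $C=ABC^{-2k}$ gives $B=A^{-1}C^{2k+1}=Y^lX^{pk+d}$. The last equality is exactly where $p=2d-1$ enters.
\end{enumerate}
After this, the remaining relations reduce as follows.
\begin{itemize}
\item The $A$-relation becomes $X^{2pk+4d-1}=X^{2p(k+1)+1}=1$.
\item The $Y$-relation becomes $Y^{2l+1}=1$.
\item The relation from step 1 becomes $(X^{p(k+1)}Y^{-l})^2=1$.
\item $D=a_3a_4=BC^{-k}AC^{-k}=Y^lX^{p(k+1)}Y^{-l}X^{-pk}$ reduces by these to $Y^{-1}X^{p+1}$, so $D^{k+1}=1$ gives the last relator.
\end{itemize}
Reversibility is then a direct check that these formulas for $A,B,C,D$ satisfy all six relations.
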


\begin{proof}
Since $D^{k+1}=1$, the endpoint relations
\[
    s_1=1,\qquad s_2=1,\qquad
    s_3D^{-(k+1)}=1,\qquad s_4D^{-(k+1)}=1
\]
give
\[
    s_1=s_2=s_3=s_4=1.
\]
The fourth block has exponent $x-l=-d$, and therefore
\[
    x_1=t_1X^{-d},\qquad x_2=t_2X^{-d},\qquad
    x_3=t_3,\qquad x_4=t_4.
\]
Together with
\[
    s_1=x_1,\qquad s_2=x_2B^{-1},\qquad
    s_3=x_3B^{-1},\qquad s_4=x_4,
\]
this gives
\[
    t_1=X^d,\qquad t_2=BX^d,\qquad t_3=B,\qquad t_4=1.
\]
The preceding blocks now give
\[
\begin{aligned}
 z_1&=X^d,& z_2&=BX^d,& z_3&=BC^{-k},& z_4&=C^{-k},\\
 y_1&=X^d,& y_2&=BX^dA,& y_3&=BC^{-k}A,& y_4&=C^{-k},
\end{aligned}
\]
and hence
\[
    a_1=X^dY^{-l},\qquad
    a_2=BX^dAY^{-l},\qquad
    a_3=BC^{-k}A,\qquad
    a_4=C^{-k}.
\]
Substitution in the definitions of $X,Y,A,B,C,D$ gives
\begin{align}
Y&=X^dY^{-l}BX^dAY^{-l}, \label{eq:xm2m1-six-1}\\
X&=Y^{-l}X^dAY^{-l}BX^d, \label{eq:xm2m1-six-2}\\
A&=Y^{-l}BX^dABC^{-k}A, \label{eq:xm2m1-six-3}\\
C&=ABC^{-2k}, \label{eq:xm2m1-six-4}\\
B&=X^dAY^{-l}BC^{-k}AB, \label{eq:xm2m1-six-5}\\
D&=BC^{-k}AC^{-k}. \label{eq:xm2m1-six-6}
\end{align}
We now impose $C=X^p$.

Cancelling the final $B$ in \eqref{eq:xm2m1-six-5} gives
\begin{equation}
 1=X^dAY^{-l}BC^{-k}A.                 \label{eq:xm2m1-key}
\end{equation}
Using this relation in \eqref{eq:xm2m1-six-2}, we obtain
\[
 X=Y^{-l}A^{-1}C^kX^d,
\]
so
\begin{equation}
 A=C^kX^{d-1}Y^{-l}=X^{pk+d-1}Y^{-l}. \label{eq:xm2m1-A}
\end{equation}
Relation \eqref{eq:xm2m1-six-4} gives $AB=C^{2k+1}$, and hence
\begin{equation}
 B=A^{-1}C^{2k+1}=Y^lX^{pk+d}.          \label{eq:xm2m1-B}
\end{equation}
Substitution of \eqref{eq:xm2m1-A} and \eqref{eq:xm2m1-B} in
\eqref{eq:xm2m1-six-3} gives
\[
 X^{2pk+4d-1}=1.
\]
Since $p=2d-1$, this is
\begin{equation}
 X^{2p(k+1)+1}=1.                       \label{eq:xm2m1-X-order}
\end{equation}
Substitution in \eqref{eq:xm2m1-six-1}, followed by
\eqref{eq:xm2m1-X-order}, gives
\begin{equation}
 Y^{2l+1}=1.                             \label{eq:xm2m1-Y-order}
\end{equation}
Finally, substitution in \eqref{eq:xm2m1-key} gives
\[
 X^{p(k+1)}Y^{-l}X^{p(k+1)}Y^{-l}=1,
\]
that is,
\begin{equation}
 \bigl(X^{p(k+1)}Y^{-l}\bigr)^2=1.      \label{eq:xm2m1-square}
\end{equation}

It remains to translate the relation $D^{k+1}=1$.  From
\eqref{eq:xm2m1-six-6}, \eqref{eq:xm2m1-A}, and
\eqref{eq:xm2m1-B},
\[
 D=Y^lX^{p(k+1)}Y^{-l}X^{-pk}.
\]
By \eqref{eq:xm2m1-square},
\[
 X^{p(k+1)}Y^{-l}=Y^lX^{-p(k+1)},
\]
and therefore
\[
 D=Y^{2l}X^{-p(2k+1)}.
\]
Using \eqref{eq:xm2m1-Y-order} and
\eqref{eq:xm2m1-X-order}, we obtain
\[
 D=Y^{-1}X^{p+1}.
\]
Thus $D^{k+1}=1$ is equivalent, after cyclic conjugation, to
\begin{equation}
 \bigl(X^{p+1}Y^{-1}\bigr)^{k+1}=1.    \label{eq:xm2m1-last}
\end{equation}
Equations \eqref{eq:xm2m1-X-order}--\eqref{eq:xm2m1-last} give
\eqref{eq:genus-four-x-minus-two-minus-one-four-relator}.  Conversely,
\eqref{eq:xm2m1-A}, \eqref{eq:xm2m1-B}, $C=X^p$, and the displayed
formula for $D$ recover $A,B,C,D$, and then the original generators
$a_i$.  Hence these are reversible Tietze transformations.
\end{proof}

\begin{cy}
\label{cor:genus-four-x-minus-two-minus-one-normalized}
Assume $r=k+1\neq0$, and put
\[
 M=|2l+1|,\qquad N=|2pr+1|,\qquad c=-2(p+1).
\]
With
\[
       u=X^{pr},\qquad v=Y^{-l},
\]
the quotient in Proposition~\ref{prop:genus-four-x-minus-two-minus-one-quotient}
becomes
\begin{equation}
\label{eq:genus-four-x-minus-two-minus-one-relative}
 P_{l,x,k}
 \cong
 \left\langle u,v\ \middle|\
 u^N,\ v^M,\ (uv)^2,\
 \bigl(u^cv^{-2}\bigr)^{|r|}
 \right\rangle .
\end{equation}
\end{cy}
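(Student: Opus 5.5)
The plan is to realize the stated change of generators as a pair of mutually inverse Tietze transformations between the four-relator presentation \eqref{eq:genus-four-x-minus-two-minus-one-four-relator} of Proposition~\ref{prop:genus-four-x-minus-two-minus-one-quotient} and \eqref{eq:genus-four-x-minus-two-minus-one-relative}. The arithmetic fact behind everything is that squaring inverts the substitution. In $P_{l,x,k}$ we have $X^{2pr+1}=1$, hence
\[
u^2=X^{2pr}=X^{-1}.
\]
Likewise $Y^{2l+1}=1$ gives
\[
v^2=Y^{-2l}=Y.
\]
So $X=u^{-2}$ and $Y=v^2$. In other words, $u$ generates the same cyclic subgroup as $X$, since $\gcd(pr,2pr+1)=1$, and $v$ generates the same cyclic subgroup as $Y$, since $\gcd(l,2l+1)=1$.

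First I would show that the new relators hold in $P_{l,x,k}$.
\begin{itemize}
\item $u^{2pr+1}=X^{pr(2pr+1)}=1$. Up to inversion this is $u^N=1$, because $N=|2pr+1|$.
\item $v^{2l+1}=Y^{-l(2l+1)}=1$, which gives $v^M=1$.
\item $(uv)^2=(X^{pr}Y^{-l})^2=1$ is literally the third relator.
\item Substituting $X=u^{-2}$ and $Y=v^2$ into the fourth relator gives
\[
X^{p+1}Y^{-1}=u^{-2(p+1)}v^{-2}=u^cv^{-2}.
\]
Hence $(u^cv^{-2})^{r}=1$. Inverting this relator when $r<0$ replaces the exponent $r$ by $|r|$.
\end{itemize}

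Next I would prove the converse. Start from the group $Q$ given by \eqref{eq:genus-four-x-minus-two-minus-one-relative} and define $X=u^{-2}$ and $Y=v^2$ there.
\begin{itemize}
\item $X^{2pr+1}=u^{-2(2pr+1)}=1$ follows from $u^N=1$.
\item $Y^{2l+1}=v^{2(2l+1)}=1$ follows from $v^M=1$.
\item Since $u^{2pr+1}=1$, we get $X^{pr}=u^{-2pr}=u$. Similarly $v^{2l+1}=1$ gives $Y^{-l}=v^{-2l}=v$. Therefore $(X^{pr}Y^{-l})^2=(uv)^2=1$.
\item The identity $X^{p+1}Y^{-1}=u^cv^{-2}$ turns the last relator of $Q$ back into $(X^{p+1}Y^{-1})^{r}=1$.
\end{itemize}

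Finally, the two assignments $X\mapsto u^{-2},\ Y\mapsto v^2$ and $u\mapsto X^{pr},\ v\mapsto Y^{-l}$ are mutually inverse on generators. This was checked in the two preceding paragraphs using only the order relations. So they define inverse isomorphisms, which gives \eqref{eq:genus-four-x-minus-two-minus-one-relative}.

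The only delicate step is the bookkeeping of signs. The exponents $2pr+1$, $2l+1$ and $r$ may be negative, so I would state explicitly that a relator $w^{e}$ and its inverse $w^{-e}=w^{|e|}$ (for $e<0$) define the same normal closure, which justifies passing to the absolute values $N$, $M$ and $|r|$. I would also note the degenerate cases. If $2pr+1=\pm1$ or $2l+1=\pm1$, the corresponding generator is trivial, and all the displayed identities remain valid verbatim. The hypothesis $r\neq0$ is needed only so that the last relator is a genuine proper power, which the later relative-picture argument uses.
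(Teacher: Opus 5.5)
Your proof is correct and follows the paper's own argument: the order relations $X^{2pr+1}=Y^{2l+1}=1$ make $X=u^{-2}$, $Y=v^{2}$ a reversible substitution, after which the two proper-power relators become $(uv)^2$ and $(u^cv^{-2})^{|r|}$. The only difference is that you write out both directions of the Tietze equivalence and the sign bookkeeping explicitly, which the paper leaves implicit.
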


\begin{proof}
Since $2pr\equiv-1\pmod N$ and $2l\equiv-1\pmod M$, the changes of
generators are reversible and
\[
       X=u^{-2},\qquad Y=v^2.
\]
The two proper-power relators in
\eqref{eq:genus-four-x-minus-two-minus-one-four-relator} then become
$(uv)^2$ and $(u^cv^{-2})^{|r|}$.
\end{proof}

\begin{prop}[Triangle-group quotients]
\label{prop:genus-four-x-minus-two-minus-one-triangle-tests}
Keep the notation of
Corollary~\ref{cor:genus-four-x-minus-two-minus-one-normalized}, and let
$r=k+1$ retain its sign.  Then the following hold.

\begin{enumerate}
\item If $3\mid M$, $2\mid r$, and
\[
       D=\gcd\bigl(N,|1-3r|\bigr)>1,
\]
then
\[
       P_{l,x,k}\twoheadrightarrow \Delta(D,3,2).
\]

\item Put
\[
 D_0=\gcd\bigl(N,|1-2r|\bigr),
 \qquad
 E_0=\gcd(M,|r|).
\]
If $D_0>1$ and $E_0>1$, then
\[
       P_{l,x,k}\twoheadrightarrow \Delta(D_0,E_0,2).
\]

\item If $M=3$ and $|r|=2$, then
\[
 P_{l,x,k}=1
 \quad\Longleftrightarrow\quad
 \gcd\bigl(N,|1-3r|\bigr)=1.
\]
If this greatest common divisor is $d>1$, the quotient in the first item is
non-trivial.
\end{enumerate}
Here
\[
 \Delta(a,b,2)=\langle \bar u,\bar v\mid
 \bar u^a,\bar v^b,(\bar u\bar v)^2\rangle .
\]
\end{prop}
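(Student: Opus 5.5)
The plan is to build every quotient in the proposition from von Dyck's theorem applied to the normalized presentation \eqref{eq:genus-four-x-minus-two-minus-one-relative} of Corollary~\ref{cor:genus-four-x-minus-two-minus-one-normalized}. In each item I will send $u\mapsto\bar u$ and $v\mapsto\bar v$ and check the four relators $u^N$, $v^M$, $(uv)^2$ and $(u^cv^{-2})^{|r|}$. The first three are handled by divisibility: the order of $\bar u$ must divide $N$, the order of $\bar v$ must divide $M$, and $(\bar u\bar v)^2=1$ holds by definition of $\Delta(\cdot,\cdot,2)$. The real work is the fourth relator. Surjectivity is automatic because $\bar u,\bar v$ generate. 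Nontriviality of the target follows since $\Delta(a,b,2)\neq1$ whenever $a,b>1$.

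The arithmetic I will use throughout comes from $2pr\equiv-1\pmod N$. Multiplying $c=-2(p+1)$ by $r$ gives
\[
cr\equiv 1-2r \pmod N,\qquad c(1-3r)\equiv -2p-2+6pr+6r\equiv \dots \pmod N .
\]
This identifies the two gcd's $D_0=\gcd(N,|1-2r|)$ and $D=\gcd(N,|1-3r|)$ as exactly the moduli in which $u^{cr}$, respectively the exponent produced by the Item~1 manipulation, becomes trivial.

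For Item~1, the hypothesis $3\mid M$ lets me take $\bar v$ of order $3$, so $\bar v^{-2}=\bar v$ and the fourth relator becomes $(\bar u^c\bar v)^{|r|}$ with $|r|$ even. In $\Delta(D,3,2)$ I will use $(\bar u\bar v)^2=1$, i.e.\ $\bar v\bar u=\bar u^{-1}\bar v^{-1}$, to rewrite $(\bar u^c\bar v)^2$. I then reduce its exponent modulo $D$ via the congruence above, aiming to show that $\bar u^c\bar v$ is conjugate to $\bar u\bar v$ (an involution) or to $\bar u^{c}$ with $\bar u^{c\cdot r/2\cdots}=1$. Either way the element has order dividing $2$, hence dividing $|r|$.

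For Item~2, I take $\bar u$ of order $D_0$ and $\bar v$ of order $E_0$. Since $E_0\mid r$ and $D_0\mid cr$ (by $cr\equiv1-2r$), the exponent sums of $(\bar u^c\bar v^{-2})^{|r|}$ vanish in each factor. I expect the main obstacle to be upgrading this to a genuine identity in $\Delta(D_0,E_0,2)$. What I will try first is to realize $\Delta(D_0,E_0,2)$ geometrically as a group generated by rotations about two vertices of a triangle (spherical, Euclidean or hyperbolic). There I compute $\bar u^c\bar v^{-2}$ as a product of two rotations and check that its rotation angle is a multiple of $2\pi/|r|$ using the two divisibilities. If that fails, I fall back on a finite image, e.g.\ the $\operatorname{PSL}(2,q)$ images used elsewhere in the paper: choose matrices of orders $D_0$, $E_0$ with involutory product and verify the trace condition for $\bar u^c\bar v^{-2}$.

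For Item~3, with $M=3$ and $|r|=2$ the presentation is
\[
\langle u,v\mid u^N,\ v^3,\ (uv)^2,\ (u^cv)^2\rangle .
\]
From $(uv)^2=(u^cv)^2=1$ I get $vu=u^{-1}v^{-1}$ and $vu^c=u^{-c}v^{-1}$, hence $vu^{c-1}v^{-1}=u^{1-c}$. Combined with $v^3=1$, this forces $u^{c-1}$ to have order dividing $2$ and to be inverted by $v$. Iterating and using $u^N=1$, I will show that the group collapses to $\Delta(\gcd(N,|1-3r|),3,2)$; the relevant exponent $1-3r$ is obtained from $c-1$ through the congruence $2pr\equiv-1$. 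Triviality then holds exactly when this gcd is $1$, and otherwise the quotient of Item~1 exhibits nontriviality.
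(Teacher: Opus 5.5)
There is a genuine gap in Items~1 and~2. You never establish the two divisibilities $D_0\mid c$ and $D\mid c-1$, and these are exactly what make both surjections work. They follow from the observation that $\gcd(N,r)=1$, which holds because $N=|2pr+1|$. Combine this with the congruences $rc\equiv 1-2r$ and $r(c-1)\equiv 1-3r \pmod N$. Then $D_0=\gcd(N,|rc|)=\gcd(N,|c|)$ and $D=\gcd(N,|r(c-1)|)=\gcd(N,|c-1|)$.

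Your second displayed congruence computes the wrong product, $c(1-3r)$, and is left unfinished. For Item~2 you only extract $D_0\mid cr$. As you note yourself, $D_0\mid cr$ and $E_0\mid r$ only make the exponent sums of $(\bar u^c\bar v^{-2})^{|r|}$ vanish, and that says nothing in the nonabelian group $\Delta(D_0,E_0,2)$. If $\bar u^c\neq1$, then $\bar u^c\bar v^{-2}$ is a product of two nontrivial rotations about different vertices. Nothing in your argument forces its $|r|$-th power to be trivial, so the rotation-angle plan cannot close without the missing fact. The fallback via $\operatorname{PSL}(2,q)$ images would not prove the statement at all: a homomorphism into a finite linear group is not a surjection onto $\Delta(D_0,E_0,2)$. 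Likewise, in Item~1 the hedged alternatives (``conjugate to $\bar u\bar v$ or to $\bar u^c$ with\dots'') are not yet an argument.

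Once you have the divisibilities, each item takes one line. In $\Delta(D,3,2)$ one has $\bar u^c=\bar u$ and $\bar v^{-2}=\bar v$, so $u^cv^{-2}\mapsto\bar u\bar v$. This is an involution, and its $|r|$-th power is trivial because $r$ is even. In $\Delta(D_0,E_0,2)$ one has $\bar u^c=1$, so $u^cv^{-2}\mapsto\bar v^{-2}$, and $\bar v^{-2|r|}=1$ because $E_0\mid r$.

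Your Item~3 argument is sound and slightly sharper than the paper's. Since $v$ inverts $u^{c-1}$, $v^3=1$ forces $u^{2(c-1)}=1$, and oddness of $N$ gives $u^{c-1}=1$. Hence $P_{l,x,k}\cong\Delta(\gcd(N,|c-1|),3,2)$ outright. The paper instead uses the involutions $s=uv$ and $t=u^cv$ with $ts=u^{c-1}$, and treats the two directions separately. Your version also needs the identification $\gcd(N,|c-1|)=\gcd(N,|1-3r|)$, which rests on the same missing step $\gcd(N,r)=1$.
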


\begin{proof}
Since $N=|2pr+1|$, one has $\gcd(N,r)=1$.  Moreover
\[
 r(c-1)\equiv1-3r\pmod N,
 \qquad
 rc\equiv1-2r\pmod N.
\]
For the first assertion, $D\mid c-1$.  Map $u$ to the order-$D$
generator and $v$ to the order-$3$ generator of $\Delta(D,3,2)$.
Then $u^cv^{-2}$ maps to $uv$, whose order divides two, and its $r$-th
power is trivial because $r$ is even.

For the second assertion, $D_0\mid c$.  In
$\Delta(D_0,E_0,2)$ the word $u^cv^{-2}$ maps to $v^{-2}$, and its
$|r|$-th power is trivial because $E_0\mid r$.

Finally suppose that $M=3$ and $|r|=2$.  In
\eqref{eq:genus-four-x-minus-two-minus-one-relative} put
\[
       s=uv,\qquad t=u^cv.
\]
Then $s^2=t^2=1$ and, using $(uv)^2=1$,
\[
       ts=u^{c-1}.
\]
Let $d=\gcd(N,c-1)$.  The congruence above gives
$d=\gcd(N,|1-3r|)$.  If $d=1$, then $u$ is a power of $ts$.
Since the involution $s$ conjugates $ts$ to $(ts)^{-1}$, it conjugates
$u$ to $u^{-1}$.  As $v=u^{-1}s$, this gives $v^2=1$; together with
$v^3=1$, it gives $v=1$.  Then $s=u$ is an involution, whereas $u^N=1$
and $N$ is odd, so $u=1$.  Conversely, if $d>1$, impose $u^d=1$.
Modulo $d$ one has $c\equiv1$, so the two involution relators become the
same and the resulting quotient is $\Delta(d,3,2)$.
\end{proof}

\begin{remark}
\label{rem:genus-four-x-minus-two-minus-one-P-collapse}
The stronger quotient $P_{l,x,k}$ still cannot handle every boundary
family.  It is trivial if $M=1$ or if $|r|=1$.  The first assertion is
immediate from $(uv)^2$ and the odd order of $u$.  For the second, the last
root relation gives $v^2=u^c$.  Since $M$ is odd, $v$ is a power of $v^2$
and hence lies in $\langle u\rangle$.  Thus the quotient is cyclic of odd
order, so $(uv)^2=1$ gives $uv=1$.  Hence
\[
       u^{c+2}=u^{-2p}=1.
\]
But $\gcd(N,2p)=1$, and therefore $u=v=1$.  In the original parameters
these collapses occur, in particular, for $l=-1$ and for $k=-2$.
They must be treated by the layer-reversed quotient or by an endpoint
elimination, not by adding further relations to $P_{l,x,k}$.
\end{remark}

\begin{remark}[How the picture proof in the spirit of Edjvet-Thomas proceeds]
\label{rem:genus-four-x-minus-two-minus-one-picture-plan}
Suppose first that the middle pair is $(-1,-1)$ and that the other four
exponents have absolute value at least two.  Then
\[
       |p|\geq3,\qquad |r|\geq2,\qquad N\geq11,
\]
and $M\geq3$; moreover $M\geq5$ unless $l=-2$.  Regard
\eqref{eq:genus-four-x-minus-two-minus-one-relative} as the relative
presentation
\[
 \left\langle C_N*C_M\ \middle|\
       (uv)^2,\ (u^cv^{-2})^{|r|}
 \right\rangle .
\]
The elementary $C(4)$--$T(4)$ proof used in the $(0,0,0)$ case does not
apply verbatim.  At a factor region the two vertex types contribute
$v$-corner labels $v^{\pm1}$ and $v^{\pm2}$, and a degree-three region can
have label
\[
       vv v^{-2}=1.
\]
The required proof is the exponent-two modification of the
Edjvet--Thomas picture method \cite[Sections~3--4]{ET}.  One first performs
the standard local surgery at every vertex coming from $(uv)^2$.  In the
resulting planar map one lists all positively curved regions of degrees
three, four, and five.  The degree-three regions are precisely the new
configurations created by the labels $v^{\pm1},v^{\pm2}$; each such region
forces a prescribed neighbouring configuration.  Its positive curvature
is then transferred across the forced edges to the adjacent region.  The remaining local verification shows, using the bounds $N\geq11$
and, in the main range, $M\geq5$, that no closed positive cluster occurs.
Therefore, the transferred curvature is non-positive on every
ordinary region and at most $2\pi$ on the distinguished region, contradicting
the total curvature $4\pi$ of a sphere.  This gives the required injection of
a cyclic factor and hence non-triviality of the quotient.

The cases $M=3$, $|r|=2$, or in which another exponent in \((*)\) is also
short have a smaller local list and must be separated before applying the
curvature argument.  They should not be handled by imposing
$Y^{x+2}=1$, because that may make the quotient trivial.  Instead one uses
one of the following two equivalent repairs: start the Tietze elimination
from the opposite end of the chain, or keep the central calculation above
but choose a different power relation for the remaining Dehn-twist
generator.  This again produces a two-generator relative presentation.
The intersections of two short subfamilies are one-parameter families;
explicit triangle-group or finite simple quotients suffice there.  We don't demonstrate here the  table
of all GAP computations  for the argument.
\end{remark}

For the two other central short pairs $(1,-3)$ and $(-3,1)$, the same
calculation is made before specializing the middle exponent.  If the middle
parameter is denoted by $m$, imposing $D^{k+1}=1$ gives
\[
\begin{aligned}
Y&=X^dY^{-l}B^{m+2}X^dA^{-m}Y^{-l},\\
X&=Y^{-l}X^dA^{-m}Y^{-l}B^{m+2}X^d,\\
A&=Y^{-l}B^{m+2}X^dA^{-m}B^{m+2}C^{-k}A^{-m},\\
C&=A^{-m}B^{m+2}C^{-2k},\\
B&=X^dA^{-m}Y^{-l}B^{m+2}C^{-k}A^{-m}B^{m+2},\\
D&=B^{m+2}C^{-k}A^{-m}C^{-k}.
\end{aligned}
\]
For $m=1$, the power $A^{-m}$ is a generator and only the cube
$B^{m+2}=B^3$ has to be made reversible; for $m=-3$ the roles of $A$ and
$B$ are interchanged.  The intended next step is to adjoin a cyclic relation of order coprime to
three for the cubed generator; this makes the replacement reversible, after
which the same elimination can be repeated to obtain a two-generator relative
presentation.  Short pairs in
the first or third position are treated by the reflected calculation,
starting the elimination at the opposite endpoint.  This gives the finite
collection of two-parameter quotient families required by \((*)\).
 The remaining small-factor subfamilies require separate explicit quotients.
For the purposes of this section it is enough to record the quotient families
and the picture strategy; a complete table of computer calculations is not
included.
\subsection{Reduction of higher genus to framed genus $3$}
\label{subsec:two-layers-large-genus}

We describe a possible reduction of the higher-genus problem.  The
conjecture below does not repeat
Conjecture~\ref{conj:generalized-one-layer}; it concerns only the reduction
of its proof to genus $3$.

Let $g\geq5$, and use the notation of
Proposition~\ref{prop:multilayer-abelianization}:
\[
 \Omega=\Lambda_r\cdots\Lambda_1,
 \qquad
 \Lambda_\nu=T_{12}^{k_{\nu,1}}\cdots
 T_{g-1,g}^{k_{\nu,g-1}},
 \qquad
 s_i=\sum_{\nu=1}^r k_{\nu,i}.
\]
Put $G=G(\mathbf 1,\Omega)$ and
$\overline G=G/\langle\!\langle a_4\rangle\!\rangle$.  Also put
\[
 \Lambda_{\nu,3}=T_{12}^{k_{\nu,1}}T_{23}^{k_{\nu,2}},
 \qquad
 \Omega_3=\Lambda_{r,3}\cdots\Lambda_{1,3}.
\]

\begin{prop}\label{prop:cut-at-a4}
The first three relations of $\overline G$ define the framed genus-three
presentation
\begin{equation}
 G_3=G\bigl((1,1,1+s_3),\Omega_3\bigr).
 \label{eq:framed-genus-three-factor}
\end{equation}
There is a group $H$ on $a_5,\ldots,a_g$ and words
$U_\nu\in G_3$, $V_\nu\in H$ such that
\begin{equation}
 \overline G\cong
 \frac{G_3*H}
 {\langle\!\langle U_1V_1U_2V_2\cdots U_rV_r\rangle\!\rangle}.
 \label{eq:higher-genus-framed-genus-three}
\end{equation}
For $g=5$, the group $H$ is cyclic, with relation
$a_5^{1+s_4}=1$.
\end{prop}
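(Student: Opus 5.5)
We plan to imitate the proof of the lemma on the right factor at a bridge, now for several layers at once. The relators of $G=G(\mathbf 1,\Omega)$ are computed recursively: each layer $\Lambda_\nu$ transforms the current $4$-tuple of ``$y$-words'' by composing with twists $T_{i,i+1}^{k_{\nu,i}}$. Each twist multiplies the $i$-th and $(i+1)$-st current words on the right by a power of $a_i^{w_i}a_{i+1}^{w_{i+1}}$, where $w_i,w_{i+1}$ are the current words. The first step is to write out this recursion in the same explicit form as the genus-three and genus-four presentations above, with auxiliary generators for each layer. We then note that a twist $T_{i,i+1}$ with $i\le 2$ involves only $a_1,a_2,a_3$ and the current words in positions $1,2,3$. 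A twist $T_{i,i+1}$ with $i\ge4$ involves only $a_4,\dots,a_g$.

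The key step is to impose $a_4=1$ and trace the two crossing twists $T_{34}$ and $T_{45}$. For $T_{34}^{k_{\nu,3}}$ the factor $a_3^{w_3}a_4^{w_4}$ becomes $a_3^{w_3}$. Its $k_{\nu,3}$-th power, appended to $w_3$, gives $w_3(w_3^{-1}a_3w_3)^{k_{\nu,3}}=a_3^{k_{\nu,3}}w_3$. So each layer contributes a left factor $a_3^{k_{\nu,3}}$ on the third word, modulo the conjugations already present. Since powers of $a_3$ commute with the $T_{23}$ conjugator $a_3^{w_3}$ only up to this left multiplication, we will have to check carefully that the accumulated effect is exactly the framing $a_3^{1+s_3}$. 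We expect to verify this using the commutation of $T_{x_3}$ with the paper twists, as in the proof of~\eqref{eq:framed-genus-three-factor}. This verification is the main obstacle. Taking the commutation for granted, the left factors combine into $T_{x_3}^{s_3}$ acting on the genus-three part, and the base framing $1$ adds one more, which yields $G_3$.

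Symmetrically, for $T_{45}$ the factor becomes $a_5^{w_5}$. This contributes a framing on $a_5$ for $H$, but it also leaves the current fourth word $w_4$ modified. The fourth relator is the only one mixing both sides. After $a_4=1$, its word $w_4$ is built by successive right multiplications: layer $\nu$ appends first a $T_{34}$-factor, which lies in $G_3$ and we call $U_\nu$, and then a $T_{45}$-factor, which lies in $H$ and we call $V_\nu$. The order in $\Lambda_\nu$ is $T_{34}$ before $T_{45}$ in right-to-left composition, and we will fix that order to get the alternation. Hence $w_4=U_1V_1\cdots U_rV_r$, possibly after cyclic conjugation. Because the $U_\nu$ are words in $a_1,a_2,a_3$ and the $V_\nu$ are words in $a_5,\dots,a_g$, this gives~\eqref{eq:higher-genus-framed-genus-three}. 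Here $H$ is the group on $a_5,\dots,a_g$ defined by relators $5,\dots,g$ after $a_4=1$.

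For $g=5$ there is a single relator on $a_5$ alone, namely $w_5$ evaluated in $\langle a_5\rangle$. By the same computation as for $a_3$, it equals $a_5\cdot a_5^{\sum_\nu k_{\nu,4}}=a_5^{1+s_4}$, up to the factors involving $a_4=1$. Therefore $H$ is cyclic with that relation.
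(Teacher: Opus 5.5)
Your proposal is correct and follows the paper's own argument step for step: kill $a_4$, use $w_3(a_3^{w_3})^{k}=a_3^{k}w_3$ at each $T_{34}$-power, move the resulting left factors $a_3^{k_{\nu,3}}$ to the front as the framing $1+s_3$, and read off the fourth relator as the alternating product of the $T_{34}$- and $T_{45}$-contributions. The step you flag as the main obstacle is immediate: $a_3^{\,a_3^{k}w}=a_3^{\,w}$, so left-multiplying the third word by a power of $a_3$ leaves every later twist factor ($a_2^{w_2}a_3^{w_3}$, and $a_3^{w_3}$ itself) unchanged, which is exactly the commutation of $T_{x_3}$ with the paper twists that the paper invokes.
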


\begin{proof}
After $a_4=1$, the relations in positions $1,2,3$ involve only
$a_1,a_2,a_3$, while those in positions $5,\ldots,g$ involve only
$a_5,\ldots,a_g$.  At $T_{34}^{k}$,
\[
 z_3(a_3^{z_3})^k=a_3^kz_3.
\]
Since the framing twists commute with the paper twists, the powers
$T_{34}^{k_{\nu,3}}$ give the last framing entry $1+s_3$.
In layer $\nu$, the fourth coordinate receives first a word $U_\nu$ from
$T_{34}^{k_{\nu,3}}$ and then a word $V_\nu$ from
$T_{45}^{k_{\nu,4}}$.  Hence its final relation is the alternating word in
\eqref{eq:higher-genus-framed-genus-three}.
\end{proof}

\begin{conjecture}[Reduction to framed genus $3$]
\label{conj:reduction-to-framed-genus-three}
Let $G=G(\mathbf 1,\Omega)$ have genus $g>4$ and trivial abelianization.
After possibly reversing the indices $1,\ldots,g$, the quotient
\eqref{eq:higher-genus-framed-genus-three} satisfies
\[
 G_3\neq1\quad\Longrightarrow\quad\overline G\neq1.
\]
Thus the proof of non-triviality in arbitrary genus reduces to the
corresponding framed genus-three presentation.
\end{conjecture}

This reduction is already proved in several cases.  For two layers the last
relation has free-product length four.  If it is cyclically reduced and the
four words $U_1,V_1,U_2,V_2$ satisfy Assumption~2.1 and the subgroup
conditions in Shwartz's theorem~\cite{Sh}, then both factors in
\eqref{eq:higher-genus-framed-genus-three} embed.  In genus five the second
factor is cyclic.

For $r\geq3$, the weight test for relative presentations~\cite{BP} gives
factor embeddings whenever
\[
 U_\nu\neq1,
 \quad U_\nu^2\neq1,
 \quad U_\nu\neq U_\mu^{\pm1}\quad(\nu\neq\mu),
\]
and the same conditions hold for the $V_\nu$.  One assigns weight $2/3$ to
every occurrence; the relator inequality is
\[
 \frac{4r}{3}\leq2r-2.
\]
It is also enough that every nonempty cyclically reduced product of at most
five words $U_\nu^{\pm1}$ be nontrivial in $G_3$ and every $V_\nu$ be
nontrivial in $H$; here one uses weights $1/3$ and $1$ on the two sides.

The marked-diagram proof for the $(0,0)$ family suggests how to establish
the latter condition when
\[
 |l|\geq3,\quad |k|\geq2,
 \qquad\text{or}\qquad
 |l|\geq2,\quad |k|\geq3.
\]
The internal curvature estimates are already proved in these ranges.  What
is still needed is their boundary form for products of the words $U_\nu$;
the small-exponent cases require separate checks of the short relations.
This appears to require a separate paper.

Once $G_3$ survives in $\overline G$,
Proposition~\ref{prop:multilayer-abelianization} settles every case in which
its abelianization is nontrivial.  Thus the remaining problem is entirely a
framed genus-three problem with trivial abelianization.

Finally, let $G_g$ be the $r$-layer presentation in genus $g$, and let $G_5$
be the genus-five presentation with the same exponents $k_{\nu,i}$ for
$i\leq4$.  Killing the tail gives
\[
 \frac{G_g}{\langle\!\langle a_5,\ldots,a_g\rangle\!\rangle}
 \cong
 \frac{G_5}{\langle\!\langle a_5\rangle\!\rangle},
\]
because all twists farther to the right disappear.  Hence any nontrivial
quotient on the right proves non-triviality in all higher genera with the
same initial exponents.  The example with exponent pairs
\[
 (4,-4),\quad(1,-1),\quad(1,-1),\quad(1,-1)
\]
is recorded in the Appendix.

\endgroup

\section{Examples}\label{sec:examples}
\begin{example}[The Poincar\'e homology sphere]
For $n=1$ in Proposition~\ref{lm:genus3-00-quotient},
\[
 G_1=\left\langle X,Y\ \middle|\
 YXY=X^2,\quad Y^5=X^3\right\rangle
 =\left\langle X,Y\ \middle|\ (XY)^2=X^3=Y^5\right\rangle.
\]
Hence $G_1\cong\operatorname{SL}(2,5)$, the binary icosahedral group of
order $120$.  The associated manifold is an integral homology sphere and,
up to orientation, is the Poincar\'e homology sphere.  The parameter
$n=-1$ gives the same group.
\end{example}

\begin{example}[A genus-ten Andrews--Curtis reduction]
\label{ex:genus-ten-AC-reduction}
Let
\[
 K=(-1,-1,1,-1,3,1,-1,1,-1).
\]
Thus the genus is ten.  In the free group
\[
 F=F(a_1,\ldots,a_{10})
\]
put
\[
 \Theta_1=a_1,
 \qquad
 P_i=\Theta_i^{-1}a_i\Theta_i,
\]
\[
 Y_i=\Theta_i(P_i a_{i+1})^{k_i},
 \qquad
 \Theta_{i+1}=a_{i+1}(a_{i+1}P_i)^{k_i}
 \quad(1\leq i<10),
\]
and
\[
 Y_{10}=\Theta_{10}.
\]
The associated presentation is
\[
 \mathcal P(K)=
 \left\langle
 a_1,\ldots,a_{10}
 \ \middle|\
 Y_1,\ldots,Y_{10}
 \right\rangle.
\]
All changes of generators below are simultaneous Nielsen transformations
of the ambient free basis.  After a relator has become a singleton
generator, we retain that singleton relator and clear its letter from all
later relators by Lemma~\ref{lem:singleton-clearing}.

The only nonsigned exponent is
\[
 k_5=3.
\]
It separates the left block
\[
 G_5(-1,-1,1,-1)
\]
on $a_1,\ldots,a_5$ from the right block
\[
 G\bigl(6,(1,-1,1,-1)\bigr)
\]
on $a_6,\ldots,a_{10}$.  We first verify directly that both blocks are
trivial.

\medskip
\noindent\textbf{The left block.}
The left block is
\[
 G_5(-1,-1,1,-1)
 =
 \left\langle
 a_1,\ldots,a_5
 \ \middle|\
 Y_1,Y_2,Y_3,Y_4,\Theta_5
 \right\rangle.
\]
Its last defining relation is $\Theta_5$; this is not the actual fifth
relator of the full genus-ten presentation.

For $k_1=-1$, initially
\[
 \Theta_1=P_1=a_1,
 \qquad
 Y_1=a_1(a_1a_2)^{-1}.
\]
Apply
\[
 a_2\longmapsto a_1^{-1}a_2^{-1}a_1.
\]
Then $Y_1\mapsto a_2$.  Retain the singleton $a_2$ and clear it from the
later words.  The remaining boundary words become
\[
 \Theta_2=a_1^{-1},
 \qquad
 P_2=1.
\]

For $k_2=-1$, after clearing $a_2$,
\[
 Y_2=a_1^{-1}a_3^{-1}.
\]
Apply
\[
 a_3\longmapsto a_3^{-1}a_1^{-1}.
\]
Then $Y_2\mapsto a_3$.  Retain $a_3$ and clear it.  The remaining
boundary words become
\[
 \Theta_3=1,
 \qquad
 P_3=a_1^{-1}.
\]

For $k_3=1$, after the preceding clearings,
\[
 Y_3=a_1^{-1}a_4.
\]
Apply
\[
 a_4\longmapsto a_1a_4.
\]
Then $Y_3\mapsto a_4$.  Retain $a_4$ and clear it.  The remaining
boundary words become
\[
 \Theta_4=a_1,
 \qquad
 P_4=a_1.
\]

For $k_4=-1$, after the preceding clearings,
\[
 Y_4=a_1(a_1a_5)^{-1}.
\]
Apply
\[
 a_5\longmapsto a_1^{-1}a_5^{-1}a_1.
\]
Then $Y_4\mapsto a_5$.  Retain $a_5$ and clear it.  The final boundary
words are
\[
 \boxed{\Theta_5=a_1^{-1},\qquad P_5=1.}
\]
Consequently the complete relator tuple of the left block has become
\[
 (a_2,a_3,a_4,a_5,a_1^{-1}),
\]
which is a free basis.  Hence
\[
 \boxed{G_5(-1,-1,1,-1)=1.}
\]

\medskip
\noindent\textbf{The right block.}
Let
\[
 I=(k_6,k_7,k_8,k_9)=(1,-1,1,-1).
\]
The block $G(6,I)$ is the shifted ordinary one-layer group on
\[
 a_6,a_7,a_8,a_9,a_{10}.
\]
For this block the one-layer recursion starts anew with
\[
 \Theta_6=P_6=a_6.
\]
Denote its relators by
\[
 B_i=\Theta_i(P_i a_{i+1})^{k_i}
 \quad(6\leq i\leq9),
 \qquad
 B_{10}=\Theta_{10}.
\]
Thus
\[
 G\bigl(6,(1,-1,1,-1)\bigr)
 =
 \left\langle
 a_6,\ldots,a_{10}
 \ \middle|\
 B_6,B_7,B_8,B_9,B_{10}
 \right\rangle.
\]
Its first defining relation is the shifted first relation of the shorter
one-layer group; it is not the sixth relator of the full genus-ten
presentation.  In particular,
\[
 B_6=a_6(a_6a_7)=a_6^2a_7.
\]

For $k_6=1$, apply
\[
 a_7\longmapsto a_6^{-2}a_7.
\]
Then $B_6\mapsto a_7$.  Retain $a_7$ and clear it.  The remaining
boundary words become
\[
 \Theta_7=a_6^{-3},
 \qquad
 P_7=a_6^{-2}.
\]

For $k_7=-1$, after clearing $a_7$,
\[
 B_7=a_6^{-3}(a_6^{-2}a_8)^{-1}
     =a_6^{-3}a_8^{-1}a_6^2.
\]
Apply
\[
 a_8\longmapsto a_6^2a_8^{-1}a_6^{-3}.
\]
Then $B_7\mapsto a_8$.  Retain $a_8$ and clear it.  The remaining
boundary words become
\[
 \Theta_8=a_6^2,
 \qquad
 P_8=a_6^{-1}.
\]

For $k_8=1$, after the preceding clearings,
\[
 B_8=a_6^2(a_6^{-1}a_9)=a_6a_9.
\]
Apply
\[
 a_9\longmapsto a_6^{-1}a_9.
\]
Then $B_8\mapsto a_9$.  Retain $a_9$ and clear it.  The remaining
boundary words become
\[
 \Theta_9=a_6^{-3},
 \qquad
 P_9=a_6^{-1}.
\]

For $k_9=-1$, after the preceding clearings,
\[
 B_9=a_6^{-3}(a_6^{-1}a_{10})^{-1}
     =a_6^{-3}a_{10}^{-1}a_6.
\]
Apply
\[
 a_{10}\longmapsto a_6a_{10}^{-1}a_6^{-3}.
\]
Then $B_9\mapsto a_{10}$.  Retain $a_{10}$ and clear it.  The terminal
boundary word is
\[
 \boxed{B_{10}=\Theta_{10}=a_6.}
\]
Therefore the complete relator tuple of the right block has become
\[
 (a_7,a_8,a_9,a_{10},a_6),
\]
which is a free basis.  Hence
\[
 \boxed{G\bigl(6,(1,-1,1,-1)\bigr)=1.}
\]
The right block is used only as a side group in the side-triviality
condition.  Its first defining relation is $B_6=a_6^2a_7$.  After the
left block and the exponent $3$ are processed in the full presentation,
the actual remaining sixth relator will instead begin with the boundary
pair $\Theta_6=a_6^4$, $P_6=a_6$.  Thus the right block presentation and
the residual actual relators are not literally the same tuple.

\medskip
\noindent\textbf{The complete genus-ten reduction.}
We now reduce the actual full tuple $(Y_1,\ldots,Y_{10})$.  At the
nonsigned exponent $k_5=3$, choose the left block.  Apply the four changes
of basis above and retain the singletons
\[
 Y_1=a_2,
 \qquad
 Y_2=a_3,
 \qquad
 Y_3=a_4,
 \qquad
 Y_4=a_5.
\]
After clearing $a_2,a_3,a_4,a_5$, we have
\[
 \Theta_5=a_1^{-1},
 \qquad
 P_5=1.
\]
The actual fifth relator of the full presentation is therefore
\[
 Y_5=\Theta_5(P_5a_6)^3=a_1^{-1}a_6^3.
\]
This is not the last defining relation $a_1^{-1}$ of the left block.
Apply
\[
 a_1\longmapsto a_6^3a_1^{-1}.
\]
Then $Y_5\mapsto a_1$.  Retain the singleton $a_1$ and clear it from
the remaining relators.  The new boundary pair for the actual residual
tuple is
\[
 \boxed{\Theta_6=a_6^4,\qquad P_6=a_6.}
\]

We now process the remaining actual exponents
\[
 (k_6,k_7,k_8,k_9)=(1,-1,1,-1).
\]
For $k_6=1$, the actual sixth relator is
\[
 Y_6=a_6^4(a_6a_7)=a_6^5a_7.
\]
Apply
\[
 a_7\longmapsto a_6^{-5}a_7.
\]
Then $Y_6\mapsto a_7$.  After clearing $a_7$,
\[
 \Theta_7=a_6^{-9},
 \qquad
 P_7=a_6^{-5}.
\]

For $k_7=-1$, after clearing,
\[
 Y_7=a_6^{-9}(a_6^{-5}a_8)^{-1}
     =a_6^{-9}a_8^{-1}a_6^5.
\]
Apply
\[
 a_8\longmapsto a_6^5a_8^{-1}a_6^{-9}.
\]
Then $Y_7\mapsto a_8$.  After clearing $a_8$,
\[
 \Theta_8=a_6^5,
 \qquad
 P_8=a_6^{-4}.
\]

For $k_8=1$, after clearing,
\[
 Y_8=a_6^5(a_6^{-4}a_9)=a_6a_9.
\]
Apply
\[
 a_9\longmapsto a_6^{-1}a_9.
\]
Then $Y_8\mapsto a_9$.  After clearing $a_9$,
\[
 \Theta_9=a_6^{-6},
 \qquad
 P_9=a_6^{-1}.
\]

For $k_9=-1$, after clearing,
\[
 Y_9=a_6^{-6}(a_6^{-1}a_{10})^{-1}
     =a_6^{-6}a_{10}^{-1}a_6.
\]
Apply
\[
 a_{10}\longmapsto a_6a_{10}^{-1}a_6^{-6}.
\]
Then $Y_9\mapsto a_{10}$.  After clearing $a_{10}$, the genuine last
relator is
\[
 \boxed{Y_{10}=\Theta_{10}=a_6.}
\]
We have therefore transformed the ordered relator tuple into
\[
 \boxed{
 (Y_1,\ldots,Y_{10})
 \ \sim_{\mathrm{AC}}\
 (a_2,a_3,a_4,a_5,a_1,a_7,a_8,a_9,a_{10},a_6).
 }
\]
The tuple on the right is a permutation of the original free basis.
Consequently
\[
 \boxed{
 \mathcal P(-1,-1,1,-1,3,1,-1,1,-1)=1,
 }
\]
and the relator tuple is Andrews--Curtis equivalent to
$(a_1,\ldots,a_{10})$.

For reference, the complete applied sequence of basis changes is
\[
\begin{array}{rcl}
 a_2&\mapsto&a_1^{-1}a_2^{-1}a_1,\\
 a_3&\mapsto&a_3^{-1}a_1^{-1},\\
 a_4&\mapsto&a_1a_4,\\
 a_5&\mapsto&a_1^{-1}a_5^{-1}a_1,\\
 a_1&\mapsto&a_6^3a_1^{-1},\\
 a_7&\mapsto&a_6^{-5}a_7,\\
 a_8&\mapsto&a_6^5a_8^{-1}a_6^{-9},\\
 a_9&\mapsto&a_6^{-1}a_9,\\
 a_{10}&\mapsto&a_6a_{10}^{-1}a_6^{-6}.
\end{array}
\]
After each line, the newly obtained singleton relator is retained and its
letter is cleared from all later relators.
\end{example}

\section{Appendix. MAGMA and Chat GPT 5.6 Sol Pro assisted GAP computations}\label{app:computer-computations}

When we started this project in 2024, we used MAGMA software. In Summer 2026 Ghat GPT has become capable of directly making similar computations using GAP.
\subsection{Exceptional cases for \(k=1\) in Proposition~\ref{prop:minus-two-minus-one}}
\label{app:k-one-gap}

For
\[
 l\in\{1,-3,2,-4,4,-6\},
\]
the two-generator quotient \(Q_{1,l}\) in
\eqref{eq:minus-two-general-quotient} is trivial by the classification
used in the proof above.  We therefore evaluate the original presentation
\(G_{1,l}\) in explicit permutation groups.
The following triples are used as the images of \(a_1,a_2,a_3\):
\[
\begin{array}{c|c|c}
 l & \langle a_1,a_2,a_3\rangle & \text{degree}\\ \hline
 1,-3 & \operatorname{PSL}(2,7) & 7\\
 2,-4 & \operatorname{PSL}(2,11) & 12\\
 4,-6 & A_5 & 5
\end{array}
\]
The auxiliary generators are then computed from the defining equations of
\(G_{1,l}\), and GAP checks the three final balanced relators directly.
The convention is \(a^b=b^{-1}ab\).

\begin{verbatim}
# Exceptional k=1 cases for the (-2,-1)/(-1,-2) proposition.
# GAP convention: a^b = b^-1*a*b.

CheckCase := function(l, a1, a2, a3, expectedOrder)
    local X, y1, y2, y3, A, z1, z2, z3,
          Y, x1, x2, x3, B, rels, H;

    X  := a1*a2;
    y1 := a1*X;
    y2 := a2*X;
    y3 := a3;

    A  := (a2^y2)*a3;
    z1 := y1;
    z2 := y2*A^l;
    z3 := y3*A^l;

    Y  := (a1^z1)*(a2^z2);
    x1 := z1*Y^-2;
    x2 := z2*Y^-2;
    x3 := z3;

    B := (a2^x2)*(a3^x3);

    rels := [x1, x2*B^(-l-2), x3*B^(-l-2)];
    if not ForAll(rels, r -> r = ()) then
        Error("a defining relator failed for l = ", l);
    fi;

    H := Group([a1,a2,a3]);
    if Size(H) <> expectedOrder then
        Error("wrong image order for l = ", l);
    fi;
    if not IsSimpleGroup(H) then
        Error("the image is not simple for l = ", l);
    fi;

    Print("l = ", l, ": image order ", Size(H),
          "; structure = ", StructureDescription(H), "\n");
end;

# l = 1: an image of order 168.
CheckCase(1,
    (1,5,4)(3,7,6),
    (1,3,4)(2,7,5),
    (1,5,3)(2,4,6),
    168);

# l = -3: an image of order 168.
CheckCase(-3,
    (1,5,4)(3,7,6),
    (1,6,4)(2,5,7),
    (1,2,3)(4,7,6),
    168);

# l = 2: an image of order 660.
CheckCase(2,
    (1,2,3,4,5,6,7,8,9,10,11),
    (1,11,12)(2,6,10)(3,8,5)(4,9,7),
    (2,8,11,6,4)(3,12,5,9,10),
    660);

# l = -4: an image of order 660.
CheckCase(-4,
    (1,2,3,4,5,6,7,8,9,10,11),
    (1,11,12)(2,6,10)(3,8,5)(4,9,7),
    (2,8,10,12,11)(3,6,5,4,7),
    660);

# l = 4: an image of order 60.
CheckCase(4,
    (1,2,3),
    (1,2,3,4,5),
    (1,2,5,3,4),
    60);

# l = -6: the same order-60 image works.
CheckCase(-6,
    (1,2,3),
    (1,2,3,4,5),
    (1,2,5,3,4),
    60);
\end{verbatim}

Thus the original groups \(G_{1,l}\) have nontrivial finite quotients for
all six exceptional values of \(l\).

\subsection{The cases $k\in\{-3,2,3\}$ and
$l\in\{1,-3,2,-4\}$}
\label{app:small-k-gap}

For these twelve parameter pairs the two-generator quotient
$Q_{k,l}$ is trivial, so we evaluate the original presentation in projective
special linear groups.  The matrices below lie in
$\operatorname{SL}(2,p)$.  Their images modulo the centre satisfy the three
balanced relators and generate $\operatorname{PSL}(2,p)$.  Thus they define
epimorphisms
\[
 G_{k,l}\twoheadrightarrow\operatorname{PSL}(2,p).
\]
The following table summarizes the primes used:
\[
\begin{array}{c|c|c}
 k & l & p\\ \hline
 -3 & 1,-3 & 13\\
 -3 & 2,-4 & 11\\
  2 & 1,-3 & 13\\
  2 & 2,-4 & 11\\
  3 & 1,-3 & 19\\
  3 & 2,-4 & 11.
\end{array}
\]

The GAP calculation is written with lifts to $\operatorname{SL}(2,p)$.
Accordingly, a relator is required to be either $I$ or $-I$.  The final
size check shows that the three lifts generate the whole group
$\operatorname{SL}(2,p)$, and therefore that their projective images
generate $\operatorname{PSL}(2,p)$.

\begin{verbatim}
# Exceptional small-k cases for the (-2,-1)/(-1,-2) proposition.
# The matrices are lifts to SL(2,p); relators are checked modulo {+I,-I}.
# GAP convention: a^b = b^-1*a*b.

CheckPSLCase := function(k, l, p, rows1, rows2, rows3)
    local F, ToMat, a1, a2, a3, id, minusid,
          X, y1, y2, y3, A, z1, z2, z3,
          Y, x1, x2, x3, B, rels, H;

    F := GF(p);
    ToMat := function(rows)
        return List(rows,
                    row -> List(row, x -> x*One(F)));
    end;

    a1 := ToMat(rows1);
    a2 := ToMat(rows2);
    a3 := ToMat(rows3);
    id := IdentityMat(2,F);
    minusid := -id;

    if not ForAll([a1,a2,a3],
                  g -> DeterminantMat(g) = One(F)) then
        Error("a matrix does not have determinant one");
    fi;

    X  := a1*a2;
    y1 := a1*X^k;
    y2 := a2*X^k;
    y3 := a3;

    A  := (a2^y2)*a3;
    z1 := y1;
    z2 := y2*A^l;
    z3 := y3*A^l;

    Y  := (a1^z1)*(a2^z2);
    x1 := z1*Y^(-k-1);
    x2 := z2*Y^(-k-1);
    x3 := z3;

    B := (a2^x2)*(a3^x3);
    rels := [x1, x2*B^(-l-2), x3*B^(-l-2)];

    if not ForAll(rels, r -> r = id or r = minusid) then
        Error("a projective defining relator failed for k,l = ",
              k, ",", l);
    fi;

    H := Group([a1,a2,a3]);
    if Size(H) <> p*(p^2-1) then
        Error("the lifts do not generate SL(2,p) for k,l = ",
              k, ",", l);
    fi;

    Print("(k,l)=", [k,l], ": PSL(2,", p,
          "), order ", p*(p^2-1)/2, "\n");
end;

CheckPSLCase(-3, 1, 13,
 [[3,5],[5,0]], [[1,6],[0,1]], [[1,1],[2,3]]);
CheckPSLCase(-3,-3, 13,
 [[4,8],[1,12]], [[1,2],[11,10]], [[5,6],[1,4]]);
CheckPSLCase(-3, 2, 11,
 [[0,5],[2,3]], [[4,3],[4,6]], [[0,1],[10,8]]);
CheckPSLCase(-3,-4, 11,
 [[2,6],[2,1]], [[4,3],[4,6]], [[4,7],[4,10]]);

CheckPSLCase( 2, 1, 13,
 [[3,4],[5,7]], [[4,10],[3,11]], [[3,5],[6,6]]);
CheckPSLCase( 2,-3, 13,
 [[0,4],[3,3]], [[4,9],[3,7]], [[6,11],[0,11]]);
CheckPSLCase( 2, 2, 11,
 [[3,4],[9,5]], [[1,4],[8,0]], [[1,4],[7,7]]);
CheckPSLCase( 2,-4, 11,
 [[5,3],[0,9]], [[3,4],[5,7]], [[4,6],[1,10]]);

CheckPSLCase( 3, 1, 19,
 [[2,15],[13,3]], [[3,8],[9,18]], [[1,12],[15,10]]);
CheckPSLCase( 3,-3, 19,
 [[8,16],[6,5]], [[8,16],[10,13]], [[1,18],[6,14]]);
CheckPSLCase( 3, 2, 11,
 [[4,5],[9,6]], [[1,4],[1,5]], [[0,5],[2,8]]);
CheckPSLCase( 3,-4, 11,
 [[3,1],[4,9]], [[3,10],[3,3]], [[2,0],[10,6]]);
\end{verbatim}

Hence all twelve original groups $G_{k,l}$ in this subsection have
nontrivial finite quotients.
\subsection{Additional $\operatorname{PSL}(2,17)$ quotients for the finite
classification}
\label{app:psl17-finite-classification}

The following computation handles the $(0,0)$ pair $(2,2)$ and strengthens
the two $A_5$ quotients for $(k,l)=(1,4),(1,-6)$ to quotients
$\operatorname{PSL}(2,17)$.  The matrices are lifts to
$\operatorname{SL}(2,17)$.  Thus every defining relator is required to be
$I$ or $-I$.  In each case the lifts generate the full group
$\operatorname{SL}(2,17)$, of order $4896$, so their projective images
generate $\operatorname{PSL}(2,17)$, of order $2448$.

\begin{verbatim}
# Three PSL(2,17) quotients used in the genus-three finite classification.
# GAP convention: a^b = b^-1*a*b.

CheckPSL17Case := function(k1, k2, m1, m2, rows1, rows2, rows3)
    local F, ToMat, a1, a2, a3, id, minusid,
          X, y1, y2, y3, A, z1, z2, z3,
          Y, x1, x2, x3, B, rels, H;

    F := GF(17);
    ToMat := function(rows)
        return List(rows,
                    row -> List(row, x -> x*One(F)));
    end;

    a1 := ToMat(rows1);
    a2 := ToMat(rows2);
    a3 := ToMat(rows3);
    id := IdentityMat(2,F);
    minusid := -id;

    if not ForAll([a1,a2,a3],
                  g -> DeterminantMat(g) = One(F)) then
        Error("a matrix does not have determinant one");
    fi;

    X  := a1*a2;
    y1 := a1*X^k1;
    y2 := a2*X^k1;
    y3 := a3;

    A  := (a2^y2)*a3;
    z1 := y1;
    z2 := y2*A^k2;
    z3 := y3*A^k2;

    Y  := (a1^z1)*(a2^z2);
    x1 := z1*Y^m1;
    x2 := z2*Y^m1;
    x3 := z3;

    B := (a2^x2)*(a3^x3);
    rels := [x1, x2*B^m2, x3*B^m2];

    if not ForAll(rels, r -> r = id or r = minusid) then
        Error("a projective defining relator failed");
    fi;

    H := Group([a1,a2,a3]);
    if Size(H) <> 4896 then
        Error("the lifts do not generate SL(2,17)");
    fi;

    Print([k1,k2,m1,m2], ": PSL(2,17), order 2448\n");
end;

# Exponent sum (0,0), parameters (2,2):
CheckPSL17Case(2, 2, -2, -2,
 [[10,4],[5,14]], [[7,15],[10,7]], [[5,2],[13,2]]);

# Exponent sum (-1,-2), k=1, l=4:
CheckPSL17Case(1, 4, -2, -6,
 [[14,2],[6,7]], [[10,9],[0,12]], [[14,10],[4,9]]);

# Exponent sum (-1,-2), k=1, l=-6:
CheckPSL17Case(1, -6, -2, 4,
 [[4,4],[13,9]], [[14,16],[8,8]], [[8,7],[0,15]]);
\end{verbatim}

\subsection{Genus five example after killing $a_5$}
\label{app:genus-five-kill-a5-example}

This is the computation used in Subsection~\ref{subsec:two-layers-large-genus}
for the exponent pairs
\[
(4,-4),\quad (1,-1),\quad (1,-1),\quad (1,-1).
\]
It computes the genus-five quotient after imposing $a_5=1$.  The simplified
subgroup has a quotient $L_2(181)$, and Magma's infiniteness test verifies that
the resulting finitely presented group is infinite.

\begin{verbatim}
G<a1,a2,a3,a4,t1,t2,t3,t4,z1,z2,z3,z4,
y1,y2,y3,y4,s1,s2,s3,s4,s5,x1,x2,x3,x4,x5,
r1,r2,r3,r4,r5,p1,p2,p3,p4,p5> :=
Group<a1,a2,a3,a4,t1,t2,t3,t4,z1,z2,z3,z4,
y1,y2,y3,y4,s1,s2,s3,s4,s5,x1,x2,x3,x4,x5,
r1,r2,r3,r4,r5,p1,p2,p3,p4,p5 |

y1 = a1*(a1*a2)^4,
y2 = a2*(a1*a2)^4,
y3 = a3,
y4 = a4,

z1 = y1,
z2 = y2*(a2^y2*a3),
z3 = y3*(a2^y2*a3),
z4 = a4,

t1 = z1,
t2 = z2,
t3 = z3*(a3^z3*a4^z4),
t4 = z4*(a3^z3*a4^z4),

r1 = t1,
r2 = t2,
r3 = t3,
r4 = t4*(a4^t4),
r5 = (a4^t4),

x1 = r1*(a1^r1*a2^r2)^-4,
x2 = r2*(a1^r1*a2^r2)^-4,
x3 = r3,
x4 = r4,
x5 = r5,

s1 = x1,
s2 = x2*(a2^x2*a3^x3)^-1,
s3 = x3*(a2^x2*a3^x3)^-1,
s4 = x4,
s5 = x5,

p1,
p2,
p3,
p4*(a4^p4)^-1,
p5*(a4^p4)^-1,

p1 = s1,
p2 = s2,
p3 = s3*(a3^s3*a4^s4)^-1,
p4 = s4*(a3^s3*a4^s4)^-1,
p5 = s5
>;

H := sub<G | a1*a2, a2^y2*a3, a3^z3*a4^z4,
              a4^t4, a1^r1*a2^r2>;

Simplify(H);

Finitely presented group on 2 generators
Generators as words in group H
    $.1 = H.1
    $.2 = H.5

Relations
    $.1^-3 * $.2^-1 * $.1^-4 * $.2^4 * $.1^-1 * $.2 *
    $.1^4 * $.2 * $.1^-1 * $.2^4 * $.1^-4 * $.2^-1 = Id($)

    $.2^4 * $.1^-1 * $.2^4 * $.1^-4 * $.2^-1 * $.1 *
    $.2^-4 * $.1 * $.2^-1 * $.1^-4 * $.2^4 * $.1^-1 * $.2 = Id($)

    $.1^-4 * $.2^-1 * $.1^-4 * $.2^4 * $.1^-1 * $.2^9 *
    $.1^-1 * $.2^4 * $.1^-4 * $.2^-1 * $.1^-3 = Id($)


// Map your relations to the new generators:
// x corresponds to H.1 ($.1)
// y corresponds to H.5 ($.2)

F<x,y> := FreeGroup(2);

relations := [
    x^-3 * y^-1 * x^-4 * y^4 * x^-1 * y * x^4 * y *
    x^-1 * y^4 * x^-4 * y^-1,

    y^4 * x^-1 * y^4 * x^-4 * y^-1 * x * y^-4 * x *
    y^-1 * x^-4 * y^4 * x^-1 * y,

    x^-4 * y^-1 * x^-4 * y^4 * x^-1 * y^9 * x^-1 *
    y^4 * x^-4 * y^-1 * x^-3
];

// Create the finitely presented group G
G<x, y> := quo< F | relations >;

// Print G to verify the presentation
print G;

Finitely presented group G on 2 generators
Relations
    x^-3 * y^-1 * x^-4 * y^4 * x^-1 * y * x^4 * y *
    x^-1 * y^4 * x^-4 * y^-1 = Id(G)

    y^4 * x^-1 * y^4 * x^-4 * y^-1 * x * y^-4 * x *
    y^-1 * x^-4 * y^4 * x^-1 * y = Id(G)

    x^-4 * y^-1 * x^-4 * y^4 * x^-1 * y^9 * x^-1 *
    y^4 * x^-4 * y^-1 * x^-3 = Id(G)

L2Quotients(G);

[
    L_2(181)
]

> IsInfinite(G);
true 
! The following lines sketch a proof that the fp-group F is infinite:-
! F has the group PSL(2, 181) of order 2964780 as a quotient G (L2Quotient).
! Let k denote the rational field.
! Let M be the permutation module of $G$ as a kG-module. It has dim 182.
! Let N be the kF-module given by the induced action of $F$ on M.
! The dimension of the first integral cohomology group of the kF-module N is 4.
! As H^1(F, N) is non-zero, the Holt-Plesken criterion shows that F is infinite.

 [
        Homomorphism of GrpFP: G into GrpPerm: $, Degree 182, Order 2^2 * 3^2 * 
        5 * 7 * 13 * 181 induced by
            x |--> (1, 106, 11, 178, 117, 24, 141, 169, 129)(2, 79, 135, 18, 48,
                182, 116, 7, 73)(3, 108, 162, 122, 150, 85, 174, 113, 98)(4, 12,
                153, 13, 161, 132, 131, 112, 10)(5, 50, 46, 49, 140, 181, 39, 
                154, 23)(6, 26, 146, 170, 125, 66, 100, 95, 157)(8, 33, 55, 20, 
                27, 38, 16, 68, 144)(9, 43, 166, 121, 145, 83, 103, 134, 14)(17,
                155, 165, 78, 62, 143, 75, 19, 163)(21, 69, 167, 118, 115, 25, 
                168, 65, 133)(22, 57, 139, 51, 164, 67, 81, 119, 72)(28, 42, 
                127, 58, 152, 52, 87, 37, 172)(29, 35, 180, 171, 114, 56, 45, 
                32, 142)(30, 107, 36, 102, 175, 109, 61, 91, 156)(31, 126, 136, 
                92, 128, 90, 34, 148, 47)(40, 88, 158, 44, 123, 84, 176, 173, 
                124)(41, 93, 71, 82, 89, 54, 76, 101, 147)(53, 177, 120, 111, 
                74, 80, 149, 77, 64)(59, 104, 86, 137, 70, 110, 151, 60, 63)(96,
                138, 97, 105, 99, 179, 160, 159, 130)
            y |--> (1, 116, 130, 94, 83, 42, 117)(2, 32, 123, 101, 136, 174, 
                103)(3, 45, 56, 178, 30, 88, 152)(4, 167, 85, 16, 81, 175, 
                140)(5, 78, 129, 33, 111, 150, 99)(6, 132, 145, 154, 57, 141, 
                92)(7, 39, 46, 118, 8, 15, 47)(9, 51, 84, 148, 24, 58, 180)(10, 
                131, 181, 59, 149, 77, 75)(11, 173, 19, 14, 157, 127, 64)(12, 
                153, 142, 106, 120, 53, 119)(13, 170, 28, 102, 110, 169, 
                146)(17, 166, 20, 164, 71, 54, 147)(18, 176, 25, 27, 29, 60, 
                36)(21, 107, 158, 49, 137, 86, 125)(22, 52, 133, 62, 100, 135, 
                113)(23, 171, 124, 115, 108, 80, 162)(26, 66, 41, 90, 67, 126, 
                134)(31, 74, 156, 128, 121, 112, 65)(34, 70, 37, 89, 182, 165, 
                72)(35, 63, 43, 172, 109, 79, 40)(38, 151, 69, 50, 96, 61, 
                155)(44, 161, 159, 87, 177, 55, 105)(48, 144, 95, 179, 82, 91, 
                104)(68, 160, 163, 97, 93, 98, 122)(73, 76, 168, 114, 138, 143, 
                139)
    ]

\end{verbatim}
{\bf Acknowlegements:} The first author thanks Dolciani-Holloran foundation for the support. Both authors thank PSC CUNY for the support.

\end{document}